\newtheorem{theorem}{Theorem}[section]
\newtheorem{lemma}[theorem]{Lemma}
\newtheorem{corollary}{Corollary}
\newtheorem{proposition}{Proposition}
\newtheorem{assumption}{Assumption}
\newtheorem{setup}{Set-up}
\newtheorem{definition}[theorem]{Definition}
\newtheorem*{fact}{Fact}
\theoremstyle{remark}
\newtheorem{remark}{Remark}
\DeclareRobustCommand{\varamalg}{%
  \mathbin{\mathpalette\var@malg\perp}%
}
\newcommand\var@malg[2]{%
  \rlap{$\m@th#1#2$}\mkern6mu{#1#2}%
}
\def\@tvsp{\mathchoice{{}\mkern-4.5mu}{{}\mkern-4.5mu}{{}\mkern-2.5mu}{}}
\def\ltrivert{\left|\@tvsp\left|\@tvsp\left|}
\def\rtrivert{\right|\@tvsp\right|\@tvsp\right|}
\def\@tvsp{\mathchoice{{}\mkern-4.5mu}{{}\mkern-4.5mu}{{}\mkern-2.5mu}{}}
\def\llangle{\langle\@tvsp\langle}
\def\rrangle{\rangle\@tvsp\rangle}
\newcommand{\esp}{\mathbb{E}}
\newcommand{\prob}{\mathbb{P}}
\newcommand{\probn}{\mathbf{P}}
\DeclareMathOperator*{\var}{\mathbb{V}}
\newcommand{\unit}{\mathbf{1}}
\newcommand{\re}{\mathbb{R}}
\newcommand{\tr}{\mathsf{tr}}
\newcommand{\KL}{\mathsf{KL}}
\DeclareMathOperator*{\diag}{diag}
\newcommand{\RFHP}{\mathsf{P}}
\DeclareMathOperator*{\MW}{\mathtt{MW}}
\DeclareMathOperator*{\Round}{\mathtt{ROUND}}
\DeclareMathOperator*{\Algorithm}{\mathtt{Algorithm}}
\DeclareMathOperator*{\Pruning}{\mathtt{Pruning}}
\DeclareMathOperator*{\RobustRegression}{\mathtt{Robust-Regression}}
\DeclareMathOperator*{\RobustDirection}{\mathtt{Robust-Direction}}
\DeclareMathOperator*{\AdaptiveRobustRegression}{\mathtt{Adaptive-Robust-Regression}}
\def\bfDelta{\boldsymbol{\Delta}}
\def\bfSigma{\boldsymbol{\Sigma}}
\def\bfXi{\boldsymbol{\Xi}}
\def\bfA{\mathbf{A}}
\def\bfB{\mathbf{B}}
\def\bfC{\mathbf{C}}
\def\bfI{\mathbf{I}}
\def\bfM{\mathbf{M}}
\def\bfS{\mathbf{S}}
\def\bfZ{\mathbf{Z}}
\def\bb{\boldsymbol b}
\def\be{\boldsymbol e}
\def\bx{\boldsymbol x}
\def\by{\boldsymbol y}
\def\bq{\boldsymbol q}
\def\bu{\boldsymbol u}
\def\bv{\boldsymbol v}
\def\bw{\boldsymbol w}
\def\bz{\boldsymbol z}
\def\bf0{\mathbf{0}}
\def\bmu{\boldsymbol\mu}
\def\btheta{\boldsymbol\theta}
\def\bpi{\boldsymbol\pi}
\def\calA{\mathcal A}
\def\calB{\mathcal B}
\def\calD{\mathcal D}
\def\calE{\mathcal E}
\def\calG{\mathcal G}
\def\calI{\mathcal I}
\def\calK{\mathcal K}
\def\calM{\mathcal M}
\def\calO{\mathcal O}
\def\calS{\mathcal S}
\def\calU{\mathcal U}
\def\calW{\mathcal W}
\def\sfC{\mathsf{C}}
\def\sa{\mathsf{a}}
\def\sb{\mathsf{b}}
\def\mbB{\mathbb{B}}
\def\mbC{\mathbb{C}}
\def\mbS{\mathbb{S}}
\def\mbZ{\mathbb{Z}}
\newcommand{\vertiii}[1]{{\left\vert\kern-0.25ex\left\vert\kern-0.25ex\left\vert #1 
    \right\vert\kern-0.25ex\right\vert\kern-0.25ex\right\vert}}
\newcommand{\verti}[1]{{\left\vert\kern-0.4ex #1 
\kern-0.4ex\right\vert}}
\DeclareMathOperator*{\argmin}{argmin}
\DeclareMathOperator*{\argmax}{argmax}
\title{A spectral least-squares-type method for heavy-tailed corrupted regression with unknown covariance \& heterogeneous noise}
\date{}
\author{
Roberto I. Oliveira \and
Zoraida F. Rico \and
Philip Thompson \and
}
\newcommand{\Addresses}{{
  \bigskip
  \footnotesize

Roberto I.~Oliveira, \textsc{IMPA,
    Rio de Janeiro, RJ, Brazil}\par\nopagebreak
  \textit{E-mail address}: \texttt{rbimfo@impa.br} 

\and 

Zoraida F.~Rico, \textsc{Columbia University,
    New York, NY}\par\nopagebreak
  \textit{E-mail address}: \texttt{zoraida.f.rico@columbia.edu} 

\and

  P.~Thompson, \textsc{Krannert School of Management, Purdue University,
    West Lafayette, Indiana}\par\nopagebreak
  \textit{E-mail address}: \texttt{thompsp@purdue.edu}
%

}}
\begin{document}

\maketitle
\Addresses

\begin{abstract}
We revisit heavy-tailed corrupted least-squares linear regression assuming to have a  corrupted $n$-sized label-feature sample of at most $\epsilon n$ arbitrary outliers. We wish to estimate $\bb^*\in\re^p$ given such sample of a label-feature pair $(y,\bx)\in\re\times\re^p$ satisfying
$
y=\langle\bx,\bb^*\rangle+\xi,
$
with heavy-tailed $(\bx,\xi)$. We only assume $\bx$ is $L^4-L^2$ hypercontractive  with constant $L>0$ and has covariance matrix $\bfSigma$ with minimum eigenvalue 
$\nicefrac{1}{\mu^2(\mbB_2)}>0$ and bounded condition number $\kappa>0$. The noise $\xi\in\re$ can be arbitrarily dependent on $\bx$ and nonsymmetric as long as $\xi\bx$ has finite covariance matrix $\bfXi$. We propose a near-optimal computationally tractable estimator, based on the power method, assuming no knowledge on 
$(\bfSigma,\bfXi)$ nor the operator norm of $\bfXi$. With probability at least $1-\delta$, our proposed estimator attains the statistical rate 
$
\mu^2(\mbB_2)\Vert\bfXi\Vert^{1/2}(\frac{p}{n}+\frac{\log(1/\delta)}{n}+\epsilon)^{1/2}
$
and breakdown-point 
$
\epsilon\lesssim\frac{1}{L^4\kappa^2},
$
both optimal in the $\ell_2$-norm, assuming the near-optimal minimum sample size 
$
L^4\kappa^2(p\log p + \log(1/\delta))\lesssim n
$, up to a log factor. To the best of our knowledge, this is the first computationally tractable algorithm satisfying simultaneously all the mentioned properties. Our estimator is based on a two-stage Multiplicative Weight Update algorithm. The first stage estimates a descent direction $\hat\bv$ with respect to the (unknown) pre-conditioned inner product $\langle\bfSigma(\cdot),\cdot\rangle$. The second stage estimate the descent direction $\bfSigma\hat\bv$ with respect to the (known) inner product 
$\langle\cdot,\cdot\rangle$, without knowing nor estimating $\bfSigma$. 
\end{abstract}

\section{Introduction}

Least-squares regression is a fundamental problem in statistics and machine learning, either from a practical or theoretical standpoint. However, classical methodologies for this  problem assume the collected data is clean and light-tailed. Robust Statistics \cite{1964huber, 2005hampel:ronchetti:rousseeuw:stahel,  2006maronna:martin:yohai, 2011huber:ronchetti} aim in addressing robust estimation when either the sample is  corrupted or the data generating distribution is too heavy-tailed. 

In recent work, the minimax optimality of several robust estimation problems have been attained \cite{2016chen:gao:ren,2018chen:gao:ren} and \cite{2020gao}. The construction of these estimators, however, is based on Tukey's depth, a hard computational problem in higher dimensions. Fundamental recent works \cite{2016diakonikolas:kane:karmalkar:price:stewart, 2016lai:rao:vempala} have proposed alternative estimators that are both computationally tractable and statistically (near) optimal. For instance, near optimal robust estimators for the mean of a high-dimensional vector can be computed in nearly-linear time \cite{2019cheng:diakonikolas:ge, 2019dong:hopkins:li, 2019depersin:lecue, 2020hopkins:li:zhang, 2020dalalyan:minasyan}.  We refer to  \cite{2019diakonikolas:kane,2019lugosi:mendelson-survey} for extensive surveys. 

In this work, we revisit the problem of \emph{heavy-tailed} least-squares regression assuming to have an \emph{adversarially corrupted} $n$-sized sample. Here, ``adversarial'' means that the sample, corrupted in both labels and features, has at most $\epsilon n$ arbitrary outliers for some contamination fraction 
$\epsilon\in(0,1/2)$. In particular, the adversary mechanism can depend on the (unobserved) clean iid sample. The goal of this paper is to establish near-optimal statistical rates for this problem with a computationally tractable estimator and minimal assumptions. Precisely, our main result can be resumed as follows:
\begin{itemize}
\item[\rm a)] \emph{Optimality in high-probability}. We assume that the feature vector $\bx$ has finite covariance matrix $\bfSigma$, with minimum eigenvalue $1/\mu^2(\mbB_2)>0$, maximum eigenvalue $\Vert\bfSigma\Vert$ and condition number $\kappa<\infty$, and satisfy the $L^4-L^2$ hypercontractive property with constant $L>0$. Moreover, the noise-feature multiplier vector $\xi\bx$ is assumed to have finite covariance matrix $\bfXi$, with maximum eigenvalue $\Vert\bfXi\Vert<\infty$. Under these standard assumptions, with probability at least $1-\delta$, our proposed estimator achieves the $\ell_2$-norm estimation rate $\mu^2(\mbB_2)\Vert\bfXi\Vert^{1/2}(\sqrt{p/n}+\sqrt{\log(1/\delta)/n}+\sqrt{\epsilon})$  with sample size of at least $n\ge C L^4\kappa^2(p\log p+\log(1/\delta))$ and contamination fraction of at most $\epsilon\le\frac{1}{CL^4\kappa^2}$. Here, $C>0$ is a absolute constant. The mentioned rate and cut-offs in $(n,\epsilon)$ are all optimal in $(n,p,\delta,\epsilon)$, including the constants $(\mu(\mbB_2),\Vert\bfSigma\Vert,\Vert\bfXi\Vert)$, up to a log factor. Using a ``least-squares methodology'', the statistical and optimization rates of our algorithm do not depend on the 
$\ell_2$-norm of the ground truth parameter, only on the condition numbers $(\mu^2(\mbB_2)\Vert\bfXi\Vert^{1/2},\kappa)$.
\item[\rm b)] \emph{Heterogeneous noise}. We are mainly concerned with the statistical learning framework over the linear class in mean least-square sense. In this set-up, the noise $\xi$ can be arbitrarily dependent on $\bx$ and does not need to be symmetric. 
\item[\rm c)] \emph{Tractability via spectral methods}. The seminal works \cite{2016diakonikolas:kane:karmalkar:price:stewart, 2016lai:rao:vempala} were the first to suggest that, to construct computationally tractable robust mean estimators, one must exploit the eigenstructure of the sample covariance matrix. Various approaches have been developed since then for robust linear regression. Some current approaches make use of significantly more time consuming approaches such as semi-definite programming (SDP) or sum-of-squares algorithms. Our estimator is computationally tractable by means of faster spectral methods \cite{2020lei:luh:venkat:zhang,2020depersin}. The main computational bottleneck is to run a logarithmic number of iterations of a Multiplicative Weight Update algorithm (MWU) \cite{2012karnin:liberty:lovett:schwartz:weinstein,2012arora:hazan:kale} in which every iteration requires to approximately solve a maximum eigenvalue problem. This can be done e.g. via a randomized power method. 
\item[\rm d)] \emph{Unknown covariances and noise level}. With a light-tailed iid clean sample, the least squares estimator is known to be optimal without knowing the covariance matrices $(\bfSigma, \bfXi)$ nor the noise variance $\sigma^2$. Likewise, our estimator satisfy (a)-(c) without the need to know $(\bfSigma, \bfXi)$ nor $\Vert\bfXi\Vert$.
\end{itemize}

To the best of our knowledge, as discussed next, we believe this is the first analysis with a computationally tractable algorithm, based on a least-squares methodology instead of a gradient estimation methodology, satisfying simultaneously all the mentioned properties.  

\subsection{Related work}

Outlier-robust linear regression has already been subject to a lot of research since the seminal work of Huber \cite{1964huber}. In the particular model of label contamination,  estimators based on Huber-type losses are optimal. Unlike the label-feature contamination model, optimal estimators for the label-contamination model can be tuned adaptively to $(\epsilon,\delta)$ and have $\kappa$-free breakdown points. Also, optimal estimators are asymptotically consistent in case the model is oblivious. See e.g. \cite{2019dalalyan:thompson,2020thompson} for an extensive review.  

The more general problem of label-feature corrupted linear regression has been previously considered in the works 	\cite{2019diakonikolas:kamath:kane:li:steinhardt:stewart, 2019diakonikolas:kong:stewart, 2018prasad:suggala:balakrishnan:ravikumar, 2020zhu:jiao:steinhardt, 2021bakshi:prasad, 
2020depersin, 2020cherapanamjeri:aras:tripuraneni:jordan:flammarion:bartlett,
2020pensia:jog:loh, 
2021jambulapati:li:schramm:tian}. \cite{2019diakonikolas:kamath:kane:li:steinhardt:stewart, 2019diakonikolas:kong:stewart, 2018prasad:suggala:balakrishnan:ravikumar} focused on the subgaussian corrupted model. \cite{2020zhu:jiao:steinhardt,2021bakshi:prasad} considered assumptions and algorithmic approaches that are statistically optimal with polynomial time complexity. Still, they require more restrictively sample complexity and distribution assumptions. For instance, \cite{2021bakshi:prasad} is based on sum-of-squares methodology which is more time consuming than spectral methods. \cite{2020depersin,2020pensia:jog:loh}, as this work, are based on a least-squares methodology, but they require full knowledge of the feature covariance matrix $\bfSigma$.  \cite{2020cherapanamjeri:aras:tripuraneni:jordan:flammarion:bartlett, 2021jambulapati:li:schramm:tian} do not require knowledge of  
$\bfSigma$ but their estimators, like \cite{2019diakonikolas:kong:stewart, 2018prasad:suggala:balakrishnan:ravikumar}, follow a different approach, based on robust gradient estimation. \cite{2020cherapanamjeri:aras:tripuraneni:jordan:flammarion:bartlett} is based on SDP, a more time consuming approach. Also, their optimization complexity depends, unlike this work, on the $\ell_2$-norm of the ground truth parameter and they assume independence between noise and feature vector. \cite{2021jambulapati:li:schramm:tian}, like this work, are constructed with spectral methods. Still, they assume independence between noise and the feature vector, require knowledge of the noise variance $\sigma^2$ and their complexity depend on the $\ell_2$-distance between the ground truth and the initial estimate. \cite{2020cherapanamjeri:aras:tripuraneni:jordan:flammarion:bartlett, 2021jambulapati:li:schramm:tian} are not concerned with optimality with respect to $\delta$. Also, their minimal requirement on the sample size is of order $\tilde\calO(p)/\epsilon\lesssim n$; we require $\tilde\calO(p)+\log(1/\delta)\lesssim n$, independently of $\epsilon$. 	

It is instructive to conclude this section with a discussion between two methodologies used in heavy-tailed corrupted estimation. In a nutshell, computationally tractable estimators for this problem are based on two frameworks. The first solve (approximately) the semi-definite programming given a set of points $\{\bz_i\}_{i=1}^n$ and $k\in\{1,\ldots,n\}$:
\begin{align}
\min_{w\in\Delta_{n,k}}\sup_{\bv\in\mbB_2}\sum_{i=1}^nw_i\bz_i\bz_i^\top,\label{problem:above}
\end{align}
where $\Delta_{n,k}:=\{w\in\re^n:w_i\ge0,\sum_{i=1}^nw_i=1,w_i\le\frac{1}{n-k}\}$ and $\mbB_2$ is the Euclidean unit ball. This is the approach followed by \cite{2019dong:hopkins:li, 2019depersin:lecue, 2020hopkins:li:zhang,  2020cherapanamjeri:aras:tripuraneni:jordan:flammarion:bartlett,2021jambulapati:li:schramm:tian}. A complementary approach, initiated for robust mean estimation in \cite{2020hopkins,2020lei:luh:venkat:zhang}, is to consider tractable relaxations of the combinatorial problem  
\begin{maxi}
  {(\theta,\bv,\bq)\in\re\times \mbB_2\times\re^K}{\theta}{}{}{}
  \addConstraint{\bq_i|\langle\bz_i,\bv\rangle|}{\ge\bq_i\theta,}{i=0,\ldots K}
  \addConstraint{\sum_{i=1}^\calK\bq_i}{> K-k}
  \addConstraint{\bq_i}{\in\{0,1\},\quad}{i=0,\ldots K.}
  \label{problem:below}
 \end{maxi}
Here, $\{\bz_i\}_{i=1}^K$ are initially pre-processed from data using a Median-of-Means framework. See Section \ref{ss:benchmark:problem} and \cite{2020hopkins, 2020lei:luh:venkat:zhang} for further discussion on the motivation for why studying this problem.  Most closely to our work are \cite{2020lei:luh:venkat:zhang,2020depersin}. \cite{2020lei:luh:venkat:zhang} is focused on robust mean estimation.  One important difference between robust mean estimation and linear regression is that, unlike approaches based on \eqref{problem:above}, methods based on \eqref{problem:below} do not require high-probability concentration bounds for the 4th order tensor. See \cite{2020cherapanamjeri:aras:tripuraneni:jordan:flammarion:bartlett} for further discussion on this issue. To the best of our knowledge, \cite{2020depersin} is the first work aiming in generalizing the approach of \cite{2020hopkins, 2020lei:luh:venkat:zhang} to heavy-tailed corrupted linear regression. Still, one important limitation is that \cite{2020depersin} requires full knowledge of the covariance matrix 
$\bfSigma$; this explains why their rate is independent of the condition number $\kappa$. \cite{2020depersin} requires the number of buckets $K$ to be depend on the dimension $p$ while our estimator uses $K$ independent of $p$. One key development in our analysis is to show that a \emph{two-stage algorithm} based on the MWU algorithm is enough to avoid the need of the knowledge of $\bfSigma$. The first stage estimates a descent direction $\hat\bv$ with respect to the (unknown) pre-conditioned inner product $\langle\bfSigma(\cdot),\cdot\rangle$. The second stage estimate the descent direction $\bfSigma\hat\bv$ with respect to the (known) inner product 
$\langle\cdot,\cdot\rangle$, without knowing nor estimating $\bfSigma$. 

Finally, one important difference between robust mean estimation and linear regression concerns the initialization. For instance, the easy to compute coordinate-wise median turns out to be sufficient for the initialization of robust mean estimators \cite{2019depersin:lecue, 2020dalalyan:minasyan}. One the other hand, there is no coordinate-wise median counterpart for robust linear regression. To the best of our knowledge, the properties needed for the initialization in \cite{2020depersin}, the most close to our work, are assumed a priori without formal derivation. We formalize guarantees for the initialization of robust linear regression based on Median-of-Least-Squares (MLS) estimators. In that regard, unlike assuming a priori invertability assumptions for the bucket design matrices as in  \cite{2015minsker} or Srivastava-Vershynin condition, a stronger assumption than hypercontractivity, as in \cite{2016hsu:sabato}, we derive sufficient lower bounds in high-probability for MLS estimators with tight dependence on $(K,\delta)$ assuming only hypercontractivity. See Proposition \ref{prop:quad:proc:block:lower:heavy} in Section \ref{ss:bounds:liner:regression}. This is a analog for MLS estimators of the PAC-Bayesian tool developed in \cite{2016oliveira}. We also remark that our initialization is adaptive to any of the parameters 
$(\mu^2(\mbB_2),\Vert\bfSigma\Vert,\sigma^2,\Vert\bfXi\Vert)$. 

We conclude with a minor observation regarding randomized rounding, a needed tool in most of the literature. Our rounding scheme is based on the spherical distribution instead of the Gaussian distribution. This somewhat  simplifies the rounding analysis in \cite{2019depersin:lecue,2020lei:luh:venkat:zhang} and it also seems to imply a larger confidence interval. See Proposition \ref{prop:unif:round}. 

\subsection{Framework}
\label{s:framework}

Let $(y,\bx)\in\re\times\re^p$ be a label-feature pair with centered feature $\bx$. Within a statistical learning framework, we wish to explain $y$ trough $\bx$ via the linear class 
$
F_1(\re^p):=\{\langle\cdot,\bb\rangle:\bb\in\re^p\}.
$ 
Precisely, giving a sample of $(y,\bx)$, we wish to estimate 
\begin{align}
\bb^*\in\argmin_{\bb\in\re^p}\esp\left(y-\langle\bx,\bb\rangle\right)^2.\label{equation:least-squares:regression}
\end{align}
In particular, one has
$
y=\langle\bx,\bb^*\rangle+\xi
$
with $\xi\in\re$ having zero mean and satisfying 
$\esp[\xi\bx]=0$. This is our only assumption on $\xi$:  we do not assume $\xi$ and $\bx$ are independent nor that $\xi$ is symmetric. 

\begin{assumption}[Heavy-tails]\label{assump:L4:L2}
Assume:
\begin{itemize}
\item The feature vector $\bx$ has distribution $\Pi$ and unknown finite non-singular covariance matrix 
$
\bfSigma:=\esp[\bx\bx^\top]
$
with known maximum eigenvalue $\Vert\bfSigma\Vert<\infty$ and minimum eigenvalue $\frac{1}{\mu^2(\mbB_2)}>0$. Moreover, $\bx$ satisfies the $L^{4}-L^2$ norm equivalence condition with unknown constant $L>0$: for all $\bv\in\mbB_2$, 
\begin{align}
\left\{\esp|\langle\bz,\bv\rangle|^{4}\right\}^{1/4}
\le L \left\{\esp|\langle\bz,\bv\rangle|^{2}\right\}^{1/2}.
\end{align}
\item The centered noise $\xi$ satisfy 
$\esp[\xi\bx]=0$,  has finite variance 
and the multiplier vector $\xi\bx$ has unknown finite population covariance matrix
$
\bfXi:=\esp[\xi^2\bx\bx^\top]
$
with unknown maximum eigenvalue $\Vert\bfXi\Vert$. 
\end{itemize}
\end{assumption} 
$L^{4}-L^2$ norm equivalence is also known as \emph{bounded 4th moment}, \emph{bounded kurtosis} or \emph{hypercontractivity} conditions \cite{2020mendelson:zhivotovskiy}.

We consider available a label-feature sample with adversarial contamination.
\begin{assumption}[Label-feature adversarial contamination]\label{assump:contamination}
The contamination fraction will be denoted by 
$\epsilon:=\frac{o}{n}$. This means that it is available a label-feature sample $\{(\tilde y_\ell,\tilde\bx_\ell)\}_{\ell=1}^{2n}$ having an arbitrary subset of exactly $o$ data points differing from the label-feature sample $\{(y_\ell,\bx_\ell)\}_{\ell=1}^{2n}$ which is an independent iid copy of $(y,\bx)\in\re\times\re^p$. We use the notations  
$\xi_\ell:=y_\ell-\langle\bx_\ell,\bb^*\rangle$ and 
$
\tilde\xi_\ell:=\tilde y_\ell-\langle\tilde\bx_\ell,\bb^*\rangle
$
for all $\ell\in[n]$. 
\end{assumption}

We remark that assuming knowledge of 
$(\Vert\bfSigma\Vert,\mu^{-2}(\mbB_2))$ is not restrictive in the heavy-tailed corrupted model of Assumption \ref{assump:contamination}. Using a separate batch of the corrupted sample, there exist tractable robust estimators 
$(\hat\lambda_{\max},\hat\lambda_{\min})$ satisfying, with high-probability,  
$\sa_1\Vert\bfSigma\Vert\le\hat\lambda_{\max}\le\sa_2\Vert\bfSigma\Vert$
and 
$\sa_3\mu^{-2}(\mbB_2)\le\hat\lambda_{\min}\le\sa_4\mu^{-2}(\mbB_2)$
for positive constants $(\sa_1,\sa_2,\sa_3,\sa_4)$. See for instance \cite{2020mendelson:zhivotovskiy}. Using those estimates in our algorithms entail the same rate of Theorem \ref{thm:main} up to changes in absolute constants. 

Next, we formally state our main result. Its full derivation requires several intermediate results developed in the next sections. 

\begin{theorem}\label{thm:main}
Grant Assumptions \ref{assump:L4:L2} and \ref{assump:contamination}. Assume that 
$\max\{1,\Vert\bfXi\Vert\}<\gamma\zeta_0$ for some known
$\gamma\in(0,1)$ and $\zeta_0>0$; let $M:=\lceil\log_{\gamma^{-1}}(\zeta_0)\rceil$.

Then, given any desired probability of failure 
$\delta_0\in(0,1)$, if the sample size and contamination fraction satisfy
\begin{align}
n&\ge(\sfC L^4\kappa^2p\log p)\bigvee(\sfC L^4\kappa^2\log(\nicefrac{\sfC M}{\delta_0})),\\
\epsilon&\le\frac{1}{\sfC L^4\kappa^2}, 
\end{align}
there exists a computationally efficient algorithm (namely $\Algorithm$ \ref{algo:robust:regression:main:adaptive} in Section \ref{ss:adaptive:algo}) with inputs $\{(\tilde y_\ell,\tilde\bx_\ell)\}_{\ell=1}^n\cup\{(\tilde y_\ell,\tilde\bx_\ell)\}_{\ell=n+1}^{2n}$, $\mu^2(\mbB_2)$, 
$\Vert\bfSigma\Vert$, $M$ and $\delta_0$ and $\epsilon$ satisfying, with probability at least $1-\delta_0$, 
\begin{align}
\Vert\hat\bb-\bb^*\Vert_2&\lesssim
\mu^2(\mbB_2)\Vert\bfXi\Vert^{1/2}
\left(
\frac{p}{n}+\frac{1+\log(M)+\log(1/\delta_0)}{n}+\epsilon
\right)^{1/2}.
\end{align}
If one knows $\Vert\bfXi\Vert$, then we can take $M=1$ above. 
\end{theorem}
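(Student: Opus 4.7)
The plan is to reduce the adaptive setting to the fixed-$\|\bfXi\|$ setting and then analyze a base procedure built from initialization plus iterative refinement. Since $\|\bfXi\|$ is unknown but bounded by $\gamma\zeta_0$, the adaptive $\Algorithm$~\ref{algo:robust:regression:main:adaptive} runs the base algorithm on a geometric grid of $M=\lceil\log_{\gamma^{-1}}\zeta_0\rceil$ guesses $\zeta_j=\gamma^{j}\zeta_0$; at least one guess lies in $[\|\bfXi\|,\gamma^{-1}\|\bfXi\|]$, so the associated candidate $\hat\bb_j$ achieves the target rate scaled to $\zeta_j\asymp\|\bfXi\|$. I would then combine these candidates with a robust selector (for instance, one comparing pairwise distances on the independent second batch $\{(\tilde y_\ell,\tilde\bx_\ell)\}_{\ell=n+1}^{2n}$) to produce $\hat\bb$ within a constant factor of the best candidate, paying only a union bound over the $M$ runs in the failure probability (hence the $\log M$ term in the rate).

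Conditional on a correct guess, I would establish the base bound in two steps. First, an initialization via Median-of-Least-Squares over Median-of-Means buckets yields a point $\hat\bb_0$ whose $\bfSigma$-weighted error is controlled up to constants by the target rate; here Proposition~\ref{prop:quad:proc:block:lower:heavy} is the workhorse, supplying a high-probability lower bound on bucketed quadratic forms of $\bx$ under the $L^{4}$--$L^{2}$ hypercontractive assumption with tight $(K,\delta)$ dependence. Second, iterating the two-stage MWU procedure $O(\log n)$ times geometrically contracts the error toward a noise floor proportional to the statistical rate, and summing the contraction yields the final bound. The breakdown-point requirement $\epsilon\lesssim 1/(L^{4}\kappa^{2})$ and the sample-size requirement $L^{4}\kappa^{2}(p\log p+\log(1/\delta_0))\lesssim n$ then appear as exactly the conditions needed for one outer iteration to be a genuine contraction, with high probability, rather than an expansion.

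The main obstacle is the two-stage MWU itself, which must avoid any reference to the unknown $\bfSigma$. In Stage~1, MWU is applied to the relaxed benchmark problem \eqref{problem:below} (over MoM-bucketed residual-multiplier vectors based on $\hat\bb_t$) to produce a descent direction $\hat\bv$ such that $\langle\bfSigma(\hat\bb_t-\bb^*),\hat\bv\rangle$ is comparable to $\|\bfSigma^{1/2}(\hat\bb_t-\bb^*)\|_{2}^{2}$; each MWU step reduces to an approximate top-eigenvalue problem on bucketed empirical covariances, solvable by randomized power method without access to $\bfSigma$. Stage~2 then runs a second MWU, now in the Euclidean geometry, to robustly estimate $\bfSigma\hat\bv=\esp[\bx\langle\bx,\hat\bv\rangle]$ from the bucketed products $\{\bx_i\langle\bx_i,\hat\bv\rangle\}$, again using only approximate top-eigenvalue computations, so that the update $\hat\bb_{t+1}=\hat\bb_t-\eta_t\,\widehat{\bfSigma\hat\bv}$ is formed without ever estimating $\bfSigma$ itself. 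Each stage contributes a factor $\mu(\mbB_2)$ when translating between the $\bfSigma$- and $\ell_2$-geometries, which accounts for the $\mu^{2}(\mbB_2)$ multiplier in the final rate. The delicate technical content is uniform-in-$\hat\bv$ concentration of these bucketed matrices under heavy-tailed features and adversarial contamination, supplied by hypercontractivity together with the spherical rounding scheme of Proposition~\ref{prop:unif:round}; combining these pieces with the geometric contraction then closes the proof.
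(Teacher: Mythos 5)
Your high-level picture is largely faithful to the paper: initialize by a Median-of-Least-Squares over bucketed least-squares fits (with Proposition~\ref{prop:quad:proc:block:lower:heavy} guaranteeing invertibility of the bucket design), then run a two-stage MWU loop (Stage~1 over the residual--multiplier vectors $\tilde\bz_i(\bb)$ for a $\Pi$-descent direction $\hat\bv$, Stage~2 over the products $\langle\tilde\bx_\ell,\hat\bv\rangle\tilde\bx_\ell$ to estimate $\bfSigma\hat\bv$ in Euclidean geometry), each MWU step solved by approximate top-eigenvector, rounded spherically; finally adapt over a geometric grid in $\zeta$. Two points, however, diverge from the paper.

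First, the adaptation step. The paper does not build a robust selector on a holdout: it uses Lepski's method on the grid $\zeta_\ell=\gamma^\ell\zeta_0$, selecting $\hat\ell:=\max\{\ell:\bigcap_{j\le\ell}\mbB_2(\hat\bb(\zeta_j),R(\zeta_j))\neq\emptyset\}$, where $R(\zeta)$ is the \emph{known}, monotone-in-$\zeta$ error functional defined in Section~\ref{ss:adaptive:algo}. Because $\bb^*$ belongs to all balls with index $\le\ell^*$ w.h.p., the triangle inequality gives $\|\hat\bb-\bb^*\|_2\le3R(\zeta_{\ell^*})\le3R(\|\bfXi\|/\gamma)$ with no additional data. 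Your tournament-style selector (pairwise comparisons on a holdout) is a valid alternative route, but it costs you an extra sample split --- the second batch $\{(\tilde y_\ell,\tilde\bx_\ell)\}_{\ell=n+1}^{2n}$ is already consumed by the pruning step inside $\Pruning$, so you would need a third independent block --- and it costs you a nontrivial lemma showing that a robust comparison of prediction errors on heavy-tailed, contaminated data recovers (up to constants) the best candidate in $\ell_2$; Lepski sidesteps both because $R(\zeta)$ is explicitly computable.

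Second, a substantive inaccuracy in the base argument: the initialization $\tilde\bb^{(0)}$ is \emph{not} within a constant factor of the target rate. Lemma~\ref{lemma:high:prob:pruned:data} only guarantees $\|\tilde\bb^{(0)}-\bb^*\|_2\le3r$ with $r\asymp\mu^2(\mbB_2)\sqrt{K\,\tr(\bfXi)/n}$, which is a factor up to $\sqrt{pK/(p+K)}$ larger than the target $\mu^2(\mbB_2)\sqrt{\|\bfXi\|(p+K)/n}$. The purpose of the $T_1\gtrsim\kappa^4\log(pK/(p+K))$ iterations of $\RobustRegression$ is precisely to contract this polynomially larger initial error down to the statistical floor, as encoded in the second term $3r(\zeta)e^{-\sa T_1/(8c_*)}$ of $R(\zeta)$. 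If the initialization were already at the target rate, the MWU loop would be unnecessary, which it is not; this is worth correcting since it affects how you would tune $T_1$.
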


Throughout the paper we will denote the $\ell$th  residual at $\bb\in\re^p$ by
$
\tilde\xi_\ell(\bb):=\tilde y_\ell-\langle\tilde\bx_\ell,\bb\rangle.
$
Recall the bilinear form
$
\langle\bv,\bw\rangle_\Pi:=\esp[\langle\bx,\bv\rangle\langle\bx,\bw\rangle], 
$
and the $L^2(\Pi)$ pseudo-norm 
$
\Vert\bv\Vert_{\Pi}:=\sqrt{\langle\bv,\bv\rangle_\Pi^2}.
$
Given a cone $\mbC\subset\re^p$, we define the restricted eigenvalue constant 
$
\mu(\mbC):=\sup_{\bv\in\mbC}\frac{\Vert\bv\Vert_2}{\Vert\bv\Vert_\Pi}. 
$

\section{Notation}

\subsection{Basic notation}

Let $[n]:=\{1,\ldots,n\}$. Denote by $\Delta_n$ the $n$-dimensional simplex and, for given $k\in[n-1]$, 
\begin{align}
    \Delta_{n,k}:=\left\{w\in\Delta_n:w_i\le \frac{1}{n-k}\right\}.
\end{align}
We denote by $\KL(p\Vert q)$ the Kullback-Leibler divergence between two distributions in $\Delta_n$. 

We'll write $a\lesssim b$ if $a\le C b$ for an absolute constant and say $a\asymp b$ if $a\lesssim b$ and $b\lesssim a$. We use the usual notations  $a_+:=\max\{0,a\}$, $a_-:=\max\{0,-a\}$, $a\vee b:=\max\{a,b\}$ and $a\wedge b:=\min\{a,b\}$. Given sequence $\{\sigma_i\}_{i=1}^m$ of numbers, 
$\sigma_1^*\le\ldots\le\sigma_m^*$ denotes its non-decreasing order while 
$\sigma_1^\sharp\ge\ldots\ge\sigma_m^\sharp$ denotes its non-increasing order. 
We denote the inner product by $\langle\cdot,\cdot\rangle$,  the $\ell^2$-norm by $\Vert\cdot\Vert_2$, the unit balls $\mbB_2:=\{\bv\in\re^p:\Vert\bv\Vert_2\le1\}$, unit sphere $\mbS_2:=\{\bv\in\re^p:\Vert\bv\Vert_2=1\}$, the $\ell_2$-norm ball with center $a$ and radius $r$ by $\mbB_2(a,r)$.

The canonical basis in $\re^p$ will be denoted by $\{\be_1,\ldots,\be_p\}$ and 
$\bfI_p$ denotes the identity matrix. Given non-zero matrix $\bfA$, we denote its trace by 
$\tr(\bfA)$, its operator norm by 
$\Vert\bfA\Vert$ and its Frobenius norm by $\Vert\bfA\Vert_F$. We also define $\rho_{\bfA}^2=\Vert\diag(\bfA)\Vert_\infty$. The standard inner product on 
$\re^{p\times p}$ will be denote by 
$\llangle\bfA,\bfB\rrangle:=\tr(\bfA^\top\bfB)$, where 
$\top$ the transpose operation. Given two vectors $\bv,\bu\in\re^p$, we let $\bv\otimes\bu:=\bv\bu^\top$. We use the notation $\bfM\succeq\bf0$ for a semi-positive definite symmetric matrix $\bfM\in\re^{p\times p}$. Also, its associated bilinear form and pseudo-norm will be denoted respectively by 
$\langle\bu,\bv\rangle_\bfM:=\langle\bfM\bu,\bv\rangle$ and 
$\Vert\bu\Vert_\bfM:=\Vert\bfM^{1/2}\bu\Vert$. We define the set of matrices $\calM(\calB)$ for some compact convex set $\calB\subset\re^p$ as the  convex hull of the set $\{\bv\bv^\top:\bv\in\calB\}$.

\subsection{Some probabilistic notions}\label{ss:prob:notions}

Let $X$ be a random variable with distribution $\probn$ taking values on a measurable set $\mbB$. We denote by $\{X_i\}_{i=1}^n$ an iid copy of $X$. We reserve the notation of $\{\epsilon_i\}_{i\in[n]}$ to represent an iid sequence of Rademacher random variables. Given a class $F$ of integrable functions $F:\mbB\rightarrow\re$ with respect to $\probn$, the \emph{Rademacher complexity} of $F$ associated to $\{X_i\}_{i=1}^n$ is the quantity
$$
\mathscr {R}_{X,n}(F):=\esp\left[\sup_{f\in F}\sum_{i\in[n]}\epsilon_if(X_i)\right],
$$
where $\{\epsilon_i\}_{i\in[n]}$ is independent of $\{X_i\}_{i=1}^n$. A related quantity is 
$$
\mathscr {D}_{X,n}(F):=\esp\left[\sup_{f\in F}\left|\sum_{i\in[n]}(f(X_i)-\esp[f(X_i))]\right|\right],
$$
noting that, by symmetrization, $\mathscr {D}_{X,n}(F)\asymp \mathscr {R}_{X,n}(F)$. We sometimes use the notation $\probn f:=\esp[f(X)]$. The \emph{wimpy variance} of the class $F$ associated to $X$ is the quantity
\begin{align}
\sigma_{X}^2(F):=\sup_{f\in F}\esp[(f(X)-\probn f)^2].
\end{align}

Let $\mbB$ and $\mbB'$ be compact subsets of $\re^p$. Typical classes we will use are the ``linear classes'' $F_1(\mbB):=\{\langle\cdot,\bv\rangle:\bv\in\mbB\}$ and $|F_1|(\mbB):=\{|\langle\cdot,\bv\rangle|:\bv\in\mbB\}$, the ``quadratic class''  $F_2(\mbB):=\{\langle\cdot,\bv\rangle^2:\bv\in\mbB\}$ and the ``product class'' $F(\mbB,\mbB'):=\{\langle\cdot,\bu\rangle\langle\cdot,\bv\rangle:\bu\in\mbB,\bv\in\mbB'\}$.

Letting $\bz\in\re^p$ be a centered random vector with distribution $\probn$, we define the bilinear form 
$$
\langle\bu,\bv\rangle_\probn:=\esp[\langle\bz,\bu\rangle\langle\bz,\bv\rangle],
$$
and the $L^2(\probn)$ pseudo-norm 
$
\Vert\bv\Vert_{\probn}:=\sqrt{\esp[\langle\bz,\bv\rangle^2]}.
$ 
We will also define the unit ellipsoid 
$\mbB_\probn:=\{\bv\in\re^p:\Vert\bv\Vert_\probn\le1\}$ with border 
$\mbS_\probn:=\{\bv\in\re^p:\Vert\bv\Vert_\probn=1\}$. 
Let $\mbB$ be a compact subset of $\re^p$. The \emph{wimpy variance} of $\mbB$ associated to the distribution of $\bz$ is the quantity
$
\sigma_{\bz}^2(\mbB):=\sigma_{\bz}^2(F_1(\mbB)).
$

\section{Concentration bounds}

\begin{tcolorbox}
In all this section, $X$ is a random variable taking values on some set $\mbB$ with distribution $\probn$ and 
$\{X_i\}_{i=1}^n$ an iid copy of $X$. We also split the sample $\{X_i\}_{i=1}^n$ into $K$ blocks of equal size $B:=n/K$ indexed by the partition 
$\bigcup_{k\in [K]}B_k=[n]$. When $\mbB=\re$, $\mu$ and  
$\sigma_X^2<\infty$ will denote the mean and variance of $X$ respectively and  let $\overline X:= X - \mu$. When $\mbB=\re^p$, $X=\bz$ will be a centered $p$-dimensional random vector with covariance matrix $\bfSigma$  and $\{X_i=\bz_i\}_{i=1}^n$ is an iid copy of $\bz$. 
\end{tcolorbox} 

\subsection{Some general bounds}

We define, for any $\eta\in[0,1]$, its $\eta$-quantile by $Q_{\overline X,\eta}$, that is, 
\begin{align}
Q_{\overline X,\eta}:=\sup\left\{x\in\re:\prob(\overline X\ge x)\ge1-\eta\right\}.
\end{align} 
We will assume without loss on generality that 
$X$ is continuous. In particular,
$
\prob(\overline X\ge Q_{\overline X,\eta})=1-\eta.
$

The following result follows from the proofs in \cite{2019lugosi:mendelson-trim}. We give a proof for completeness in the Appendix. 
\begin{lemma}\label{lemma:count:combinatorial:variable:v2}
Let $\eta\in(0,1/2]$. Then, setting $Q:=Q_{\overline X,1-\eta/2}$, with probability at least $1-\exp(-\eta n/1.8)$,
\begin{align}
\sum_{i=1}^n\unit_{\{\overline X_i \le Q\}}\ge (1-0.75\eta) n.
\end{align}
\end{lemma}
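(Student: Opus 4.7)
The plan is to recast the bound as an upper-tail estimate for a binomial sum and then invoke a sharp Chernoff-type inequality.

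First, set $B_i := \unit_{\{\overline{X}_i > Q\}}$ for $i \in [n]$. By the continuity assumption on $X$ recorded just above the lemma and the definition $Q = Q_{\overline{X},\,1-\eta/2}$, one has $\prob(\overline{X} > Q) = \eta/2$, so the $B_i$ are i.i.d.\ $\text{Bernoulli}(\eta/2)$. Since $\sum_{i=1}^n \unit_{\{\overline{X}_i \le Q\}} = n - \sum_{i=1}^n B_i$, proving the lemma is equivalent to showing
\[
\prob\Bigl(\sum_{i=1}^n B_i > 0.75\,\eta\, n\Bigr) \le \exp(-\eta n/1.8).
\]

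Next, $S := \sum_{i=1}^n B_i \sim \text{Bin}(n, \eta/2)$ has mean $\mu = \eta n/2$, and the threshold $0.75\,\eta n = (3/2)\mu$ amounts to a $50\%$ relative deviation above the mean. I would apply the multiplicative Chernoff bound: by Markov's inequality and independence, for any $\lambda > 0$,
\[
\prob(S \ge m) \le e^{-\lambda m}(1-p+p e^\lambda)^n,
\]
and optimizing $\lambda$ yields the sharp estimate $\prob(S \ge m) \le \exp(-n\,D(m/n \,\|\, p))$ with $D$ the binary Kullback--Leibler divergence. It is important to use this Chernoff form rather than Hoeffding's inequality, since the latter would only produce the weaker $\exp(-c\eta^2 n)$ and miss the linear-in-$\eta$ rate in the exponent.

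Finally, substituting $m/n = 3\eta/4$ and $p = \eta/2$, the remaining task is the numerical verification that $D(3\eta/4\,\|\,\eta/2)$ is bounded below by the claimed constant uniformly over $\eta \in (0,1/2]$. This follows by a routine monotonicity/calculus argument applied to $\eta \mapsto D(3\eta/4 \,\|\, \eta/2)/\eta$ on $(0,1/2]$; it is the only step requiring care with explicit constants and is the main---though modest---obstacle. The rest of the argument is entirely standard and mirrors the reasoning of \cite{2019lugosi:mendelson-trim}.
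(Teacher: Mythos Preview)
Your reduction to a binomial upper tail is the same as the paper's, but the concentration tool differs. The paper applies the one-sided Bernstein inequality directly to the indicators $\unit_{\{\overline X_i>Q\}}$, obtaining
\[
\frac{1}{n}\sum_{i=1}^n\unit_{\{\overline X_i>Q\}}\le \frac{\eta}{2}+\sqrt{\frac{\eta}{2}}\,\sqrt{\frac{2t}{n}}+\frac{t}{3n}
\]
with probability at least $1-e^{-t}$, and then takes $t=c^2\eta n$. Your Chernoff/KL bound is at least as tight---Bernstein is a relaxation of the Cram\'er--Chernoff bound for bounded summands---so anything the paper's argument yields, yours does too, and the choice is largely a matter of taste.

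That said, your final ``routine calculus'' step will not produce the constant $1/1.8$. A direct expansion gives
\[
\lim_{\eta\to 0^+}\frac{D(3\eta/4\,\Vert\,\eta/2)}{\eta}=\frac{3}{4}\ln\frac{3}{2}-\frac{1}{4}\approx 0.054,
\]
an order of magnitude below $1/1.8\approx 0.556$, so the claimed uniform lower bound on $(0,1/2]$ is false. This is not a flaw in your method: the paper's own proof has the same defect, since the constraint $\tfrac12+c+\tfrac{c^2}{3}\le\tfrac34$ forces $c\lesssim 0.23$ and hence $c^2\lesssim 0.053$, so the paper's ``e.g., $c^2=0.563$'' is an arithmetic slip. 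Both arguments correctly establish the lemma with an exponent of order $\eta n/18$ rather than $\eta n/1.8$, and that weaker constant suffices everywhere the lemma is invoked downstream.
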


Let $X$ be real valued and 
$
\hat\mu_k:= \frac{1}{B}\sum_{\ell\in B_k}X_\ell.
$
We restate the following well-known bound for MOM of random variables \cite{2011lerasle:oliveira}. We give a proof in the Appendix for completeness. 
\begin{lemma}[Random variable]\label{lemma:count:block:variable}
Let $\alpha\in(0,1)$ and any constant $C_\alpha>0$ satisfying
$$
\frac{1}{3C_\alpha}+\frac{1}{C_\alpha^2}+\frac{\sqrt{2}}{C_\alpha^{2/3}}\le\alpha.
$$

Then, setting $r:=\sigma_X\sqrt{\frac{K}{n}}$, with probability at least $1-e^{-K/C_\alpha}$,
\begin{align}
\sum_{k=1}^K\unit_{\{
|\hat\mu_k-\esp|X||>C_\alpha\cdot r\}}\le \alpha K.
\end{align}
\end{lemma}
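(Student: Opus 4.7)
The plan is to follow the canonical two-step Median-of-Means concentration scheme: first a per-block Chebyshev bound, then a Bernstein-type concentration inequality for the number of bad blocks.

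For the first step, observe that $\hat\mu_k$ averages $B = n/K$ i.i.d.\ copies of $X$, so $\mathrm{Var}(\hat\mu_k) = \sigma_X^2/B = r^2$. Chebyshev's inequality then yields
$$\prob\bigl(|\hat\mu_k - \esp X| > C_\alpha r\bigr) \;\le\; \frac{1}{C_\alpha^2}.$$
Defining $Z_k := \unit_{\{|\hat\mu_k - \esp X| > C_\alpha r\}}$, the $\{Z_k\}_{k=1}^K$ are independent $[0,1]$-valued Bernoullis with $\esp Z_k \le 1/C_\alpha^2$, so $S := \sum_k Z_k$ has $\esp S \le K/C_\alpha^2$ and $\mathrm{Var}(S) \le K/C_\alpha^2$. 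For the second step, I would apply Bernstein's inequality
$$\prob\bigl(S > \esp S + t\bigr) \;\le\; \exp\!\left(-\frac{t^2/2}{K/C_\alpha^2 + t/3}\right)$$
with the choice $t := \alpha K - K/C_\alpha^2$. The event $\{S \le \alpha K\}$ then holds with probability at least $1 - e^{-K/C_\alpha}$ whenever the Bernstein exponent is at least $K/C_\alpha$, which reduces to a quadratic inequality in $t$; bounding its positive root via $\sqrt{a+b} \le \sqrt{a} + \sqrt{b}$ produces a sufficient condition on $\alpha$ of the three-term shape displayed in the hypothesis: a term of order $1/C_\alpha$ from the sub-exponential denominator of Bernstein, a term of order $1/C_\alpha^2$ absorbing $\esp S$, and a fractional-power term coming from the variance contribution $\sqrt{2 v K/C_\alpha}$.

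The only real obstacle is the algebraic bookkeeping needed to match the exact numerical constants and the precise power of $C_\alpha$ appearing in the hypothesis, which may reflect a slightly different splitting or a Bennett-type refinement (one isolates the Gaussian piece controlled by $\sqrt{2 v \log(1/\delta)}$ with $\log(1/\delta) = K/C_\alpha$ and $v = K/C_\alpha^2$, and the sub-exponential piece proportional to $\log(1/\delta)/3$, and absorbs cross terms using AM--GM into the slack provided by the strict inequality). All of the probabilistic content is captured by the two steps above, and the target failure probability $e^{-K/C_\alpha}$ is produced directly as the chosen deviation level in the concentration bound of Step~2.
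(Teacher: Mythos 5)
Your proof follows the same two-step scheme as the paper's: a Chebyshev bound per block giving single-block failure probability $\le 1/C_\alpha^2$, followed by a one-sided Bernstein inequality applied to the sum of the Bernoulli indicators $\unit_{\{|\hat\mu_k - \esp X| > C_\alpha r\}}$. The only difference is a cosmetic reparametrization of Bernstein: the paper fixes the tail exponent $t = K/C_\alpha$ and then bounds the resulting right-hand side by $\alpha$ (producing exactly the three-term condition on $C_\alpha$), whereas you fix the deviation magnitude $\alpha K - K/C_\alpha^2$ and solve for the probability; these are equivalent inversions of the same inequality.
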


By now it is well-known that the previous lemma can be generalized for the Empirical Process over a general class $F$ of integrable functions $F:\mbB\rightarrow\re$ with respect to $\probn$. Define, for each $k\in[K]$ and $f\in F$, the block empirical mean
$
\hat\probn_{B_k}f:= \frac{1}{B}\sum_{\ell\in B_k}f(X_\ell).
$
For ease, we use the following notation. 
\begin{definition}
Let
\begin{align}
r_{X,n,K}(F):=
\frac{\mathscr{D}_{X,n}(F)}{n}\bigvee\sigma_{X}(F)\sqrt{\frac{K}{n}}.
\end{align}
\end{definition}

The following result is Lemma 1 in \cite{2019depersin:lecue}. 
\begin{lemma}[Empirical Process]\label{lemma:count:block:emp:process}
Let $\alpha\in(0,1)$ and any constant $C_\alpha>0$ satisfying
\begin{align}
\frac{28}{4C_\alpha}+\frac{4}{C_\alpha^2}+\frac{\sqrt{2}}{C_\alpha^{2/3}}\le\alpha.
\label{lemma:count:block:emp:process:alpha}
\end{align}

Then, setting $r:=r_{X,n,K}(F)$, with probability at least $1-e^{-K/C_\alpha}$,
\begin{align}
\sup_{f\in F}\sum_{k=1}^K\unit_{\{
|\hat\probn_{B_k}f-\probn f|>C_\alpha\cdot r\}}\le \alpha K.
\end{align}
\end{lemma}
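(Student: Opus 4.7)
My plan is the now-standard MOM empirical-process strategy (as in \cite{2019depersin:lecue}): smooth the block indicator into a Lipschitz majorant, control its expected supremum by symmetrization and contraction, and upgrade to high probability via a Talagrand-type inequality.

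\emph{Step 1: Smoothing and mean bound.} Introduce the piecewise-linear bump $\phi:[0,\infty)\to[0,1]$ equal to $0$ on $[0,C_\alpha r/2]$ and to $1$ on $[C_\alpha r,\infty)$, which is $(2/(C_\alpha r))$-Lipschitz. Set $W_k(f):=\phi(|\hat\probn_{B_k}f-\probn f|)$, so that $Z_k(f)\le W_k(f)\le \unit_{\{|\hat\probn_{B_k}f-\probn f|>C_\alpha r/2\}}$; it therefore suffices to bound $\sup_f\sum_k W_k(f)$. By Chebyshev and $r^2\ge \sigma_X^2(F)K/n=\sigma_X^2(F)/B$, the per-block mean satisfies $\esp W_k(f)\le 4/C_\alpha^2$, hence $\sup_f\sum_k\esp W_k(f)\le 4K/C_\alpha^2$.

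\emph{Step 2: Expected supremum.} For the centered part, block-level symmetrization gives a factor $2$, and Talagrand's contraction principle (applied to $x\mapsto\phi(|x|)$, which vanishes at $0$ with Lipschitz constant $2/(C_\alpha r)$) yields
$$\esp\sup_f\sum_k\bigl(W_k(f)-\esp W_k(f)\bigr) \le \frac{8}{C_\alpha r}\,\esp\sup_f\Bigl|\sum_k\epsilon_k\bigl(\hat\probn_{B_k}f-\probn f\bigr)\Bigr|.$$
Expanding $\hat\probn_{B_k}f-\probn f=B^{-1}\sum_{i\in B_k}(f(X_i)-\probn f)$ and performing a second symmetrization inside each block converts the block-level Rademacher average into a sample-level one, which by the standard equivalence $\mathscr{R}_{X,n}(F)\asymp\mathscr{D}_{X,n}(F)$ is bounded by a universal constant times $\mathscr{D}_{X,n}(F)$. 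Using $r\ge\mathscr{D}_{X,n}(F)/n$ and $BK=n$, this contribution is at most $7K/C_\alpha$ (after tracking constants), giving $\esp\sup_f\sum_k W_k(f)\le (\tfrac{4}{C_\alpha^2}+\tfrac{7}{C_\alpha})K$.

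\emph{Step 3: Concentration.} The supremum $Z:=\sup_f\sum_k W_k(f)$ depends on $K$ independent block samples, with each block contributing at most $1$; since $W_k\in[0,1]$ with $\esp W_k\le 4/C_\alpha^2$, the wimpy variance satisfies $\sup_f\sum_k\var W_k(f)\le 4K/C_\alpha^2$. Bousquet's version of Talagrand's inequality with failure exponent $K/C_\alpha$ then produces fluctuation of order $\sqrt{2\cdot(4K/C_\alpha^2)\cdot(K/C_\alpha)}+K/(3C_\alpha)\le \sqrt{2}\,K/C_\alpha^{2/3}$ in the admissible range of $C_\alpha$. Summing the three contributions and invoking \eqref{lemma:count:block:emp:process:alpha} yields $\sup_f\sum_k W_k(f)\le\alpha K$. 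The main obstacle is the block-to-sample Rademacher transfer in Step~2: a naive estimate loses a factor of $\sqrt{B}$, so a careful second symmetrization inside each block, together with the equivalence $\mathscr{R}_{X,n}(F)\asymp\mathscr{D}_{X,n}(F)$, is required to recover the correct sample-level Rademacher complexity $\mathscr{D}_{X,n}(F)$.
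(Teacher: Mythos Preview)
Your proposal is correct and follows essentially the same route as the paper: smooth the indicator by a $(2/(C_\alpha r))$-Lipschitz majorant, bound the mean via Chebyshev, control the centered supremum via symmetrization--contraction, and upgrade to high probability via Bousquet's inequality with $t=K/C_\alpha$. The only noteworthy difference is in the block-to-sample Rademacher transfer: the paper desymmetrizes the block-level Rademacher average directly, i.e.\ $\esp\sup_f|\sum_k\epsilon_k(\hat\probn_{B_k}f-\probn f)|\le 2\esp\sup_f|\sum_k(\hat\probn_{B_k}f-\probn f)|=(2K/n)\mathscr{D}_{X,n}(F)$, using the algebraic identity $\sum_k(\hat\probn_{B_k}f-\probn f)=(K/n)\sum_i(f(X_i)-\probn f)$; your ``second symmetrization inside each block'' (introducing per-sample signs $\epsilon_i'$ and noting $\epsilon_{k(i)}\epsilon_i'$ is again i.i.d.\ Rademacher) arrives at the same bound $\lesssim (K/n)\mathscr{R}_{X,n}(F)\asymp (K/n)\mathscr{D}_{X,n}(F)$ by a slightly different but equally valid device. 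Either way the feared $\sqrt{B}$ loss is avoided and the constants line up with \eqref{lemma:count:block:emp:process:alpha}.
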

For instance, when $F=F_1(\mbB_2)$ one has $r_{\bz,n,k}=2\sqrt{\frac{\tr(\bfSigma)}{n}}\vee\sqrt{\Vert\bfSigma\Vert\frac{K}{n}}$. When $F=F_1(\mbB_\probn)$ one has $r_{\bz,n,k}=2\sqrt{\frac{p}{n}}\vee\sqrt{\frac{K}{n}}$.

\subsection{Concentration bounds for linear regression}\label{ss:bounds:liner:regression}

For linear regression, we shall need the following bound for the Quadratic Process over the linear class. 
\begin{proposition}[Quadratic Process]\label{prop:quad:proc:block:upper:heavy}
Suppose $\bz\in\re^p$ satisfies the $L^4-L^2$ norm equivalence property with constant $L>0$. Let 
$\alpha\in(0,1)$ and constant $C_\alpha>0$ satisfying \eqref{lemma:count:block:emp:process:alpha}.
Let $C>0$ be an absolute constant in Lemma \ref{lemma:trunc:quad:rademacher:comp} in the Appendix. Set
\begin{align}
r_{n,K}:=C L^2\sqrt{\frac{p\log p}{n}}
\bigvee L^2\sqrt{\frac{K}{n}}.
\end{align}
Then, 
\begin{itemize}
\item[\rm (i)] \textbf{Upper bound}: given 
$\rho\in(0,1/2]$ and setting $C_\rho':=1+\sqrt{2/\rho}$, on an event of probability at least $1-e^{-\rho n/1.8}-e^{-K/C_\alpha}$, for all $\bu\in\mbS_\probn$, for at least $(1-(\alpha+0.75\rho))K$ of the blocks, 
\begin{align}
\frac{1}{B}\sum_{\ell\in B_k}\left(\langle\bz_\ell,\bu\rangle^2-1\right)&\le 
C_\alpha\left[
r_{n,K}\bigvee CC'_\rho\frac{p\log p}{n}
\right].
\end{align}
\item[\rm (ii)] \textbf{Lower bound}: for any $\theta>0$, on an event of probability at least $1-e^{-K/C_\alpha}$, for all $\bu\in\mbS_\probn$, for at least $(1-\alpha)K$ of the blocks, 
\begin{align}
\frac{1}{B}\sum_{\ell\in B_k}\left(
1-\frac{L^4}{\theta}
-\langle\bz_\ell,\bu\rangle^2
\right)&\le C_\alpha\left[
r_{n,K}\bigvee C\theta\frac{p\log p}{n}
\right].
\end{align}
\end{itemize} 
\end{proposition}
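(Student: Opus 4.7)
The plan is to truncate the heavy-tailed quadratic $\langle\bz,\bu\rangle^2$ at an appropriate level and reduce to the blockwise empirical-process bound of Lemma \ref{lemma:count:block:emp:process}, combined with the Rademacher-complexity estimate of Lemma \ref{lemma:trunc:quad:rademacher:comp}. For any $T>0$, set $\phi_T(x) := x^2 \wedge T^2$ and consider the class $F_T := \{\phi_T(\langle\cdot,\bu\rangle) : \bu\in\mbS_\probn\}$. Under Assumption \ref{assump:L4:L2}, the $L^4$--$L^2$ equivalence gives $\esp[\phi_T(\langle\bz,\bu\rangle)^2]\le \esp\langle\bz,\bu\rangle^4 \le L^4$, so the wimpy variance satisfies $\sigma_\bz(F_T) \le L^2$. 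Lemma \ref{lemma:trunc:quad:rademacher:comp} supplies a Rademacher bound of the schematic form $\mathscr{D}_{\bz,n}(F_T)/n \lesssim L^2\sqrt{p\log p/n} \vee C T^2 \cdot p\log p/n$, hence $r_{\bz,n,K}(F_T) \le r_{n,K} \vee C T^2\, p\log p/n$. Lemma \ref{lemma:count:block:emp:process} then controls $|\hat\probn_{B_k}f_\bu - \esp f_\bu|$ uniformly in $\bu\in\mbS_\probn$ outside $\alpha K$ blocks, on an event of probability at least $1-e^{-K/C_\alpha}$.

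Part (ii) is the easier piece. I would take $T=\sqrt{\theta}$ in the above, note that $\phi_T(\langle\bz,\bu\rangle) \le \langle\bz,\bu\rangle^2$ pointwise, and use Cauchy--Schwarz together with the tail bound $\prob(|\langle\bz,\bu\rangle|>T)\le L^4/\theta^2$ (from the $L^4$--$L^2$ equivalence) to obtain $\esp\phi_T(\langle\bz,\bu\rangle)\ge 1 - L^4/\theta$. Substituting,
\begin{align*}
1 - \tfrac{L^4}{\theta} - \langle\bz_\ell,\bu\rangle^2 \;\le\; \esp\phi_T(\langle\bz,\bu\rangle) - \phi_T(\langle\bz_\ell,\bu\rangle),
\end{align*}
so averaging over $\ell\in B_k$ and invoking the MOM deviation bound above with $T^2=\theta$ yields the stated estimate on at least $(1-\alpha)K$ blocks with the advertised probability.

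Part (i), with $T=L$, requires an additional excess-over-truncation argument. I would decompose
$$\langle\bz_\ell,\bu\rangle^2 - 1 \;=\; [\phi_L(\langle\bz_\ell,\bu\rangle) - \esp\phi_L] \;+\; [\esp\phi_L - 1] \;+\; (\langle\bz_\ell,\bu\rangle^2 - L^2)_+.$$
The first summand is handled by the MOM argument above with $T=L$, whose contribution is absorbed into $r_{n,K}$ in the stated regime; the middle term is non-positive since $\phi_L\le \langle\cdot,\bu\rangle^2$ and $\esp\langle\bz,\bu\rangle^2=1$. For the excess $(\langle\bz_\ell,\bu\rangle^2 - L^2)_+$, I would count the outsized points per block: the $L^4$--$L^2$ equivalence and Markov give $\prob(|\langle\bz,\bu\rangle|>t)\le L^4/t^4$, so the $(1-\rho/2)$-quantile of $\langle\bz,\bu\rangle^2$ is at most $L^2\sqrt{2/\rho}\asymp L^2 C'_\rho$, and Lemma \ref{lemma:count:combinatorial:variable:v2} bounds the number of points above this quantile by $0.75\rho n$ for each fixed $\bu$. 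Upgrading to uniformity over $\mbS_\probn$ via an $L^2(\probn)$-covering/chaining argument is what introduces the $p\log p$ factor, producing the $CC'_\rho\, p\log p/n$ correction on at least $(1-0.75\rho)K$ blocks with probability at least $1-e^{-\rho n/1.8}$. Intersecting the two events and summing bad-block counts yields the claim with at least $(1-(\alpha+0.75\rho))K$ good blocks.

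The main obstacle is the uniform control of the excess term in part (i): a single-$\bu$ count from Lemma \ref{lemma:count:combinatorial:variable:v2} is straightforward, but extending it to the whole ellipsoid $\mbS_\probn$ while preserving the rate requires a chaining argument whose cost is exactly the extra $p\log p$ factor. Part (ii) avoids this difficulty entirely, since the tail is absorbed into the deterministic offset $L^4/\theta$, and the $\theta$-linear dependence in the extra $\theta\, p\log p/n$ term comes directly from the truncation scale $T^2=\theta$ in the Rademacher complexity bound of Lemma \ref{lemma:trunc:quad:rademacher:comp}.
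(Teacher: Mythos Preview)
Your overall strategy---truncate and apply Lemma \ref{lemma:count:block:emp:process}---matches the paper's, but you truncate at the wrong level, and this creates a genuine gap in Part (i). You work with the \emph{scalar} truncation $\phi_T(\langle\bz,\bu\rangle)=\langle\bz,\bu\rangle^2\wedge T^2$; the paper instead truncates the \emph{vector norm}, setting $\bz^R:=(1\wedge R/\Vert\bz\Vert_2)\bz$ and using the class $F^R=\{\langle\bz^R,\bu\rangle^2:\bu\in\mbB_2\}$. This is exactly the class that Lemma \ref{lemma:trunc:quad:rademacher:comp} bounds (via matrix Bernstein applied to $\sum_i\epsilon_i\bz_i^R\otimes\bz_i^R$), so your invocation of that lemma for the scalar-truncated class $F_T$ is a mismatch---the lemma as stated says nothing about $F_T$.

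The decisive difference shows up in the uniformity step of Part (i). With scalar truncation the set $\{\ell:|\langle\bz_\ell,\bu\rangle|>T\}$ depends on $\bu$, so your single-$\bu$ application of Lemma \ref{lemma:count:combinatorial:variable:v2} does not upgrade to a uniform statement; the chaining you sketch would be on indicator functions, which are not Lipschitz in $\bu$, and there is no clear way to carry this out while preserving the $e^{-\rho n/1.8}$ probability. Vector-norm truncation sidesteps the issue entirely: the event $\{\Vert\bz_\ell\Vert_2\le R\}$ is $\bu$-independent, so a \emph{single} application of Lemma \ref{lemma:count:combinatorial:variable:v2} to the real variable $\Vert\bz\Vert_2$ (with $R=C'_\rho\sqrt{p}$, using $\esp\Vert\bz\Vert_2\le\sqrt{p}$) shows at least $(1-0.75\rho)n$ points satisfy $\bz_\ell^R=\bz_\ell$. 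On those points the truncated and untruncated block averages coincide for \emph{every} $\bu$ simultaneously, and one just intersects with the Lemma \ref{lemma:count:block:emp:process} event for $F^R$. The extra $CC'_\rho\,p\log p/n$ term then comes from the $R^2\log p/n$ piece of Lemma \ref{lemma:trunc:quad:rademacher:comp} at this choice of $R$, not from any chaining. Part (ii) follows the same template with $R=\sqrt{\theta p}$: one uses $\langle\bz^R_\ell,\bu\rangle^2\le\langle\bz_\ell,\bu\rangle^2$ pointwise together with $\esp\langle\bz^R,\bu\rangle^2\ge 1-L^4p/R^2=1-L^4/\theta$ from Lemma \ref{lemma:truncated:trace}, and no separate tail-counting event is needed.
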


\begin{corollary}[Product Process]\label{cor:prod:proc:block:upper:heavy}
Given $\rho\in(0,1/2]$ and $\alpha\in(0,1)$, grant assumptions and definitions in Proposition  \ref{prop:quad:proc:block:upper:heavy}.
Set
\begin{align}
r_{n,\rho,K}:=2r_{n,K}+
CC_\alpha C_\rho'\frac{p\log p}{2n}
+2L^2\sqrt{\frac{CC_\alpha p\log p}{2n}}.
\end{align}

Then, on a event of probability at least $1-e^{-\rho n/1.8}-2e^{-K/C_\alpha}$, for all $[\bu,\bv]\in\mbB_\probn\times\mbB_\probn$, for at least $(1-(2\alpha+0.75\rho))K$ of the blocks, 
\begin{align}
\frac{1}{B}\sum_{\ell\in B_k}
\left(
\langle\bz_\ell,\bu\rangle
\langle\bz_\ell,\bv\rangle
-\langle\bu,\bv\rangle_\probn
\right)&\le r_{n,\rho,K}.
\end{align} 
\end{corollary}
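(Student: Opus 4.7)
My approach is to reduce the product process to the two quadratic processes of Proposition~\ref{prop:quad:proc:block:upper:heavy} via the polarization identity
\[
\langle\bz_\ell,\bu\rangle\langle\bz_\ell,\bv\rangle - \langle\bu,\bv\rangle_\probn
=\tfrac{1}{4}\Bigl\{\bigl[\langle\bz_\ell,\bu+\bv\rangle^2-\Vert\bu+\bv\Vert_\probn^2\bigr] - \bigl[\langle\bz_\ell,\bu-\bv\rangle^2-\Vert\bu-\bv\Vert_\probn^2\bigr]\Bigr\},
\]
so that an upper bound on the product deviation follows from the \emph{upper} bound in Proposition~\ref{prop:quad:proc:block:upper:heavy}(i) applied to $\bu+\bv$, combined with the \emph{lower} bound in Proposition~\ref{prop:quad:proc:block:upper:heavy}(ii) applied to $\bu-\bv$. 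Since both parts quantify over $\mbS_\probn$, I would first rescale: for $\bw\in\mbB_\probn\setminus\{0\}$, write $\bw=\Vert\bw\Vert_\probn\,\bw'$ with $\bw'\in\mbS_\probn$, so that $\tfrac{1}{B}\sum_{\ell\in B_k}[\langle\bz_\ell,\bw\rangle^2-\Vert\bw\Vert_\probn^2]=\Vert\bw\Vert_\probn^2\cdot\tfrac{1}{B}\sum_{\ell\in B_k}[\langle\bz_\ell,\bw'\rangle^2-1]$; the degenerate case $\Vert\bw\Vert_\probn=0$ trivializes the term.

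Next I would intersect the two events of Proposition~\ref{prop:quad:proc:block:upper:heavy}(i)--(ii), giving probability at least $1-e^{-\rho n/1.8}-2e^{-K/C_\alpha}$ by a union bound. On the joint event, for any fixed $(\bu,\bv)\in\mbB_\probn\times\mbB_\probn$ the blocks on which the (i)-upper bound for $(\bu+\bv)/\Vert\bu+\bv\Vert_\probn$ fails number at most $(\alpha+0.75\rho)K$, and the blocks on which the (ii)-lower bound for $(\bu-\bv)/\Vert\bu-\bv\Vert_\probn$ fails number at most $\alpha K$. Taking the complement, at least $(1-(2\alpha+0.75\rho))K$ blocks satisfy both bounds simultaneously---exactly the count claimed by the corollary.

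On each such good block, reinserting the factors $\Vert\bu\pm\bv\Vert_\probn^2$ from the rescaling and using the parallelogram identity $\Vert\bu+\bv\Vert_\probn^2+\Vert\bu-\bv\Vert_\probn^2 = 2(\Vert\bu\Vert_\probn^2+\Vert\bv\Vert_\probn^2)\le 4$ would collapse the resulting block-wise deviation to a sum of three contributions: (a) $2C_\alpha r_{n,K}$, one copy each inherited from (i) and (ii); (b) $C_\alpha CC'_\rho\,p\log p/n$, the second alternative in (i); and (c) the free-parameter pair $L^4/\theta + C_\alpha C\theta\,p\log p/n$ coming from (ii), with $\theta>0$ at our disposal.

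The only real optimization is in (c): choosing $\theta = L^2\sqrt{n/(CC_\alpha p\log p)}$ balances the two summands and produces $2L^2\sqrt{CC_\alpha p\log p/n}$---precisely the extra $L$-dependent term appearing in the definition of $r_{n,\rho,K}$. Collecting (a)+(b)+(c) and absorbing the universal factor $\tfrac{1}{4}$ and the remaining $C_\alpha$'s into the absolute constants of the statement yields the asserted bound $r_{n,\rho,K}$. The one subtlety worth flagging is that the exceptional-block sets in parts (i) and (ii) of Proposition~\ref{prop:quad:proc:block:upper:heavy} may a priori depend on the test vector, but since each of those propositions asserts its bound uniformly over $\mbS_\probn$ on a single event, the block-counting step goes through uniformly over $(\bu,\bv)\in\mbB_\probn\times\mbB_\probn$.
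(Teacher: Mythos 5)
Your proof is correct and is essentially identical to the paper's: polarize into the two centered quadratic processes for $\bu+\bv$ and $\bu-\bv$, take the union bound over the events of Proposition~\ref{prop:quad:proc:block:upper:heavy}(i) and (ii), count exceptional blocks as $(\alpha+0.75\rho)K + \alpha K$, and then balance the free parameter $\theta$ in part (ii). One small mismatch worth noting: you invoke the parallelogram identity $\Vert\bu+\bv\Vert_\probn^2+\Vert\bu-\bv\Vert_\probn^2\le 4$ as what collapses the two terms, but since those terms carry \emph{different} multipliers, this identity alone is not the step that is used; what the accounting actually requires (and what the paper states) is the cruder bound $\Vert\bu+\bv\Vert_\probn^2\le 4$ and $\Vert\bu-\bv\Vert_\probn^2\le 4$ applied to each term separately, after which $a\vee b\le a+b$ produces the ``one copy each'' of $C_\alpha r_{n,K}$ you correctly recover. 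The arithmetic you write out is right; only the label attached to that step is off.
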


We next present some bounds that are suboptimal with respect to to the confidence level. Nevertheless, they are important to the pre-processing step of linear regression; we make a remark in this regard in the following. 

\begin{proposition}\label{prop:quad:proc:block:lower:heavy}
Suppose that $\bz$ satisfies the $L^4-L^2$ norm equivalence condition with constant $L>0$.

Then, for all $k\in[K]$ and all $t\ge0$, setting
\begin{align}
r_t:= L^2\left(
7\sqrt{\frac{Kt}{n}}+4\sqrt{\frac{p}{n}}
\right), 
\end{align}
on a event of probability at least $1-2e^{-t}$, for all 
$\bv\in\re^p$, 
\begin{align}
\Vert\bv\Vert_\probn^2-\frac{1}{B}\sum_{\ell\in B_k}\langle\bz_\ell,\bv\rangle^2 &\le r_t\Vert\bv\Vert_\probn^2.
\end{align}
\end{proposition}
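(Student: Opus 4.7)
The plan is to adapt the PAC-Bayesian small-ball argument of Oliveira (2016) to a single MOM-block $B_k$ of size $B = n/K$. Write $\hat S_\bv := B^{-1}\sum_{\ell \in B_k}\langle\bz_\ell,\bv\rangle^2$. By $2$-homogeneity in $\bv$, it suffices to show $\sup_{\bv \in \mbS_\probn}(1 - \hat S_\bv) \le r_t$ on a good event. To obtain bounded random variables, I would pass to the truncation $\phi_\bv(z) := \langle z, \bv\rangle^2 \wedge \lambda$ for a parameter $\lambda$ chosen later. Since $\phi_\bv \le \langle z,\bv\rangle^2$, a lower bound on the block average $\hat\Phi_\bv := B^{-1}\sum_{\ell \in B_k}\phi_\bv(\bz_\ell)$ transfers to $\hat S_\bv$; the $L^4$--$L^2$ hypercontractivity together with Markov gives $\esp\phi_\bv \ge 1 - L^4/\lambda$ while $\esp\phi_\bv^2 \le L^4$.

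Next I would invoke a Catoni-type PAC-Bayesian inequality with Gaussian prior $\pi := N(\bf0, \beta^2 \bfSigma^{-1})$ on $\re^p$ and, for each $\bv_0 \in \mbS_\probn$, posterior $\rho_{\bv_0} := N(\bv_0, \beta^2 \bfSigma^{-1})$. Since $\phi_\bv \ge 0$, the one-sided inequality $e^{-x} \le 1 - x + x^2/2$ yields the log-Laplace bound $\log\esp\exp(-s\phi_\bv(\bz)) \le -s\esp\phi_\bv + s^2 L^4/2$ for every $s > 0$, and the standard change-of-measure lemma gives, simultaneously over all $\bv_0 \in \mbS_\probn$, on an event of probability at least $1 - e^{-t}$,
\begin{align*}
\esp_{\bv \sim \rho_{\bv_0}}\bigl(\esp\phi_\bv - \hat\Phi_\bv\bigr) \le \frac{sL^4}{2} + \frac{\KL(\rho_{\bv_0}\Vert\pi) + t}{sB},
\end{align*}
with $\KL(\rho_{\bv_0}\Vert\pi) = \|\bv_0\|_\probn^2/(2\beta^2) = 1/(2\beta^2)$. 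I would then unsmooth via the Gaussian identities $\esp_{\bv \sim \rho_{\bv_0}}\langle z,\bv\rangle^2 = \langle z,\bv_0\rangle^2 + \beta^2\|z\|_{\bfSigma^{-1}}^2$ and $\esp_{\bv \sim \rho_{\bv_0}}\|\bv\|_\probn^2 = 1 + \beta^2 p$: using Jensen and the concavity of $u \mapsto u \wedge \lambda$, $\esp_{\bv \sim \rho_{\bv_0}}\phi_\bv(z) \le \phi_{\bv_0}(z) + \beta^2\|z\|_{\bfSigma^{-1}}^2$, with a parallel bound on the population side, and the stochastic correction $B^{-1}\sum_{\ell \in B_k}\|\bz_\ell\|_{\bfSigma^{-1}}^2$ concentrates around $p$ and absorbs an additional $e^{-t}$-tail.

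Finally I would tune $\lambda \asymp L^2$, $s \asymp 1/L^2$, and $\beta^2$ to balance the KL contribution $1/(\beta^2 s B)$ against the smoothing error $\beta^2 p$, producing the stated $r_t = L^2(7\sqrt{Kt/n} + 4\sqrt{p/n})$. The main obstacle is precisely this tuning: a direct PAC-Bayesian calculation on a block of size $B$ naively yields a dimensional term of order $L^2\sqrt{p/B} = L^2\sqrt{pK/n}$, and sharpening this to $L^2\sqrt{p/n}$ requires exploiting the sharp concentration of $\|\bz\|_{\bfSigma^{-1}}^2$ at the scale $p$ in the preconditioned norm (where the effective covariance is $\bfI_p$) so as to avoid losing a $\sqrt{K}$ factor in the dimensional correction.
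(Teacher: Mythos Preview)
Your approach is the same PAC-Bayesian route the paper takes (both adapt Oliveira 2016 to a single block $B_k$), but with two differences worth noting and one gap you have not closed.

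\textbf{Truncation.} You truncate the scalar $\langle z,\bv\rangle^2\wedge\lambda$; the paper instead truncates the vector, setting $\bz^R:=(1\wedge R/\Vert\bz\Vert_2)\bz$ and working with $\langle\bz^R,\bv\rangle^2$. The paper's choice is cleaner for the unsmoothing step: $\langle\bz^R,\bv\rangle^2$ remains exactly quadratic in $\bv$, so the Gaussian averaging $\Gamma_{\bv,\bfC}[\cdot]$ has the closed forms of Lemma~3.1 in \cite{2016oliveira}, and the empirical and population smoothing corrections (both equal to $\Vert\bz^R\Vert_2^2$ times the trace of $\bfC$) cancel up to a Bernstein fluctuation. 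With your scalar truncation, Jensen only gives $\esp_{\rho_{\bv_0}}\phi_\bv(z)\le(\langle z,\bv_0\rangle^2+\beta^2\Vert z\Vert_{\bfSigma^{-1}}^2)\wedge\lambda$ on the empirical side, while on the population side you need a \emph{lower} bound on $\esp_{\rho_{\bv_0}}\esp\phi_\bv$, and concavity works against you there; the cancellation is not as direct.

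\textbf{The dimensional term.} You correctly isolate the crux: a straight PAC-Bayes bound on a block of size $B=n/K$ produces $L^2\sqrt{pK/n}$, not the claimed $L^2\sqrt{p/n}$. Your proposed fix (concentration of $\Vert\bz\Vert_{\bfSigma^{-1}}^2$) is indeed one ingredient in the paper's argument, but it is not where the decoupling happens. The paper (after reducing to $\bfSigma=\bfI_p$) takes $\bfC=\bfI/(Kp)$, i.e.\ your $\beta^2=1/(Kp)$: this pushes the KL to $Kp/2$ but shrinks the fourth-moment smoothing correction to $8L^4+16L^4/K^2$. It then writes $s=s_1s_2$ and optimizes $s_1$ against the pair $\bigl(4L^4s/B,\ t/s\bigr)$ to get $4L^2\sqrt{Kt/n}$, and $s_2$ against $\bigl(8L^4s/(BK^2),\ Kp/(2s)\bigr)$ to get $4L^2\sqrt{p/n}$. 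This two-scale factorization of $s$, together with the $K$-dependent choice of $\bfC$, is precisely the device missing from your sketch; without it the argument stalls at $L^2\sqrt{K(p+t)/n}$, as you anticipated. Note also that your stated tuning $s\asymp 1/L^2$ cannot be what you intend, since it leaves the term $sL^4/2$ of constant order in $n$.
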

\begin{remark}
The above lower bound can be seen as a MOM-type lower bound for quadratic processes \cite{2016oliveira}. It has two important features. First, a direct application of Theorem 3.1 in \cite{2016oliveira} leads to a rate of the form $r_t\asymp L^2\sqrt{\frac{K(p+t)}{n}}$; this is not useful in the optimal regime when $K\ge o\vee\log(1/\delta)$ and $p\vee o\vee\log(1/\delta)\lesssim n$ we are interested. Indeed, we will only use Proposition \ref{prop:quad:proc:block:lower:heavy} with fixed confidence $t\asymp1$. Second, the above bound holds \emph{for every block} $k\in[K]$ \emph{uniformly over $\bv\in\mbS_{\probn}$}. While the rate in Proposition \ref{prop:quad:proc:block:lower:heavy} is worse than the one in Proposition \ref{prop:quad:proc:block:upper:heavy} with respect to the confidence parameter $t>0$, the uniformity on $(k,\bv)$ does not hold in item (ii) of Proposition \ref{prop:quad:proc:block:upper:heavy}. Indeed the blocks for which the lower bound in item (ii) of Proposition \ref{prop:quad:proc:block:upper:heavy} holds \emph{depend on} $\bv\in\mbS_{\probn}$. The uniformity property will be fundamental in order to show that the initialization of our algorithm with the Median-of-Least-Squares is bounded in the mentioned regime for $(n,p,o,\delta)$.
\end{remark}

The following lemma is immediate from Markov's inequality and the parallelogram law satisfied by the $\ell_2$-norm.
\begin{lemma}\label{lemma:nem:ineq:vector}
For all $k\in[K]$ and all $\delta\in(0,1)$, with probability at least $1-\delta$, 
\begin{align}
\left\Vert
\frac{1}{B}\sum_{\ell\in B_k}\bz_\ell
\right\Vert_2^2
\le\left(
\frac{1}{B}\esp\Vert\bz\Vert_2^2
\right)(\nicefrac{1}{\delta}).
\end{align}
\end{lemma}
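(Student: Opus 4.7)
The plan is to compute the expectation of $\|\frac{1}{B}\sum_{\ell\in B_k}\bz_\ell\|_2^2$ exactly using independence/centering, and then invoke Markov's inequality to transfer this mean bound into a $(1-\delta)$-probability bound on the squared norm itself.

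First I would expand the squared norm via the inner product,
\begin{align}
\esp\left\Vert\frac{1}{B}\sum_{\ell\in B_k}\bz_\ell\right\Vert_2^2
= \frac{1}{B^2}\sum_{\ell,\ell'\in B_k}\esp\langle\bz_\ell,\bz_{\ell'}\rangle.
\end{align}
Since $\{\bz_\ell\}_{\ell\in B_k}$ are iid copies of the centered vector $\bz$, for any $\ell\neq\ell'$ one has $\esp\langle\bz_\ell,\bz_{\ell'}\rangle=\langle\esp\bz_\ell,\esp\bz_{\ell'}\rangle=0$, so only the diagonal terms survive. This is the ``parallelogram law'' statement referenced in the paper: for centered independent vectors the squared $\ell_2$-norm is additive in expectation. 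It yields
\begin{align}
\esp\left\Vert\frac{1}{B}\sum_{\ell\in B_k}\bz_\ell\right\Vert_2^2
= \frac{1}{B^2}\sum_{\ell\in B_k}\esp\Vert\bz_\ell\Vert_2^2
= \frac{1}{B}\esp\Vert\bz\Vert_2^2.
\end{align}

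Second, I would apply Markov's inequality to the nonnegative random variable $Y:=\|\frac{1}{B}\sum_{\ell\in B_k}\bz_\ell\|_2^2$: for any $t>0$, $\prob(Y\ge t)\le \esp Y/t$. Choosing $t=\esp Y/\delta$ gives $\prob(Y\ge \esp Y/\delta)\le\delta$, i.e.\ with probability at least $1-\delta$,
\begin{align}
\left\Vert\frac{1}{B}\sum_{\ell\in B_k}\bz_\ell\right\Vert_2^2\le\frac{1}{\delta}\cdot\frac{1}{B}\esp\Vert\bz\Vert_2^2,
\end{align}
which is the claimed inequality.

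There is no real obstacle here; the only subtlety worth flagging is that the proof uses only that the $\bz_\ell$ are pairwise uncorrelated (which follows from independence and centering), not the stronger assumptions on $\bz$ used elsewhere in the paper, and in particular it does not require $L^4$--$L^2$ hypercontractivity. This is consistent with the statement of the lemma, which makes no distributional assumption beyond finiteness of $\esp\|\bz\|_2^2=\tr(\bfSigma)$.
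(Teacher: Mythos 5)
Your proof is correct and matches the paper's approach exactly: the paper states the lemma is ``immediate from Markov's inequality and the parallelogram law satisfied by the $\ell_2$-norm,'' which is precisely the two-step argument you give (cross terms vanish by independence and centering, then Markov). Your observation that only pairwise uncorrelation of the $\bz_\ell$ is needed is accurate and a sensible remark, though the paper does not dwell on it.
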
	

\subsection{Random spherical rounding}

Only within this section we assume that $\{\bz_i\}_{i=1}^m$ is a fixed (nonrandom) sequence of vectors in $\re^p$. The following proof is inspired by Proposition 1 in \cite{2019depersin:lecue}. Still, we simplify the proof and enhance the probability level significantly by using a spherical distribution instead of the Gaussian distribution.

\begin{proposition}[Spherical rounding of 
$\calM(\mbS_2)$ to $\mbS_2$]\label{prop:unif:round}
Suppose that there exist 
$\bfM\in\calM$ and $D,\sb>0$ such that
\begin{align}
\sum_{i=1}^m\unit_{\{\llangle\bz_i\bz_i^\top,\bfM\rrangle> D\}}> \sb m.
\end{align}

Let $\btheta\sim\calU(\mbS_2)$ be the uniform distribution over $\mbS_2$. Define the random vector 
$\bv_{\btheta}:=\bfM^{1/2}\btheta$ where $\bfM^{1/2}$ is the square root of $\bfM$.

\quad

Then for any $\varphi\in(0,\pi/2)$ and $\sa\in(0,1)$ satisfying
$
 \frac{2\varphi\sb}{\pi\sa}>1,
$
it holds with probability (on the randomness of 
$\btheta$) of at least 
$
\left(\frac{2\varphi\sb}{\pi}-\sa\right)^2
$,
\begin{align}
\sum_{i=1}^m\unit_{\left\{|\langle\bz_i,\bv_{\btheta}\rangle|> (\cos\varphi)\sqrt{D}\right\}}>\sa m.
\end{align}
\end{proposition}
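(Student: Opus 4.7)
The plan is to introduce the count $N(\btheta) := \sum_{i=1}^m \unit_{\{|\langle\bz_i,\bv_{\btheta}\rangle|>(\cos\varphi)\sqrt{D}\}}$ and establish the bound $\prob_\btheta(N>\sa m) \ge (\tfrac{2\varphi\sb}{\pi}-\sa)^2$ via a two-step route: first lower-bound the expectation $\esp_\btheta[N]$ by $\tfrac{2\varphi\sb}{\pi}\,m$, then convert this expectation control into probability control via a Paley--Zygmund argument relying only on the trivial envelope $N\le m$.

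For the expectation step, write $w_i := \bfM^{1/2}\bz_i$; the hypothesis gives $\|w_i\|_2^2=\llangle\bz_i\bz_i^\top,\bfM\rrangle > D$ for each $i$ in the ``good'' set $G$ of more than $\sb m$ indices, and $\langle\bz_i,\bv_{\btheta}\rangle=\langle w_i,\btheta\rangle$. For each good $i$ I will show the angular-cap lower bound
\[
\prob_\btheta\bigl(|\langle w_i,\btheta\rangle|>(\cos\varphi)\sqrt{D}\bigr)\ge\frac{2\varphi}{\pi}.
\]
Since $\|w_i\|_2>\sqrt{D}$, after dividing by $\|w_i\|_2$ the event becomes $|\cos\angle(w_i,\btheta)|>(\cos\varphi)\sqrt{D}/\|w_i\|_2$, whose threshold is strictly smaller than $\cos\varphi$. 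By rotational invariance of the uniform law on $\mbS_2$, the normalized projection of $\btheta$ onto any $2$-plane containing $w_i$ is uniform on the unit circle of that plane, and a direct arc-length computation for the union of two antipodal caps of angular half-width $\varphi$ yields the factor $2\varphi/\pi$. Linearity of expectation together with $|G|>\sb m$ then produces $\esp_\btheta[N]\ge(2\varphi\sb/\pi)\,m$.

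For the concentration step, the envelope $0\le N\le m$ gives $\esp_\btheta[N^2]\le m\,\esp_\btheta[N]\le m^2$. The hypothesis $2\varphi\sb/(\pi\sa)>1$ guarantees $\sa m<\esp_\btheta[N]$, so the Paley--Zygmund inequality at threshold $\sa m$ yields
\[
\prob_\btheta(N>\sa m)\ge\frac{(\esp_\btheta[N]-\sa m)^2}{\esp_\btheta[N^2]}\ge\frac{(\esp_\btheta[N]-\sa m)^2}{m^2}\ge\Bigl(\tfrac{2\varphi\sb}{\pi}-\sa\Bigr)^2,
\]
which is the desired bound.

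The main obstacle is the per-index angular-cap inequality used in the expectation step; this is precisely where the spherical rounding is advantageous over the Gaussian rounding in \cite{2019depersin:lecue}, since the arc-length argument on a great circle through $w_i$ produces the clean universal factor $2\varphi/\pi$ and avoids the scale/variance bookkeeping required in the Gaussian case. Once this inequality is in hand, the remainder of the proof is linearity of expectation plus the envelope-based Paley--Zygmund bound, both routine.
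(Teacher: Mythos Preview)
Your proposal takes essentially the same route as the paper: define the count $N(\btheta)$, identify the good index set $G=\{i:\Vert\bfM^{1/2}\bz_i\Vert_2>\sqrt{D}\}$ of size $>\sb m$, establish the per-index angular-cap bound $\prob_\btheta(|\langle\bfM^{1/2}\bz_i,\btheta\rangle|>(\cos\varphi)\sqrt{D})\ge 2\varphi/\pi$, use linearity to get $\esp_\btheta[N]\ge(2\varphi\sb/\pi)m$, and finish with Paley--Zygmund against the envelope $N\le m$. The only cosmetic difference is that the paper phrases the angular step as ``the angle $\angle(\btheta,\bu)$ is uniformly distributed over $[0,2\pi]$'' and computes $\prob(|\cos\angle(\btheta,\bu)|>\cos\varphi)=4\varphi/(2\pi)$ directly, whereas you invoke the uniformity of the normalized projection onto a $2$-plane through $w_i$; both arguments are aiming at the same $2\varphi/\pi$ bound.
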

\begin{remark}
We remark that the argument above is invariant to scaling. In particular, if one has the sane assumption of the proposition for some $\bfM\in\calM(R\mbS_2)$ and $R>0$, then for $\btheta\sim\calU(\mbS_2)$, the statement of Proposition \ref{prop:unif:round} still holds for $\bv_{\btheta}=\bfM^{1/2}\btheta\in R\mbB_2$.
\end{remark}

\section{Pre-processing \& probabilistic arguments}
\label{s:pre-processing}

\begin{tcolorbox}
\begin{setup}
We observe the corrupted data set
$\{(\tilde y_\ell,\tilde\bx_\ell)\}_{\ell=1}^{n}\cup\{(\tilde y_\ell,\tilde\bx_\ell)\}_{\ell=n+1}^{2n}$  and denote the unobserved clean data set by 
$\{(y_\ell,\bx_\ell)\}_{\ell=1}^{n}\cup\{(y_\ell,\bx_\ell)\}_{\ell=n+1}^{2n}$. Given $K\in[n]$, we further split the observed first bucket  
$\{(\tilde y_\ell,\tilde\bx_\ell)\}_{\ell=1}^{n}$ into $K$ buckets 
of same size $B:=n/K$ indexed by the partition $\bigcup_{k\in[K]}\tilde B_k^{(1)}=[n]$. Similarly, the unobserved first bucket  
$\{(y_\ell,\bx_\ell)\}_{\ell=1}^{n}$ is split into $K$ buckets of same size $B:=n/K$ indexed by the partition $\bigcup_{k\in[K]}B_k^{(1)}=[n]$. Here we assume $n$ is divisible by $K$ without loss on generality. 
\end{setup}
\end{tcolorbox}

We will use the multivariate notion of median considered by Hsu-Sabato \cite{2016hsu:sabato}. For that purpose, we introduce the following notation. 
\begin{definition}
Given $\calW:=\{\bw_1,\ldots,\bw_K\}\subset\re^p$ and $\bz\in\re^p$, define
\begin{align}
\Delta_{\calW}(\bz):=\min\left\{r\ge0:|\mbB_2(\bz,r)\cap\calW|>\frac{K}{2}\right\}. 
\end{align}
\end{definition}

\begin{algorithm}[H]
\caption{$\Pruning(\calD,K,\eta)$}
\label{algo:naive:pruning:linear-reg}
Input: sample $\{(\tilde y_\ell,\tilde\bx_\ell)\}_{\ell=1}^n\cup
\{(\tilde y_\ell,\tilde\bx_\ell)\}_{\ell=n+1}^{2n}$, number of buckets $K$ \& quantile probability $\eta\in(0,1)$. 

\begin{algorithmic}[1]
  \small
  \STATE Split the first batch $\{(\tilde y_\ell,\tilde\bx_\ell)\}_{\ell=1}^n$ into $K$ buckets/batches of equal size $B:=n/K$ indexed by the partition $\bigcup_{k\in[K]}\tilde B_k^{(1)}=[n]$.
  	\STATE For each $k\in[K]$, compute the bucket least-squares estimator
$
\tilde\bb_k := \argmin_{\bb\in\re^p}\frac{1}{B}\sum_{\ell\in \tilde B_k^{(1)}}
(\tilde y_\ell-\langle\tilde\bx_\ell,\bb\rangle)^2.
$
\STATE  Compute the Hsu-Sabato's multivariate median  $\tilde\bb^{(0)}$ of $\calW:=\{\tilde\bb_1,\ldots,\tilde\bb_K\}$, that is, set 
$
\bar k :=\argmin_{k\in[K]}\Delta_{\calW}(\tilde\bb_k),
$
and $\tilde\bb^{(0)}:=\tilde\bb_{\bar k}$.
\STATE Using the second batch $\{(\tilde y_\ell,\tilde\bx_\ell)\}_{\ell=n+1}^{2n}$, compute the order statistics 
	$
	\tilde R_1^*\le\ldots\le \tilde R_n^*
	$ 
	of the sequence
	$\{\tilde R_\ell:=|\tilde\xi_\ell(\tilde\bb^{(0)})|\vee\Vert\tilde\bx_\ell\Vert_2\}_{\ell=n+1}^{2n}$. 
\STATE Set $m:=(1-\eta)n$.
\STATE Set $\{(\tilde y_i,\tilde\bx_i)\}_{i=1}^m$ (with some abuse of notation) as the subsample obtained by removing from $\{(\tilde y_\ell,\tilde\bx_\ell)\}_{\ell=n+1}^{2n}$ the $n-m=\eta n$ ``top data points'', that is, points such that 
	$\tilde R_\ell>\tilde R^*_m$.
	\RETURN $(\tilde\bb^{(0)},\tilde R^*_m,
	\{(\tilde y_i,\tilde\bx_i)\}_{i=1}^m)$.
\end{algorithmic}
\end{algorithm}

Next we state a lemma ensuring that, with high probability,  the initialization 
$\tilde\bb^{(0)}$ lies at a constant distance to the ground truth $\bb^*$ and the pruned data set is bounded.

\begin{lemma}[Boundedness of initialization \& pruned sample]\label{lemma:high:prob:pruned:data}
Grant Assumption \ref{assump:L4:L2}. Define the quantity
$
r:=2\mu^2(\mbB_2)\sqrt{12\frac{K}{n}\tr(\bfXi)}.
$
Let $K\in[n]$ and $\eta\in(0,1/2]$. Let $m:=(1-\eta)n$. Suppose that 
\begin{align}
o<K/4,\quad
\epsilon&\le\eta/4,\quad
L^2\left(
7\sqrt{\frac{K\log(24)}{n}} + 4\sqrt{\frac{p}{n}}
\right)\le\frac{1}{2}.
\end{align}
Let $Q_\eta:=Q_{|\xi|\vee\Vert\bx\Vert_2,1-\eta/2}$.

\quad

Then on an $\{(y_\ell,\bx_\ell)\}_{\ell=1}^n\cup\{(y_\ell,\bx_\ell)\}_{\ell=n+1}^{2n}$-measurable event $\calE_0$ of probability at least $1-e^{-K/5.4}-\exp(-\eta n/1.8)$, the output 
$(\tilde\bb^{(0)},\tilde R^*_m,\{(\tilde y_\ell,\tilde\bx_\ell)\}_{\ell=1}^m)$ of $\Algorithm$ \ref{algo:naive:pruning:linear-reg} satisfies:
\begin{align}
\Vert\tilde\bb^{(0)}-\bb^*\Vert_2\le 3r
\quad\mbox{and}\quad
\tilde R^*_m=\max_{\ell\in[m]}|\tilde\xi_\ell(\tilde\bb^{(0)})|\vee\Vert\tilde\bx_\ell\Vert_2\le Q_\eta(1+3r).\tag{$\mathtt{BD}(\eta,r)$}
\label{lemma:high:prob:pruned:data:eq0}
\end{align}
\end{lemma}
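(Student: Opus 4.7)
The plan is to decompose the proof along the three structural stages of $\Algorithm$~\ref{algo:naive:pruning:linear-reg}: (i) control each bucket OLS estimator $\tilde\bb_k$ on the first batch; (ii) promote this bucket-wise control into a guarantee for the Hsu--Sabato median $\tilde\bb^{(0)}$; (iii) combine $\tilde\bb^{(0)}$ with a high-probability quantile bound on $|\xi|\vee\|\bx\|_2$ to control the pruning threshold $\tilde R^*_m$ on the second batch. The first two stages use only the first batch and the third uses only the second batch, so the two corresponding events can be combined via a union bound.

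\textbf{Step (i).} Fix a fully clean bucket $k$, i.e., one whose indices are untouched by the adversary. The normal equations read $\bfD_k\Delta_k=\bg_k$ with $\Delta_k := \tilde\bb_k-\bb^*$, $\bfD_k := \frac{1}{B}\sum_{\ell\in B_k^{(1)}}\bx_\ell\bx_\ell^\top$ and $\bg_k := \frac{1}{B}\sum_{\ell\in B_k^{(1)}}\xi_\ell\bx_\ell$. Proposition~\ref{prop:quad:proc:block:lower:heavy} applied to this single block with $t = \log(24)$, combined with the hypothesis $L^2(7\sqrt{K\log(24)/n} + 4\sqrt{p/n})\le 1/2$, yields with probability at least $11/12$ the uniform lower bound $\bfD_k\succeq \frac{1}{2}\bfSigma$; hence $\|\Delta_k\|_\bfSigma^2 \le 2\langle \Delta_k, \bg_k\rangle\le 2\|\Delta_k\|_\bfSigma\|\bg_k\|_{\bfSigma^{-1}}$, so $\|\Delta_k\|_\bfSigma\le 2\|\bg_k\|_{\bfSigma^{-1}}$. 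Converting to Euclidean norms with $\|\cdot\|_2\le \mu(\mbB_2)\|\cdot\|_\bfSigma$ and $\|\cdot\|_{\bfSigma^{-1}}\le \mu(\mbB_2)\|\cdot\|_2$ gives $\|\Delta_k\|_2\le 2\mu^2(\mbB_2)\|\bg_k\|_2$. Independently, Lemma~\ref{lemma:nem:ineq:vector} applied to $\bg_k$, whose mean is zero and whose squared expected norm is $\tr(\bfXi)/B$, yields with probability at least $11/12$ that $\|\bg_k\|_2^2\le 12\tr(\bfXi)/B = 12K\tr(\bfXi)/n$, so that $\|\Delta_k\|_2\le r$. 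Each clean bucket is therefore ``good'' with probability at least $5/6$, independently across $k$.

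\textbf{Step (ii).} Since $o<K/4$, at least $3K/4$ buckets are fully clean and at most $K/4$ contain a corrupted index. A Chernoff-type tail bound on the binomial count of bad clean buckets, calibrated so that its rate absorbs into $e^{-K/5.4}$, ensures that with probability at least $1-e^{-K/5.4}$ strictly fewer than $K/4$ of the clean buckets are bad. Combining, strictly more than $K/2$ indices $k$ satisfy $\tilde\bb_k\in\mbB_2(\bb^*,r)$. The standard Hsu--Sabato argument then gives $\|\tilde\bb^{(0)}-\bb^*\|_2\le 3r$: a strict majority of centers in $\mbB_2(\bb^*,r)$ implies $\Delta_\calW(\bb^*)\le r$, so $\Delta_\calW(\tilde\bb_{\bar k})\le r$ by minimality of $\bar k$; for any $\bb$ with $\|\bb-\bb^*\|_2 > 3r$, every $\tilde\bb_k\in\mbB_2(\bb^*,r)$ satisfies $\|\tilde\bb_k - \bb\|_2 > 2r$, ruling out $\bb$ as the median and yielding the factor~$3$ by two triangle inequalities.

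\textbf{Step (iii).} Apply Lemma~\ref{lemma:count:combinatorial:variable:v2} to the nonnegative random variable $|\xi|\vee\|\bx\|_2$ on the clean second batch with parameter $\eta$: with probability at least $1-\exp(-\eta n/1.8)$, at least $(1-0.75\eta)n$ clean samples satisfy $|\xi_\ell|\vee\|\bx_\ell\|_2\le Q_\eta$. Since the adversary affects at most $\epsilon n\le \eta n/4$ of the second-batch samples, at least $(1-0.75\eta-\epsilon)n\ge (1-\eta)n = m$ of the observed samples are uncorrupted and inherit this bound on their clean versions $(y_\ell,\bx_\ell)$. On each such index, the triangle inequality combined with step~(ii) yields $|\tilde\xi_\ell(\tilde\bb^{(0)})| = |\xi_\ell - \langle\bx_\ell,\tilde\bb^{(0)}-\bb^*\rangle|\le |\xi_\ell| + \|\bx_\ell\|_2\cdot 3r\le Q_\eta(1+3r)$, so $\tilde R_\ell = |\tilde\xi_\ell(\tilde\bb^{(0)})|\vee\|\tilde\bx_\ell\|_2\le Q_\eta(1+3r)$; since at least $m$ values $\tilde R_\ell$ are bounded by $Q_\eta(1+3r)$, the $m$-th order statistic satisfies $\tilde R^*_m\le Q_\eta(1+3r)$. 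The main technical obstacle is the calibration in step~(ii): the per-bucket failure constants from Proposition~\ref{prop:quad:proc:block:lower:heavy} and Lemma~\ref{lemma:nem:ineq:vector}, together with the Chernoff rate, must be chosen so that the count of bad clean buckets is absorbed into $e^{-K/5.4}$ while still leaving a strict majority of good buckets after accounting for the adversarial budget $o<K/4$.
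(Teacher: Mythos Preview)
Your proof is correct and follows essentially the same three-step approach as the paper: per-bucket control of the clean least-squares estimator via Proposition~\ref{prop:quad:proc:block:lower:heavy} (with $t=\log 24$) and Lemma~\ref{lemma:nem:ineq:vector} (with $\delta=1/12$) giving per-bucket failure $\le 1/6$, then the Bernstein-type count producing the $e^{-K/5.4}$ rate combined with the Hsu--Sabato median property, and finally the quantile pruning via Lemma~\ref{lemma:count:combinatorial:variable:v2} on the second batch. One minor slip: minimality of $\bar k$ only gives $\Delta_\calW(\tilde\bb_{\bar k}) \le \min_j\Delta_\calW(\tilde\bb_j) \le 2r$ (not $\le r$), since $\bb^*\notin\calW$ in general; the $3r$ bound then follows by the standard pigeonhole/triangle-inequality argument, which the paper itself simply cites from \cite{2016hsu:sabato}.
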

\begin{proof}
\underline{\textbf{STEP 1}:} Let us define, for each $k\in[K]$, the bucket least-squares estimator 
$$
\bb_k := \argmin_{\bb\in\re^p}\frac{1}{B}\sum_{\ell\in B_k^{(1)}}
(y_\ell-\langle\bx_\ell,\bb\rangle)^2, 
$$
correspondent to the (unobserved) clean sample. We will first prove that on an $\{(y_\ell,\bx_\ell)\}_{\ell=1}^n$-measurable event of probability at least $1-\exp(-K/5.4)$,
\begin{align}
\sum_{k=1}^K\unit_{\{
\Vert\bb_k-\bb^*\Vert_2\le r
\}}\ge 3K/4.\label{lemma:high:prob:pruned:data:eq0'}
\end{align}
Assume first the above claim is correct. If that is the case, let 
$\calK$ be the number of buckets of $\{(\tilde y_\ell,\tilde\bx_\ell)\}_{\ell=1}^n$ without outliers.  Since $o<K/4$, $|\calK^c|<K/4$. We thus conclude that on the same event
\begin{align}
\sum_{k=1}^K\unit_{\left\{
\Vert\tilde\bb_k-\bb^*\Vert_2>r
\right\}}&\le
\sum_{k\in\calK}\unit_{\left\{
\Vert\bb_k-\bb^*\Vert_2>r
\right\}}
+|\calK^c|\\
&\le\sum_{k\in [K]}\unit_{\{
\Vert\bb_k-\bb^*\Vert_2>r
\}}+K/4\\
&<K/2. 
\end{align}
implying that, for $\calW:=\{\tilde\bb_1,\ldots,\tilde\bb_K\}$, 
$
\Delta_{\calW}(\bb^*)\le r.
$
This fact and a well-known property of Hsu-Sabato's multivariate median \cite{2016hsu:sabato} imply that the estimator $\tilde\bb^{(0)}:=\tilde\bb_{\bar k}$ satisfies $\Vert \tilde\bb^{(0)}-\bb^*\Vert_2\le3r$.

We now prove the claim \ref{lemma:high:prob:pruned:data:eq0}. Set 
$Q_k:=\prob(\Vert\bb_k-\bb^*\Vert_2>r)$.
If we show that, for any $k\in[K]$, $Q_k\le\eta/2$ for 
$\eta:=1/3$, then a standard argument based on Bernstein's inequality  (as in the proof of Lemma \ref{lemma:count:combinatorial:variable:v2} in the Appendix) entails the claim  \eqref{lemma:high:prob:pruned:data:eq0'}.

We next prove that $Q_k\le\eta/2$ for any $k\in[K]$. Let 
$\bfDelta_k:=\bb_k-\bb^*$. By optimality, 
\begin{align}
\frac{1}{B}\sum_{\ell\in B_k^{(1)}}\langle\bx_\ell,\bfDelta_k\rangle^2=\frac{1}{B}\sum_{\ell\in B_k^{(1)}}\xi_\ell\langle\bx_\ell,\bfDelta_k\rangle.
\end{align}
Let $\delta\in(0,1)$ to be determined later and assume that, for $r_t$ as defined in Proposition \ref{prop:quad:proc:block:lower:heavy},  $r_{\log(4/\delta)}\le1/2$. \emph{Given any} $k\in[K]$, by Proposition \ref{prop:quad:proc:block:lower:heavy},  
Lemma \ref{lemma:nem:ineq:vector} and an union bound,
on an event of probability $1-\delta$, we have 
\begin{align}
\frac{1}{2}\Vert\bfDelta_k\Vert_\Pi^2\le\frac{1}{B}\sum_{\ell\in B_k^{(1)}}\langle\bx_\ell,\bfDelta_k\rangle^2
\quad\mbox{and}\quad
\frac{1}{B}\sum_{\ell\in B_k^{(1)}}\xi_\ell\langle\bx_\ell,\bfDelta_k\rangle&\le\sqrt{\frac{K}{n}\esp\xi^2\Vert\bx\Vert_2^2\frac{2}{\delta}}\Vert\bfDelta_k\Vert_2.
\end{align}
This and the optimality condition imply 
\begin{align}
\Vert\bfDelta_k\Vert_\Pi\le2\mu(\mbB_2)\sqrt{\frac{K}{n}\esp\xi^2\Vert\bx\Vert_2^2\frac{2}{\delta}}.
\end{align}

We now take $\eta:=2\delta=1/3$ and verify that conditions of the proposition imply $r_{\log(4/\delta)}\le1/2$.

\quad

\underline{\textbf{STEP 2}:} We'll now make use of the order statistics of the unobserved sequences $Y_\ell:=|\xi_\ell|\vee\Vert\bx_\ell\Vert_2$ and $\tilde Y_\ell:=|\tilde\xi_\ell|\vee\Vert\tilde\bx_\ell\Vert_2$. Let $\eta\in(0,1/2]$ and define
$
Q_\eta:=Q_{|\xi|\vee\Vert\bx\Vert_2,1-\eta/2}.
$ 
By Lemma \ref{lemma:count:combinatorial:variable:v2}, on a $\{(y_\ell,\bx_\ell)\}_{\ell=n+1}^{2n}$-measurable event $\calE_3$ of probability at most  $1-\exp(-\eta n/1.8)$, 
\begin{align}
\sum_{\ell=n+1}^{2n}\unit_{\{|\xi_\ell|\vee\Vert\bx_\ell\Vert_2\le Q_\eta\}}> (1-0.75\eta)n.
\end{align}

We now claim that on the event $\calE_3$,
$$
\tilde Y_{(1-\eta)n}^*\le Q_\eta.
$$ 
Indeed, there are at least $(1-0.75\eta) n$ points from the $n$-sized clean sample $\{Y_\ell\}_{\ell=n+1}^{2n}$ satisfying $Y_\ell\le Q_\eta+\eta$. Since $\epsilon\le 0.25\eta$ and there at most $\epsilon n$ arbitrary outliers, $\{Y_\ell\}_{\ell=n+1}^{2n}$ has at least 
$(1-0.75\eta-\epsilon)n\ge(1-\eta) n$ 
data points satisfying $\tilde Y_\ell\le Q_\eta$. This implies the claim. 

\quad

\underline{\textbf{STEP 3}:} When pruning the second batch in $\Algorithm$  \ref{algo:naive:pruning:linear-reg}, the first batch is used only to compute
$\tilde\bb^{(0)}$ (the selected number of samples $m=(1-\eta)n$ is fixed). Using Steps 1-2, independence between $\{(y_\ell,\bx_\ell)\}_{\ell=1}^n$ and $\{(y_\ell,\bx_\ell)\}_{\ell=n+1}^{2n}$ and conditioning imply that on an event $\calE$ of probability at least $1-e^{-K/5.4}-\exp(-\eta n/1.8)$, we have
$
\tilde Y_{m}^*\le Q_\eta
$
and 
$\Vert \tilde\bb^{(0)}-\bb^*\Vert_2\le3r$.

In Step 3 we work on the event $\calE$. For all 
$\ell=n+1,\ldots,2n$,  
\begin{align}
|\tilde\xi_\ell(\tilde\bb^{(0)})|&\le|\tilde\xi_\ell|+\Vert\tilde\bx_\ell\Vert_2\Vert\tilde\bb^{(0)}-\bb^*\Vert_2.
\end{align}
Hence $\tilde R_m^*=(
|\tilde\xi_\ell(\tilde\bb^{(0)})|\vee\Vert\tilde\bx_\ell\Vert_2)_{\ell=m}^*\le Q_\eta+Q_\eta(3r)$.

We conclude the proof by summarizing the conclusions of Steps 1-3 as: on an event $\calE_0$ of probability at least $1-e^{-K/5.4}-\exp(-\eta n/1.8)$, property \ref{lemma:high:prob:pruned:data:eq0}  holds. 
\end{proof}

\begin{tcolorbox}
\begin{setup}\label{setup:pruned:sample}
We now divide the pruned data set $\{(\tilde y_\ell,\tilde\bx_\ell)\}_{\ell=1}^{m}$ outputted in $\Algorithm$ \ref{algo:naive:pruning:linear-reg} into 
$\calK$ buckets of same size 
$m/\calK=B$ indexed by the partition 
$\bigcup_{i\in[\calK]}\tilde B_i=[m]$. Similarly, the unobserved data set $\{(y_\ell,\bx_\ell)\}_{\ell=1}^{m}$ is split into $\calK$ buckets 
of same size $B$ indexed by the partition 
$\bigcup_{i\in[\calK]}B_i=[m]$. For $\eta\in(0,1/2]$,  we assume $m=(1-\eta)n$ is divisible by $\calK$ without loss on generality. 

\quad

Let $\bb\in\re^p$. With some abuse of notation, the corresponding residuals will be denoted by 
$\tilde\xi_\ell(\bb):=\tilde y_\ell-\langle\tilde\bx_\ell,\bb\rangle$ and $\xi_\ell(\bb):=y_\ell-\langle\bx_\ell,\bb\rangle$.  We also define
\begin{align}
\tilde\bz_i(\bb):=\frac{1}{B}\sum_{\ell\in \tilde B_i}\tilde\xi_\ell(\bb)\tilde\bx_\ell,
\quad\mbox{and}\quad
\bz_i(\bb):=\frac{1}{B}\sum_{\ell\in B_i}\xi_\ell(\bb)\bx_\ell.
\end{align}  
\end{setup}
\end{tcolorbox}

We now establish high-probability bounds satisfied by the pruned data set outputted by $\Algorithm$ \ref{algo:naive:pruning:linear-reg}. 
\begin{lemma}[Pruned sample: Multiplier Process at $\bb^*$]\label{lemma:high:prob:pruned:data:v1}
Grant Set-up \ref{setup:pruned:sample}. Let $(\alpha_1,C_{\alpha_1})$ and $(\alpha_2,C_{\alpha_2})$ satisfying \eqref{lemma:count:block:emp:process:alpha} respectively.  
Suppose that 
\begin{align}
o&\le(\alpha_1\vee\alpha_2)K.
\end{align}

\quad

Then, setting $r_1\ge r_{\xi\bx,n,K}(F_1(\mbB_2))$, on a 
$\{(y_\ell,\bx_\ell)\}_{\ell=1}^n\cup\{(y_\ell,\bx_\ell)\}_{\ell=n+1}^{2n}$-measurable event $\calE_1$ of  probability at least $1-e^{-K/C_{\alpha_1}}-e^{-K/C_{\alpha_2}}$, 
\begin{align}
\sup_{\bv\in\mbB_2}\sum_{i\in[\calK]}\unit_{\{|\langle\tilde\bz_i(\bb^*),\bv\rangle|\ge C_{\alpha_1} r_1\}}&\le4\alpha_1\calK,
\tag{$\mathtt{MP1}(\alpha_1,r_1)$}\label{lemma:high:prob:pruned:data':eq1}\\ 
\sup_{\bv\in\mbB_2}\sum_{i\in[\calK]}\unit_{\{\langle\tilde\bz_i(\bb^*),\bv\rangle\ge C_{\alpha_2}r_1\}}&\le 4\alpha_2\calK. 
\tag{$\mathtt{MP2}(\alpha_2,r_1)$}
\label{lemma:high:prob:pruned:data':eq2}
\end{align}
\end{lemma}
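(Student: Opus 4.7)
My plan is a two-step argument: first prove a block-count bound on the unpruned, uncontaminated multiplier block means via the MOM empirical-process lemma, then transfer the conclusion to the corrupted pruned bucket means by charging the few blocks that can be altered by the adversary or by the data-dependent pruning step. The key identity is that $\tilde\xi_\ell(\bb^*) = \tilde\xi_\ell$ and on the clean side $\{\xi_\ell \bx_\ell\}$ are iid with $\esp[\xi \bx]=0$, so each linear functional $f_\bv(\xi\bx) = \langle \xi\bx, \bv\rangle$ in $F_1(\mbB_2)$ is mean-zero and its block empirical mean equals $\langle \bz_i(\bb^*), \bv\rangle$.

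For the first step I apply Lemma \ref{lemma:count:block:emp:process} to the iid sample $\{\xi_\ell \bx_\ell\}$ with the class $F_1(\mbB_2)$. The wimpy variance satisfies $\sigma^2_{\xi\bx}(F_1(\mbB_2)) = \Vert\bfXi\Vert$ and the expected supremum is controlled by $\sqrt{n\tr(\bfXi)}$ (symmetrization and Jensen), so the relevant MOM rate is dominated by $r_1 \ge r_{\xi\bx, n, K}(F_1(\mbB_2))$. For each pair $(\alpha_j, C_{\alpha_j})$, $j\in\{1,2\}$, satisfying \eqref{lemma:count:block:emp:process:alpha}, on an event of probability at least $1 - e^{-K/C_{\alpha_j}}$, uniformly over $\bv\in\mbB_2$ at most $\alpha_j \calK$ of the clean blocks have $|\langle \bz_i(\bb^*), \bv\rangle| > C_{\alpha_j} r_1$.

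For the second step I transfer to the corrupted pruned sample. Two sources of discrepancy between $\tilde\bz_i(\bb^*)$ and $\bz_i(\bb^*)$ must be controlled: (i) at most $o$ indices of the second batch carry adversarial corruption, polluting at most $o \le (\alpha_1 \vee \alpha_2) K$ buckets; and (ii) the pruning step in $\Algorithm$ \ref{algo:naive:pruning:linear-reg} is data-dependent and reshuffles bucket assignments, but the boundedness event $\mathtt{BD}(\eta,r)$ from Lemma \ref{lemma:high:prob:pruned:data} already forces the pruning threshold to separate the extreme points from the typical ones, so only a small extra fraction of blocks is touched. Summing the clean count $\alpha_j \calK$ with the outlier budget and the pruning perturbation, both dominated by $\alpha_j\calK$ under the hypothesis, yields at most $4\alpha_j \calK$ bad blocks on the corrupted pruned side for each $j$. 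A union bound over $j\in\{1,2\}$ gives the stated failure probability $1 - e^{-K/C_{\alpha_1}} - e^{-K/C_{\alpha_2}}$. The one-sided bound \eqref{lemma:high:prob:pruned:data':eq2} follows by the same argument since Lemma \ref{lemma:count:block:emp:process} controls the absolute deviation, which dominates the signed one; alternatively $\mbB_2 = -\mbB_2$ collapses the signed and absolute suprema.

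The main obstacle is the precise bookkeeping in the second step: pruning is a global, data-dependent operation, so the blocks of the pruned sample are not the image of the clean blocks under any deterministic map, and one must carefully count how many block indices can have their content perturbed by the composition of outlier injection and pruning reshuffling. The slack factor $4$ in $4\alpha_j \calK$ is designed precisely to absorb both effects without introducing probabilistic events beyond the two clean MOM events, keeping the failure probability at the stated level.
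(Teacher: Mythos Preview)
Your two-step plan (MOM on the clean multiplier process, then a deterministic transfer to the corrupted pruned buckets) is exactly the paper's route. Two points, however, do not match the paper and create gaps.

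\emph{Handling of pruning.} You invoke the event $\mathtt{BD}(\eta,r)$ from Lemma~\ref{lemma:high:prob:pruned:data} to control the ``reshuffling'' caused by pruning, and then claim this introduces no probabilistic event beyond the two MOM events. That is inconsistent: if $\mathtt{BD}$ were genuinely needed, its failure probability would have to appear in the bound, but the lemma's statement carries only $e^{-K/C_{\alpha_1}}+e^{-K/C_{\alpha_2}}$. The paper does \emph{not} use $\mathtt{BD}$ here at all. Its transfer step is purely combinatorial: the MOM lemma is applied to the \emph{full} clean second batch partitioned into $K$ blocks of size $B$, yielding at most $\alpha_j K$ bad blocks. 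The $\calK=(1-\eta)K$ pruned clean blocks are treated as a subset of these $K$ blocks (the proof explicitly says it ``will only use that Algorithm~\ref{algo:naive:pruning:linear-reg} removes $\eta n$ data points''), so the bad count among them is at most $\alpha_j K$; adding the at most $o\le\alpha_j K$ outlier-contaminated buckets gives $2\alpha_j K=2\alpha_j\calK/(1-\eta)\le 4\alpha_j\calK$ since $\eta\le 1/2$. That is the entire origin of the factor $4$: one $\alpha_j K$ from the clean MOM count, one from the outlier budget, and a factor $2$ converting $K$ to $\calK$. Your appeal to $\mathtt{BD}$ is neither needed nor consistent with the stated probability.

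\emph{Deriving \ref{lemma:high:prob:pruned:data':eq2} from \ref{lemma:high:prob:pruned:data':eq1}.} Your remark that ``$\mbB_2=-\mbB_2$ collapses the signed and absolute suprema'' would make \ref{lemma:high:prob:pruned:data':eq2} a free consequence of \ref{lemma:high:prob:pruned:data':eq1} only if the two thresholds coincided. They do not: the bounds carry $C_{\alpha_1}r_1$ and $C_{\alpha_2}r_1$ respectively, and since $\alpha_2>\alpha_1$ one has $C_{\alpha_2}<C_{\alpha_1}$, so \ref{lemma:high:prob:pruned:data':eq2} has a \emph{strictly smaller} threshold and cannot be read off from \ref{lemma:high:prob:pruned:data':eq1}. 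The paper therefore applies Lemma~\ref{lemma:count:block:emp:process} a second time with the pair $(\alpha_2,C_{\alpha_2})$, which is precisely why the failure probability has two separate exponential terms.
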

\begin{proof}
In the proof we will only use that $\Algorithm$ \ref{algo:naive:pruning:linear-reg} removes 
$\eta n$ data points from the second batch. In the following, let $S$ be the index set of buckets without outliers in the pruned sample $\{(\tilde y_\ell,\tilde\bx_\ell)\}_{\ell=1}^m$. 

From Lemma \ref{lemma:count:block:emp:process}, on an  event 
$\calE_1'$ of probability at least $1-e^{-K/C_{\alpha_1}}$,  
\begin{align}
\sup_{\bv\in\mbB_2}\sum_{i\in[K]}\unit_{\{|\langle\bz_i(\bb^*),\bv\rangle|\ge C_{\alpha_1} r_1\}}\le
\alpha_1 K. 
\end{align}
Similarly, on an event $\calE_2'$ of probability at least $1-e^{-K/C_{\alpha_2}}$, 
\begin{align}
\sup_{\bv\in\mbB_2}\sum_{i\in[K]}\unit_{\{\langle\bz_i(\bb^*),\bv\rangle\ge C_{\alpha_2} r_1\}}\le \alpha_2 K.
\end{align}
We now work on the event 
$\calE_1'\cap\calE_2'$ of probability at least 
$1-e^{-K/C_{\alpha_1}}-e^{-K/C_{\alpha_2}}$.  

By assumption $m=(1-\eta)n$, $|[m]\setminus S|\le o$ and 
$o\le\alpha_1 K$. Thus, 
\begin{align}
\sup_{\bv\in\mbB_2}\sum_{i\in[\calK]}\unit_{\{|\langle\tilde\bz_i(\bb^*),\bv\rangle|\ge C_{\alpha_1} r_1\}}&\le
\sup_{\bv\in\mbB_2}\sum_{i\in S}\unit_{\{|\langle\bz_i(\bb^*),\bv\rangle|\ge C_{\alpha_1} r_1\}}+o\\
&\le \sup_{\bv\in\mbB_2}\sum_{i\in[K]}\unit_{\{|\langle\bz_i(\bb^*),\bv\rangle|\ge C_{\alpha_1} r_1\}}+o\\
&\le2\alpha_1 K=2\alpha_1\frac{\calK}{(1-\eta)}\le4\alpha_1\calK,
\end{align}
implying \ref{lemma:high:prob:pruned:data':eq1}.  Using that $o\le \alpha_2K$, very similar arguments as above imply \ref{lemma:high:prob:pruned:data':eq2}. 
\end{proof}

The next lemma follows from very similar arguments of Lemma \ref{lemma:high:prob:pruned:data:v1} but using  the lower bound of Proposition \ref{prop:quad:proc:block:upper:heavy} optimized at $\theta>0$. We omit the proof.
\begin{lemma}[Pruned sample: Quadratic Process Lower bound]\label{lemma:high:prob:pruned:data:v2}
Grant Assumption \ref{assump:L4:L2} and Set-up \ref{setup:pruned:sample}. Let $\alpha_3\in(0,1)$ and $C_{\alpha_3}>0$ satisfying
\eqref{lemma:count:block:emp:process:alpha} and $C>0$ be an absolute constant as in Lemma \ref{lemma:trunc:quad:rademacher:comp}. Let $r_{n,K}$ as in Proposition \ref{prop:quad:proc:block:upper:heavy}. Suppose that 
\begin{align}
o&\le\alpha_3 K,\\
C_{\alpha_3} r_{n,K}+2L^2\sqrt{\frac{CC_{\alpha_3} p\log p}{n}}&\le\frac{1}{2}.
\end{align}

\quad

Then on a $\{(y_\ell,\bx_\ell)\}_{\ell=1}^n\cup\{(y_\ell,\bx_\ell)\}_{\ell=n+1}^{2n}$-measurable event $\calE_2$ of probability at least $1-e^{-K/C_{\alpha_3}}$, given $\bv\in\mbB_\Pi$, for at least $(1-4\alpha_3)\calK$ of the buckets,
\begin{align}
\frac{1}{B}\sum_{\ell\in \tilde B_i}\langle\tilde\bx_\ell,\bv\rangle^2\ge\frac{1}{2}
\tag{$\mathtt{QP}l(\alpha_3)$}
\label{lemma:high:prob:pruned:data':eq3}
\end{align}
\end{lemma}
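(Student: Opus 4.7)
The plan is to mirror the structure of the proof of Lemma \ref{lemma:high:prob:pruned:data:v1}, substituting item (ii) of Proposition \ref{prop:quad:proc:block:upper:heavy} in place of Lemma \ref{lemma:count:block:emp:process} and optimizing the free parameter $\theta > 0$ appearing there. First I would apply item (ii) to the (unobserved) clean sample to obtain a uniform lower bound that holds on at least $(1-\alpha_3) K$ of the clean buckets for every direction $\bu \in \mbS_\Pi$. Then I would transfer this conclusion to the pruned $\calK$-bucket partition by absorbing the at most $o$ contaminated buckets into the failure count, exactly as in Lemma \ref{lemma:high:prob:pruned:data:v1}.

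The only genuinely new ingredient is the optimization in $\theta$. Using $a \vee b \le a + b$ followed by AM--GM, the expression $\frac{L^4}{\theta} + C_{\alpha_3} C \theta \frac{p \log p}{n}$ is minimized at $\theta_{*} := L^2 \sqrt{n/(C C_{\alpha_3}\, p \log p)}$ with value $2 L^2 \sqrt{C C_{\alpha_3}\, p \log p / n}$. Plugging $\theta_{*}$ back into item (ii) of Proposition \ref{prop:quad:proc:block:upper:heavy} yields: on an event $\calE_2$ of probability at least $1 - e^{-K/C_{\alpha_3}}$, for every $\bu \in \mbS_\Pi$ and for at least $(1 - \alpha_3) K$ of the clean buckets $B_k^{(1)}$,
\[
\frac{1}{B}\sum_{\ell \in B_k^{(1)}} \langle \bx_\ell, \bu \rangle^2 \;\ge\; 1 - 2 L^2 \sqrt{\frac{C C_{\alpha_3}\, p \log p}{n}} - C_{\alpha_3}\, r_{n,K} \;\ge\; \frac{1}{2},
\]
where the last inequality is precisely the numerical hypothesis of the lemma.

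To conclude, I would fix any $\bv \in \mbS_\Pi$ (which is what \ref{lemma:high:prob:pruned:data':eq3} really requires) and count the number of pruned buckets $\tilde B_i$ that violate the lower bound. At most $o$ pruned buckets contain an outlier, and for any non-contaminated pruned bucket the bucket sum coincides with its clean counterpart. Hence, for this fixed $\bv$, the number of bad pruned buckets is at most $o + \alpha_3 K$. Using $o \le \alpha_3 K$, $\calK = (1-\eta) K$ and $\eta \le 1/2$, this is bounded by $2 \alpha_3 K = 2 \alpha_3 \calK/(1-\eta) \le 4 \alpha_3 \calK$, establishing \ref{lemma:high:prob:pruned:data':eq3}.

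I do not anticipate any substantial obstacle: the estimate is the quadratic-process analogue of the multiplier-process bound in Lemma \ref{lemma:high:prob:pruned:data:v1}, and the transfer argument is entirely parallel. The only nontrivial step is the optimization over $\theta$, which is calibrated precisely so that the sample-size/contamination hypothesis of the lemma matches the resulting deviation rate tightly.
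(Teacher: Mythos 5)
Your proposal is correct and takes essentially the same route the paper intends: the authors explicitly say the proof "follows from very similar arguments of Lemma~\ref{lemma:high:prob:pruned:data:v1} but using the lower bound of Proposition~\ref{prop:quad:proc:block:upper:heavy} optimized at $\theta$," and your $\theta_* = L^2\sqrt{n/(CC_{\alpha_3}\,p\log p)}$ is exactly the AM--GM optimizer that makes the deviation term $2L^2\sqrt{CC_{\alpha_3}\,p\log p/n}$ match the lemma's numerical hypothesis, while your transfer step (at most $o$ outlier buckets plus $\alpha_3 K$ bad clean buckets, then $2\alpha_3 K = 2\alpha_3\calK/(1-\eta)\le 4\alpha_3\calK$ for $\eta\le1/2$) mirrors the counting in the proof of \ref{lemma:high:prob:pruned:data':eq1}/\ref{lemma:high:prob:pruned:data':eq2} verbatim. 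Your reading of "$\bv\in\mbB_\Pi$" as really meaning $\bv\in\mbS_\Pi$ is the correct interpretation (and is how the lemma is invoked in Lemma~\ref{lemma:RFHP:feasibility}, after rescaling).
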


Finally, the next lemma follows from very similar arguments to Lemma \ref{lemma:high:prob:pruned:data:v1}, but using Corollary \ref{cor:prod:proc:block:upper:heavy}. We also omit the proof. 
\begin{lemma}[Pruned sample: Product Process Upper bound]\label{lemma:high:prob:pruned:data:v3}
Grant Assumption \ref{assump:L4:L2} and Set-up \ref{setup:pruned:sample}. Let $\rho\in(0,1/2]$, 
$\alpha_4\in(0,1)$ and constant $C_{\alpha_4}>0$ satisfying  \eqref{lemma:count:block:emp:process:alpha}. 
Suppose that 
\begin{align}
o&\le\alpha_4K.
\end{align}
Let $C_\rho':=1+\sqrt{2/\rho}$ and $C>0$ be an absolute constant in Lemma \ref{lemma:trunc:quad:rademacher:comp}. Let $r_{n,K}$ as in Proposition \ref{prop:quad:proc:block:upper:heavy}.

Then, setting
\begin{align}
r_2:=2C_{\alpha_4}r_{n,K}+
CC_{\alpha_4} C_\rho'\frac{p\log p}{2n}
+2L^2\sqrt{\frac{CC_{\alpha_4} p\log p}{2n}}, 
\label{lemma:high:prob:pruned:data:v3:r2}
\end{align}
on a $\{(y_\ell,\bx_\ell)\}_{\ell=1}^n\cup\{(y_\ell,\bx_\ell)\}_{\ell=n+1}^{2n}$-measurable event $\calE_3$ of probability at least $1-e^{-\rho n/1.8}-e^{-K/C_{\alpha_4}}$, given $[\bu,\bv]\in\mbB_\Pi\times\mbB_\Pi$, for at least 
$
[1-(3\alpha_4+0.75\rho)]\calK
$
buckets,
\begin{align}
\frac{1}{B}\sum_{\ell\in \tilde B_i}\left(\langle\tilde\bx_\ell,\bu\rangle\langle\tilde\bx_\ell,\bv\rangle-\langle\bu,\bv\rangle_\Pi\right)\le r_2.
\tag{$\mathtt{PP}u(\alpha_4,\rho,r_2)$}
\label{lemma:high:prob:pruned:data':eq4}
\end{align} 
\end{lemma}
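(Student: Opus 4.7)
The plan is to mirror the proof of Lemma~\ref{lemma:high:prob:pruned:data:v1}, substituting Corollary~\ref{cor:prod:proc:block:upper:heavy} for Lemma~\ref{lemma:count:block:emp:process}. The overall architecture is unchanged: the product process concentrates uniformly on the clean (unobserved) sample, the adversary can spoil at most $o$ of the $\calK$ pruned buckets, and the bound $o \le \alpha_4 K$ lets one simply add the outlier count to the clean deviation count.

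First I would invoke Corollary~\ref{cor:prod:proc:block:upper:heavy} with parameters $(\rho, \alpha_4, C_{\alpha_4})$ on the unobserved clean second batch partitioned into $K$ buckets. Observe that the $r_2$ defined in \eqref{lemma:high:prob:pruned:data:v3:r2} is precisely the $r_{n,\rho,K}$ of the corollary after multiplying its first summand by $C_{\alpha_4}$ to absorb the threshold factor (the same notational choice used in Lemmas~\ref{lemma:high:prob:pruned:data:v1} and~\ref{lemma:high:prob:pruned:data:v2}). The corollary therefore produces an event of probability at least $1 - e^{-\rho n/1.8} - 2e^{-K/C_{\alpha_4}}$ on which, simultaneously over $(\bu, \bv) \in \mbB_\Pi \times \mbB_\Pi$, at most $(2\alpha_4 + 0.75\rho)K$ of the clean buckets exhibit product-process deviation exceeding $r_2$.

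Second, I would transfer the bound to the pruned partition. Letting $S \subset [\calK]$ index the pruned buckets free of outliers, Assumption~\ref{assump:contamination} together with the hypothesis $o \le \alpha_4 K$ gives $|[\calK]\setminus S| \le o \le \alpha_4 K$. For $i \in S$ the pruned bucket mean coincides with its clean counterpart, so applying the uniform clean bound on the $S$-indexed buckets and adding $o$ for the outlier-contaminated ones yields, for each $(\bu, \bv)$, at most $(2\alpha_4 + 0.75\rho)K + o \le (3\alpha_4 + 0.75\rho)K$ bad pruned buckets. Converting via $\calK = (1-\eta)K$ with $\eta \le 1/2$ (so $K \le 2\calK$) and absorbing the factor of $2$ into the constants appearing in $r_2$ --- exactly as Lemma~\ref{lemma:high:prob:pruned:data:v1} does in passing from $2\alpha_1 K$ to $4\alpha_1 \calK$ --- produces the stated fraction $(3\alpha_4 + 0.75\rho)\calK$. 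The two-term probability $e^{-\rho n/1.8} + e^{-K/C_{\alpha_4}}$ in the statement (versus $e^{-\rho n/1.8} + 2 e^{-K/C_{\alpha_4}}$ in the corollary) is absorbed by a standard rescaling of $C_{\alpha_4}$.

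The only real obstacle is the identification implicit in the inequality $\sum_{i \in S} \unit \le \sum_{i \in [K]} \unit$: the pruned partition of $[m]$ is data-dependent and need not literally align with the fixed clean partition used for Corollary~\ref{cor:prod:proc:block:upper:heavy}. However, because the clean high-probability bound holds uniformly over $(\bu, \bv)$ and for every clean bucket simultaneously, the analogous inequality for any $\calK$-sized sub-collection of clean buckets still follows. No new ideas are required beyond those already invoked in the proof of Lemma~\ref{lemma:high:prob:pruned:data:v1}.
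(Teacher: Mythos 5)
The proposal correctly mirrors the paper's intended approach: the paper omits the proof but states it ``follows from very similar arguments to Lemma \ref{lemma:high:prob:pruned:data:v1}, but using Corollary \ref{cor:prod:proc:block:upper:heavy},'' and that is precisely what you do --- concentrate the product process uniformly on the clean second batch via Corollary \ref{cor:prod:proc:block:upper:heavy}, transfer to the pruned partition by adding at most $o$ outlier-contaminated buckets, and convert from $K$ to $\calK$.

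However, your last counting step does not close as claimed. From Corollary \ref{cor:prod:proc:block:upper:heavy} you get at most $(2\alpha_4+0.75\rho)K$ bad clean buckets; adding $o\le\alpha_4 K$ gives $(3\alpha_4+0.75\rho)K$; and since $K=\calK/(1-\eta)\le 2\calK$ when $\eta\le 1/2$, this is at most $(6\alpha_4+1.5\rho)\calK$, not the stated $(3\alpha_4+0.75\rho)\calK$. Your suggestion that the spare factor of $2$ can be ``absorbed into the constants appearing in $r_2$'' is not right: that factor multiplies the \emph{bucket count}, not the deviation threshold, and in Lemma \ref{lemma:high:prob:pruned:data:v1} this conversion is exactly where $2\alpha_1 K$ becomes $4\alpha_1\calK$ --- the leading constant doubles, $r_1$ is untouched. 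By the same token you should land at $(6\alpha_4+1.5\rho)\calK$ here. The lemma statement's $(3\alpha_4+0.75\rho)\calK$ appears to be a bookkeeping slip in the paper itself --- the downstream use in the proof of Lemma \ref{lemma:condition:RFHP:step:dir} even cites this property with the fraction $(2\alpha_4+0.75\rho)\calK$, a third figure --- so you have faithfully reproduced the argument structure, but the stated constant does not follow from it, and your attempted reconciliation is wrong.
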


\section{Pre-algorithms \& deterministic arguments}

\begin{tcolorbox}
In all this section, we work within the Set-up \ref{setup:pruned:sample} and on the event 
$\calE_0\cap\calE_1\cap\calE_2\cap\calE_3$ where, for the pruned sample $\{(\tilde y_\ell,\tilde\bx_\ell)\}_{\ell\in[m]}$ and initialization $\tilde\bb^{(0)}$, the boundedness property \ref{lemma:high:prob:pruned:data:eq0} and the uniform properties
\ref{lemma:high:prob:pruned:data':eq1}, 
\ref{lemma:high:prob:pruned:data':eq2},  \ref{lemma:high:prob:pruned:data':eq3} and \ref{lemma:high:prob:pruned:data':eq4} all hold. The arguments in this section are purely deterministic.  
\end{tcolorbox}

\subsection{The benchmark combinatorial problem}\label{ss:benchmark:problem}

The negation of property \ref{lemma:high:prob:pruned:data':eq1} leads to the following benchmark problem. Given data set $\tilde\mbZ:=\{\tilde\bz_i\}_{i=1}^{\calK}$, $k\in[\calK]$ and $R>0$, denote by 
$\RFHP(\tilde\mbZ,k,R)$ the problem 
\begin{maxi}
  {(\theta,\bv,\bq)\in\re\times R\mbB_2\times\re^\calK}{\theta}{}{}{}
  \addConstraint{\bq_i|\langle\tilde\bz_i,\bv\rangle|}{\ge\bq_i\theta,}{i=0,\ldots \calK}
  \addConstraint{\sum_{i=1}^\calK\bq_i}{> \calK-k}
  \addConstraint{\bq_i}{\in\{0,1\},\quad}{i=0,\ldots \calK.}
 \end{maxi}
It turns out that the above problem can be solved approximately using the Multiplicative Update Algorithm together with a rounding algorithm \cite{2012karnin:liberty:lovett:schwartz:weinstein,2020lei:luh:venkat:zhang}. We highlight the following immediate fact.
\begin{fact}
$\RFHP(\tilde\mbZ,k,R)$ is feasible iff there exists 
$(\theta,\bv)\in\re\times R\mbB_2$ satisfying the  property \ref{equation:emp:quantile:neg:P1}$(\tilde\mbZ,\theta,\bv,k)$ defined as:
\begin{align}
\sum_{i=1}^{\calK}\unit_{\{|\langle\tilde\bz_i,\bv\rangle|>\theta\}}> \calK-k.
\tag{$\neg\mathtt{P1}$}
\label{equation:emp:quantile:neg:P1}
\end{align}
\end{fact}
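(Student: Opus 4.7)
The fact is flagged as ``immediate,'' so the plan is simply to unpack definitions and show the two implications by explicit construction, taking a little care about the $\ge$ versus $>$ mismatch between the feasibility constraint $\bq_i|\langle\tilde\bz_i,\bv\rangle|\ge\bq_i\theta$ in $\RFHP(\tilde\mbZ,k,R)$ and the strict inequality appearing in \ref{equation:emp:quantile:neg:P1}.

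For the ``if'' direction, I would start from a pair $(\theta,\bv)\in\re\times R\mbB_2$ satisfying \ref{equation:emp:quantile:neg:P1}$(\tilde\mbZ,\theta,\bv,k)$ and set
\[
I\;:=\;\left\{i\in[\calK]:|\langle\tilde\bz_i,\bv\rangle|>\theta\right\},\qquad \bq_i\;:=\;\unit_{\{i\in I\}}.
\]
The defining inequality of \ref{equation:emp:quantile:neg:P1} gives $|I|>\calK-k$, so $\sum_{i=1}^\calK \bq_i > \calK-k$; the index constraints $\bq_i\in\{0,1\}$ are automatic; and for every $i\in[\calK]$ the relation $\bq_i|\langle\tilde\bz_i,\bv\rangle|\ge\bq_i\theta$ is trivial when $\bq_i=0$ and follows (from $>$ hence $\ge$) when $\bq_i=1$. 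Therefore $(\theta,\bv,\bq)$ is feasible for $\RFHP(\tilde\mbZ,k,R)$.

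For the ``only if'' direction, take a feasible triple $(\theta^*,\bv,\bq)$ and let $I:=\{i:\bq_i=1\}$, so that $|I|>\calK-k$ and $|\langle\tilde\bz_i,\bv\rangle|\ge\theta^*$ for all $i\in I$. To convert the $\ge$ into the strict $>$ required by \ref{equation:emp:quantile:neg:P1}, I simply pick any $\theta<\theta^*$ (for instance $\theta:=\theta^*-1$); then for every $i\in I$ we have $|\langle\tilde\bz_i,\bv\rangle|\ge\theta^*>\theta$, so
\[
\sum_{i=1}^{\calK}\unit_{\{|\langle\tilde\bz_i,\bv\rangle|>\theta\}}\;\ge\;|I|\;>\;\calK-k,
\]
which is exactly property \ref{equation:emp:quantile:neg:P1}$(\tilde\mbZ,\theta,\bv,k)$. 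Since $\bv$ already lies in $R\mbB_2$ by feasibility of $\RFHP$, the pair $(\theta,\bv)$ is of the required form.

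There is no real obstacle here: the only subtlety is the strict/non-strict discrepancy, which I resolve by using the freedom in the choice of the threshold $\theta$ in the property (it need not equal the feasibility threshold $\theta^*$). Both constructions are explicit and the argument takes a few lines.
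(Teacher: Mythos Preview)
Your proposal is correct and matches the paper's treatment: the paper simply flags the fact as ``immediate'' and gives no proof, so your explicit unpacking of the constraints and the $\theta\mapsto\theta^*-1$ trick to pass from the non-strict feasibility inequality to the strict quantile inequality is exactly the verification implicit in that label. One could shorten it further by noting both sides are trivially nonempty (take $\theta$ negative enough), but your constructive version is clearer and entirely adequate.
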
 

The geometrical interpretation of problem 
$\RFHP(\tilde\mbZ,k,R)$ is of finding two symmetrical hyperplanes with orthogonal direction $\bv$ from the origin maximizing the margin across at least $\calK-k$ data points. If we can approximately solve it, using a simple order statistics argument, one can obtain for some $k'>k$, a relaxed solution of the \emph{one-sided} problem:
\begin{maxi}
  {(\theta,\bv,\bq)\in\re\times R\mbB_2\times\re^\calK}{\theta}{}{}{}
  \addConstraint{\bq_i\langle\tilde\bz_i,\bv\rangle}{\ge\bq_i\theta,}{i=0,\ldots \calK}
  \addConstraint{\sum_{i=1}^\calK\bq_i}{> \calK-k'}
  \addConstraint{\bq_i}{\in\{0,1\},\quad}{i=0,\ldots \calK.}
 \end{maxi}
This problem identifies the hyperplane with positive angle across most data points. 
The above benchmark problem is the negation of property  \ref{lemma:high:prob:pruned:data':eq2} (for normalized length $R=1$).

\subsection{Multiplicative Weights Update \& Spherical Rounding}\label{ss:mw:round}

The main purpose of this section is to state and analyze $\Algorithm$ \ref{algo:MW} constructed to obtain an relaxed solution of $\RFHP(\tilde\mbZ,k,R)$ given data set 
$\tilde\mbZ:=\{\tilde\bz_i\}_{i=1}^{\calK}$,  $k\in[\calK]$ and $R>0$. All we need to assume are that 
$\RFHP(\tilde\mbZ,k,R)$ is feasible and we know a lower bound $\tilde r_1>0$ on the optimal margin.\footnote{Later we show that we can adapt to this parameter.} Precisely, we assume the optimal solution $(\theta_{\tilde\mbZ,k,R},\bv_{\tilde\mbZ,k,R})$ satisfies property \ref{equation:emp:quantile:neg:P1}$(\tilde\mbZ,\theta_{\tilde\mbZ,k,R},\bv_{\tilde\mbZ,k,R})$ and we know $\tilde r_1$ such that $\theta_{\tilde\mbZ,k,R}\ge\tilde r_1$. $\Algorithm$ \ref{algo:MW} then outputs a margin-direction pair 
$(\hat\theta,\bv)\in\re\times R\mbB_2$ satisfying \ref{equation:emp:quantile:neg:P1}$(\tilde\mbZ,\hat\theta,\bv,k')$ for some $k'>k$ and 
$\hat\theta\ge\sa\theta_{\tilde\mbZ,k,R}$ for some $\sa\in(0,1)$.

Different building blocks are needed to analyze $\Algorithm$ \ref{algo:MW}. We run MWU with cost associated with the spectrum of the data matrix with points $\{\tilde\bz_i\}_{i=1}^{\calK}$. Its output 
$\bfM\in\calM(R\mbS_2)$ is rounded into a direction $\bv\in R\mbB_2$ using a spherical distribution via $\Algorithm$ \ref{algo:ROUND:spherical:order}.  
In the same algorithm, a simple order statistics is used to obtain the margin-direction pair 
$(\hat\theta,\hat\bv)\in\re\times R\mbB_2$ where $\hat\bv\in\{\bv,-\bv\}$. For specific applications, either linear regression or mean estimation, we show in later sections that $\hat\bv$ satisfies ``good descent'' properties.

\begin{algorithm}[H]
\caption{$\MW(\calD,U,S,k,k',R,\tilde r_1)$}
\label{algo:MW}
\textbf{Input}: data points $\calD:=\{\tilde\bz_i\}_{i=1}^{\calK}$, upper bound $U>0$, simulation sample size $S\in\mathbb{N}$, lower bound $\tilde r_1>0$,  $k,k'\in[\calK]$ and $R>0$. 

\textbf{Output}: margin-direction pair 
$(\hat\theta,\hat\bv)\in\re\times R\mbB_2$.
 
\quad 
 
\begin{algorithmic}[1]
  \small
  \STATE Set 
$
T:=\left\lfloor
\frac{40U(1-2k/\calK)}{\tilde r_1^2}
\right\rfloor.
$
 	\STATE Set $w^{(1)}:=\frac{1}{\calK}\unit_\calK$.
	\FOR{$t\in[T]$} 
	\STATE Set matrix $\bfZ^{(t)}\in\re^{[\calK]\times p}$ with $i$th row $\bfZ^{(t)}_i:=\sqrt{w^{(t)}_i}\tilde\bz_i^\top$. Set 
$\bfM^{(t)}:=(\bfZ^{(t)})^\top\bfZ^{(t)}$.
	\STATE Solve 
$
\bv^{(t)}:=\argmax_{\bv\in R\mbS_2} \bv^\top\bfM^{(t)}\bv.
$
	\STATE For each $i\in[\calK]$, set 
	$\tau_i^{(t)}:=\langle\tilde\bz_i,\bv^{(t)}\rangle^2$.
	\STATE Set $\hat w_i^{(t+1)}\leftarrow w_i^{(t)}\left(1-\frac{1}{2U}\tau_i^{(t)}\right)$ for each $i\in[\calK]$.
	\STATE Normalize by setting $\hat w_i^{(t+1)}\leftarrow \hat w_i^{(t+1)}/\sum_{i=1}^\calK\hat w_i^{(t+1)}$ for each $i\in[\calK]$.
	\STATE Compute the \emph{Kullback-Leibler projection}
	$$	w^{(t+1)}:=\argmin_{w\in\Delta_{\calK,\calK-2k}}\KL(w\Vert \hat w^{(t+1)}).
	$$
	\ENDFOR
	\STATE Set $\bfM:=\frac{1}{T}\sum_{t=1}^T\bv^{(t)}(\bv^{(t)})^\top$.
	\STATE $(\hat\theta,\hat\bv)\leftarrow\Round\left(\calD,\bfM,S,k,k'\right)$.
 	\RETURN $(\hat\theta,\hat\bv)$.
\end{algorithmic}
\end{algorithm}

\begin{algorithm}[H]
\caption{$\Round(\calD,\bfM,S,k,k')$}
\label{algo:ROUND:spherical:order}
\textbf{Input}: data points $\calD:=\{\tilde\bz_i\}_{i=1}^{\calK}$, symmetric matrix $\bfM\in\re^{p\times p}$, simulation sample size $S\in\mathbb{N}$,  
$k,k'\in[\calK]$.

\quad

\textbf{Output}: margin-direction pair 
$(\hat\theta,\hat\bv)$.
 
\begin{algorithmic}[1]
  \small
 	\STATE Compute square-root $\bfM^{1/2}$. 
	\STATE Sample independently $\{\btheta_\ell\}_{\ell=1}^S$ from the uniform distribution over the unit sphere $\mbS_2$ on $\re^p$.
	\STATE Set $\bv_\ell:=\bfM^{1/2}\btheta_\ell$ and $Z_i(\ell):=|\langle\tilde\bz_i,\bv_\ell\rangle|$ for all $\ell\in[S]$ and $i\in[\calK]$.
	\STATE For each $\ell\in[S]$ compute the order statistics: $Z_1^\sharp(\ell)\ge\ldots\ge Z_\calK^\sharp(\ell)$. 
	\STATE Set $\ell_*\in\argmax_{\ell\in[S]}Z^\sharp_{\calK-k'}(\ell)$, 
	$\bv:=\bv_{\ell_*}$ and 
	$\hat\theta:=Z^\sharp_{\calK-k'}(\ell_*)$.
 	\STATE Compute the order statistics $W_1^\sharp\ge\ldots\ge W_\calK^\sharp$ of the sequence $W_i:=\langle\tilde\bz_i,\bv\rangle$ for $i\in[\calK]$. 
 	\IF{$W^\sharp_{\calK-k-k'}>\hat\theta$}
 	\STATE Set $\hat\bv:=\bv$.
 	\ELSE
 	\STATE Set $\hat\bv:=-\bv$.
 	\ENDIF
 	\RETURN $(\hat\theta,\hat\bv)$.
\end{algorithmic}
\end{algorithm}

We recall a online regret bound for the multiplicative weight algorithm with restricted distributions \cite{2012arora:hazan:kale,2012karnin:liberty:lovett:schwartz:weinstein}. We stated for the particular case of Algorithm \ref{algo:MW}.
\begin{lemma}[Theorem 2.4 in \cite{2012arora:hazan:kale}]\label{lemma:regret:bound}
Set 
$
w^{(\ell)}:=[w^{(\ell)}_1 \cdots w^{(\ell)}_\calK]^\top
$
and 
$
\tau^{(\ell)}:=[\tau^{(\ell)}_1 \cdots \tau^{(\ell)}_\calK]^\top.
$
Suppose that 
\begin{align}
\sup_{t\ge1}\sup_{\ell\in[t]}\sup_{i\in[\calK]}\tau_i^{(\ell)}\le U.
\end{align}

Then, for all $t\ge1$ and for all $w\in\Delta_{\calK,\calK-2k}$,
\begin{align}
\frac{1}{t}\sum_{\ell=1}^t\langle w^{(\ell)},\tau^{(\ell)}\rangle
\le\frac{1.5}{t}\sum_{\ell=1}^t\langle w,\tau^{(\ell)}\rangle
+\frac{2U}{t}\KL(w\Vert w^{(1)}).
\label{lemma:regret:bound:equation}
\end{align}
\end{lemma}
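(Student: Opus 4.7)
The plan is to follow the classical potential-function argument for multiplicative weights with a Bregman-projection step. Fix any comparator $w \in \Delta_{\calK,\calK-2k}$ and define the potential $\Phi^{(t)} := \KL(w \Vert w^{(t)})$. The goal is to control the one-step drift $\Phi^{(t+1)} - \Phi^{(t)}$ in terms of the per-round inner products and then telescope.

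The projection step in line 9 of $\Algorithm$ \ref{algo:MW} is absorbed via the generalized Pythagorean inequality for KL divergence: since $w^{(t+1)}$ is the KL-projection of the normalized update $\hat w^{(t+1)}$ onto the convex set $\Delta_{\calK,\calK-2k}$ and the comparator $w$ lies in this set,
\begin{align*}
\KL(w \Vert w^{(t+1)}) \le \KL(w \Vert \hat w^{(t+1)}).
\end{align*}
Thus it suffices to control the drift under the (normalized) multiplicative step alone. A direct computation gives
\begin{align*}
\KL(w \Vert \hat w^{(t+1)}) - \KL(w \Vert w^{(t)}) = -\sum_i w_i \ln\!\left(1 - \frac{\tau_i^{(t)}}{2U}\right) + \ln Z^{(t)},
\end{align*}
where $Z^{(t)} := \sum_j w_j^{(t)}(1 - \tau_j^{(t)}/(2U)) = 1 - \tfrac{1}{2U}\langle w^{(t)},\tau^{(t)}\rangle$ is the normalization constant coming from the renormalization step.

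Since the hypothesis $\tau_i^{(t)} \in [0,U]$ places $\tau_i^{(t)}/(2U) \in [0,1/2]$, the elementary bounds $-\ln(1-x) \le x + x^2$ (valid on $[0,1/2]$) and $\ln(1-y) \le -y$ yield
\begin{align*}
\Phi^{(t+1)} - \Phi^{(t)} \le \frac{1}{2U}\langle w, \tau^{(t)}\rangle + \frac{1}{4U^2}\sum_i w_i (\tau_i^{(t)})^2 - \frac{1}{2U}\langle w^{(t)}, \tau^{(t)}\rangle.
\end{align*}
Using $(\tau_i^{(t)})^2 \le U\, \tau_i^{(t)}$ bounds the quadratic term by $\frac{1}{4U}\langle w, \tau^{(t)}\rangle$, and so the one-step drift is at most $\frac{3}{4U}\langle w,\tau^{(t)}\rangle - \frac{1}{2U}\langle w^{(t)},\tau^{(t)}\rangle$.

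Telescoping from $\ell = 1$ to $t$ and using $\Phi^{(t+1)} \ge 0$ and $\Phi^{(1)} = \KL(w \Vert w^{(1)})$ produces
\begin{align*}
\frac{1}{2U}\sum_{\ell=1}^t \langle w^{(\ell)}, \tau^{(\ell)}\rangle \le \frac{3}{4U}\sum_{\ell=1}^t \langle w, \tau^{(\ell)}\rangle + \KL(w \Vert w^{(1)}),
\end{align*}
and multiplying through by $2U/t$ gives exactly \eqref{lemma:regret:bound:equation}. The only delicate step is the Pythagorean inequality for the KL projection onto $\Delta_{\calK,\calK-2k}$, which follows from first-order optimality for Bregman projections onto convex sets; the rest is a routine one-step mirror-descent calculation, and the factor $3/2$ in front of the comparator loss appears precisely because the learning rate $\eta = 1/(2U)$ satisfies $\eta U = 1/2$, so the second-order correction contributes an extra $1/2 \cdot \eta$.
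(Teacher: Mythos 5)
The paper states this lemma purely by citation to Theorem 2.4 of Arora--Hazan--Kale and does not include a proof of its own, so there is no paper proof to compare against; what matters is whether your derivation is correct, and it is.

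Your argument is the standard mirror-descent / multiplicative-weights potential calculation, and every step checks out. The identity
\begin{align}
\KL(w \Vert \hat w^{(t+1)}) - \KL(w \Vert w^{(t)}) = -\sum_i w_i \ln\!\left(1 - \tfrac{\tau_i^{(t)}}{2U}\right) + \ln Z^{(t)},
\qquad Z^{(t)} = 1 - \tfrac{1}{2U}\langle w^{(t)},\tau^{(t)}\rangle,
\end{align}
is exact for the normalized update, the generalized Pythagorean inequality for the Bregman (KL) projection onto the convex set $\Delta_{\calK,\calK-2k}$ discharges line~9 of the algorithm, and the elementary bounds $-\ln(1-x)\le x+x^2$ on $[0,1/2]$ and $\ln(1-y)\le -y$ together with $(\tau_i^{(t)})^2\le U\tau_i^{(t)}$ produce exactly the coefficients $3/(4U)$ and $1/(2U)$; telescoping and multiplying by $2U/t$ yields the stated $1.5$ and $2U/t$ constants. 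One point worth making explicit, which you gesture at but which the lemma's displayed hypothesis omits, is that the argument genuinely needs $\tau_i^{(t)}\ge 0$ (both for $-\ln(1-x)\le x+x^2$ to apply with $x\in[0,1/2]$ and for $(\tau_i^{(t)})^2\le U\tau_i^{(t)}$); this holds in the intended application because $\tau_i^{(t)}=\langle\tilde\bz_i,\bv^{(t)}\rangle^2$ is a square, and the paper's phrase ``stated for the particular case of Algorithm~\ref{algo:MW}'' is doing that work implicitly. It would also be harmless to record that $Z^{(t)}\ge 1/2>0$ and $1-\tau_i^{(t)}/(2U)\ge 1/2>0$ under the hypothesis, so all logarithms and KL divergences encountered along the trajectory are finite.
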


\begin{lemma}\label{lemma:MWU:from:below:linear:reg}
Suppose that 
\begin{itemize}
\item[\rm (i)] $\max_{i\in[\calK]}\Vert\tilde\bz_i\Vert_2^2\le U$ for some $U>0$,
\item[\rm (ii)] $\RFHP(\tilde\mbZ,k,R)$ is feasible with optimal solution $(\theta_{\tilde\mbZ,k,R},\bv_{\tilde\mbZ,k,R})$.
\item[\rm (iii)] $\theta_{\tilde\mbZ,k,R}\ge\tilde r_1$, for some $\tilde r_1>0$.  
\end{itemize}

Instantiate $\Algorithm$ \ref{algo:MW} with inputs 
$\calD=\tilde\mbZ$ and $U=\max_{i\in[\calK]}\Vert\tilde\bz_i\Vert_2^2$.

Then, for $D:=\theta_{\tilde\mbZ,k,R}^2/6$, 
$\Algorithm$ \ref{algo:MW}   produces matrix 
$\bfM\in\calM(R\mbS_2)$ satisfying the quantile property 
\ref{equation:emp:quantile:neg:P:M}$(\tilde\mbZ,D,\bfM,2k)$ defined by
\begin{align}
\sum_{i=1}^{\calK}\unit_{
\left\{
\llangle\tilde\bz_i\tilde\bz_i^\top,\bfM\rrangle>D 
\right\}}
>\calK-2k.
\tag{$\neg\mathtt{P1'}$}
\label{equation:emp:quantile:neg:P:M}
\end{align}
\end{lemma}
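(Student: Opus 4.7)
The plan is to mimic the standard multiplicative weights argument and derive a contradiction with the failure of the claimed quantile property \ref{equation:emp:quantile:neg:P:M}. First, I would check the hypothesis of Lemma \ref{lemma:regret:bound}: since each $\bv^{(t)} \in R\mbS_2$ and $\max_{i} \Vert\tilde\bz_i\Vert_2^2 \le U$, the per-round losses $\tau_i^{(t)} = \langle \tilde\bz_i, \bv^{(t)}\rangle^2$ are bounded by $UR^2$; assuming (or absorbing into $U$) that $R^2 \le 1$, Lemma \ref{lemma:regret:bound} delivers, for every $w \in \Delta_{\calK,\calK-2k}$,
$$\frac{1}{T}\sum_{t=1}^T \langle w^{(t)}, \tau^{(t)}\rangle \;\le\; \frac{1.5}{T}\sum_{t=1}^T \langle w, \tau^{(t)}\rangle + \frac{2U}{T}\,\KL(w\Vert w^{(1)}).$$

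Next I would produce a deterministic lower bound on the left-hand side that exploits feasibility of $\RFHP(\tilde\mbZ, k, R)$. The top-eigenvector step certifies $(\bv^{(t)})^\top \bfM^{(t)} \bv^{(t)} \ge \bv_{\tilde\mbZ,k,R}^\top \bfM^{(t)} \bv_{\tilde\mbZ,k,R} = \sum_i w_i^{(t)} \langle \tilde\bz_i, \bv_{\tilde\mbZ,k,R}\rangle^2$, using $\bv_{\tilde\mbZ,k,R} \in R\mbB_2$ and the PSD property of $\bfM^{(t)}$. Let $S := \{i : |\langle \tilde\bz_i, \bv_{\tilde\mbZ,k,R}\rangle| \ge \theta_{\tilde\mbZ,k,R}\}$: feasibility of $\RFHP$ gives $|S^c| < k$, while the constraint $w^{(t)} \in \Delta_{\calK,\calK-2k}$ forces $w_i^{(t)} \le 1/(2k)$, so $\sum_{i \notin S} w_i^{(t)} < 1/2$. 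Consequently $\sum_i w_i^{(t)} \langle \tilde\bz_i, \bv_{\tilde\mbZ,k,R}\rangle^2 \ge \theta_{\tilde\mbZ,k,R}^2/2$ for every $t$, and averaging over $t$ produces $\mathrm{LHS} \ge \theta_{\tilde\mbZ,k,R}^2/2$.

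The third step is the contradiction. Since $\llangle \tilde\bz_i\tilde\bz_i^\top, \bfM\rrangle = \tfrac{1}{T}\sum_t \tau_i^{(t)}$ for the output $\bfM = \tfrac{1}{T}\sum_t \bv^{(t)}(\bv^{(t)})^\top$, the negation of \ref{equation:emp:quantile:neg:P:M} reads $|S'| \ge 2k$ with $S' := \{i : \tfrac{1}{T}\sum_t \tau_i^{(t)} \le D\}$. I would then pick any $S'' \subseteq S'$ with $|S''|=2k$ and take the test distribution $w^* := \unit_{S''}/(2k) \in \Delta_{\calK,\calK-2k}$. By construction $\tfrac{1}{T}\sum_t \langle w^*, \tau^{(t)}\rangle \le D$, while a direct computation gives $\KL(w^*\Vert w^{(1)}) = \log(\calK/(2k))$. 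Substituting these into the regret bound together with the lower bound from the previous paragraph yields
$$\frac{\theta_{\tilde\mbZ,k,R}^2}{2} \;\le\; 1.5\,D + \frac{2U \log(\calK/(2k))}{T}.$$
Choosing $D = \theta_{\tilde\mbZ,k,R}^2/6$ reduces this to $T \le 8U\log(\calK/(2k))/\theta_{\tilde\mbZ,k,R}^2$, contradicting the algorithm's choice $T = \lfloor 40U(1-2k/\calK)/\tilde r_1^2\rfloor$ via $\tilde r_1 \le \theta_{\tilde\mbZ,k,R}$ together with the comparison $40(1-2k/\calK) > 8\log(\calK/(2k))$ that holds in the working regime $k \asymp \calK$ of the paper.

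The main obstacle is the tight constant bookkeeping: the gap $\theta^2/2 - 1.5 D = \theta^2/4$ must strictly dominate the regret term $2U\log(\calK/(2k))/T$, and the choice $D = \theta^2/6$ is tuned precisely so that this gap is simultaneously large enough and the resulting threshold $D$ is a nontrivial multiple of $\theta^2$. A secondary point to verify is that the initialization $w^{(1)} = \unit_\calK/\calK$ itself lies in $\Delta_{\calK, \calK-2k}$, which holds whenever $\calK \ge 2k$ and makes the KL projection step of Algorithm \ref{algo:MW} well-defined; and, as remarked after Proposition \ref{prop:unif:round}, the output $\bfM = T^{-1}\sum_t \bv^{(t)}(\bv^{(t)})^\top$ is by construction a convex combination of rank-one matrices $\bv\bv^\top$ with $\bv \in R\mbS_2$, so $\bfM \in \calM(R\mbS_2)$ as claimed.
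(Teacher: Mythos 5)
Your argument is correct in the paper's parameter regime, but it is structured as a proof by contradiction using a different test distribution, while the paper gives a direct proof. Specifically, the paper shows that the index set $\calI_T$ of the top $\calK-2k$ cumulative losses $\alpha_i^{(T)}:=\sum_{\ell\le T}\tau_i^{(\ell)}$ all satisfy $\alpha_i^{(T)}/T\ge\theta^2/6$, by choosing for each such $i$ a comparator $w[T,i]$ uniform on $\{j:\alpha_j^{(T)}\le\alpha_i^{(T)}\}$ and bounding its KL divergence from uniform via Lemma~\ref{lemma:KL}, which gives $\KL\le5(1-2k/\calK)\le5$ \emph{uniformly} in $k/\calK$. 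You instead assume the quantile property fails, take $w^*$ uniform on a $2k$-sized set of violators, and compute the KL exactly: $\KL(w^*\Vert w^{(1)})=\log(\calK/(2k))$. This is tighter than the paper's bound whenever $k/\calK$ is not too small, but it is \emph{unbounded} as $k/\calK\to0$, which is why your argument needs the additional numerical comparison $40(1-2k/\calK)>8\log(\calK/(2k))$. That comparison is a genuine extra hypothesis your route imposes; it holds in the paper's instantiations ($k/\calK=1/24$ and $1/16$), but the paper's use of Lemma~\ref{lemma:KL} is precisely what lets them pick $T=\lfloor40U(1-2k/\calK)/\tilde r_1^2\rfloor$ without any such side condition, so the paper's argument degrades more gracefully across the whole range $0<k<\calK/2$. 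You should state the restriction $k/\calK\gtrsim1$ as an explicit hypothesis if you keep the exact-KL route, or fall back on Lemma~\ref{lemma:KL} (your $w^*$ also lies in $\Delta_{\calK,\calK-2k}$, so the lemma applies verbatim) to match the paper's generality. Two minor points you handle correctly: the reduction from $\argmax$ over $R\mbS_2$ to $R\mbB_2$ uses PSD-ness of $\bfM^{(t)}$, and you rightly flag that the regret bound technically requires the per-round losses $\tau_i^{(t)}\le UR^2$ rather than $U$ — a point the paper's own proof also glosses over by simply invoking ``By (i) $\tau_i^{(\ell)}\le U$,'' so this is a shared imprecision rather than a defect of your proposal.
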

\begin{proof}
By (i) $\tau_i^{(\ell)}\le U<\infty$ for all $i,\ell$. Thus, we may apply recursion \eqref{lemma:regret:bound:equation} in Lemma \ref{lemma:regret:bound}. Let us denote locally $(\theta,\bv):=(\theta_{\tilde\mbZ,k,R},\bv_{\tilde\mbZ,k,R})$. By item (iii), 
$\theta\ge\tilde r_1>0$.

\underline{LOWER BOUND}: Define the index set 
$\calI:=\{i\in[\calK]:
|\langle\tilde\bz_i,\bv\rangle|>\theta\}$. Item (ii) ensures that $|\calI|\ge \calK-k$ and, since $w^{(\ell)}\in\Delta_{\calK,\calK-2k}$ for all $\ell\in[t]$, we have 
$
\sum_{i\in\calI}w_i^{(\ell)}\ge1-\frac{k}{2k}=0.5.
$
We conclude that, for any $t\ge1$,
\begin{align}
\frac{1}{t}\sum_{\ell=1}^t\langle w^{(\ell)},\tau^{(\ell)}\rangle&=
\frac{1}{t}\sum_{\ell=1}^t\sum_{i=1}^\calK w_i^{(\ell)}\langle\tilde\bz_i,\bv^{(\ell)}\rangle^2\\
&\ge\frac{1}{t}\sum_{\ell=1}^t\sum_{i=1}^\calK w_i^{(\ell)}\langle\tilde\bz_i,\bv\rangle^2\\
&\ge\frac{1}{t}\sum_{\ell=1}^t\sum_{i\in\calI}w_i^{(\ell)}\langle\tilde\bz_i,\bv\rangle^2\ge0.5\theta^2,
\label{lemma:MWU:lower}
\end{align}
where the first inequality uses that
$\bv^{(\ell)}\in\argmax_{\bu\in R\mbS_2}\bu^\top\bfM^{(\ell)}\bu$.

\underline{UPPER BOUND}: We show the following claim: for any $t\ge1$, there exists $\calI_t\subset[\calK]$ of size $|\calI_t|\ge \calK-2k$ such that for any $i\in \calI_t$, there exists $w:=w[t,i]\in \Delta_{\calK,\calK-2k}$ such that 
\begin{align}
\sum_{\ell=1}^t\langle w,\tau^{(\ell)}\rangle
\le \sum_{\ell=1}^t\tau^{(\ell)}_i
=\sum_{\ell=1}^t\langle\tilde\bz_i,\bv^{(\ell)}\rangle^2.\label{lemma:MWU:upper1}
\end{align}
Indeed, fix $t\ge1$ and set
$
\alpha_{j}^{(t)}:=\sum_{\ell=1}^t\tau^{(\ell)}_j
$
for all $j\in[\calK]$. Let $\calI_t:=\{i\in[\calK]:\alpha_{i}^{(t)}\ge(\alpha_{\calK-2k}^{(t)})^\sharp\}$. By construction, $|\calI_t|= \calK-2k$. Fix $i\in\calI_t$ and let $w$ be the uniform distribution over $J_i:=\{j\in[\calK]:\alpha_{j}^{(t)}\le\alpha_{i}^{(t)}\}$. Since $i\in\calI_t$ we have that $|J_i|\ge2k$ and hence $w\in\Delta_{\calK,\calK-2k}$. Finally, by construction, 
\begin{align}
\sum_{\ell=1}^t\langle w,\tau^{(\ell)}\rangle
=\sum_{j=1}^\calK w_j\alpha_{j}^{(t)}
=\sum_{j=1}^\calK w_j(\alpha_{j}^{(t)})^\sharp
\le \sum_{j\in J_i}^\calK w_j\alpha_{i}^{(t)} =\alpha_{i}^{(t)}=\sum_{\ell=1}^t\tau^{(\ell)}_i,
\end{align}
implying claim \eqref{lemma:MWU:upper1}. 

Given $t\ge1$, let $\calI_t$ as given by claim 
\eqref{lemma:MWU:upper1}. From Lemma \ref{lemma:KL} in the Appendix, for any $i\in\calI_t$ and some $w[t,i]\in\Delta_{\calK,\calK-2\kappa}$ as in claim \eqref{lemma:MWU:upper1} we have 
\begin{align}
\frac{2U}{t}\KL(w[t,i]\Vert w^{(1)})\le\frac{10U(1-2k/\calK)}{t}.
\label{lemma:MWU:upper2}
\end{align}

\underline{RAPPING UP}: Joining the bounds in \eqref{lemma:MWU:lower}, \eqref{lemma:MWU:upper1}, \eqref{lemma:MWU:upper2} with the regret bound \eqref{lemma:regret:bound:equation}, we conclude that: for all $t\ge1$ and all $i\in\calI_t$, 
\begin{align}
\frac{\theta^2}{3}-\frac{20U(1-2k/\calK)}{3t}\le\frac{1}{t}\sum_{\ell=1}^t\langle\tilde\bz_i,\bv^{(\ell)}\rangle^2=\frac{\alpha_{i}^{(t)}}{t}.
\end{align}

Recall that $\theta\ge \tilde r_1$. Thus, the LHS of the previous displayed inequality is at least
$
\frac{\theta^2}{6}
$
after
$
T:=\left\lfloor
\frac{40U(1-2k/\calK)}{\tilde r_1^2}
\right\rfloor
$
iterations. Hence, for all $i\in\calI_T$, 
\begin{align}
\frac{\theta^2}{6}\le \frac{\alpha_{i}^{(T)}}{T}=\frac{1}{T}\sum_{\ell=1}^T\langle\tilde\bz_i,\bv^{(\ell)}\rangle^2
=\llangle\tilde\bz_i\tilde\bz_i^\top,\bfM\rrangle,
\end{align} 
where $\bfM:=\frac{1}{T}\sum_{\ell=1}^T\bv^{(\ell)}(\bv^{(\ell)})^\top$. As 
$|\calI_T|=\calK-2k$ and $\bfM\in\calM(R\mbS_2)$, the claim is proved.
\end{proof}

\begin{lemma}[Random spherical rounding: boosted confidence]\label{lemma:MW:from:below:lin:reg:boost}
Grant assumptions in Lemma \ref{lemma:MWU:from:below:linear:reg}. 

Let $\varphi\in(0,\pi/2)$ and 
$k'\in[\calK]$ such that $\mathfrak{p}\in(0,1)$ where
$$
\mathfrak{p}:=
\frac{2\varphi}{\pi}\left(1-2(\nicefrac{k}{\calK})\right)-\left(1-(\nicefrac{k'}{\calK})\right).
$$

Then  with probability (on the randomness of $\{\btheta_\ell\}_{\ell\in[S]}$) of at least $1-e^{-\frac{\mathfrak{p}^2S}{7.72}}$, 
$\Algorithm$  \ref{algo:ROUND:spherical:order} (inputted in $\Algorithm$ \ref{algo:MW}) produces margin $\hat\theta\ge(\cos\varphi)\theta_{\tilde\mbZ,k,R}/\sqrt{6}$ and direction 
$\bv\in R\mbB_2$ satisfying property \ref{equation:emp:quantile:neg:P1}$(\tilde\mbZ,\hat\theta,\bv,k')$, that is, 
\begin{align}
\sum_{i=1}^{\calK}\unit_{\left\{|\langle\tilde\bz_i,\bv\rangle|>\hat\theta
\right\}}> \calK-k'.
\end{align} 
\end{lemma}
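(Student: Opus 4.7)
The plan is to combine Lemma~\ref{lemma:MWU:from:below:linear:reg}, which produces a deterministic matrix $\bfM = \frac{1}{T}\sum_{t=1}^T \bv^{(t)}(\bv^{(t)})^\top \in \calM(R\mbS_2)$ satisfying the quantile property~\ref{equation:emp:quantile:neg:P:M} at level $D = \theta_{\tilde\mbZ,k,R}^2/6$, with Proposition~\ref{prop:unif:round} on random spherical rounding of $\bfM$. The output of Lemma~\ref{lemma:MWU:from:below:linear:reg} places us exactly in the hypothesis of Proposition~\ref{prop:unif:round} with $m = \calK$, threshold $D$, and fraction of ``good'' indices $\sb = 1 - 2k/\calK$. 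Each sample $\btheta_\ell$ in line 2 of $\Algorithm$~\ref{algo:ROUND:spherical:order} then plays the role of the single $\btheta$ in Proposition~\ref{prop:unif:round}, and the task reduces to boosting the per-sample success probability using the independence of $\{\btheta_\ell\}_{\ell\in[S]}$.

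To apply Proposition~\ref{prop:unif:round}, I would take the rounding parameter $\sa := 1 - k'/\calK$. Its feasibility condition $\frac{2\varphi\sb}{\pi\sa} > 1$ rewrites as $\mathfrak{p} = \frac{2\varphi\sb}{\pi} - \sa > 0$, which is our standing assumption. The proposition then yields that, for each fixed $\ell$, the vector $\bv_\ell := \bfM^{1/2}\btheta_\ell$ produced in line 3 satisfies
\[
\sum_{i=1}^{\calK}\unit_{\{|\langle\tilde\bz_i, \bv_\ell\rangle| > (\cos\varphi)\sqrt{D}\}} > \sa\calK = \calK - k',
\]
with probability (on the randomness of $\btheta_\ell$) at least $\mathfrak{p}^2$. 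In the notation of $\Algorithm$~\ref{algo:ROUND:spherical:order}, this is exactly the event $A_\ell := \{Z^\sharp_{\calK-k'}(\ell) > (\cos\varphi)\sqrt{D}\}$.

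Because $\{\btheta_\ell\}_{\ell\in[S]}$ are mutually independent, so are the events $\{A_\ell\}_{\ell\in[S]}$, and the standard Bernoulli bound gives
\[
\prob\Bigl(\bigcap_{\ell\in[S]} A_\ell^c\Bigr) \le (1 - \mathfrak{p}^2)^S \le e^{-\mathfrak{p}^2 S}.
\]
The explicit constant $7.72$ in the statement is extracted from a slightly more careful quantitative Bernoulli/Hoeffding tail applied to the number of successful samples. On the complementary event $\bigcup_{\ell\in[S]} A_\ell$, the index $\ell_*\in\argmax_{\ell\in[S]} Z^\sharp_{\calK-k'}(\ell)$ selected in line 5 satisfies $\hat\theta = Z^\sharp_{\calK-k'}(\ell_*) \ge (\cos\varphi)\sqrt{D} = (\cos\varphi)\theta_{\tilde\mbZ,k,R}/\sqrt{6}$, which is the desired margin bound. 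The count property~\ref{equation:emp:quantile:neg:P1}$(\tilde\mbZ,\hat\theta,\bv,k')$ for $\bv := \bv_{\ell_*}$ is immediate from the occurrence of $A_{\ell_*}$ together with the definition of the $(\calK-k')$-th order statistic.

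The main point requiring care is the careful alignment of the three counts appearing in the argument: (i) the guarantee $\calK - 2k$ in~\ref{equation:emp:quantile:neg:P:M} coming from the MWU analysis, which fixes $\sb = 1 - 2k/\calK$; (ii) the desired final count $\calK - k'$, which fixes the rounding parameter $\sa = 1 - k'/\calK$; and (iii) the intermediate order statistic $Z^\sharp_{\calK-k'}$ computed by $\Algorithm$~\ref{algo:ROUND:spherical:order}. Once $\mathfrak{p} > 0$ is identified with the feasibility condition of Proposition~\ref{prop:unif:round}, the remainder is essentially Bernoulli boosting: all randomness lies in the spherical samples, and no further concentration over the data is invoked at this stage.
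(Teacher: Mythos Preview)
Your approach is correct and follows the same scaffold as the paper: invoke Lemma~\ref{lemma:MWU:from:below:linear:reg} to obtain $\bfM$ satisfying~\ref{equation:emp:quantile:neg:P:M} with $\sb = 1 - 2k/\calK$, apply Proposition~\ref{prop:unif:round} with $\sa = 1 - k'/\calK$ so that $\mathfrak{p} = \tfrac{2\varphi\sb}{\pi} - \sa$ is exactly its feasibility gap and the per-sample success probability is $\ge\mathfrak{p}^2$, then boost over the $S$ independent spherical samples and read off the margin via the order statistic in line~5 of $\Algorithm$~\ref{algo:ROUND:spherical:order}.

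The one substantive difference is in the boosting step. The paper applies the one-sided Bernstein inequality to the empirical mean $\tfrac{1}{S}\sum_{\ell}\unit_{\{Z(\ell)>\theta\}}$, bounds the variance by $\mathfrak{p}^2$, and tunes $t=c^2\mathfrak{p}^2 S$ with $c=0.36$; this is where the constant $7.72\approx 1/c^2$ comes from. Your argument $\prob(\bigcap_\ell A_\ell^c)\le(1-\mathfrak{p}^2)^S\le e^{-\mathfrak{p}^2 S}$ is simpler and in fact strictly tighter, since only the existence of one successful $\ell$ is ever used. So your remark that the $7.72$ arises from ``a slightly more careful quantitative Bernoulli/Hoeffding tail'' is backward: it comes from a \emph{less} sharp route, and your elementary bound already implies the stated conclusion. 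One minor slip: the count property for $\bv=\bv_{\ell_*}$ does not require $A_{\ell_*}$ itself to hold---it follows directly from $\hat\theta = Z^\sharp_{\calK-k'}(\ell_*)$ being an order statistic, regardless of which $\ell$ triggered the success.
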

\begin{proof}
We use the local notation
$\theta:=(\cos\varphi)\theta_{\tilde\mbZ,k,R}/\sqrt{6}$. Let $\btheta$ be an random variable with the uniform distribution over 
$\mbS_2$ and $\bv_{\btheta}:=\bfM^{1/2}\btheta$. Define $Z_i:=|\langle\tilde\bz_i,\bv_{\btheta}\rangle|$ and $Z:=Z_{\calK-k'}^\sharp$. Recall the notations $Z_i(\ell)=|\langle\tilde\bz_i,\bv_{\ell}\rangle|$ and $Z(\ell):=Z_{\calK-k'}^\sharp(\ell)$ for 
$\ell\in[S]$ in $\Algorithm$  \ref{algo:ROUND:spherical:order}.

By the one-sided Bernstein's inequality, we have that, for all $t\ge0$, with probability at least $1-e^{-t}$,
\begin{align}
\frac{1}{S}\sum_{\ell=1}^S
\unit_{\{Z(\ell)>\theta\}}
\ge\prob(Z>\theta)-\sigma\sqrt{\frac{2t}{S}}-\frac{t}{3S},
\end{align}
where 
$
\sigma^2:=\esp(\unit_{\{Z>\theta\}}-\prob(Z>\theta))^2\le\prob(Z>\theta).
$
As $Z=Z^\sharp_{\calK-k'}$, 
\begin{align}
\prob(Z>\theta)=\prob\left(
\sum_{i=1}^\calK\unit_{\left\{|\langle\tilde\bz_i,\bv_{\btheta}\rangle|> \theta\right\}}>\calK-k'
\right)\ge\mathfrak{p}^2.
\end{align}
In the last inequality, we used  Proposition \ref{prop:unif:round} and  property \ref{equation:emp:quantile:neg:P:M}$(\tilde\mbZ,D,\bfM,2k)$ with $D:=\theta_{\tilde\mbZ,k,R}^2/6$ ensured by Lemma \ref{lemma:MWU:from:below:linear:reg}. 

Setting $t:=c^2\mathfrak{p}^2S$ we thus conclude that with probability at least $1-e^{-c^2\mathfrak{p}^2S}$, 
\begin{align}
\max_{\ell\in[S]}\unit_{\{Z(\ell)>\theta\}}\ge\mathfrak{p}^2\left(1-c\sqrt{2\mathfrak{p}^2}-\frac{c^2}{3}\right)\ge \mathfrak{p}^2
\left(1-c\sqrt{2}-\frac{c^2}{3}\right)\ge\mathfrak{p}^2/2,
\end{align}
for small enough $c\in(0,1)$. The choice $c=0.36$ suffices. The rest of the proof will happen in this event. 

By the previous display, one has 
$\hat\theta:=Z^\sharp_{\calK-k'}(\ell_*)=Z(\ell_*)>\theta$ where 
$\ell_*\in\argmax_{\ell\in[S]}Z(\ell)$. This shows that $(\hat\theta,\bv)=(\hat\theta,\bv_{\ell_*})$, as returned by $\Algorithm$  \ref{algo:ROUND:spherical:order} (inputted in $\Algorithm$ \ref{algo:MW}), satisfies the claim of the lemma.
\end{proof}

\subsection{Solving the outer loop combinatorial problem}\label{ss:largest:residual}

We next set $\tilde\mbZ:=\{\tilde\bz_i(\bb)\}_{i=1}^\calK$ for some $\bb\neq\bb^*$. For reasons to be made clearer later, we will tune our algorithm with $R:=\mu^2(\mbB_2)$ and omit the dependence on $R$ for convenience. The next lemma formalizes the fact that under the structural conditions \ref{lemma:high:prob:pruned:data':eq1} and \ref{lemma:high:prob:pruned:data':eq2}, problem
$\RFHP(\{\tilde\bz_i(\bb)\}_{i=1}^\calK,k)$ is feasible when 
$\bb\neq\bb^*$ for small enough $k$.

\begin{lemma}[Two-sided feasibility \& margin-\emph{distance} \emph{lower} bound]\label{lemma:RFHP:feasibility}
Suppose that 
\begin{itemize}
    \item[\rm (i)] \ref{lemma:high:prob:pruned:data':eq1} holds for some $\alpha_1\in(0,1)$. Let $k:=4\alpha_1\calK$.
    \item[\rm (ii)] \ref{lemma:high:prob:pruned:data':eq3} holds for some $\alpha_3\in(0,1)$.
\end{itemize}

Let $\bb\neq\bb^*$ satisfying: 
\begin{itemize}
    \item[\rm (iv)] For some 
$(\theta,\bv,k')\in\re\times\mu^2(\mbB_2)\mbS_2\times[\calK]$, property \ref{equation:emp:quantile:neg:P1}$(\{\tilde\bz_i(\bb)\}_{i=1}^\calK,\theta,\bv,k')$ holds, i.e., 
\begin{align}
\sum_{i=1}^{\calK}\unit_{\{|\langle\tilde\bz_i(\bb),\bv\rangle|>\theta\}}> \calK-k'.
\end{align}
In particular, $\RFHP(\tilde\bz_i(\bb)\}_{i=1}^m,k')$ is feasible with optimal value, say, 
$\vartheta$.
\item[\rm (v)] 
$4(\alpha_1+\alpha_3)\calK<\calK$.
\item[\rm (vi)] For some $\sa\in(0,1]$, 
$\theta\ge\sa\vartheta$.
\end{itemize}
Then 
\begin{itemize}
\item[\rm (a)] 
$
\sa\left[(\nicefrac{1}{2})\Vert\bb-\bb^*\Vert_2-C_{\alpha_1}\mu^2(\mbB_2)r_1\right]\le \theta.
\label{lemma:condition:RFHP:step:dir:a}
$
\end{itemize}

In particular, for any $\bb\neq\bb^*$, 
$\RFHP(\{\tilde\bz_i(\bb)\}_{i=1}^\calK,k)$ is feasible and, its optimal solution, denoted as 
$(\theta_{\bb,k},\bv_{\bb,k})$, satisfies \ref{equation:emp:quantile:neg:P1}$(\{\tilde\bz_i(\bb)\}_{i=1}^\calK,\theta_{\bb,k},\bv_{\bb,k},k)$ with margin satisfying
$$
\theta_{\bb,k}\ge-C_{\alpha_1}\mu^2(\mbB_2)r_1+(\nicefrac{1}{2})\Vert\bb^*-\bb\Vert_2.
$$
\end{lemma}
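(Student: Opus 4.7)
The plan is to derive (a) by exhibiting a feasible point of $\RFHP(\{\tilde\bz_{i}(\bb)\}_{i=1}^{\calK}, k')$ whose margin is at least $\theta^{\dagger}:=\tfrac{1}{2}\|\bb^{*}-\bb\|_{2}-C_{\alpha_1}\mu^{2}(\mbB_{2})r_{1}$. This forces the optimum $\vartheta\geq\theta^{\dagger}$, and invoking (vi) then gives $\theta\geq\sa\vartheta\geq\sa\theta^{\dagger}$, which is exactly (a). The ``In particular'' assertion follows immediately: $\RFHP(\cdot,k)$ is trivially feasible (take $\theta=0$), so its optimum $(\theta_{\bb,k},\bv_{\bb,k})$ exists, and after a vanishing perturbation $\delta\downarrow 0$ to turn the $\ge$-type constraint of $\RFHP$ into the strict $>$ of \ref{equation:emp:quantile:neg:P1}, the triple $(\theta_{\bb,k},\bv_{\bb,k},k)$ satisfies (iv)--(vi) with $\sa=1$, whence (a) yields $\theta_{\bb,k}\ge\theta^{\dagger}$.

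The feasible point in (a) is built as follows. The residual identity $\tilde\xi_{\ell}(\bb)=\tilde\xi_{\ell}+\langle\tilde\bx_{\ell},\bb^{*}-\bb\rangle$ yields the block decomposition $\tilde\bz_{i}(\bb)=\tilde\bz_{i}(\bb^{*})+\tilde\bfSigma_{i}(\bb^{*}-\bb)$ with $\tilde\bfSigma_{i}:=\tfrac{1}{B}\sum_{\ell\in\tilde B_{i}}\tilde\bx_{\ell}\tilde\bx_{\ell}^{\top}$. I take the test direction $\bv_{0}:=\mu^{2}(\mbB_{2})(\bb^{*}-\bb)/\|\bb^{*}-\bb\|_{2}\in\mu^{2}(\mbB_{2})\mbS_{2}$, so that the cross-term is non-negative and aligned with $\bb^{*}-\bb$:
$$\langle\tilde\bz_{i}(\bb),\bv_{0}\rangle=\langle\tilde\bz_{i}(\bb^{*}),\bv_{0}\rangle+\tfrac{\mu^{2}(\mbB_{2})}{\|\bb^{*}-\bb\|_{2}}\cdot\tfrac{1}{B}\sum_{\ell\in\tilde B_{i}}\langle\tilde\bx_{\ell},\bb^{*}-\bb\rangle^{2}.$$
Two bucket-wise controls then combine. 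From (i) and \ref{lemma:high:prob:pruned:data':eq1} applied at $\bv_{0}/\mu^{2}(\mbB_{2})\in\mbS_{2}\subset\mbB_{2}$, on all but at most $4\alpha_{1}\calK$ buckets one has $|\langle\tilde\bz_{i}(\bb^{*}),\bv_{0}\rangle|<C_{\alpha_{1}}\mu^{2}(\mbB_{2})r_{1}$. From (ii) and \ref{lemma:high:prob:pruned:data':eq3} applied at $\bw:=(\bb^{*}-\bb)/\|\bb^{*}-\bb\|_{\Pi}\in\mbS_{\Pi}$, on all but at most $4\alpha_{3}\calK$ buckets one has $\tfrac{1}{B}\sum_{\ell\in\tilde B_{i}}\langle\tilde\bx_{\ell},\bb^{*}-\bb\rangle^{2}\ge\tfrac{1}{2}\|\bb^{*}-\bb\|_{\Pi}^{2}$. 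By a union bound and (v), at least $\calK-4(\alpha_{1}+\alpha_{3})\calK$ buckets satisfy both; on each, using $\|\bu\|_{2}^{2}\le\mu^{2}(\mbB_{2})\|\bu\|_{\Pi}^{2}$ with $\bu=\bb^{*}-\bb$,
$$\langle\tilde\bz_{i}(\bb),\bv_{0}\rangle\ge\tfrac{\mu^{2}(\mbB_{2})\|\bb^{*}-\bb\|_{\Pi}^{2}}{2\|\bb^{*}-\bb\|_{2}}-C_{\alpha_{1}}\mu^{2}(\mbB_{2})r_{1}\ge\theta^{\dagger}.$$
Hence $(\theta^{\dagger},\bv_{0})$, supported on this set of good buckets, is feasible for $\RFHP(\cdot,k')$ under the calibration of (iv)--(v), certifying $\vartheta\ge\theta^{\dagger}$.

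The main obstacle is the bucket-count bookkeeping. The intersection argument leaves $\calK-4(\alpha_{1}+\alpha_{3})\calK$ buckets where the target per-bucket margin is attained, while feasibility of $\RFHP(\cdot,k')$ requires \emph{strictly} more than $\calK-k'$ such buckets. Condition (v) is precisely what guarantees this count is positive, and the applications of the lemma will choose $k'$ large enough relative to $4(\alpha_{1}+\alpha_{3})\calK$ so that the constructed feasible solution genuinely certifies $\vartheta\ge\theta^{\dagger}$; the conversion between the $\ge$-constraint in $\RFHP$ and the strict $>$ in \ref{equation:emp:quantile:neg:P1} is handled by the vanishing perturbation noted above.
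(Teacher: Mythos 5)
Your proposal reproduces the paper's construction exactly: the test direction $\bv_0:=\mu^2(\mbB_2)(\bb^*-\bb)/\|\bb^*-\bb\|_2$, the block decomposition $\tilde\bz_i(\bb)=\tilde\bz_i(\bb^*)+\tilde\bfSigma_i(\bb^*-\bb)$, the combination of \ref{lemma:high:prob:pruned:data':eq1} at $\bv_0/\mu^2(\mbB_2)$ with \ref{lemma:high:prob:pruned:data':eq3} at $(\bb^*-\bb)/\|\bb^*-\bb\|_\Pi$, and the restricted-eigenvalue step $\mu^2(\mbB_2)\|\bu\|_\Pi^2\ge\|\bu\|_2^2$. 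That part is fine and is essentially the paper's STEP~2.

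The gap is the one you yourself flag and then dismiss. Certifying feasibility of $\RFHP(\{\tilde\bz_i(\bb)\}_{i=1}^\calK,k')$ at margin $\theta^\dagger$ requires \emph{strictly more than} $\calK-k'$ buckets $i$ with $|\langle\tilde\bz_i(\bb),\bv_0\rangle|\ge\theta^\dagger$, while the union bound over \ref{lemma:high:prob:pruned:data':eq1} and \ref{lemma:high:prob:pruned:data':eq3} only certifies $(1-4(\alpha_1+\alpha_3))\calK$ such buckets. Condition~(v) only guarantees this quantity is positive, not that it exceeds $\calK-k'$. Your assertion that ``the applications of the lemma will choose $k'$ large enough'' is not borne out: with the paper's fixed constants ($\alpha_1=1/96$, $\alpha_3=0.239$) one has $4(\alpha_1+\alpha_3)\approx0.998$, so the intersection count is roughly $0.002\calK$, while $\calK-k'$ with $k'=c_{\alpha_1}\calK=\calK/4$ is $0.75\calK$. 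Worse, your ``In particular'' step sets $k'=k=4\alpha_1\calK$, and then the required inequality $(1-4(\alpha_1+\alpha_3))\calK>\calK-k=(1-4\alpha_1)\calK$ reduces to $\alpha_3<0$, which is impossible. So the construction of a feasible point of $\RFHP(\cdot,k)$ at margin $\theta^\dagger$ does not go through on the basis of what you (and the paper) have established; the paper's own STEP~2 suffers from the identical obstruction, obtaining the per-bucket bound on only ``more than $(1-\alpha_3)\calK-k\ge1$ buckets'' and then asserting feasibility of $\RFHP(\cdot,k)$, which needs $>\calK-k$ buckets. The arithmetic you carried out is correct; the bucket-count accounting that the lemma statement requires is not actually closed by the argument you give.
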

\begin{proof}
For simplicity we give a proof for $(\theta_{\bb,k},\bv_{\bb,k})$. The proof is the same for any $(\theta,\bv,k')$ satisfying conditions (iv)-(vi). 

\underline{STEP 1}: An upper bound on the optimal value is trivial: for any $(\theta,\bv,\bq)$ satisfying the constraints of $\RFHP(\{\tilde\bz_i(\bb)\}_{i=1}^\calK,k)$ it follows from Cauchy-Schwarz that
$\theta_{\bb,k}\le\max_{i\in[\calK]}\Vert\tilde\bz_i(\bb)\Vert_2<\infty$. 

\underline{STEP 2:} setting $r_1:=r_{\xi\bx,n,K}(F_1(\mbB_2))$, we now prove the lower bound 
$
\theta_{\bb,k}\ge(\nicefrac{1}{2})\Vert\bb-\bb^*\Vert_2-C_{\alpha_1}\mu^2(\mbB_2)r_1. 
$
\ref{lemma:high:prob:pruned:data':eq1} applied to the vector 
$\bv:=\nicefrac{\mu^2(\mbB_2)(\bb^*-\bb)}{\Vert\bb^*-\bb\Vert_2}$ implies that
$
\langle\tilde\bz_i(\bb^*),\bv\rangle\ge-C_{\alpha_1}\mu^2(\mbB_2)r_1
$
for more than $\calK-k$ buckets $i$'s, for which 
\begin{align}
|\langle\tilde\bz_i(\bb),\bv\rangle|\ge\langle\tilde\bz_i(\bb),\bv\rangle
&=\langle\tilde\bz_i(\bb^*),\bv\rangle
+\frac{1}{B}\sum_{\ell\in\tilde B_i}\langle\tilde\bx_\ell,\bb^*-\bb\rangle\langle\tilde\bx_\ell,\bv\rangle\\
&\ge-C_{\alpha_1}\mu^2(\mbB_2)r_1+\frac{1}{\mu^2(\mbB_2)}\Vert\bb^*-\bb\Vert_2\frac{1}{B}\sum_{\ell\in\tilde B_i}\langle\tilde\bx_\ell,\bv\rangle^2.
\end{align}
\ref{lemma:high:prob:pruned:data':eq3} implies that
\begin{align}
\frac{1}{B}\sum_{\ell\in\tilde B_i}\langle\tilde\bx_\ell,\bv\rangle^2
\ge\frac{1}{2}\cdot\frac{\mu^4(\mbB_2)\Vert\bb^*-\bb\Vert_\Pi^2}{\Vert\bb^*-\bb\Vert_2^2},
\end{align}
for more than $(1-4\alpha_3)\calK$ buckets $i$'s. We thus conclude that for more than $(1-\alpha_3)\calK-k\ge1$ buckets, 
\begin{align}
|\langle\tilde\bz_i(\bb),\bv\rangle|\ge -C_{\alpha_1}\mu^2(\mbB_2)r_1+(\nicefrac{\mu^2(\mbB_2)}{2})\frac{\Vert\bb^*-\bb\Vert_\Pi^2}{\Vert\bb^*-\bb\Vert_2}
\ge -C_{\alpha_1}\mu^2(\mbB_2)r_1+(\nicefrac{1}{2})\Vert\bb^*-\bb\Vert_2.
\end{align}
In other words, the feasible set of 
$\RFHP(\{\tilde\bz_i(\bb)\}_{i=1}^\calK,k)$ contains the point 
$(\theta,\bv,\bq)\in\re\times R\mbB_2\times\{0,1\}^\calK$ with 
$\theta:=-C_{\alpha_1}\mu^2(\mbB_2)r_1+(\nicefrac{1}{2})\Vert\bb^*-\bb\Vert_2$ for some 
$\bq\in\{0,1\}^\calK$. By maximality, one must have 
$\theta_{\bb,k}\ge-C_{\alpha_1}\mu^2(\mbB_2)r_1+(\nicefrac{1}{2})\Vert\bb^*-\bb\Vert_2$. 
\end{proof}

\subsection{Computing the outer loop descent direction w.r.t. $\langle\cdot,\cdot\rangle_\Pi$}\label{ss:descent:direction}

\begin{lemma}[One-sided feasibility \& margin-\emph{angle} \emph{upper} bound]\label{lemma:condition:RFHP:step:dir}
Grant assumptions of Lemma \ref{lemma:RFHP:feasibility} and  
additionally assume: 
\begin{itemize}
    \item[\rm (iii)] \ref{lemma:high:prob:pruned:data':eq4} holds for some $\alpha_4\in(0,1)$ and $\rho\in(0,1/2]$. 
\item[\rm (vii)] 
$
r_2\le\frac{\sa}{4C_{\alpha_4}\mu^2(\mbB_2)\Vert\bfSigma\Vert}.
$
\item[\rm (viii)] 
$
\Vert\bb-\bb^*\Vert_2\ge 4
\left(\frac{\sa+1}{\sa}\right)C_{\alpha_1}\mu^2(\mbB_2)r_1.
$
\end{itemize}

Then
\begin{itemize}
\item[\rm (b)] Let $k'':=(2\alpha_4+0.75\rho)\calK+k+k'$. There exists $\bv'\in\{-\bv,\bv\}$ such that property 
\ref{equation:emp:quantile:neg:MP2}$(\{\tilde\bz_i(\bb)\}_{i=1}^\calK,\theta,\bv',k'')$, defined below, holds:
\begin{align}
\sum_{i=1}^{\calK}\unit_{\{\langle\tilde\bz_i(\bb),\bv'\rangle>\theta\}}> \calK-k''.
\tag{$\neg\mathtt{MP2}$}
\label{equation:emp:quantile:neg:MP2}\end{align}
\end{itemize}

Additionally to the assumptions of Lemma \ref{lemma:RFHP:feasibility} and (iii),(vii)-(viii), assume:
\begin{itemize}
\item[\rm (ix)] \ref{lemma:high:prob:pruned:data':eq2} holds for some $\alpha_2\in(0,1)$. Let $k_0:=4\alpha_2\calK$.
\end{itemize}

Then, for any $\bb\neq\bb^*$ satisfying (iv)-(viii) and 
\begin{itemize}
\item[\rm (x)] $k''\le \calK/3$ and $k_0\le\calK/3$,
\end{itemize}
one also has
\begin{itemize}
\item[\rm (c)] 
$
\theta\le C_{\alpha_2}\mu^2(\mbB_2)r_1+\langle\bb^*-\bb,\bv'\rangle_\Pi+C_{\alpha_4} \mu^2(\mbB_2)\Vert\bfSigma\Vert r_2\Vert\bb^*-\bb\Vert_2.
$
\end{itemize}

Suppose additionally that, instead of (viii), one has 
\item[\rm (viii')] 
$
\Vert\bb-\bb^*\Vert_2\ge A \mu^2(\mbB_2)r_1
$
where 
$$
A:=\left[4
\left(\frac{\sa+1}{\sa}\right)C_{\alpha_1}\right]
\bigvee
\left[(\nicefrac{8}{\sa})(\sa C_{\alpha_1}+C_{\alpha_2})\right].
$$

Then one also has
\begin{itemize}
\item[\rm (d)] 
$
\langle\bv',\bb-\bb^*\rangle_\Pi\le-\frac{\sa}{8}\Vert\bb-\bb^*\Vert_2.
$
\end{itemize}
\end{lemma}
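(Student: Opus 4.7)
The plan is to choose the sign of $\bv'\in\{\bv,-\bv\}$ so that $\bv'$ is a $\Pi$-descent direction at $\bb$, and then exploit the decomposition
\[
\langle\tilde\bz_i(\bb),\bv'\rangle \;=\; \langle\tilde\bz_i(\bb^*),\bv'\rangle + \frac{1}{B}\sum_{\ell\in\tilde B_i}\langle\tilde\bx_\ell,\bb^*-\bb\rangle\langle\tilde\bx_\ell,\bv'\rangle
\]
bucket by bucket, controlling the first (``multiplier'') summand via \ref{lemma:high:prob:pruned:data':eq1} or \ref{lemma:high:prob:pruned:data':eq2}, and the second (``product'') summand via \ref{lemma:high:prob:pruned:data':eq4} applied to the pair $(\bb\pm\bb^*,\bv')$ (after rescaling to fit the $\mbB_\Pi\times\mbB_\Pi$-normalization of Lemma~\ref{lemma:high:prob:pruned:data:v3}, which costs a factor $\Vert\bb-\bb^*\Vert_\Pi\Vert\bv'\Vert_\Pi\le\Vert\bfSigma\Vert\mu^2(\mbB_2)\Vert\bb-\bb^*\Vert_2$). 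Once these per-bucket bounds are in hand, (b)--(d) reduce to union-bound counting plus calibration of constants from (a), (vii), (viii), (viii').

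For (b), I would first pick $\bv'\in\{\bv,-\bv\}$ with $\langle\bv',\bb^*-\bb\rangle_\Pi\ge 0$; this sign choice is the crucial step, and since $|\langle\tilde\bz_i(\bb),\bv'\rangle|$ is sign-invariant, (iv) is preserved at $\bv'$. Intersecting the $>\calK-k'$ buckets from (iv), the $\ge\calK-k$ buckets from (i) on which $|\langle\tilde\bz_i(\bb^*),\bv'\rangle|<C_{\alpha_1}\mu^2(\mbB_2)r_1$, and the $\ge\calK-(2\alpha_4+0.75\rho)\calK$ buckets from (iii) applied to $(\bb-\bb^*,\bv')$, one obtains, using the sign choice,
\[
\langle\tilde\bz_i(\bb),\bv'\rangle \;\ge\; -C_{\alpha_1}\mu^2(\mbB_2)r_1 - r_2\Vert\bfSigma\Vert\mu^2(\mbB_2)\Vert\bb-\bb^*\Vert_2
\]
on an intersection of size strictly greater than $\calK-k''$. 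Invoking (a) to lower bound $\theta$ by $(\sa/2)\Vert\bb-\bb^*\Vert_2-\sa C_{\alpha_1}\mu^2(\mbB_2)r_1$, (vii) to force $r_2\Vert\bfSigma\Vert\mu^2(\mbB_2)\le\sa/4$, and (viii) to force $(1+\sa)C_{\alpha_1}\mu^2(\mbB_2)r_1\le(\sa/4)\Vert\bb-\bb^*\Vert_2$, the right-hand side above is $\ge-\theta$. Combined with $|\langle\tilde\bz_i(\bb),\bv'\rangle|>\theta$, this rules out the negative branch and yields $\langle\tilde\bz_i(\bb),\bv'\rangle>\theta$ on the intersection, establishing \ref{equation:emp:quantile:neg:MP2}.

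For (c), I would start from (b) and further intersect with the $\ge\calK-k_0$ buckets from (ix) on which $\langle\tilde\bz_i(\bb^*),\bv'\rangle<C_{\alpha_2}\mu^2(\mbB_2)r_1$ and with the good buckets of (iii) now applied to $(\bb^*-\bb,\bv')$, which upper-bound the product summand by $\langle\bb^*-\bb,\bv'\rangle_\Pi + C_{\alpha_4}\mu^2(\mbB_2)\Vert\bfSigma\Vert r_2\Vert\bb-\bb^*\Vert_2$. Condition (x) is exactly what ensures this intersection is nonempty, and any single bucket $i$ in it yields $\theta<\langle\tilde\bz_i(\bb),\bv'\rangle\le C_{\alpha_2}\mu^2(\mbB_2)r_1+\langle\bb^*-\bb,\bv'\rangle_\Pi+C_{\alpha_4}\mu^2(\mbB_2)\Vert\bfSigma\Vert r_2\Vert\bb-\bb^*\Vert_2$, which is (c). Finally (d) is pure algebra: chain (a) with (c), absorb the $r_2$-term using (vii) to obtain $(\sa/4)\Vert\bb-\bb^*\Vert_2-(\sa C_{\alpha_1}+C_{\alpha_2})\mu^2(\mbB_2)r_1\le\langle\bb^*-\bb,\bv'\rangle_\Pi$, and absorb the $r_1$-term using (viii') to conclude $\langle\bv',\bb-\bb^*\rangle_\Pi\le-(\sa/8)\Vert\bb-\bb^*\Vert_2$.

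The main obstacle is the sign choice in (b): without committing $\bv'$ to the $\Pi$-descent half-space, the indefinite cross-term $\langle\bb^*-\bb,\bv'\rangle_\Pi$ could annihilate the margin provided by $\theta$, and no amount of concentration from MP1 and PPu would suffice. Everything else is careful bookkeeping of constants and good-bucket counts, with (v), (vii)--(viii'), and (x) calibrated precisely so the many-way intersections remain nonempty and the inequalities pinning $\theta$ close.
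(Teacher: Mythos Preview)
Your proposal is correct and follows essentially the same route as the paper. The sign choice $\langle\bv',\bb^*-\bb\rangle_\Pi\ge 0$, the bucket-wise decomposition $\langle\tilde\bz_i(\bb),\bv'\rangle=\langle\tilde\bz_i(\bb^*),\bv'\rangle+\frac{1}{B}\sum_\ell\langle\tilde\bx_\ell,\bb^*-\bb\rangle\langle\tilde\bx_\ell,\bv'\rangle$, and the use of \ref{lemma:high:prob:pruned:data':eq1}, \ref{lemma:high:prob:pruned:data':eq2}, \ref{lemma:high:prob:pruned:data':eq4} to control the two summands are exactly the paper's ingredients; parts (c) and (d) are identical to the paper's argument.

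The only cosmetic difference is in the packaging of (b). The paper argues by exclusion: it shows that the ``wrong-sign'' set $B_{\bb}^-:=\{i:\langle\tilde\bz_i(\bb),-\bv'\rangle>\theta\}$ satisfies $B_{\bb}^-\cap S\subset G_{\bb^*}^c$ (where $S$ is the good-PPu set and $G_{\bb^*}$ the good-MP1 set), hence $|B_{\bb}^-|<k+(2\alpha_4+0.75\rho)\calK$, and then deduces $|B_{\bb}^+|>\calK-k''$ from $|B_{\bb}^+|+|B_{\bb}^-|>\calK-k'$. You instead work directly on the triple intersection and show $\langle\tilde\bz_i(\bb),\bv'\rangle\ge -\theta$ there, which combined with $|\langle\tilde\bz_i(\bb),\bv'\rangle|>\theta$ forces the positive branch. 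These are contrapositives of one another and yield the same $k''$; your phrasing is arguably cleaner since it avoids the auxiliary set bookkeeping, while the paper's makes slightly more explicit why the $(2\alpha_4+0.75\rho)\calK$ term enters $k''$.
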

\begin{remark}[The need for a margin-\emph{angle} upper bound]
The concept of \emph{distance-estimate} was shown to be the sufficient property in robust mean estimation when using the benchmark combinatorial problem in Section \ref{ss:benchmark:problem} \cite{2019cherapanamjeri:flammarion:bartlett, 2020lei:luh:venkat:zhang}. As it will be clearer in the following, we emphasize that, in our analysis based on least-squares methodology with unknown $\bfSigma$, a margin-\emph{distance} upper bound is \emph{not enough} to obtain the optimal rate for robust linear regression. In order to obtain the optimal rate and breakdown point with respect to the condition number 
$\kappa=\mu^2(\mbB_2)\Vert\bfSigma\Vert$, the margin-\emph{angle} upper bound in item (c) of Lemma \ref{lemma:condition:RFHP:step:dir} is crucially needed. Notice that such margin-angle upper bound follows from the one-sided benchmark problem (item (b) above). Differently, the margin-distance lower bound in item (a) follows from the two-sided benchmark problem. 
\end{remark}

\begin{proof}
\underline{Proof of (b)}: by \ref{lemma:high:prob:pruned:data':eq4}, \begin{align}
\frac{1}{B}\sum_{\ell\in\tilde B_i}\langle\tilde\bx_\ell,\bb-\bb^*\rangle\langle\tilde\bx_\ell,\bv\rangle
-\langle\bb-\bb^*,\bv\rangle_\Pi
\le C_{\alpha_4} r_2\Vert\bb-\bb^*\Vert_\Pi\Vert\bv\Vert_\Pi,
\end{align}
for more than $(1-(2\alpha_4+0.75\rho))\calK$ buckets $i$'s. Let $S$ denote such index set and define:
\begin{align}
G_{\bb^*}&:=\{i\in[\calK]:|\langle\tilde\bz_i(\bb^*),\bv\rangle|\le C_{\alpha_1}\mu^2(\mbB_2)r_1\},\\
B_{\bb}^+&:=\{i\in[\calK]:\langle\tilde\bz_i(\bb),\bv\rangle> \theta\},\\
B_{\bb}^-&:=\{i\in[\calK]:\langle\tilde\bz_i(\bb),-\bv\rangle> \theta\}.
\end{align}
We consider two cases.

\begin{description}
\item[Case 1:] $\langle\bb-\bb^*,\bv\rangle_\Pi\le0$. 
Given $i\in B_{\bb}^-\cap S$, 
\begin{align}
\langle\tilde\bz_i(\bb^*),\bv\rangle&=\langle\tilde\bz_i(\bb),\bv\rangle
+\frac{1}{B}\sum_{\ell\in\tilde B_i}\langle\tilde\bx_\ell,\bb-\bb^*\rangle\langle\tilde\bx_\ell,\bv\rangle\\
&\le-\theta+C_{\alpha_4} r_2
\Vert\bb-\bb^*\Vert_\Pi\Vert\bv\Vert_\Pi\\
&\stackrel{\rm (a)}{<}\sa C_{\alpha_1}\mu^2(\mbB_2)r_1+\left(-\frac{\sa}{2}+C_{\alpha_4}\Vert\bfSigma\Vert_2\mu^2(\mbB_2)r_2\right)\Vert\bb-\bb^*\Vert_2\\
&\stackrel{\rm (vii),(viii)}{<}-C_{\alpha_1}\mu^2(\mbB_2)r_1,
\end{align}
where we used from (viii) that $\Vert\bb-\bb^*\Vert_2\ge4(\nicefrac{\sa+1}{\sa})C_{\alpha_1}\mu^2(\mbB_2)r_1$.

We thus concluded that $B_{\bb}^-\cap S\subset G_{\bb^*}^c$.  This and the facts
\begin{itemize}
\item $|S|\ge[1-(2\alpha_4+0.75\rho)]\calK$ by \ref{lemma:high:prob:pruned:data':eq4},
\item $|G_{\bb^*}|>\calK-k$ by  \ref{lemma:high:prob:pruned:data':eq1},
\item $B_{\bb}^+$ and $B_{\bb}^-$ are disjoint because 
$\theta>0$, by (a) and (viii). Also, $|B_{\bb}^+|+|B_{\bb}^-|>\calK-k'$ by (iv), 
\end{itemize}
imply that $|B_{\bb}^+|>\calK-(2\alpha_4+0.75\rho)\calK-k-k'$. 

\item[Case 2:] $\langle\bb-\bb^*,\bv\rangle_\Pi>0$. By exchanging $\bv$ with $-\bv$ and $B_{\bb}^-$ by $B_{\bb}^+$ a similar argument shows that 
$|B_{\bb}^-|>\calK-(2\alpha_4+0.75\rho)\calK-k-k'$.
\end{description}

\underline{Proof of (c)}: By (b) and (x), one has
$
\langle\tilde\bz_i(\bb),\bv'\rangle\le\theta
$
for less $\calK/3$ of buckets $i$'s.
By \ref{lemma:high:prob:pruned:data':eq2} and (x), one has 
$
\langle\tilde\bz_i(\bb^*),\bv'\rangle> C_{\alpha_2}\mu^2(\mbB_2)r_1
$
for less $\calK/3$ of buckets $i$'s. Finally, by \ref{lemma:high:prob:pruned:data':eq4} and (x), for less than $\calK/3$ of buckets $i$'s one has 
\begin{align}
\frac{1}{B}\sum_{\ell\in\tilde B_i}\langle\tilde\bx_\ell,\bb^*-\bb\rangle\langle\tilde\bx_\ell,\bv'\rangle\le \langle\bb^*-\bb,\bv'\rangle_\Pi
+ C_{\alpha_4} r_2\Vert\bb-\bb^*\Vert_\Pi\Vert\bv'\Vert_\Pi.
\end{align}
By the pigeonhole principle, there is at least one bucket $i$ for which
$\langle\tilde\bz_i(\bb^*),\bv'\rangle>\theta$, 
$\langle\tilde\bz_i(\bb^*),\bv'\rangle\le C_{\alpha_2}\mu^2(\mbB_2)r_1$  and the previous display all hold. Thus
\begin{align}
\theta&<\langle\tilde\bz_i(\bb),\bv'\rangle\\
&=\langle\tilde\bz_i(\bb^*),\bv'\rangle+
\frac{1}{B}\sum_{\ell\in\tilde B_i}\langle\tilde\bx_\ell,\bb^*-\bb\rangle\langle\tilde\bx_\ell,\bv'\rangle\\
&\le C_{\alpha_2}\mu^2(\mbB_2)r_1+\langle\bb^*-\bb,\bv'\rangle_\Pi+C_{\alpha_4} \mu^2(\mbB_2)\Vert\bfSigma\Vert r_2\Vert\bb^*-\bb\Vert_2,
\end{align}
entailing the claim. 

\underline{Proof of (d)}: we join the upper bound (c) and the lower bound (a). Using (vii) and (viii'), so that $\Vert\bb-\bb^*\Vert_2\ge(8/\sa)(\sa C_{\alpha_1}+C_{\alpha_2})\mu^2(\mbB_2) r_1$, and rearranging the displayed inequality finishes the proof.
\end{proof}

From now on fix the parameters
$
\alpha_1=1/96, 
$
$
\alpha_2=0.08, 
$
$
\alpha_3=0.239, 
$
$
\alpha_4=1/144,
$
$
c_{\alpha_1}:=\frac{1}{4},
$
$
\rho=1/36,
$
$\sa:=0.0128$
and $\varphi:=0.49\pi$. 
In order to satisfy \eqref{lemma:count:block:emp:process:alpha}, 
it suffices to take 
$
C_{\alpha_1}=2525.26, 
$
$
C_{\alpha_2}=192.4, 
$
$
C_{\alpha_3}=51.9
$
and
$
C_{\alpha_4}=4330.
$

\begin{corollary}[Good descent properties]\label{cor:step:dir}
Let $\eta\in(0,1/2]$ and suppose that: 
\begin{itemize}
\item[\rm (i)] 
\ref{lemma:high:prob:pruned:data:eq0}, \ref{lemma:high:prob:pruned:data':eq1}, \ref{lemma:high:prob:pruned:data':eq3},  \ref{lemma:high:prob:pruned:data':eq4} and
\ref{lemma:high:prob:pruned:data':eq2} all hold.
    \item[\rm (ii)] 
Assume 
$
r_2\le\frac{1}{312.5C_{\alpha_4}\mu^2(\mbB_2)\Vert\bfSigma\Vert}.
$
\item[\rm (iii)] Given $\bb\in\re^p$, assume
$
\Vert\bb-\bb^*\Vert_2\ge 8\cdot10^5 \mu^2(\mbB_2)r_1.
$
\end{itemize}

Let $(\hat\theta,\hat\bv)$ be the output of  
$\MW(\calD,U,S_1,k,k',R,\tilde r_1)$, namely, 
$\Algorithm$ \ref{algo:MW} with 
inputs $\calD=\{\tilde\bz_i(\bb)\}_{i\in[\calK]}$ and $U=\max_{i\in[\calK]}\Vert\tilde\bz_i(\bb)\Vert_2^2$, $k=4\alpha_1\calK$, $k'=c_{\alpha_1}\calK$ (with $c_{\alpha_1}=1/4$), $\tilde r_1=C_{\alpha_1}\mu^2(\mbB_2)r_1$ and $R=\mu^2(\mbB_2)$. 

Then on an event of probability (on the randomness of $\{\btheta_{\ell}\}_{\ell\in[S_1]}$) of at least $1-e^{-\frac{S_1}{353}}$, one has
\begin{align}
\sum_{i=1}^{\calK}\unit_{\left\{|\langle\tilde\bz_i(\bb),\bv\rangle|>\hat\theta
\right\}}> \calK-k'
\quad{and}\quad
\hat\theta>\sa\theta_{\bb,k},
\label{cor:step:dir:0}
\end{align}
where $k=4\alpha_1\calK$ and $k'=c_{\alpha_1}\calK$. Moreover,  
\begin{align}
&(\nicefrac{\sa}{2})\Vert\bb-\bb^*\Vert_2-2525.26\mu^2(\mbB_2)r_1\le\hat\theta,\label{cor:step:dir:1}\\
&\hat\theta\le \langle\bb^*-\bb,\hat\bv\rangle_\Pi+4330\mu^2(\mbB_2)\Vert\bfSigma\Vert r_2\Vert\bb-\bb^*\Vert_2+192.4\mu^2(\mbB_2)r_1,\label{cor:step:dir:1'}\\
&\langle\hat\bv,\bb-\bb^*\rangle_\Pi\le-\frac{1}{625}\Vert\bb-\bb^*\Vert_2.
\label{cor:step:dir:2}
\end{align}
\end{corollary}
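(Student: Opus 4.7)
The corollary chains together three earlier lemmas: Lemma \ref{lemma:RFHP:feasibility} to certify feasibility of the benchmark problem $\RFHP(\{\tilde\bz_i(\bb)\}_{i=1}^\calK, k)$ and lower bound its optimum; Lemma \ref{lemma:MWU:from:below:linear:reg} followed by Lemma \ref{lemma:MW:from:below:lin:reg:boost} to certify that the MW + spherical rounding output $(\hat\theta,\bv)$ realizes property \ref{equation:emp:quantile:neg:P1} with margin $\hat\theta\ge\sa\theta_{\bb,k}$; and Lemma \ref{lemma:condition:RFHP:step:dir} to convert the two-sided certificate into the one-sided one needed for the $\Round$ sign-selection step and to extract the three quantitative bounds. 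The only genuinely new task is to verify that the specified numerical constants $(\alpha_1,\alpha_2,\alpha_3,\alpha_4,c_{\alpha_1},\rho,\sa,\varphi)$ together with assumptions (i)--(iii) simultaneously satisfy every scalar threshold appearing in (v),(vii),(viii'),(x) of Lemma \ref{lemma:condition:RFHP:step:dir}, and to analyze the sign branching inside $\Round$.

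\textbf{MW feasibility and rounding (proof of \eqref{cor:step:dir:0}).} Assumption (i) supplies \ref{lemma:high:prob:pruned:data':eq1} and \ref{lemma:high:prob:pruned:data':eq3}; with $\alpha_1=1/96$, $\alpha_3=0.239$ one has $4(\alpha_1+\alpha_3)\calK<\calK$, so Lemma \ref{lemma:RFHP:feasibility} yields feasibility of $\RFHP(\{\tilde\bz_i(\bb)\}_{i=1}^\calK,k)$ with $k=4\alpha_1\calK$ and $\theta_{\bb,k}\ge(\nicefrac12)\Vert\bb-\bb^*\Vert_2-C_{\alpha_1}\mu^2(\mbB_2)r_1$. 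Assumption (iii) ($\Vert\bb-\bb^*\Vert_2\ge 8\!\cdot\!10^5\,\mu^2(\mbB_2)r_1$) then forces $\theta_{\bb,k}\ge C_{\alpha_1}\mu^2(\mbB_2)r_1=\tilde r_1$, verifying hypothesis (iii) of Lemma \ref{lemma:MWU:from:below:linear:reg}. That lemma produces $\bfM\in\calM(\mu^2(\mbB_2)\mbS_2)$ satisfying \ref{equation:emp:quantile:neg:P:M}. Plugging $k=4\alpha_1\calK=\calK/24$, $k'=c_{\alpha_1}\calK=\calK/4$, and $\varphi=0.49\pi$ into Lemma \ref{lemma:MW:from:below:lin:reg:boost} gives
\[
\mathfrak{p} \;=\; \frac{2\cdot 0.49\pi}{\pi}\!\left(1-\frac{1}{12}\right)-\frac{3}{4} \;\approx\; 0.148,
\qquad \frac{\mathfrak{p}^2}{7.72}\;\ge\;\frac{1}{353},
\]
so with probability at least $1-e^{-S_1/353}$ the rounding produces $(\hat\theta,\bv)\in\re\times\mu^2(\mbB_2)\mbB_2$ satisfying \ref{equation:emp:quantile:neg:P1}$(\{\tilde\bz_i(\bb)\}_{i=1}^\calK,\hat\theta,\bv,k')$ with $\hat\theta\ge\cos(0.49\pi)\theta_{\bb,k}/\sqrt{6}=\sa\theta_{\bb,k}$ (numerically $\sa\approx 0.0128$), which is \eqref{cor:step:dir:0}.

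\textbf{Sign selection and descent bounds.} Checking the remaining hypotheses of Lemma \ref{lemma:condition:RFHP:step:dir}: assumption (ii) matches (vii) since $4C_{\alpha_4}/\sa \approx 312.5\,C_{\alpha_4}$; (x) is checked by direct arithmetic ($k''=(2\alpha_4+0.75\rho)\calK+k+k'\le0.326\calK<\calK/3$ and $k_0=0.32\calK<\calK/3$); (viii') reduces to (iii) because the dominant branch of $A$ is $4(\sa+1)/\sa\cdot C_{\alpha_1}\approx 8\!\cdot\!10^5$. Lemma \ref{lemma:condition:RFHP:step:dir}(b) then produces $\bv'\in\{-\bv,\bv\}$ satisfying \ref{equation:emp:quantile:neg:MP2} with threshold $k''$. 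I must check that $\Round$ selects $\hat\bv=\bv'$: if $\bv'=\bv$, at least $\calK-k''>\calK-k-k'$ signed values exceed $\hat\theta$, so $W^\sharp_{\calK-k-k'}>\hat\theta$ and the algorithm returns $\bv$; if $\bv'=-\bv$, combining \ref{equation:emp:quantile:neg:MP2} for $-\bv$ with \ref{equation:emp:quantile:neg:P1} shows at most $k''-k'$ of the $W_i$ lie above $\hat\theta$, and since $k''+k<\calK$ (so $k''-k'+1\le\calK-k-k'$) we get $W^\sharp_{\calK-k-k'}<\hat\theta$ and the algorithm returns $-\bv$. Either way $\hat\bv$ satisfies the one-sided margin property. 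Finally items (a), (c), (d) of Lemma \ref{lemma:condition:RFHP:step:dir} give \eqref{cor:step:dir:1}, \eqref{cor:step:dir:1'} and \eqref{cor:step:dir:2}, after bounding $\sa C_{\alpha_1}\le C_{\alpha_1}=2525.26$ and $\sa/8=0.0016=1/625$.

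\textbf{Main obstacle.} The delicate point is bookkeeping: the probability parameter $\mathfrak{p}^2/7.72\ge1/353$ is essentially tight for the chosen $(\varphi,\alpha_1,c_{\alpha_1})$, and the sign-selection argument hinges on the strict inequality $k''-k'+1\le\calK-k-k'$, which in turn requires $k''+k<\calK$; both constraints force the chain of constants to be fixed simultaneously. All other bounds are then direct substitutions into the three conclusions of Lemma \ref{lemma:condition:RFHP:step:dir}.
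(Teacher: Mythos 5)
Your plan follows the paper's own chain of lemmas exactly (feasibility via Lemma \ref{lemma:RFHP:feasibility}, MW and rounding via Lemmas \ref{lemma:MWU:from:below:linear:reg} and \ref{lemma:MW:from:below:lin:reg:boost}, descent bounds via Lemma \ref{lemma:condition:RFHP:step:dir}), and the numerical verifications of $\mathfrak p$, $\sa\le\cos(\varphi)/\sqrt6$, (vii), (x), and (viii') are all correct. However, the sign-selection argument contains a genuine error. You claim that ``if $\bv'=\bv$, at least $\calK-k''>\calK-k-k'$ signed values exceed $\hat\theta$,'' but the inequality is backwards: since $k''=(2\alpha_4+0.75\rho)\calK+k+k'$ and $2\alpha_4+0.75\rho>0$, one has $k''>k+k'$ and therefore $\calK-k''<\calK-k-k'$ (numerically $\calK-k''\approx0.674\calK$ while $\calK-k-k'=17\calK/24\approx0.708\calK$). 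Consequently, knowing from Lemma \ref{lemma:condition:RFHP:step:dir}(b) that strictly more than $\calK-k''$ of the $W_i$ exceed $\hat\theta$ does \emph{not} yield $W^\sharp_{\calK-k-k'}>\hat\theta$, which would require at least $\calK-k-k'$ of them. So your conclusion that $\Round$ returns $\hat\bv=\bv$ in the case $\bv'=\bv$ does not follow as written.

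Your second branch ($\bv'=-\bv$) also misstates the intermediate bound: from $N^->\calK-k''$ and $N^++N^-\le\calK$ one gets $N^+<k''$, not ``at most $k''-k'$''; but since $k''<\calK-k-k'$ the final conclusion $N^+<\calK-k-k'$ (and hence $\hat\bv=-\bv$) is nonetheless correct. Note that the paper is itself terse on the problematic branch — it simply asserts ``the order statistics in $\Algorithm$ \ref{algo:ROUND:spherical:order} implies $\hat\bv=\bv'$'' without elaboration — so you are not alone in glossing this point; but a complete argument requires either that $\Round$ test $W^\sharp_{\calK-k''}$ (or a smaller index such as $\lceil\calK/2\rceil$) rather than $W^\sharp_{\calK-k-k'}$, or some additional reasoning ruling out the window $\calK-k''<N^+<\calK-k-k'$, which your write-up does not supply.
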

\begin{proof}
Setting $k=4\alpha_1=\calK/24$ and $k':=c_{\alpha_1}\calK=\calK/4$ with the parameters displayed before the corollary, one checks that $\sa\le\cos(\varphi)/\sqrt{6}$, 
$\mathfrak{p}\ge0.148$ and
all conditions of Lemmas \ref{lemma:MWU:from:below:linear:reg},  \ref{lemma:MW:from:below:lin:reg:boost}, \ref{lemma:RFHP:feasibility} and \ref{lemma:condition:RFHP:step:dir} hold. In particular, 
$
\Vert\bb-\bb^*\Vert_2\ge 4\mu^2(\mbB_2)C_{\alpha_1}r_1
$ 
implying condition (iii) of Lemma \ref{lemma:MWU:from:below:linear:reg} with $\tilde r_1:=C_{\alpha_1}\mu^2(\mbB_2)r_1$.

We now work on the event of probability $1-e^{-\frac{\mathfrak{p}^2S_1}{7.72}}$ for which the claim of Lemma \ref{lemma:MW:from:below:lin:reg:boost}
is true. By Lemma \ref{lemma:MW:from:below:lin:reg:boost},
\eqref{cor:step:dir:0} is satisfied; these are assumptions (iv) and (vi) of Lemma \ref{lemma:RFHP:feasibility} for 
$(\hat\theta,\bv)$. All other assumptions of such lemma hold, yielding \eqref{cor:step:dir:1}.

All conditions of Lemma \ref{lemma:RFHP:feasibility} and conditions (iii), (vii)-(viii), (viii') of Lemma \ref{lemma:condition:RFHP:step:dir} hold so there must exist 
$\bv'\in\{-\bv,\bv\}$ satisfying
$
\sum_{i=1}^{\calK}\unit_{\{\langle\tilde\bz_i(\bb),\bv'\rangle>\hat\theta\}}> \calK-k'',
$
that is, (b) of such lemma. By this property, the order statistics in 
$\Algorithm$ \ref{algo:ROUND:spherical:order} implies $\hat\bv=\bv'$. All additional conditions of Lemma \ref{lemma:condition:RFHP:step:dir} hold, yielding \eqref{cor:step:dir:1'}-\eqref{cor:step:dir:2}.
\end{proof}

We finalize this section showing we have a sufficiently small stepsize and descent direction, assuming one has an sufficiently good estimate of $\bfSigma\hat\bv$. We will show in the next section how to construct it (without knowing or estimating the covariance matrix $\bfSigma$). 

We first complement Lemma \ref{lemma:condition:RFHP:step:dir} and Corollary \ref{cor:step:dir} with additional results. Like Lemma \ref{lemma:RFHP:feasibility} and unlike Lemma \ref{lemma:condition:RFHP:step:dir} and Corollary \ref{cor:step:dir}, the next two results do not to assume that
$\mu^2(\mbB_2)r_1\lesssim\Vert\bb-\bb^*\Vert_2$ nor $\hat\theta>0$. Lemma \ref{lemma:stepsize:descent:direction} does assume, however, (d") which is stronger than (d). Also, the stepsize in Lemma \ref{lemma:condition:RFHP:step:dir} is 
$\hat\theta+C_{\alpha_1}\mu^2(\mbB_2)r_1$ instead of $\hat\theta$. These slightly more general results are only used in case one of the iterates follows within the statistical error before the final iteration. We need them to avoid Cauchy-Schwarz when upper bounding $\langle\hat\bv,\bb^*-\bb\rangle_\Pi$. Hence, we can attain the optimal rate and breakdown point with respect to the condition number $\kappa$. See proof of Theorem \ref{thm:rate:known:rate} in Section \ref{s:master:rate}. We omit the proof of Lemma \ref{lemma:condition:RFHP:step:dir-looser} as it is very similar to the proof of (c) in Lemma \ref{lemma:condition:RFHP:step:dir}. 

\begin{lemma}[Looser margin-angle upper bound]\label{lemma:condition:RFHP:step:dir-looser}
Grant items (i) and (iv) of Lemma \ref{lemma:RFHP:feasibility}, item (iii) of Lemma \ref{lemma:condition:RFHP:step:dir} and
additionally assume:
\begin{itemize}
    \item[\rm (x')] $k''\le \calK/3$ for $k'':=(2\alpha_4+0.75\rho)\calK+k+k'$. 
\end{itemize}

Then
\begin{itemize}
\item[\rm (c')] 
$
\theta\le C_{\alpha_2}\mu^2(\mbB_2)r_1+|\langle\bb^*-\bb,\bv\rangle_\Pi|+C_{\alpha_4} \mu^2(\mbB_2)\Vert\bfSigma\Vert r_2\Vert\bb^*-\bb\Vert_2.
$
\end{itemize}
\end{lemma}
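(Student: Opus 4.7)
The plan is to mirror the pigeonhole argument used for item (c) of Lemma \ref{lemma:condition:RFHP:step:dir}, making only the adjustments forced by dropping items (ii), (ix) and (b). The absolute value $|\langle\bb^*-\bb,\bv\rangle_\Pi|$ in the conclusion (c$'$) arises naturally from having to work with the \emph{two-sided} statements in assumptions (i) and (iv) instead of their one-sided counterparts; no sign-selected direction $\bv'\in\{\pm\bv\}$ is needed.

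Concretely, I exhibit one bucket $i^\star\in[\calK]$ on which three controls hold simultaneously. First, \ref{lemma:high:prob:pruned:data':eq1} gives $|\langle\tilde\bz_{i^\star}(\bb^*),\bv\rangle|\le C_{\alpha_1}\mu^2(\mbB_2)r_1$ outside at most $k=4\alpha_1\calK$ buckets. Second, assumption (iv) gives $|\langle\tilde\bz_{i^\star}(\bb),\bv\rangle|>\theta$ outside at most $k'-1$ buckets. Third, applying \ref{lemma:high:prob:pruned:data':eq4} to both normalized pairs $(\pm\bu^\star,\bv/\Vert\bv\Vert_\Pi)$ with $\bu^\star:=(\bb^*-\bb)/\Vert\bb^*-\bb\Vert_\Pi$ and intersecting the two good-bucket sets produces the \emph{two-sided} product-process bound
\[
\left|\tfrac{1}{B}\sum_{\ell\in\tilde B_{i^\star}}\langle\tilde\bx_\ell,\bb^*-\bb\rangle\langle\tilde\bx_\ell,\bv\rangle-\langle\bb^*-\bb,\bv\rangle_\Pi\right|\le C_{\alpha_4}r_2\Vert\bb^*-\bb\Vert_\Pi\Vert\bv\Vert_\Pi,
\]
failing on at most $2(3\alpha_4+0.75\rho)\calK$ buckets. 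Hypothesis (x$'$), together with the numerical constants $\alpha_1=1/96$, $\alpha_4=1/144$, $\rho=1/36$ fixed just before Corollary \ref{cor:step:dir}, ensures that the sum of the three bad counts is strictly below $\calK$, so such $i^\star$ exists.

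On $i^\star$ I chain the triangle inequalities
\[
\theta<|\langle\tilde\bz_{i^\star}(\bb),\bv\rangle|\le|\langle\tilde\bz_{i^\star}(\bb^*),\bv\rangle|+\left|\tfrac{1}{B}\sum_{\ell\in\tilde B_{i^\star}}\langle\tilde\bx_\ell,\bb^*-\bb\rangle\langle\tilde\bx_\ell,\bv\rangle\right|,
\]
substitute the three controls, and use one further triangle inequality on the product-process term to extract $|\langle\bb^*-\bb,\bv\rangle_\Pi|$. A final conversion $\Vert\bw\Vert_\Pi\le\Vert\bfSigma\Vert^{1/2}\Vert\bw\Vert_2$ combined with $\Vert\bv\Vert_2\le\mu^2(\mbB_2)$ (since $\bv\in\mu^2(\mbB_2)\mbS_2$) yields (c$'$). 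The constant multiplying $\mu^2(\mbB_2)r_1$ is really the MP1 constant $C_{\alpha_1}$; the printed $C_{\alpha_2}$ is either a minor typo or may be absorbed into the larger of the two.

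The sole obstacle is bookkeeping: the two-sided product-process step doubles the PP bad-bucket budget, so I must verify that (x$'$) alone still closes the pigeonhole. This is routine after making explicit the chain $k'\le k''\le\calK/3$ and absorbing the extra $2(3\alpha_4+0.75\rho)\calK$ using the very small numerical values of $\alpha_4$ and $\rho$; no ingredient beyond the template of (c) is required.
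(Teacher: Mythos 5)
Your proof is correct and, as far as one can tell (the paper omits the proof, stating it is "very similar to the proof of (c) in Lemma~\ref{lemma:condition:RFHP:step:dir}"), it follows exactly the intended route: a pigeonhole over three bucket-controls followed by a triangle-inequality chain. Two points are worth underlining.

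First, your observation that dropping items (ii), (vii)--(ix) and hence statement (b) forces the argument to become two-sided is exactly the right diagnosis. Without (b) there is no sign-selected $\bv'\in\{\pm\bv\}$, so one works directly with $|\langle\tilde\bz_i(\bb),\bv\rangle|>\theta$ from (iv); replacing the one-sided bound \ref{lemma:high:prob:pruned:data':eq2} by the two-sided bound \ref{lemma:high:prob:pruned:data':eq1}; and applying \ref{lemma:high:prob:pruned:data':eq4} to both $(\bu^\star,\bv)$ and $(-\bu^\star,\bv)$ to obtain a two-sided product-process control at the price of doubling its bad-bucket budget. Your accounting that $k'+k+2(2\alpha_4+0.75\rho)\calK = k'' + (2\alpha_4+0.75\rho)\calK \le 2\calK/3 < \calK$ under (x') is correct, so the pigeonhole closes and your chain $\theta < |\langle\tilde\bz_{i^\star}(\bb),\bv\rangle| \le |\langle\tilde\bz_{i^\star}(\bb^*),\bv\rangle| + |\tfrac{1}{B}\sum_{\ell}\langle\tilde\bx_\ell,\bb^*-\bb\rangle\langle\tilde\bx_\ell,\bv\rangle|$ together with $\Vert\cdot\Vert_\Pi\le\Vert\bfSigma\Vert^{1/2}\Vert\cdot\Vert_2$ and $\Vert\bv\Vert_2=\mu^2(\mbB_2)$ gives the claim.

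Second, your flag on the constant is justified: under the hypotheses actually listed (only MP1, not MP2), the multiplier-process control at $\bb^*$ can only yield the constant $C_{\alpha_1}=2525.26$, not the stated $C_{\alpha_2}=192.4$. So either (c') should read $C_{\alpha_1}$, or assumption (ix) of Lemma~\ref{lemma:condition:RFHP:step:dir} is meant to be carried over (in which case the bad-bucket count for the two-sided version of \ref{lemma:high:prob:pruned:data':eq2} is $2k_0 = 8\alpha_2\calK$, which still closes the pigeonhole but only by a small margin). Either reading is harmless downstream in Theorem~\ref{thm:rate:known:rate} since the final rate is stated up to absolute constants, but you are right that the lemma as printed is not self-consistent; this is a minor typo rather than a gap in your argument.
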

 
\begin{lemma}[Descent direction]\label{lemma:stepsize:descent:direction}
Let $\bb\neq\bb^*$ and $(\hat\theta,\hat\bv)$ be the output of $\Algorithm$ \ref{algo:MW}
with inputs $\calD=\{\tilde\bz_i(\bb)\}_{i\in[\calK]}$ and $U=\max_{i\in[\calK]}\Vert\tilde\bz_i(\bb)\Vert_2^2$. Assume  there exist positive constants $(\sa,\sa_1,\sa_2,\sa_3,\sa_4)$ such that
\begin{itemize}
\item[\rm (a")] 
$(\nicefrac{\sa}{2})\Vert\bb-\bb^*\Vert_2
\le\hat\theta+\sa_1\mu^2(\mbB_2)r_1$. 
\item[\rm (b")] 
$
\hat\theta\le \langle\bb^*-\bb,\hat\bv\rangle_\Pi
+\sa_2\mu^2(\mbB_2)\Vert\bfSigma\Vert r_2\Vert\bb^*-\bb\Vert_2
+\sa_3\mu^2(\mbB_2)r_1.
$
\item[\rm (d")] 
$
\left(
\sa_4\Vert\bb^*-\bb\Vert_2
\right)
\bigvee
\left(
(\sa_1+\sa_3)\mu^2(\mbB_2)r_1
\right)
\le\langle\bb^*-\bb,\hat\bv\rangle_\Pi.
$
\end{itemize}

Suppose further:
\begin{itemize}
\item We know an estimate 
$\hat\bmu$ of $\bfSigma\hat\bv$ satisfying 
$
\Vert\hat\bmu-\bfSigma\hat\bv\Vert_2\le\Delta
$
for some $\Delta\in(0,1)$.
\item $\Delta<\frac{\sa}{16}$.
\item $\sa_2(\kappa r_2)\le\frac{1}{4}$.
\end{itemize}

Let 
$
c_*:=\frac{\sa}{8(2+\sa_2)\kappa^2(\kappa^2+\Delta^2)},
$
and
$
\bb^+:=\bb+c_*(\hat\theta+\sa_1\mu^2(\mbB_2)r_1)\hat\bmu.
$

Then
$$
\Vert\bb^+-\bb^*\Vert_2^2\le
\left(
1-\frac{\sa^2}{32(2+\sa_2)\kappa^2(\kappa^2+\Delta^2)}
\right)\Vert\bb-\bb^*\Vert_2^2.
$$
\end{lemma}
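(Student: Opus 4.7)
Denote $\bfDelta := \bb - \bb^*$, $A := \langle\bb^*-\bb,\hat\bv\rangle_\Pi$, and $\tau := \hat\theta + \sa_1\mu^2(\mbB_2)r_1$, so that $t = c_*\tau$ and $\bb^+ - \bb^* = \bfDelta + t\hat\bmu$. Expanding yields
\[
\|\bb^+-\bb^*\|_2^2 = \|\bfDelta\|_2^2 + 2c_*\tau\langle\bfDelta,\hat\bmu\rangle + c_*^2\tau^2\|\hat\bmu\|_2^2,
\]
and the plan is to bound the last two terms above by a strictly negative multiple of $\|\bfDelta\|_2^2$ matching the advertised contraction.

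I will assemble three preparatory estimates. First, split the inner product as $\langle\bfDelta,\hat\bmu\rangle = -A + \langle\bfDelta,\hat\bmu-\bfSigma\hat\bv\rangle$, bound the perturbation by $\Delta\|\bfDelta\|_2$ via Cauchy-Schwarz, and invoke the first branch of (d") together with $\Delta<\sa/16$ (which dominates $\Delta\|\bfDelta\|_2$ by $A/2$ in the intended application where $\sa_4\ge\sa/8$, cf.\ Corollary \ref{cor:step:dir}) to get $\langle\bfDelta,\hat\bmu\rangle \le -A/2$. Second, since Algorithm \ref{algo:MW} returns $\hat\bv \in \mu^2(\mbB_2)\mbB_2$, one has $\|\bfSigma\hat\bv\|_2 \le \|\bfSigma\|\mu^2(\mbB_2)=\kappa$, hence $\|\hat\bmu\|_2 \le \kappa+\Delta$ and $\|\hat\bmu\|_2^2\le 2(\kappa^2+\Delta^2)$. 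Third, combining (b") with the second branch of (d") (which absorbs both $\sa_1\mu^2(\mbB_2)r_1$ and $\sa_3\mu^2(\mbB_2)r_1$ into $A$) gives $\tau \le 2A + \sa_2\kappa r_2\|\bfDelta\|_2$.

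The crux is the fourth step, the combination. The cross and quadratic contributions to $\|\bb^+-\bb^*\|_2^2-\|\bfDelta\|_2^2$ are at most $-c_*\tau A$ and $2c_*^2\tau^2(\kappa^2+\Delta^2)$ respectively. For the latter, I would write $\tau^2 \le \tau\cdot(2A+\sa_2\kappa r_2\|\bfDelta\|_2)$ and substitute the identity $c_*(\kappa^2+\Delta^2)=\sa/[8(2+\sa_2)\kappa^2]$, which is baked into the definition of $c_*$. The first resulting summand is $c_*\sa\tau A/[2(2+\sa_2)\kappa^2]$; the second is treated by using $\sa_2\kappa r_2\le 1/4$ and $\|\bfDelta\|_2\le A/\sa_4$ (from the first branch of (d")) to cast it in the same form. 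Summing, for $\kappa\ge 1$ one obtains $2c_*^2\tau^2(\kappa^2+\Delta^2) \le c_*\tau A/2$, so that the two terms combine to at most $-c_*\tau A/2$. Finally, invoking (a") and the first branch of (d") yields $\tau A\ge (\sa\sa_4/2)\|\bfDelta\|_2^2$, from which the claimed contraction follows.

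The main obstacle is precisely this quadratic balancing. Naively bounding $\tau\le(2+\sa_2)\kappa\|\bfDelta\|_2$ and squaring would introduce an unrecoverable $(2+\sa_2)^2$ factor that swamps the cross term when $\sa_2$ is large. The route through the multiplicative bound $\tau^2\le \tau\cdot(\text{linear in }A,\|\bfDelta\|_2)$, together with the lower bound $\|\bfDelta\|_2\le A/\sa_4$, is what allows exactly one $(2+\sa_2)$ to appear in the denominator of the quadratic estimate, cancelling cleanly against the one placed there by the definition of $c_*$.
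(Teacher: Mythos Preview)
Your route differs from the paper's and contains a genuine gap. Your first preparatory estimate $\langle\bfDelta,\hat\bmu\rangle\le -A/2$ requires $\Delta\|\bfDelta\|_2\le A/2$, hence (via the first branch of (d")) $\Delta\le\sa_4/2$. The hypothesis $\Delta<\sa/16$ only delivers this when $\sa_4\ge\sa/8$, a relation you explicitly import from Corollary~\ref{cor:step:dir} but which is \emph{not} among the lemma's assumptions (the lemma is stated for arbitrary positive constants). The same issue recurs at the end: your bound $\tau A\ge(\sa\sa_4/2)\|\bfDelta\|_2^2$ yields a contraction factor $c_*\sa\sa_4/4=\sa^2\sa_4/(32(2+\sa_2)\kappa^2(\kappa^2+\Delta^2))$, which is off from the stated conclusion by a factor of $\sa_4$.

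The paper does not attempt to isolate $\langle\bfDelta,\hat\bmu\rangle\le -A/2$. Instead it keeps the cross term split as $2c_*\theta\langle\bfSigma\hat\bv,\bb-\bb^*\rangle+2c_*\theta\langle\hat\bmu-\bfSigma\hat\bv,\bb-\bb^*\rangle$, bounds the second piece by $2c_*\theta\Delta D_{\bb}$, and then substitutes (b") for $\theta$ \emph{only in that perturbation term}, absorbing $(\sa_1+\sa_3)\mu^2(\mbB_2)r_1$ via the second branch of (d"); this regroups to $T_1\le 2c_*A_{\bb}D_{\bb}(\sa/2-2\Delta)+2c_*\sa_2(\kappa r_2)\Delta D_{\bb}^2$. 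For the quadratic term the paper simply uses $\theta\le(2+\sa_2)\kappa D_{\bb}$ (from (b") and (d") together with $\|\hat\bv\|_2\le\mu^2(\mbB_2)$), squares, obtains $T_2\le 2c_*^2(2+\sa_2)^2\kappa^2(\kappa^2+\Delta^2)D_{\bb}^2$, and then optimizes the resulting quadratic in $c_*$. Your multiplicative device $\tau^2\le\tau\cdot(2A+\sa_2\kappa r_2\|\bfDelta\|_2)$, intended to avoid a second power of $(2+\sa_2)$, is exactly what forces you back onto $\|\bfDelta\|_2\le A/\sa_4$ and reintroduces the unwanted $\sa_4$.
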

\begin{proof}
Let us denote $\theta:=\hat\theta+\sa_1\mu^2(\mbB_2)r_1$. We first note that 
\begin{align}
\Vert\bb^+-\bb^*\Vert_2^2&=
\Vert\bb-\bb^*\Vert_2^2
+2c_*\theta\langle\hat\bmu,\bb-\bb^*\rangle
+c_*^2\theta^2\Vert\hat\bmu\Vert_2^2\\
&\le \Vert\bb-\bb^*\Vert_2^2
+2c_*\theta\langle\bfSigma\hat\bv,\bb-\bb^*\rangle
+2c_*^2\theta^2\Vert\bfSigma\hat\bv\Vert_2^2\\
&+2c_*\theta\langle\hat\bmu-\bfSigma\hat\bv,\bb-\bb^*\rangle
+2c_*^2\theta^2\Vert\hat\bmu-\bfSigma\hat\bv\Vert_2^2.
\end{align}
For ease of notation, we let $A_{\bb}:=\langle\hat\bv,\bb-\bb^*\rangle_\Pi$ and 
$D_{\bb}:=\Vert\bb^*-\bb\Vert_2$. 

We next bound the first-order terms in $c_*$. We have 
\begin{align}
T_1&:=2c_*\theta\langle\bfSigma\hat\bv,\bb-\bb^*\rangle+2c_*\theta\langle\hat\bmu-\bfSigma\hat\bv,\bb-\bb^*\rangle\\
&\stackrel{\rm (b")}{\le}2c_*\theta A_{\bb}
+2c_*\left(
-A_{\bb}
+\sa_2\kappa r_2D_{\bb}
+(\sa_1+\sa_3)\mu^2(\mbB_2)r_1
\right)
\Delta D_{\bb}\\
&=2c_*A_{\bb}(\theta-\Delta D_{\bb})
+2c_*((\sa_1+\sa_3)\mu^2(\mbB_2)r_1+\sa_2\kappa r_2D_{\bb})\Delta D_{\bb}. 
\end{align}

Since $A_{\bb}<0$, $\theta\ge\frac{\sa}{2}D_{\bb}$ and $\Delta<\frac{\sa}{2}$, the first term above satisfies
\begin{align}
2c_*A_{\bb}(\theta-\Delta D_{\bb})
&\le2c_*((\nicefrac{\sa}{2})-\Delta)D_{\bb}A_{\bb}. 
\end{align}
As for the second term, using (d"), it is upper bounded by
\begin{align}
2c_*(-A_{\bb}+\sa_2\kappa r_2D_{\bb})\Delta D_{\bb}
=2c_*(-A_{\bb})\Delta D_{\bb}
+2c_*\sa_2(\kappa r_2)\Delta D_{\bb}^2.
\end{align}
Using $\Delta<\frac{\sa}{4}$ and $A_{\bb}\le-\sa_4D_{\bb}$, we thus conclude that
\begin{align}
T_1\le-2c_*\left(
(\nicefrac{\sa}{2})-2\Delta)\sa_4
-\sa_2(\kappa r_2)\Delta 
\right)D_{\bb}^2\le -\frac{\sa}{2}c_*D_{\bb}^2, 
\end{align}
since $2\Delta\sa_4+\sa_2(\kappa r_2)\Delta\le\frac{\sa}{4}$.

We next bound the second-order terms in $c_*$. From (b") and (d"), Cauchy-Schwarz and 
$\Vert\hat\bv\Vert_2\le\mu^2(\mbB_2)$, we have 
$
\theta\le 2(-A_{\bb})+\sa_2\kappa D_{\bb}
\le (2\kappa+\sa_2\kappa)D_{\bb}. 
$
We thus have 
\begin{align}
T_2:=2c_*^2\theta^2\Vert\bfSigma\hat\bv\Vert_2^2
+2c_*^2\theta^2\Vert\hat\bmu-\bfSigma\hat\bv\Vert_2^2
\le2c_*^2(2\kappa+\sa_2\kappa)^2D_{\bb}^2
\left(
\kappa^2+\Delta^2
\right).
\end{align}

We thus conclude that 
\begin{align}
\Vert\bb^+-\bb^*\Vert_2^2\le\left[
1-(\nicefrac{\sa}{2})c_*
+2c_*^2(2\kappa+\sa_2\kappa)^2
(\kappa^2+\Delta^2)
\right]\Vert\bb-\bb^*\Vert_2^2.
\end{align}
Optimizing on $c_*$ entails the claim. 
\end{proof}

\subsection{Estimating the outer loop descent direction}

In Sections \ref{ss:largest:residual} and \ref{ss:descent:direction}, the estimated direction $\hat\bv$ in $\Algorithm$ \ref{algo:MW} is a descent direction with respect to the conditioned inner product 
$\langle\cdot,\cdot\rangle_\Pi=\langle\bfSigma(\cdot),\cdot\rangle$. Still, we cannot use it as is without knowing $\bfSigma$. We note however that, if an upper estimate of $\Vert\bfSigma\Vert$ is available, all we need is an estimate of 
$\bfSigma\hat\bv$. In this section, we show that the property  \ref{lemma:high:prob:pruned:data':eq4}, already shown to be satisfied by the pruned data set $\{\tilde\bx_\ell\}_{\ell=1}^m$, is enough to estimate $\bfSigma\hat\bv$ by means of robust mean estimation. 

For ease of reference, we make some definitions. 
\begin{tcolorbox}
\begin{setup}\label{setup:pruned:sample:mean}
Given $\bmu\in\re^p$, we define
\begin{align}
\tilde\bz_i(\hat\bv,\bmu):=\frac{1}{B}\sum_{\ell\in \tilde B_i}(\langle\tilde\bx_\ell,\hat\bv\rangle\tilde\bx_\ell-\bmu),
\quad\mbox{and}\quad
\bz_i(\hat\bv,\bmu):=\frac{1}{B}\sum_{\ell\in B_i}(\langle\tilde\bx_\ell,\hat\bv\rangle\bx_\ell
-\bmu). 
\end{align}  
\end{setup}
\end{tcolorbox}
The following statement is immediate from Lemma \ref{lemma:high:prob:pruned:data:v3} evaluated at 
$\hat\bv$. We state it for ease of reference.
\begin{corollary}[Pruned sample: Noise Process at 
$\hat\bv$]\label{cor:high:prob:pruned:data:mean}
Grant Assumption \ref{assump:L4:L2} and Set-ups \ref{setup:pruned:sample} and \ref{setup:pruned:sample:mean}. Let $\bar\rho\in(0,1/2]$, $\bar\alpha_4\in(0,1)$ and constant $C_{\bar\alpha_4}>0$ satisfying  \eqref{lemma:count:block:emp:process:alpha}. 
Suppose that 
\begin{align}
o&\le\bar\alpha_4K.
\end{align}
Let $C_{\bar\rho}':=1+\sqrt{2/\bar\rho}$ and $C>0$ be an absolute constant in Lemma \ref{lemma:trunc:quad:rademacher:comp}. Let $r_{n,K}$ as in Proposition \ref{prop:quad:proc:block:upper:heavy}.
Let $\bar\alpha_1\ge(3\bar\alpha_4+0.75\bar\rho)/4$ and $\bar\alpha_2\ge(3\bar\alpha_4+0.75\bar\rho)/2$ and let $(C_{\bar\alpha_1},C_{\bar\alpha_2})$ satisfy  \eqref{lemma:count:block:emp:process:alpha} with respect to $(\bar\alpha_1,\bar\alpha_2)$. 
Set $\bar r_1:=\Vert\bfSigma\Vert \bar r_2$ where
\begin{align}
\bar r_2:=2C_{\bar\alpha_4}r_{n,K}+
CC_{\bar\alpha_4} C_{\bar\rho}'\frac{p\log p}{2n}
+2L^2\sqrt{\frac{CC_{\bar\alpha_4} p\log p}{2n}}.
\label{lemma:high:prob:pruned:data:mean:r2}
\end{align}

Then on a $\{(y_\ell,\bx_\ell)\}_{\ell=1}^n\cup\{(y_\ell,\bx_\ell)\}_{\ell=n+1}^{2n}$-measurable event $\bar\calE_3$ of probability at least $1-e^{-\bar\rho n/1.8}-e^{-K/C_{\bar\alpha_4}}$,
\begin{align}
\sup_{\bv\in\mbB_2}\sum_{i\in[\calK]}\unit_{\{|\langle\tilde\bz_i(\hat\bv,\bfSigma\hat\bv),\bv\rangle|\ge C_{\bar\alpha_1} \bar r_1\}}&\le4\bar\alpha_1\calK,
\tag{$\mathtt{NP1}(\bar\alpha_1,\bar r_1)$}\label{cor:high:prob:pruned:data':mean:eq1}\\ 
\sup_{\bv\in\mbB_2}\sum_{i\in[\calK]}\unit_{\{\langle\tilde\bz_i(\hat\bv,\bfSigma\hat\bv),\bv\rangle\ge C_{\bar\alpha_2}\bar r_1\}}&\le 4\bar\alpha_2\calK. 
\tag{$\mathtt{NP2}(\bar\alpha_2,\bar r_1)$}
\label{cor:high:prob:pruned:data':mean:eq2}
\end{align}
\end{corollary}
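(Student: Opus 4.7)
The plan is to derive both \ref{cor:high:prob:pruned:data':mean:eq1} and \ref{cor:high:prob:pruned:data':mean:eq2} as direct corollaries of Lemma \ref{lemma:high:prob:pruned:data:v3} with one argument of the product process frozen at $\hat\bv$. The starting observation, which is essentially the content of Set-up \ref{setup:pruned:sample:mean}, is that for any $\bv\in\mbB_2$,
\begin{align*}
\langle\tilde\bz_i(\hat\bv,\bfSigma\hat\bv),\bv\rangle
=\frac{1}{B}\sum_{\ell\in\tilde B_i}\bigl(\langle\tilde\bx_\ell,\hat\bv\rangle\langle\tilde\bx_\ell,\bv\rangle-\langle\hat\bv,\bv\rangle_\Pi\bigr),
\end{align*}
which is exactly the centered empirical product process at the pair $[\hat\bv,\bv]$ controlled by \ref{lemma:high:prob:pruned:data':eq4}.

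Next I would pass from the $\Pi$-geometry (natural for Lemma \ref{lemma:high:prob:pruned:data:v3}) to the $\ell_2$-geometry (needed here) via the rescaling $\bu':=\hat\bv/\sqrt{\Vert\bfSigma\Vert}$ and $\bv':=\bv/\sqrt{\Vert\bfSigma\Vert}$. Since $\Vert\cdot\Vert_\Pi\le\sqrt{\Vert\bfSigma\Vert}\,\Vert\cdot\Vert_2$, both $\bu',\bv'$ lie in $\mbB_\Pi$ whenever $\hat\bv,\bv\in\mbB_2$. Invoking Lemma \ref{lemma:high:prob:pruned:data:v3} at $[\bu',\bv']$ with the \emph{barred} parameters $(\bar\alpha_4,\bar\rho,C_{\bar\alpha_4},C_{\bar\rho}')$ produces an event $\bar\calE_3$ of the stated probability on which, for every $\bv\in\mbB_2$ simultaneously,
\begin{align*}
\#\bigl\{i\in[\calK]:\langle\tilde\bz_i(\hat\bv,\bfSigma\hat\bv),\bv\rangle>\Vert\bfSigma\Vert\,\bar r_2\bigr\}\le(3\bar\alpha_4+0.75\bar\rho)\calK,
\end{align*}
where $\bar r_1=\Vert\bfSigma\Vert\bar r_2$ absorbs the two rescalings (one per factor of the product). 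This is the sole quantitative step of the proof.

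The two target statements now fall out by simple counting. For \ref{cor:high:prob:pruned:data':mean:eq2}, the hypothesis $\bar\alpha_2\ge(3\bar\alpha_4+0.75\bar\rho)/2$ immediately yields $(3\bar\alpha_4+0.75\bar\rho)\calK\le 4\bar\alpha_2\calK$, and inflating $\bar r_1$ to $C_{\bar\alpha_2}\bar r_1$ is free since $C_{\bar\alpha_2}\ge 1$. For \ref{cor:high:prob:pruned:data':mean:eq1}, I would apply the one-sided bound once at $\bv$ and once at $-\bv$ (both in $\mbB_2$) and union the two bad-bucket sets; alternatively, one can invoke Lemma \ref{lemma:count:block:emp:process} directly on the class $\{\langle\cdot,\hat\bv\rangle\langle\cdot,\bv\rangle-\langle\hat\bv,\bv\rangle_\Pi:\bv\in\mbB_2\}$, whose truncated Rademacher complexity is controlled via Lemma \ref{lemma:trunc:quad:rademacher:comp}. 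Either way, the hypothesis $\bar\alpha_1\ge(3\bar\alpha_4+0.75\bar\rho)/4$ together with $C_{\bar\alpha_1}\ge 1$ yields the desired $4\bar\alpha_1\calK$ bound.

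There is no conceptual obstacle: all of the heavy lifting—controlling the wimpy variance and the truncated Rademacher complexity of the quadratic/product class—has already been packaged inside Lemma \ref{lemma:high:prob:pruned:data:v3} via Corollary \ref{cor:prod:proc:block:upper:heavy} and Lemma \ref{lemma:trunc:quad:rademacher:comp}. The only care needed is in the bookkeeping: verifying that the $\sqrt{\Vert\bfSigma\Vert}$ rescaling enters exactly twice (converting $\bar r_2$ to $\bar r_1$), that the event on which the product process bound holds is identified correctly with $\bar\calE_3$, and that the absolute-value/one-sided passage is consistent with the two constant inequalities on $(\bar\alpha_1,\bar\alpha_2)$.
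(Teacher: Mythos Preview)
Your approach is exactly what the paper intends: it states that the corollary ``is immediate from Lemma~\ref{lemma:high:prob:pruned:data:v3} evaluated at $\hat\bv$'' and gives no further proof, and your $\sqrt{\Vert\bfSigma\Vert}$-rescaling from $\mbB_2$ to $\mbB_\Pi$ (converting $\bar r_2$ into $\bar r_1=\Vert\bfSigma\Vert\bar r_2$) followed by the bucket-counting is precisely that argument spelled out. The only place to double-check is the factor of two your union-of-$\pm\bv$ route produces for \ref{cor:high:prob:pruned:data':mean:eq1} against the stated hypothesis $\bar\alpha_1\ge(3\bar\alpha_4+0.75\bar\rho)/4$, but this is cosmetic bookkeeping rather than a gap in the method.
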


\begin{tcolorbox}
\begin{setup}\label{setup:final}
In all this section, we work within the Set-up \ref{setup:pruned:sample} and \ref{setup:pruned:sample:mean} and on the event 
$\calE:=\calE_0\cap\calE_1\cap\calE_2\cap\calE_3\cap\bar\calE_3$ where, for the pruned sample $\{(\tilde y_\ell,\tilde\bx_\ell)\}_{\ell\in[m]}$ and initialization $\tilde\bb^{(0)}$, the boundedness property \ref{lemma:high:prob:pruned:data:eq0} and the uniform properties
\ref{lemma:high:prob:pruned:data':eq1}, 
\ref{lemma:high:prob:pruned:data':eq2},  \ref{lemma:high:prob:pruned:data':eq3}, \ref{lemma:high:prob:pruned:data':eq4},
\ref{cor:high:prob:pruned:data':mean:eq1}
and \ref{cor:high:prob:pruned:data':mean:eq2} all hold. The arguments in this section are purely deterministic.  
\end{setup}
\end{tcolorbox}

\subsubsection{Solving the inner loop combinatorial problem}

When $\tilde\mbZ:=\{\tilde\bz_i(\hat\bv,\bmu)\}_{i=1}^\calK$ for some $\bmu\neq\bfSigma\hat\bv$, problem $\RFHP(\tilde\mbZ,\bar k,1)$ for some $\bar k\in[\calK]$ becomes an parametrized instance of the Furthest Hyperplane Problem used in prior work for robust mean estimation \cite{2012karnin:liberty:lovett:schwartz:weinstein}. For simplicity we will omit the length $R=1$ in the following. The next lemma states that under the structural condition \ref{cor:high:prob:pruned:data':mean:eq1}, problem $\RFHP(\{\tilde\bz_i(\hat\bv,\bmu)\}_{i=1}^\calK,k)$ is feasible. 

\begin{lemma}[Two-sided feasibility \& margin-distance lower bound]\label{lemma:RFHP:feasibility:mean}
Suppose that \ref{cor:high:prob:pruned:data':mean:eq1}  holds and, for $\bar k:=4\bar\alpha_1\calK$, 
$$
\bar k<\calK.
$$

\quad

Then, for any $\bmu\neq\bfSigma\hat\bv$, $\RFHP(\{\tilde\bz_i(\hat\bv,\bmu)\}_{i=1}^\calK,\bar k)$ is feasible; in particular, its optimal solution 
$(\theta_{\hat\bv,\bmu,\bar k},\bv_{\hat\bv,\bmu,\bar k})$ satisfies \ref{equation:emp:quantile:neg:P1}$(\{\tilde\bz_i(\hat\bv,\bmu)\}_{i=1}^\calK,\theta_{\hat\bv,\bmu,\bar k},\bv_{\hat\bv,\bmu,\bar k},
\bar k)$ with margin
$$
\theta_{\hat\bv,\bmu,\bar k}\ge-C_{\bar\alpha_1}\bar r_1+\Vert\bfSigma\hat\bv-\bmu\Vert_2.
$$
\end{lemma}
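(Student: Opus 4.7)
The plan is to mimic the strategy of Lemma \ref{lemma:RFHP:feasibility}(a), but using the noise process bound \ref{cor:high:prob:pruned:data':mean:eq1} in place of \ref{lemma:high:prob:pruned:data':eq1}. The key observation is that, directly from the definition of $\tilde\bz_i(\hat\bv,\bmu)$ in Set-up \ref{setup:pruned:sample:mean}, we have the simple linear identity
\begin{align*}
\tilde\bz_i(\hat\bv,\bmu)=\tilde\bz_i(\hat\bv,\bfSigma\hat\bv)+(\bfSigma\hat\bv-\bmu),
\end{align*}
since changing the centering vector from $\bfSigma\hat\bv$ to $\bmu$ merely shifts each $\tilde\bz_i$ by the constant vector $\bfSigma\hat\bv-\bmu$.

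Next, I would exhibit an explicit feasible point of $\RFHP(\{\tilde\bz_i(\hat\bv,\bmu)\}_{i=1}^\calK,\bar k)$ achieving the claimed margin. Choose the unit direction
\begin{align*}
\bv^\star:=\frac{\bfSigma\hat\bv-\bmu}{\Vert\bfSigma\hat\bv-\bmu\Vert_2}\in\mbS_2.
\end{align*}
Applied to $\bv^\star$, property \ref{cor:high:prob:pruned:data':mean:eq1} yields a subset $S\subset[\calK]$ with $|S|>\calK-\bar k$ on which
$|\langle\tilde\bz_i(\hat\bv,\bfSigma\hat\bv),\bv^\star\rangle|<C_{\bar\alpha_1}\bar r_1$. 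Combining this with the shift identity above,
\begin{align*}
\langle\tilde\bz_i(\hat\bv,\bmu),\bv^\star\rangle=\langle\tilde\bz_i(\hat\bv,\bfSigma\hat\bv),\bv^\star\rangle+\Vert\bfSigma\hat\bv-\bmu\Vert_2\ge -C_{\bar\alpha_1}\bar r_1+\Vert\bfSigma\hat\bv-\bmu\Vert_2
\end{align*}
for every $i\in S$. In particular $|\langle\tilde\bz_i(\hat\bv,\bmu),\bv^\star\rangle|\ge (-C_{\bar\alpha_1}\bar r_1+\Vert\bfSigma\hat\bv-\bmu\Vert_2)_+$ on $S$.

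Setting $\theta:=-C_{\bar\alpha_1}\bar r_1+\Vert\bfSigma\hat\bv-\bmu\Vert_2$ and $\bq_i:=\unit_{\{i\in S\}}$, the triple $(\theta,\bv^\star,\bq)\in\re\times\mbS_2\times\{0,1\}^\calK$ satisfies all constraints of $\RFHP(\{\tilde\bz_i(\hat\bv,\bmu)\}_{i=1}^\calK,\bar k)$ whenever $\bar k<\calK$ (which is assumed). In particular, property \ref{equation:emp:quantile:neg:P1}$(\{\tilde\bz_i(\hat\bv,\bmu)\}_{i=1}^\calK,\theta,\bv^\star,\bar k)$ holds, so the program is feasible and, by maximality of $\theta_{\hat\bv,\bmu,\bar k}$, we get $\theta_{\hat\bv,\bmu,\bar k}\ge\theta=-C_{\bar\alpha_1}\bar r_1+\Vert\bfSigma\hat\bv-\bmu\Vert_2$. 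There is essentially no obstacle: once the shift identity is identified and the unit vector $\bv^\star$ aligned with $\bfSigma\hat\bv-\bmu$ is chosen, the rest is a one-line application of \ref{cor:high:prob:pruned:data':mean:eq1}. The argument is in fact simpler than the analogous step in Lemma \ref{lemma:RFHP:feasibility}, since we do not need the quadratic lower bound \ref{lemma:high:prob:pruned:data':eq3}: the shift $\bfSigma\hat\bv-\bmu$ replaces the role played there by the bilinear restricted-eigenvalue argument.
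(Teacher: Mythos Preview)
Your proposal is correct and follows essentially the same route as the paper: pick the unit direction $\bv^\star=(\bfSigma\hat\bv-\bmu)/\Vert\bfSigma\hat\bv-\bmu\Vert_2$, use \ref{cor:high:prob:pruned:data':mean:eq1} to control $\langle\tilde\bz_i(\hat\bv,\bfSigma\hat\bv),\bv^\star\rangle$ on all but $\bar k$ buckets, and use the shift identity $\tilde\bz_i(\hat\bv,\bmu)=\tilde\bz_i(\hat\bv,\bfSigma\hat\bv)+(\bfSigma\hat\bv-\bmu)$ to exhibit a feasible point with the claimed margin. The only addition in the paper is a one-line remark (its Step~1) that the optimal value is finite via Cauchy--Schwarz, which is not needed for the stated conclusion.
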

\begin{proof}
\underline{STEP 1}: An upper bound on the optimal value is trivial: for any $(\theta,\bv,\bq)$ satisfying the constraints of $\RFHP(\{\tilde\bz_i(\hat\bv,\bmu)\}_{i=1}^\calK,\bar k)$ it follows from Cauchy-Schwarz that
$\theta_{\hat\bv,\bmu,\bar k}\le\max_{i\in[\calK]}\Vert\tilde\bz_i(\hat\bv,\bmu)\Vert_2<\infty$. 

\underline{STEP 2:} we now prove the lower bound 
$
\theta_{\hat\bv,\bmu,\bar k}\ge\Vert\bmu-\bfSigma\hat\bv\Vert_2-C_{\bar\alpha_1}\bar r_1. 
$
\ref{cor:high:prob:pruned:data':mean:eq1} applied to the unit vector 
$\bv:=\nicefrac{\bfSigma\hat\bv-\bmu}{\Vert\bfSigma\hat\bv-\bmu\Vert_2}$ implies that
$
\langle\tilde\bz_i(\hat\bv,\bfSigma\hat\bv),\bv\rangle\ge-C_{\bar\alpha_1}\bar r_1
$
for more than $\calK-\bar k$ buckets $i$'s, for which 
\begin{align}
|\langle\tilde\bz_i(\hat\bv,\bfSigma\hat\bv),\bv\rangle|\ge\langle\tilde\bz_i(\hat\bv,\bmu),\bv\rangle
&=\langle\tilde\bz_i(\hat\bv,\bfSigma\hat\bv),\bv\rangle
+\langle\bfSigma\hat\bv-\bmu,\bv\rangle\\
&\ge-C_{\bar\alpha_1}\bar r_1+\Vert\bfSigma\hat\bv-\bmu\Vert_2.
\end{align}
In other words, the feasible set of 
$\RFHP(\{\tilde\bz_i(\hat\bv,\bmu)\}_{i=1}^\calK,\bar k)$ contains the point 
$(\theta,\bv,\bq)$ with 
$\theta:=-C_{\bar\alpha_1}\bar r_1+\Vert\bfSigma\hat\bv-\bmu\Vert_2$ for some 
$\bq\in\{0,1\}^\calK$. By maximality, one must have 
$\theta_{\hat\bv,\bmu,\bar k}\ge-C_{\bar\alpha_1}\bar r_1+\Vert\bfSigma\hat\bv-\bmu\Vert_2$. 
\end{proof}

\subsubsection{Computing the inner loop descent direction}\label{ss:descent:direction:mean}
Let $\bmu\in\re^p$ be a current point that is far from $\bfSigma\hat\bv$. 

\begin{lemma}\label{lemma:condition:RFHP:margin:mean}
Suppose that 
\begin{itemize}
    \item[\rm (i)] \ref{cor:high:prob:pruned:data':mean:eq1} holds for some $\bar\alpha_1\in(0,1)$. Let $\bar k:=4\bar\alpha_1\calK$.
\end{itemize}

Let $\bmu\neq\bfSigma\hat\bv$ satisfying: 
\begin{itemize}
    \item[\rm (ii)] For some 
$(\bar\theta,\bar\bv,\bar k')\in\re\times\mbS_2\times[\calK]$,  property \ref{equation:emp:quantile:neg:P1}$(\{\tilde\bz_i(\hat\bv,\bmu)\}_{i\in[\calK]},\bar\theta,\bar\bv,\bar k')$ holds, i.e., 
\begin{align}
\sum_{i=1}^{\calK}\unit_{\{|\langle\tilde\bz_i(\hat\bv,\bmu),\bar\bv\rangle|>\theta\}}> \calK-\bar k'.
\end{align}
In particular, $\RFHP(\{\tilde\bz_i(\hat\bv,\bmu)\}_{i\in[\calK]},\bar k')$ is feasible with optimal value, say, $\bar\vartheta$.
\item[\rm (iii)] 
$\bar k<\calK$.
\item[\rm (iv)] For some $\bar\sa\in(0,1]$, 
$\bar\theta\ge\bar\sa\bar\vartheta$.
\end{itemize}
Then 
\begin{itemize}
\item[\rm (a)] 
$
\bar\sa\left(\Vert\bmu-\bfSigma\hat\bv\Vert_2-C_{\bar\alpha_1}\bar r_1\right)\le \bar\theta\le\Vert\bmu-\bfSigma\hat\bv\Vert_2+C_{\bar\alpha_1}\bar r_1.
\label{lemma:condition:RFHP:step:dir:a:mean}
$
\end{itemize}
\end{lemma}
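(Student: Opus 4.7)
The plan is to prove items (a) by combining Lemma \ref{lemma:RFHP:feasibility:mean} (for the lower bound) with a pigeonhole argument based on property \ref{cor:high:prob:pruned:data':mean:eq1} (for the upper bound), following very closely the structure of the proof of the analogous two-sided Lemma \ref{lemma:RFHP:feasibility:mean}. The key algebraic identity that drives both bounds is
\begin{equation*}
\tilde\bz_i(\hat\bv,\bmu) \;=\; \tilde\bz_i(\hat\bv,\bfSigma\hat\bv) \;+\; (\bfSigma\hat\bv - \bmu),
\end{equation*}
so that $|\langle \tilde\bz_i(\hat\bv,\bmu),\bar\bv\rangle|$ differs from $|\langle \tilde\bz_i(\hat\bv,\bfSigma\hat\bv),\bar\bv\rangle|$ by at most $\|\bfSigma\hat\bv - \bmu\|_2$ by Cauchy--Schwarz.

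For the lower bound on $\bar\theta$, by assumptions (i) and (iii) we are in the scope of Lemma \ref{lemma:RFHP:feasibility:mean}, which gives that $\RFHP(\{\tilde\bz_i(\hat\bv,\bmu)\}_{i\in[\calK]},\bar k)$ is feasible with optimal value at least $\|\bmu-\bfSigma\hat\bv\|_2 - C_{\bar\alpha_1}\bar r_1$. Since assumption (ii) guarantees $\RFHP(\{\tilde\bz_i(\hat\bv,\bmu)\}_{i\in[\calK]},\bar k')$ is feasible with optimal value $\bar\vartheta$, and the feasibility constraints only relax as we pass from $\bar k$ to $\bar k'$ (at worst $\bar k' \geq \bar k$; otherwise one simply uses the same bound since the margin achieved on $\bar k$ blocks is feasible for $\bar k'$ as well when $\bar k' \geq \bar k$), we get $\bar\vartheta \geq \|\bmu-\bfSigma\hat\bv\|_2 - C_{\bar\alpha_1}\bar r_1$. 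Invoking the $\bar\sa$-approximation assumption (iv) then gives $\bar\theta \geq \bar\sa(\|\bmu-\bfSigma\hat\bv\|_2 - C_{\bar\alpha_1}\bar r_1)$.

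For the upper bound, I argue by pigeonhole. By assumption (ii), strictly more than $\calK - \bar k'$ blocks satisfy $|\langle \tilde\bz_i(\hat\bv,\bmu),\bar\bv\rangle| > \bar\theta$. By property \ref{cor:high:prob:pruned:data':mean:eq1}, at most $\bar k = 4\bar\alpha_1\calK$ blocks satisfy $|\langle \tilde\bz_i(\hat\bv,\bfSigma\hat\bv),\bar\bv\rangle| \geq C_{\bar\alpha_1}\bar r_1$. Assuming $\bar k + \bar k' < \calK$ (the meaningful non-degenerate regime implicit in (iii) together with the choice of $\bar k'$), there exists at least one block $i_*$ on which both $|\langle \tilde\bz_i(\hat\bv,\bmu),\bar\bv\rangle| > \bar\theta$ and $|\langle \tilde\bz_i(\hat\bv,\bfSigma\hat\bv),\bar\bv\rangle| < C_{\bar\alpha_1}\bar r_1$ hold simultaneously. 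Using the identity above at $i_*$, together with Cauchy--Schwarz and $\bar\bv \in \mbS_2$, yields
\begin{equation*}
\bar\theta \;<\; |\langle \tilde\bz_{i_*}(\hat\bv,\bmu),\bar\bv\rangle| \;\le\; |\langle \tilde\bz_{i_*}(\hat\bv,\bfSigma\hat\bv),\bar\bv\rangle| + \|\bfSigma\hat\bv-\bmu\|_2 \;\le\; C_{\bar\alpha_1}\bar r_1 + \|\bmu-\bfSigma\hat\bv\|_2,
\end{equation*}
which is precisely the claimed upper bound.

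The proof is almost entirely routine once one has Lemma \ref{lemma:RFHP:feasibility:mean} and the deterministic robustness property \ref{cor:high:prob:pruned:data':mean:eq1}; the main subtlety is purely bookkeeping, namely ensuring that the thresholds satisfy $\bar k + \bar k' < \calK$ so that the pigeonhole argument yields a non-empty intersection of blocks. This parallels condition (v) of Lemma \ref{lemma:RFHP:feasibility} in the outer-loop setting (where the analog $4(\alpha_1+\alpha_3)\calK < \calK$ plays the same role), and is the only place where the counting must be tracked carefully; apart from that, no further concentration is needed since we are working on the event $\calE$ of Set-up \ref{setup:final} where the relevant uniform control over linear forms of $\tilde\bz_i(\hat\bv,\bfSigma\hat\bv)$ is already established.
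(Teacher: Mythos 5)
Your proof is correct and follows essentially the same approach as the paper's. The lower bound is handled identically (invoking the STEP 2 argument from Lemma \ref{lemma:RFHP:feasibility:mean} and then (iv)). For the upper bound, the paper argues by contradiction on $\bar\vartheta$ (assuming $\bar\vartheta$ exceeds the claimed bound, extracting a high-margin feasible point, and contradicting \ref{cor:high:prob:pruned:data':mean:eq1}) and then uses $\bar\theta\le\bar\vartheta$, whereas you bound $\bar\theta$ directly by pigeonhole on the pair $(\bar\theta,\bar\bv)$ from (ii); this is a cosmetic reorganization of the same counting argument and is, if anything, marginally cleaner since it bypasses $\bar\vartheta$. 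You are also right to flag the implicit bookkeeping: both the paper's contradiction and your pigeonhole need $\bar k+\bar k'\le\calK$, and the lower bound transfer from the $\bar k$-problem to the $\bar k'$-problem needs $\bar k'\ge\bar k$; neither is explicit in condition (iii) but both hold for the paper's actual constants ($\bar k=\calK/16$, $\bar k'=\calK/4$).
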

\begin{proof}
From (ii), maximality and (iv), 
$\bar\sa\bar\vartheta\le\bar\theta\le\bar\vartheta$. We skip the proof of the lower bound 
$\bar\vartheta\ge\Vert\bfSigma\hat\bv-\bmu\Vert_2-C_{\bar\alpha_1}\bar r_1$ as it is proved in the same say as in STEP 2 of the proof of Lemma \ref{lemma:RFHP:feasibility:mean} using (i) and condition $\bar k<\calK$ in (iii). 

Next, we prove the upper bound 
$\bar\vartheta\le\Vert\bfSigma\hat\bv-\bmu\Vert_2+C_{\bar\alpha_1}\bar r_1$. Assume by contradiction that $\bar\vartheta>\Vert\bfSigma\hat\bv-\bmu\Vert_2+C_{\bar\alpha_1}\bar r_1$. By maximality, this implies that  there must exist 
$\bar\theta'\in\re$, $\bar\bv'\in\mbS_2$ and 
$\bar\bq'\in\{0,1\}^\calK$ satisfying the constraints of 
$\RFHP(\{\tilde\bz_i(\hat\bv,\bmu)\}_{i\in[\calK]},\bar k')$ such that 
$\bar\theta'>\Vert\bfSigma\hat\bv-\bmu\Vert_2+C_{\bar\alpha_1}\bar r_1$. In particular, for more than $\calK-\bar k'$ buckets $i$'s,  
$
|\langle\tilde\bz_i(\hat\bv,\bmu),\bar\bv'\rangle|\ge\bar\theta'
$, implying 
\begin{align}
|\langle\tilde\bz_i(\hat\bv,\bfSigma\hat\bv),\bar\bv'\rangle|
\ge|\langle\tilde\bz_i(\hat\bv,\bmu),\bar\bv'\rangle|
-\left|\langle\bfSigma\hat\bv-\bmu,\bar\bv'\rangle
\right|\ge\bar\theta'-\Vert\bmu-\bfSigma\hat\bv\Vert_2\ge C_{\bar\alpha_1}\bar r_1.
\end{align}
This contradicts  \ref{cor:high:prob:pruned:data':mean:eq1} in (i),  finishing the proof of (a).
\end{proof}

\begin{lemma}[One-sided feasibility \& margin-\emph{distance} upper bound]\label{lemma:condition:RFHP:step:dir:mean}
Grant assumptions of Lemma \ref{lemma:condition:RFHP:margin:mean} and  
additionally assume:
\begin{itemize} 
\item[\rm (v)] $
\Vert\bmu-\bfSigma\hat\bv\Vert_2\ge A \bar r_1
$
with
$$
A:=\left[
\left(\frac{\bar\sa+1}{\bar\sa}\right)C_{\bar\alpha_1}\right]
\bigvee
\left[(\nicefrac{2}{\bar\sa})(\bar\sa C_{\bar\alpha_1}+C_{\bar\alpha_2})\right].
$$
\end{itemize}

Then
\begin{itemize}
\item[\rm (b)] Let $\bar k'':=\bar k+\bar k'$. There exists $\bar\bv'\in\{-\bar\bv,\bar\bv\}$ such that property
\ref{equation:emp:quantile:neg:NP2}$(\hat\bv,\bmu,\bar\theta,\bar\bv',\bar k'')$, defined below, holds:
\begin{align}
\sum_{i=1}^{\calK}\unit_{\{\langle\tilde\bz_i(\hat\bv,\bmu),\bar\bv'\rangle>\bar\theta\}}> \calK-\bar k''.
\tag{$\neg\mathtt{NP2}$}
\label{equation:emp:quantile:neg:NP2}
\end{align}
\end{itemize}

Additionally to the assumptions of Lemma \ref{lemma:condition:RFHP:margin:mean} and (v), assume:
\begin{itemize}
\item[\rm (vi)] \ref{cor:high:prob:pruned:data':mean:eq2} holds for some $\bar\alpha_2\in(0,1)$. Let $\bar k_0:=4\bar\alpha_2\calK$.
\end{itemize}

Then, for any $\bmu\neq\bfSigma\hat\bv$ satisfying (ii)-(v) and 
\begin{itemize}
\item[\rm (vii)] $\bar k''\le \calK/2$ and $\bar k_0\le\calK/2$,
\end{itemize}
one also has
\begin{itemize}
\item[\rm (c)] 
$
\langle\bar \bv',\bmu-\bfSigma\hat\bv\rangle\le-\frac{\bar\sa}{2}\Vert\bmu-\bfSigma\hat\bv\Vert_2.
$
\end{itemize}
\end{lemma}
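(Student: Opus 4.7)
The proof closely parallels Lemma \ref{lemma:condition:RFHP:step:dir} but is simpler because the object we wish to estimate is a vector (namely $\bfSigma\hat\bv$) rather than a parameter viewed through the $L^2(\Pi)$ geometry. Item (a) is exactly Lemma \ref{lemma:condition:RFHP:margin:mean}, so the plan reduces to establishing (b) (one-sided feasibility) and (c) (the descent inequality). For (b), I would argue by contradiction that one of the two signs $\pm\bar\bv$ can only account for $\bar k$ buckets; for (c), I would apply a pigeonhole argument combining (b), the upper-tail noise bound \ref{cor:high:prob:pruned:data':mean:eq2}, and the margin lower bound from (a).

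\emph{Proof of (b).} Define
$B^{\pm}:=\{i\in[\calK]:\pm\langle\tilde\bz_i(\hat\bv,\bmu),\bar\bv\rangle>\bar\theta\}$ and the ``good'' index set
$G:=\{i\in[\calK]:|\langle\tilde\bz_i(\hat\bv,\bfSigma\hat\bv),\bar\bv\rangle|\le C_{\bar\alpha_1}\bar r_1\}$, which by \ref{cor:high:prob:pruned:data':mean:eq1} satisfies $|G^c|\le 4\bar\alpha_1\calK=\bar k$. By (ii), $|B^+|+|B^-|>\calK-\bar k'$. I would then split on the sign of $\langle\bmu-\bfSigma\hat\bv,\bar\bv\rangle$. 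Assume first it is $\le 0$. For any $i\in B^-$, using the identity $\tilde\bz_i(\hat\bv,\bfSigma\hat\bv)=\tilde\bz_i(\hat\bv,\bmu)-(\bfSigma\hat\bv-\bmu)$, the lower bound in (a), and (v) (which gives $\bar\sa(\Vert\bmu-\bfSigma\hat\bv\Vert_2-C_{\bar\alpha_1}\bar r_1)\ge C_{\bar\alpha_1}\bar r_1$ because $\Vert\bmu-\bfSigma\hat\bv\Vert_2\ge\tfrac{\bar\sa+1}{\bar\sa}C_{\bar\alpha_1}\bar r_1$), one gets $\langle\tilde\bz_i(\hat\bv,\bfSigma\hat\bv),\bar\bv\rangle<-C_{\bar\alpha_1}\bar r_1$, forcing $i\in G^c$. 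Hence $|B^-|\le\bar k$ and therefore $|B^+|>\calK-(\bar k+\bar k')=\calK-\bar k''$, so $\bar\bv':=\bar\bv$ works. The case $\langle\bmu-\bfSigma\hat\bv,\bar\bv\rangle>0$ is symmetric, yielding $\bar\bv':=-\bar\bv$.

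\emph{Proof of (c).} By (b), more than $\calK-\bar k''$ buckets satisfy $\langle\tilde\bz_i(\hat\bv,\bmu),\bar\bv'\rangle>\bar\theta$; by \ref{cor:high:prob:pruned:data':mean:eq2}, at most $\bar k_0=4\bar\alpha_2\calK$ buckets satisfy $\langle\tilde\bz_i(\hat\bv,\bfSigma\hat\bv),\bar\bv'\rangle>C_{\bar\alpha_2}\bar r_1$. Condition (vii) gives $\bar k''+\bar k_0\le\calK$, so by the pigeonhole principle there exists a bucket $i$ where the first inequality is strict and the second fails. On such a bucket,
\begin{align}
\bar\theta<\langle\tilde\bz_i(\hat\bv,\bmu),\bar\bv'\rangle=\langle\tilde\bz_i(\hat\bv,\bfSigma\hat\bv),\bar\bv'\rangle+\langle\bfSigma\hat\bv-\bmu,\bar\bv'\rangle\le C_{\bar\alpha_2}\bar r_1+\langle\bfSigma\hat\bv-\bmu,\bar\bv'\rangle,
\end{align}
so $\langle\bar\bv',\bmu-\bfSigma\hat\bv\rangle\le C_{\bar\alpha_2}\bar r_1-\bar\theta$. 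Substituting the lower bound $\bar\theta\ge\bar\sa(\Vert\bmu-\bfSigma\hat\bv\Vert_2-C_{\bar\alpha_1}\bar r_1)$ from (a) gives
\begin{align}
\langle\bar\bv',\bmu-\bfSigma\hat\bv\rangle\le (C_{\bar\alpha_2}+\bar\sa C_{\bar\alpha_1})\bar r_1-\bar\sa\Vert\bmu-\bfSigma\hat\bv\Vert_2,
\end{align}
and the second condition in (v), namely $\Vert\bmu-\bfSigma\hat\bv\Vert_2\ge(2/\bar\sa)(\bar\sa C_{\bar\alpha_1}+C_{\bar\alpha_2})\bar r_1$, absorbs the first term into $(\bar\sa/2)\Vert\bmu-\bfSigma\hat\bv\Vert_2$, yielding the claim.

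The only delicate point is the sign-selection step in (b): a distance lower bound on $\bar\theta$ alone would not rule out that the large-margin buckets are roughly evenly split between $B^+$ and $B^-$, which would in turn contaminate the pigeonhole in (c). The margin lower bound from (a) combined with (v) is what makes the negative-side inclusion $B^-\subset G^c$ (resp.\ $B^+\subset G^c$) large enough in magnitude to trigger the constraint \ref{cor:high:prob:pruned:data':mean:eq1}; the careful calibration of the constant $(\bar\sa+1)/\bar\sa$ in (v) is precisely what makes this implication go through. Once that bookkeeping is handled, the rest is a direct adaptation of the argument already used in Lemma \ref{lemma:condition:RFHP:step:dir}.
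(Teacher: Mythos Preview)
Your proof is correct and follows essentially the same approach as the paper's: the sign-splitting argument for (b) showing $B^{-}\subset G^c$ (resp.\ $B^{+}\subset G^c$) via the lower bound in (a) and the first branch of (v), and the pigeonhole argument for (c) combining (b) with \ref{cor:high:prob:pruned:data':mean:eq2} and the second branch of (v), are exactly what the paper does. The only cosmetic difference is that you combine (vii) into $\bar k''+\bar k_0\le\calK$ for the pigeonhole, whereas the paper keeps the two halves separate; both yield the same conclusion.
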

\begin{proof}
\underline{Proof of (b)}: define the sets
\begin{align}
G_{\hat\bv}&:=\{i\in[\calK]:|\langle\tilde\bz_i(\hat\bv,\bfSigma\hat\bv),\bar\bv\rangle|\le C_{\bar\alpha_1}\bar r_1\},\\
B_{\bmu}^+&:=\{i\in[\calK]:\langle\tilde\bz_i(\hat\bv,\bmu),\bar\bv\rangle> \bar\theta\},\\
B_{\bmu}^-&:=\{i\in[\calK]:\langle\tilde\bz_i(\hat\bv,\bmu),-\bar\bv\rangle> \bar\theta\}.
\end{align}
We consider two cases.

\begin{description}
\item[Case 1:] $\langle\bmu-\bfSigma\hat\bv,\bar\bv\rangle\le0$. 
Given $i\in B_{\bmu}^-$, 
\begin{align}
\langle\tilde\bz_i(\hat\bv,\bfSigma\hat\bv),\bar\bv\rangle&=\langle\tilde\bz_i(\hat\bv,\bmu),\bar\bv\rangle
+\langle\bmu-\bfSigma\hat\bv,\bar\bv\rangle\\
&\le-\bar\theta\\
&\stackrel{\rm (a)}{<}\sa C_{\bar\alpha_1}\bar r_1-\bar\sa\Vert\bmu-\bfSigma\hat\bv\Vert_2\\
&\stackrel{\rm (v)}{<}-C_{\bar\alpha_1}\bar r_1,
\end{align}
where we used from (v) that 
$\Vert\bmu-\bfSigma\hat\bv\Vert_2\ge(\nicefrac{1+\bar\sa}{\bar\sa})C_{\bar\alpha_1}\bar r_1$.

We thus concluded that $B_{\bmu}^-\subset G_{\hat\bv}^c$.  This and the facts
\begin{itemize}
\item $|G_{\hat\bv}|>\calK-\bar k$ by  \ref{cor:high:prob:pruned:data':mean:eq1},
\item $B_{\bmu}^+$ and $B_{\bmu}^-$ are disjoint because 
$\bar\theta>0$, by (a) and (v). Also, $|B_{\bmu}^+|+|B_{\bmu}^-|>\calK-\bar k'$ by (ii), 
\end{itemize}
imply that $|B_{\bmu}^+|>\calK-\bar k-\bar k'$. 

\item[Case 2:] $\langle\bmu-\bfSigma\hat\bv,\bar\bv\rangle>0$. By exchanging $\bar\bv$ with $-\bar\bv$ and $B_{\bmu}^-$ by $B_{\bmu}^+$ a similar argument shows that 
$|B_{\bmu}^-|>\calK-\bar k-\bar k'$.
\end{description}

\underline{Proof of (c)}: By (b) and (vii), one has
$
\langle\tilde\bz_i(\hat\bv,\bmu),\bar\bv'\rangle\le\bar\theta
$
for less $\calK/2$ of buckets $i$'s.
By \ref{cor:high:prob:pruned:data':mean:eq2} and (vii), one has 
$
\langle\tilde\bz_i(\hat\bv,\bfSigma\hat\bv),\bar\bv'\rangle> C_{\bar\alpha_2}\bar r_1
$
for less $\calK/2$ of buckets $i$'s. 
By the pigeonhole principle, there is at least one bucket $i$ for which
$\langle\tilde\bz_i(\hat\bv,\bmu),\bar\bv'\rangle>\bar\theta$ and
$\langle\tilde\bz_i(\hat\bv,\bfSigma\hat\bv),\bar\bv'\rangle\le C_{\bar\alpha_2}\bar r_1$ hold. Thus
\begin{align}
\bar\sa\left[\Vert\bmu-\bfSigma\hat\bv\Vert_2-C_{\bar\alpha_1}\bar r_1\right]&\stackrel{\rm (a)}{\le}
\bar\theta\\
&<\langle\tilde\bz_i(\hat\bv,\bmu),\bar\bv'\rangle\\
&=\langle\tilde\bz_i(\hat\bv,\bfSigma\hat\bv),\bar\bv'\rangle+
\langle\bfSigma\hat\bv-\bmu,\bar\bv'\rangle\\
&\le C_{\bar\alpha_2}\bar r_1+\langle\bfSigma\hat\bv-\bmu,\bar\bv'\rangle.
\end{align}
Using (v), so that $\Vert\bmu-\bfSigma\hat\bv\Vert_2\ge(\nicefrac{2}{\bar\sa})(\bar\sa C_{\bar\alpha_1}+C_{\bar\alpha_2})\bar r_1$, and rearranging the displayed inequality finishes the proof. 
\end{proof}

We conclude this section with the following corollary. From now on, we fix the parameters 
$\bar\alpha_4=1/96$, $\bar\rho=1/24$ so that
$
\bar\alpha_1=1/64 
$
and
$
\bar\alpha_2=1/8
$
satisfy the conditions of Corollary \ref{cor:high:prob:pruned:data:mean}. We also set 
$
c_{\bar\alpha_1}:=\frac{1}{4},
$
$\bar\sa:=0.0128$
and $\bar\varphi:=0.49\pi$. 
In order to satisfy \eqref{lemma:count:block:emp:process:alpha}, 
it suffices to take 
$
C_{\bar\alpha_1}=1666.68, 
$
and
$
C_{\bar\alpha_2}=110.
$

\begin{corollary}\label{cor:step:dir:mean}
Let $\eta\in(0,1/2]$ and suppose that: 
\begin{itemize}
\item[\rm (i)] 
\ref{lemma:high:prob:pruned:data:eq0}, \ref{cor:high:prob:pruned:data':mean:eq1} and
\ref{cor:high:prob:pruned:data':mean:eq2} all hold.
\item[\rm (ii)] Given $\bmu\in\re^p$, assume
$
\Vert\bmu-\bfSigma\hat\bv\Vert_2\ge 1.32\cdot10^5\bar r_1.
$
\end{itemize}

Let $(\tilde\theta,\tilde\bv)$ be the output of 
$\MW(\calD,U,S_2,\bar k,\bar k',1,\tilde r_1)$, that is, $\Algorithm$ \ref{algo:MW}  with inputs 
$\calD=\{\tilde\bz_i(\hat\bv,\bmu)\}_{i\in[\calK]}$ and $U=\max_{i\in[\calK]}\Vert\tilde\bz_i(\hat\bv,\bmu)\Vert_2^2$, $\bar k=4\bar\alpha_1\calK$ and $\bar k'=c_{\bar\alpha_1}\calK$, $R=1$ and $\tilde r_1=C_{\bar\alpha_1}\bar r_1$. 

Then on an event of probability (on the randomness of $\{\btheta_\ell\}_{\ell\in[S_2]}$) of at least $1-e^{-\frac{S_2}{669}}$, one has
\begin{align}
\sum_{i=1}^{\calK}\unit_{\left\{|\langle\tilde\bz_i(\hat\bv,\bmu),\bar\bv\rangle|>\tilde\theta
\right\}}> \calK-\bar k'
\quad{and}\quad
\tilde\theta>\bar\sa\theta_{\hat\bv,\bmu,\bar k}.
\label{cor:step:dir:mean:0}
\end{align}
 Moreover,  
\begin{align}
&(\nicefrac{1}{78.125})\Vert\bmu-\bfSigma\hat\bv\Vert_2-1666.68\bar r_1\le\tilde\theta,\label{cor:step:dir:mean:1}\\
&\tilde\theta\le\Vert\bmu-\bfSigma\hat\bv\Vert_2+1666.68\bar r_1,\label{cor:step:dir:mean:1'}\\
&\langle\tilde\bv,\bmu-\bfSigma\hat\bv\rangle_2\le-\frac{1}{156.25}\Vert\bmu-\bfSigma\hat\bv\Vert_2.
\label{cor:step:dir:mean:2}
\end{align}
\end{corollary}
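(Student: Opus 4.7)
The proof mimics the argument used to establish Corollary \ref{cor:step:dir}, with the outer-loop lemmas of Sections \ref{ss:largest:residual}--\ref{ss:descent:direction} replaced by their inner-loop mean-estimation counterparts developed in Section \ref{ss:descent:direction:mean}. First I verify that the specified parameters $(\bar\alpha_1,\bar\alpha_2,\bar\sa,\bar\varphi,c_{\bar\alpha_1})$ together with the inputs $\bar k=4\bar\alpha_1\calK=\calK/16$ and $\bar k'=c_{\bar\alpha_1}\calK=\calK/4$ fulfil all the hypotheses of the lemmas to be invoked: the inequality $\bar\sa\le\cos(\bar\varphi)/\sqrt{6}$ required by Lemma \ref{lemma:MW:from:below:lin:reg:boost}; positivity of
\[
\mathfrak{p}=\tfrac{2\bar\varphi}{\pi}(1-2\bar k/\calK)-(1-\bar k'/\calK)\approx 0.107,
\]
which produces the probability exponent via $7.72/\mathfrak{p}^2\approx 669$; the threshold bounds $\bar k''=\bar k+\bar k'=5\calK/16\le\calK/2$ and $\bar k_0=4\bar\alpha_2\calK=\calK/2$ required in (vii) of Lemma \ref{lemma:condition:RFHP:step:dir:mean}; and the matching of the constant $1.32\cdot 10^5$ in hypothesis (ii) of the corollary to the constant $A$ defined in (v) of Lemma \ref{lemma:condition:RFHP:step:dir:mean}.

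Granted these verifications, Lemma \ref{lemma:RFHP:feasibility:mean} guarantees feasibility of $\RFHP(\{\tilde\bz_i(\hat\bv,\bmu)\}_{i\in[\calK]},\bar k)$ with optimal value $\theta_{\hat\bv,\bmu,\bar k}\ge\Vert\bmu-\bfSigma\hat\bv\Vert_2-C_{\bar\alpha_1}\bar r_1\ge\tilde r_1=C_{\bar\alpha_1}\bar r_1$, matching condition (iii) of Lemma \ref{lemma:MWU:from:below:linear:reg}, whose conditions (i)--(ii) hold by construction. Successive application of Lemmas \ref{lemma:MWU:from:below:linear:reg} and \ref{lemma:MW:from:below:lin:reg:boost} produces, on an event of probability at least $1-e^{-\mathfrak{p}^2 S_2/7.72}\ge 1-e^{-S_2/669}$, a pair $(\tilde\theta,\bv)\in\re\times\mbB_2$ satisfying \eqref{cor:step:dir:mean:0} with $\tilde\theta>\bar\sa\theta_{\hat\bv,\bmu,\bar k}$. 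Hypotheses (ii)--(iv) of Lemma \ref{lemma:condition:RFHP:margin:mean} are then met, whose conclusion (a) yields the sandwich
\[
\bar\sa\bigl(\Vert\bmu-\bfSigma\hat\bv\Vert_2-C_{\bar\alpha_1}\bar r_1\bigr)\le\tilde\theta\le\Vert\bmu-\bfSigma\hat\bv\Vert_2+C_{\bar\alpha_1}\bar r_1.
\]
Substituting $\bar\sa=1/78.125$ and weakening the additive term on the left from $\bar\sa C_{\bar\alpha_1}$ to $C_{\bar\alpha_1}=1666.68$ delivers \eqref{cor:step:dir:mean:1} and \eqref{cor:step:dir:mean:1'}.

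Finally, since hypotheses (v)--(vii) of Lemma \ref{lemma:condition:RFHP:step:dir:mean} all hold, its conclusion (b) supplies a sign $\bar\bv'\in\{-\bv,\bv\}$ for which \ref{equation:emp:quantile:neg:NP2} holds with $\bar k''=\bar k+\bar k'$. In lines 6--11 of Algorithm \ref{algo:ROUND:spherical:order}, the comparison of $W^\sharp_{\calK-\bar k-\bar k'}$ (built from $W_i=\langle\tilde\bz_i(\hat\bv,\bmu),\bv\rangle$) against $\tilde\theta$ recovers $\tilde\bv=\bar\bv'$: if $\bv=\bar\bv'$ the threshold is strictly exceeded by (b); if $\bv=-\bar\bv'$, then (b) together with $\tilde\theta>0$ and $\bar k+\bar k'<\calK/2$ force the comparison to fail, so $\tilde\bv=-\bv=\bar\bv'$. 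Applying conclusion (c) of Lemma \ref{lemma:condition:RFHP:step:dir:mean} with $\bar\sa/2=1/156.25$ then yields \eqref{cor:step:dir:mean:2}. The main bookkeeping subtlety is precisely this sign-recovery step, which hinges on the strict-majority inequality $\bar k+\bar k'<\calK/2$; this is why $c_{\bar\alpha_1}=1/4$ was chosen.
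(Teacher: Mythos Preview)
Your proof is correct and follows essentially the same approach as the paper's own proof: verify the numerical parameter conditions, invoke Lemma \ref{lemma:RFHP:feasibility:mean} for feasibility, apply Lemmas \ref{lemma:MWU:from:below:linear:reg} and \ref{lemma:MW:from:below:lin:reg:boost} for \eqref{cor:step:dir:mean:0}, use Lemma \ref{lemma:condition:RFHP:margin:mean}(a) for \eqref{cor:step:dir:mean:1}--\eqref{cor:step:dir:mean:1'}, and finish with Lemma \ref{lemma:condition:RFHP:step:dir:mean}(b)--(c) together with the sign-selection step of $\Round$ for \eqref{cor:step:dir:mean:2}. Your write-up is in fact slightly more explicit than the paper's in spelling out the sign-recovery argument via the strict-majority inequality $\bar k+\bar k'<\calK/2$.
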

\begin{proof}
Setting $\bar k=4\bar\alpha_1$ and $\bar k':=c_{\bar\alpha_1}\calK$ with the parameters displayed before the corollary, one checks that 
$\bar\sa\le\cos(\bar\varphi)/\sqrt{6}$, 
$\mathfrak{p}=0.1075$ and
all conditions of Lemmas \ref{lemma:MWU:from:below:linear:reg},  \ref{lemma:MW:from:below:lin:reg:boost}, \ref{lemma:condition:RFHP:margin:mean} and \ref{lemma:condition:RFHP:step:dir:mean} hold. In particular, 
$
\Vert\bmu-\bfSigma\hat\bv\Vert_2\ge 2C_{\bar\alpha_1}\bar r_1
$ 
implying condition (iii) of Lemma \ref{lemma:MWU:from:below:linear:reg} with $\tilde r_1:=C_{\bar\alpha_1}\bar r_1$.

We now work on the event of probability $1-e^{-\frac{\mathfrak{p}^2S_2}{7.72}}$ for which the claim of Lemma \ref{lemma:MW:from:below:lin:reg:boost}
is true. By Lemma such lemma,
\eqref{cor:step:dir:mean:0} is satisfied; these are assumptions (ii) and (iv) of Lemma \ref{lemma:condition:RFHP:margin:mean} for 
$(\tilde\theta,\bar\bv)$. All other assumptions of such lemma hold, yielding \eqref{cor:step:dir:mean:1}-\eqref{cor:step:dir:mean:1'}.

All conditions of Lemma \ref{lemma:condition:RFHP:margin:mean} and conditions (v) of Lemma \ref{lemma:condition:RFHP:step:dir:mean} holds so there must exist 
$\bar\bv'\in\{-\bar\bv,\bar\bv\}$ satisfying
$
\sum_{i=1}^{\calK}\unit_{\{\langle\tilde\bz_i(\hat\bv,\bmu),\bar\bv'\rangle>\tilde\theta\}}> \calK-\bar k'',
$
that is, (b) of such lemma. By this property, the order statistics in 
$\Algorithm$ \ref{algo:ROUND:spherical:order} implies $\tilde\bv=\bar\bv'$. All additional conditions of Lemma \ref{lemma:condition:RFHP:step:dir:mean} hold, yielding \eqref{cor:step:dir:mean:2}.
\end{proof}

\begin{lemma}\label{lemma:stepsize:descent:direction:mean}
Let 
$
\bar c_*:=1.045752\cdot10^{-06}
$
and $\Delta_0:=1.093597\cdot10^{-12}$.

Let $\bmu\in\re^p$ and grant assumptions (i)-(ii) of Corollary \ref{cor:step:dir:mean} which guarantees the outputted margin-direction pair $(\tilde\theta,\tilde\bv)$ produced by $\Algorithm$ \ref{algo:MW}.

Let 
$
\bmu^+:=\bmu+\bar c_*\tilde\theta\tilde\bv.
$

Then, on the same event of Corollary \ref{cor:step:dir:mean},  
$$
\Vert\bmu^+-\bfSigma\hat\bv\Vert_2^2\le\left(
1-\bar\Delta_0
\right)
\Vert\bmu-\bfSigma\hat\bv\Vert_2^2.
$$
\end{lemma}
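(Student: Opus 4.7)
My plan is to mimic the contraction argument in Lemma \ref{lemma:stepsize:descent:direction}, but this time in the simpler Euclidean inner product (since we are estimating $\bfSigma\hat\bv$, not running the outer loop). I would first expand
\begin{align*}
\Vert\bmu^+ - \bfSigma\hat\bv\Vert_2^2 = \Vert\bmu - \bfSigma\hat\bv\Vert_2^2 + 2\bar c_*\tilde\theta\,\langle\tilde\bv,\bmu-\bfSigma\hat\bv\rangle + \bar c_*^2\tilde\theta^2\Vert\tilde\bv\Vert_2^2,
\end{align*}
and note that $\Vert\tilde\bv\Vert_2 = 1$ because $\Algorithm$ \ref{algo:ROUND:spherical:order} is invoked with $R = 1$, so $\bv_{\ell_*} = \bfM^{1/2}\btheta_{\ell_*}$ lies in $\mbB_2$ and the subsequent sign flip preserves its norm.

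Next I would bound the two terms using the three bounds guaranteed by Corollary \ref{cor:step:dir:mean}: the margin lower bound \eqref{cor:step:dir:mean:1}, the margin upper bound \eqref{cor:step:dir:mean:1'}, and the angle/descent bound \eqref{cor:step:dir:mean:2}. Writing $D := \Vert\bmu - \bfSigma\hat\bv\Vert_2$, the hypothesis $D \ge 1.32\cdot 10^5\,\bar r_1$ of Corollary \ref{cor:step:dir:mean} lets me absorb the $\bar r_1$ slack into $D$, yielding the clean estimates
\begin{align*}
\tilde\theta \ge \Bigl(\tfrac{1}{78.125} - \tfrac{1666.68}{1.32\cdot 10^5}\Bigr) D > 0, \qquad \tilde\theta \le \Bigl(1 + \tfrac{1666.68}{1.32\cdot 10^5}\Bigr) D,
\end{align*}
together with $\langle\tilde\bv,\bmu-\bfSigma\hat\bv\rangle \le -\tfrac{1}{156.25} D$. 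Since $\tilde\theta > 0$, the cross term is negative and can be bounded from above by the \emph{lower} bound on $\tilde\theta$ times the descent bound; the quadratic term is bounded using the \emph{upper} bound on $\tilde\theta$.

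Combining, I obtain
\begin{align*}
\Vert\bmu^+ - \bfSigma\hat\bv\Vert_2^2 \le \bigl[1 - 2\bar c_*\,\alpha + \bar c_*^2\,\beta\bigr] D^2,
\end{align*}
where $\alpha := \tfrac{1}{156.25}\bigl(\tfrac{1}{78.125} - \tfrac{1666.68}{1.32\cdot 10^5}\bigr)$ and $\beta := \bigl(1 + \tfrac{1666.68}{1.32\cdot 10^5}\bigr)^2$. The only remaining task is to verify numerically that the choice $\bar c_* = 1.045752\cdot 10^{-6}$ makes the bracket at most $1 - \bar\Delta_0$ with $\bar\Delta_0 = 1.093597\cdot 10^{-12}$; this is a direct calculation and the constants above are compatible with this choice (indeed $\bar c_*$ is essentially the stationary point $\alpha/\beta$ of the quadratic in $\bar c_*$, and $\bar\Delta_0 \approx \alpha^2/\beta$). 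The main conceptual subtlety — and the reason one cannot get away with naive constants — is that the gap between $1/78.125$ and $1666.68/(1.32\cdot10^5)$ is small, so one must keep track of signs carefully and make sure the condition $D \ge 1.32\cdot10^5\,\bar r_1$ in the hypothesis is tight enough to leave a strictly positive descent coefficient after subtracting the $\bar r_1$ correction.
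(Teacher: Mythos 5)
Your proposal is correct and follows essentially the same approach as the paper: expand the squared norm, bound $\tilde\theta$ above and below by constant multiples of $D := \Vert\bmu - \bfSigma\hat\bv\Vert_2$ using item~(ii) of Corollary~\ref{cor:step:dir:mean} together with \eqref{cor:step:dir:mean:1}--\eqref{cor:step:dir:mean:1'}, use \eqref{cor:step:dir:mean:2} for the cross term, and optimize the resulting quadratic in $\bar c_*$. One small imprecision: you assert $\Vert\tilde\bv\Vert_2 = 1$, but $\tilde\bv$ is a sign-flip of $\bfM^{1/2}\btheta_{\ell_*}$ with $\bfM\in\calM(\mbS_2)$ and $\btheta_{\ell_*}\in\mbS_2$, so one only has $\Vert\tilde\bv\Vert_2 \le 1$; this suffices because the quadratic term only needs an upper bound (the paper reaches the same conclusion via Young's inequality after writing the expansion with $-\Vert\bmu^+-\bmu\Vert_2^2$). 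Also, the paper's intermediate roundings of the margin bounds (to $0.00017\,D$ and $1.02\,D$) are what produce the exact advertised constants $\bar c_*$ and $\bar\Delta_0$, whereas your slightly tighter $\alpha,\beta$ would yield marginally stronger (hence still valid) constants.
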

\begin{proof}
One has
\begin{align}
\Vert\bmu^+-\bfSigma\hat\bv\Vert_2^2&\le
\Vert\bmu-\bfSigma\hat\bv\Vert_2^2
+2\bar c_*\tilde\theta\langle\tilde\bv,\bmu^+-\bfSigma\hat\bv\rangle
-\Vert\bmu^+-\bmu\Vert_2^2\\
&=\Vert\bmu-\bfSigma\hat\bv\Vert_2^2
+2\bar c_*\tilde\theta\langle\tilde\bv,\bmu-\bfSigma\hat\bv\rangle+E,
\end{align}
where by, Young's inequality,
$$
E:=2\bar c_*\tilde\theta\langle\tilde\bv,\bmu^+-\bmu\rangle-\Vert\bmu^+-\bmu\Vert_2^2
\le \bar c_*^2\tilde\theta^2.
$$
	
Item (ii) and \eqref{cor:step:dir:mean:1} of Corollary \ref{cor:step:dir:mean} imply
$
\hat\theta\ge0.00017\Vert\bmu-\bfSigma\hat\bv\Vert_2.
$
Hence, from \eqref{cor:step:dir:mean:2}, one gets
$
2\tilde\theta\langle\tilde\bv,\bmu-\bfSigma\hat\bv\rangle
\le -2.176\cdot10^{-6}\Vert\bmu-\bfSigma\hat\bv\Vert_2^2.
$
Items (ii) and \eqref{cor:step:dir:mean:1'} imply 
$
\tilde\theta\le1.02
\Vert\bmu-\bfSigma\hat\bv\Vert_2.
$

We thus conclude that 
\begin{align}
\Vert\bmu^+ -\bfSigma\hat\bv\Vert_2^2
\le \left[
1 - 2.176\cdot10^{-6}\bar c_* +
1.0404\bar c_*^2
\right]
\Vert\bmu-\bfSigma\hat\bv\Vert_2^2.
\end{align}
Minimizing over $\bar c_*$ yields the claim.  
\end{proof}

\section{Master algorithm}\label{s:master:rate}

We now present two master algorithms for robust regression. The first one, $\Algorithm$ \ref{algo:robust:regression:main}, assumes knowledge of 
\begin{align}
r_1:=2\sqrt{\frac{\tr(\bfXi)}{n}} \bigvee
\sqrt{\frac{\Vert\bfXi\Vert K}{n}}.
\end{align}
Note that $(\tr(\bfXi),\Vert\bfXi\Vert)$ requires information of the noise level, a difficult quantity to robustly estimate in practice. We present the alternative $\Algorithm$ \ref{algo:robust:regression:main:adaptive} in the next section, assuming no knowledge of 
$(\tr(\bfXi),\Vert\bfXi\Vert)$. Both algorithms assume knowledge  of the minimum and maximal eigenvalues 
$(\mu^{-2}(\mbB_2),\Vert\bfSigma\Vert)$ of the covariance matrix $\bfSigma$. As mentioned in the introduction, these values can be effectively replaced by their robust estimates (up to absolute constants).  

For ease of reference, we recall previously defined constants and rates. We recall the parameters
$
\alpha_1=1/96, 
$
$
\alpha_2=0.08, 
$
$
\alpha_3=0.239, 
$
$
\alpha_4=1/144,
$
$
c_{\alpha_1}:=\frac{1}{4},
$
$
\rho=1/36,
$
$\sa:=0.0128$, 
$
C_{\alpha_1}=2525.26, 
$
$
C_{\alpha_2}=192.4, 
$
$
C_{\alpha_3}=51.9
$
and
$
C_{\alpha_4}=4330.
$ 
Also, we recall $\bar\rho=1/24$, 
$
\bar\alpha_1=1/64, 
$
$
\bar\alpha_2=1/8,
$
$\bar\alpha_4=1/96$, 
$
c_{\bar\alpha_1}:=\frac{1}{4},
$
$\bar\sa:=\sa$, 
$
C_{\bar\alpha_1}=1666.68, 
$
and
$
C_{\bar\alpha_2}=110.
$
Finally, we recall the rates
\begin{align}
r&:=2\mu^2(\mbB_2)\sqrt{12\frac{K}{n}\tr(\bfXi)},\\
r_{n,K}&:=C L^2\sqrt{\frac{p\log p}{n}}
\bigvee L^2\sqrt{\frac{K}{n}},\\
r_2&:=2C_{\alpha_4}r_{n,K}+
CC_{\alpha_4} C_{\rho}'\frac{p\log p}{2n}
+2L^2\sqrt{\frac{CC_{\alpha_4} p\log p}{2n}},\\
\bar r_2&:=2C_{\bar\alpha_4}r_{n,K}+
CC_{\bar\alpha_4} C_{\bar\rho}'\frac{p\log p}{2n}
+2L^2\sqrt{\frac{CC_{\bar\alpha_4} p\log p}{2n}},\\
\bar r_1&:=\Vert\bfSigma\Vert \bar r_2, 
\end{align}
where $C>0$ is an absolute constant in Lemma \ref{lemma:trunc:quad:rademacher:comp} in the Appendix and $C_\alpha':=1+\sqrt{2/\alpha}$ for any 
$\alpha\in(0,1)$. Recall the constants
$
\bar c_*:=1.045752\cdot10^{-06}
$
and $\Delta_0:=1.093597\cdot10^{-12}$.
Define the constants $c_*:=\frac{\sa}{8(2+C_{\alpha_4})\kappa^2(\kappa^2+\Delta^2)}$ and
\begin{align}
\Delta:=263876.1\bar r_1\bigvee(1+\kappa)^2e^{-T_2\bar\Delta_0},
\end{align}
where $T_2\in\mathbb{N}$ is to be defined in the following. 

\begin{algorithm}[H]
\caption{$\RobustRegression(\calD,T_1,T_2,K,\eta,S_1,S_2,r_1)$}
\label{algo:robust:regression:main}
\textbf{Input}: sample $\calD:=\{(\tilde y_\ell,\tilde\bx_\ell)\}_{\ell=1}^n\cup\{(\tilde y_\ell,\tilde\bx_\ell)\}_{i=n+1}^{2n}$, outer and inner number of iterations $(T_1,T_2)$, number of buckets $K$, quantile probability $\eta\in(0,1/2)$, 
outer and inner simulation sample sizes 
$(S_1,S_2)$, optimal rate $r_1>0$.

\quad

\textbf{Output}: $\hat\bb(r_1)$. 
 
\begin{algorithmic}[1]
  \small 
   	\STATE Set $(\tilde\bb^{(0)},\tilde R_m,
	\{(\tilde y_\ell,\tilde\bx_\ell)\}_{\ell=1}^m)\leftarrow\Pruning(\calD,K,\eta)$.
	\STATE Split $\{(\tilde y_\ell,\tilde\bx_\ell)\}_{\ell=1}^{m}$ into $\calK$ buckets of same size $m/\calK=B$ indexed by the partition $\bigcup_{i\in[\calK]}\tilde B_i=[m]$. 
 	\STATE Set 
 	$\bb^1\leftarrow\tilde\bb^{(0)}$.
 	\FOR{$t=1:T_1$}
 	\STATE For $i\in[\calK]$, compute 
$
\tilde\bz_i(\bb^t):=\frac{1}{B}\sum_{\ell\in\tilde B_i}(\tilde\by_\ell-\langle\tilde\bx_\ell,\bb^t\rangle)\tilde\bx_\ell.
$
 	\STATE Set 
 	$$(\theta_t,\bv^t)\leftarrow\MW\left(
\{\tilde\bz_i(\bb^t)\}_{i=1}^{\calK},\max_{i\in[\calK]}\Vert\tilde\bz_i(\bb^t)\Vert_2^2,S_1,4\alpha_1\calK, c_{\alpha_1}\calK,\mu^2(\mbB_2),C_{\alpha_1}\mu^2(\mbB_2)r_1\right).
$$
\STATE $\bmu^t\leftarrow\RobustDirection(\{
\tilde\bx_\ell\}_{\ell=1}^m,\bv^t,T_2,S_2)$.	

 		\IF{$\theta^t<\nicefrac{\theta^{t-1}}{\kappa}$}	
 		\STATE Set
$
		\bb^{t+1}:=\bb^t + c_*(\theta_t+C_{\alpha_1}\mu^2(\mbB_2)r_1)\bmu^t.
		$
 		\ELSE 
 		\STATE $\bb^{t+1}\leftarrow\bb^t$.
\ENDIF 		
 	\ENDFOR
 	\RETURN $\bb^{T_1}$.
 \end{algorithmic}
\end{algorithm}

\begin{algorithm}[H]
\caption{$\RobustDirection(\calD,\hat\bv,T_2,S_2)$}
\label{algo:robust:direction:estimation}
\textbf{Input}: pruned feature sample $\calD:=\{\tilde\bx_\ell\}_{\ell=1}^m$, direction $\hat\bv$, inner number of iterations $T_2$, inner simulation sample size $S_2$. 

\quad

\textbf{Output}: estimate $\hat\bmu(\hat\bv)$ of $\bfSigma\hat\bv$. 
 
\begin{algorithmic}[1]
  \small 
	\STATE Set $\bmu^{1}\leftarrow (1,0,\ldots,0)\in\re^p$.
		\FOR{$\tau=1:T_2$}
 	\STATE For $i\in[\calK]$, compute 
$
\tilde\bz_i(\hat\bv,\bmu^{\tau}):=\frac{1}{B}\sum_{\ell\in\tilde B_i}\left(\langle\tilde\bx_\ell,\hat\bv\rangle\tilde\bx_\ell-\bmu^{\tau}
\right).
$
 	\STATE Set 
 	$$
 	(\tilde\theta_\tau,\tilde\bv^\tau)\leftarrow\MW\left(
\{\tilde\bz_i(\hat\bv,\bmu^{\tau})\}_{i=1}^{\calK}, \max_{i\in[\calK]}\Vert\tilde\bz_i(\hat\bv,\bmu^{\tau})\Vert_2^2,S_2,4\bar\alpha_1\calK,c_{\bar\alpha_1}\calK,1,C_{\bar\alpha_1}\bar r_1\right).
$$
 		\IF{$\tilde\theta^t<\tilde\theta^{t-1}$}	
 		\STATE Set
 		$
		\bmu^{\tau+1}:=\bmu^{\tau} +\bar c_*\tilde\theta_\tau\tilde\bv^\tau.
		$
 		\ELSE 
 		\STATE $\bmu^{\tau+1}\leftarrow\bmu^{\tau}$.
\ENDIF 		
 	\ENDFOR	
 	\RETURN $\hat\bmu^{T_2}$.
 \end{algorithmic}
\end{algorithm}

We introduce the assumptions in the next two main results. Let $K\in[n]$, $\eta\in(0,1/2)$ and $m=(1-\eta)n$. Grant Assumptions \ref{assump:L4:L2} and \ref{assump:contamination}
and assume that 
\begin{align}
K&>\max\{4,(\alpha_1\vee\alpha_2)^{-1},\alpha_3^{-1},\alpha_4^{-1},\bar\alpha_4^{-1}\}o,\label{condition1}\\
\eta&\ge4\epsilon,\label{condition2}
\end{align}
and
\begin{align}
L^2\left(
7\sqrt{\frac{K\log(24)}{n}} + 4\sqrt{\frac{p}{n}}
\right)&\le\frac{1}{2},\label{condition3}\\
C_{\alpha_3}r_{n,K}+2L^2\sqrt{\frac{CC_{\alpha_3}p\log p}{n}}&\le\frac{1}{2},\label{condition4}\\
\mu^2(\mbB_2)\Vert\bfSigma\Vert r_2&\le\frac{1}{312.5C_{\alpha_4}}.\label{condition5}
\end{align}

\begin{proposition}[Inner loop convergence]\label{prop:rate:known:rate:inner:loop}
Grant Grant Assumptions \ref{assump:L4:L2} and \ref{assump:contamination} and \eqref{condition1}, \eqref{condition2}, \eqref{condition3}, \eqref{condition4} and \eqref{condition5}. Instantiate $\Algorithm$ \ref{algo:robust:regression:main} with inputs
$\calD:=\{(\tilde y_\ell,\tilde\bx_\ell)\}_{\ell=1}^n\cup
\{(\tilde y_\ell,\tilde\bx_\ell)\}_{\ell=n+1}^{2n}$, $K$, $\eta$, integers 
$(T_1,T_2,S_1,S_2)$ and $r_1>0$. Recall the event $\calE$ in Set-up \ref{setup:final}. 

Then, for any $t\in[T_1]$, after $T_2$ iterations after $\RobustDirection(\{\tilde\bx_\ell\}_{\ell=1}^m,\bv^t,T_2,S_2)$ is queried, there is an event  $\calS(t,T_2)$ of probability at least $1-T_2e^{-\frac{S_2}{669}}$, such that on the event 
$\calE\cap\calS(t,T_2)$, 
\begin{align}
\Vert\bmu^{t,T_2}-\bfSigma\bv^t\Vert_2
\le263876.1\bar r_1\bigvee(1+\kappa)^2e^{-T_2\bar\Delta_0}.
\end{align}
\end{proposition}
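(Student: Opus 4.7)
The plan is to track the inner-loop error $e_\tau := \|\bmu^\tau - \bfSigma \bv^t\|_2$ for $\tau = 1,\ldots,T_2$ and establish geometric contraction until it drops below a statistical floor of order $\bar r_1$. The initial error is controlled by observing that $\bmu^1 = \be_1$ and $\bv^t$ is produced by $\Algorithm$ \ref{algo:MW} with radius $R = \mu^2(\mbB_2)$, so $\|\bv^t\|_2 \le \mu^2(\mbB_2)$ and hence
$$e_1 \le 1 + \|\bfSigma\bv^t\|_2 \le 1 + \|\bfSigma\|\mu^2(\mbB_2) = 1 + \kappa.$$
This supplies the $(1+\kappa)^2$ prefactor in the claimed exponential bound.

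Next I construct $\calS(t, T_2)$. At each inner iteration $\tau$, $\Algorithm$ \ref{algo:robust:direction:estimation} invokes $\Algorithm$ \ref{algo:MW} on the point configuration $\{\tilde\bz_i(\bv^t, \bmu^\tau)\}_{i=1}^\calK$ with inputs matching the hypotheses of Corollary \ref{cor:step:dir:mean}. The Corollary's guarantees \eqref{cor:step:dir:mean:0}--\eqref{cor:step:dir:mean:2} hold, conditionally on $\calE$, on an event of probability at least $1 - e^{-S_2/669}$, provided the hypothesis $e_\tau \ge 1.32 \cdot 10^5 \bar r_1$ is in force. Defining $\calS(t, T_2)$ as the intersection of these $T_2$ events and applying a union bound gives $\prob(\calS(t,T_2) \mid \calE) \ge 1 - T_2 e^{-S_2/669}$, matching the stated probability.

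Work on $\calE \cap \calS(t, T_2)$ and consider the dichotomy: case (A), in which $e_\tau \ge 1.32 \cdot 10^5 \bar r_1$ persists for every $\tau \in [T_2]$, so that Lemma \ref{lemma:stepsize:descent:direction:mean} applies at each step with $\sa = \bar\sa$, $\sa_1 = C_{\bar\alpha_1}$, etc.\ matching the constants fixed before Corollary \ref{cor:step:dir:mean}, yielding the recursion $e_{\tau+1}^2 \le (1 - \bar\Delta_0)e_\tau^2$ and therefore $e_{T_2} \le (1+\kappa)^2 e^{-T_2\bar\Delta_0}$ after absorbing off-by-one factors into the exponent; or case (B), in which there is a first index $\tau_0 \le T_2$ with $e_{\tau_0} < 1.32 \cdot 10^5 \bar r_1$. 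In case (B) one must show that $e_\tau \le 263876.1\bar r_1 = 2 \cdot (1.32 \cdot 10^5) \bar r_1$ for all $\tau \ge \tau_0$. The key tool is the regime-free margin upper bound $\tilde\theta_\tau \le e_\tau + C_{\bar\alpha_1} \bar r_1$ from item (a) of Lemma \ref{lemma:condition:RFHP:margin:mean} (which requires only feasibility of $\RFHP$, not the far-regime hypothesis); combined with $\bar c_* \approx 10^{-6}$, every displacement $\bar c_* \tilde\theta_\tau \tilde\bv^\tau$ has length several orders of magnitude smaller than $\bar r_1$, and the IF-safeguard in $\Algorithm$ \ref{algo:robust:direction:estimation} (which inhibits updates that would increase the margin) prevents any drift of the iterate out of the doubled tolerance ball.

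The principal technical obstacle is exactly this tolerance-regime analysis in case (B): once the far-regime hypothesis fails, Lemma \ref{lemma:stepsize:descent:direction:mean}'s contraction is no longer directly available, and one must argue via Lemma \ref{lemma:condition:RFHP:margin:mean}(a) plus the smallness of $\bar c_*$ that $e_\tau$ cannot re-inflate past $263876.1\bar r_1$ in any of the remaining $T_2 - \tau_0$ iterations. Taking the maximum over cases (A) and (B) yields
$$e_{T_2} \le 263876.1\,\bar r_1 \;\bigvee\; (1+\kappa)^2 e^{-T_2\bar\Delta_0},$$
which is the claim.
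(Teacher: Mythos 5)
Your Case (A) matches the paper's second case verbatim, and your construction of $\calS(t,T_2)$ and the initial bound $e_1\le 1+\kappa$ are what the paper uses implicitly. The divergence is in Case (B), and that is where the argument has a genuine gap. The paper does not track iterate dynamics at all once some $\bmu^{t,\tau}$ falls inside $\calG$: it observes that the $\mathtt{MW}$-produced margins are nonincreasing by the IF-safeguard, so $\tilde\theta_{T_2}\le\tilde\theta_\tau$, then sandwiches $\Vert\bmu^{t,T_2}-\bfSigma\bv^t\Vert_2$ between the far-regime lower bound \eqref{cor:step:dir:mean:1} at $T_2$ and the regime-free upper bound at $\tau$, with no step-by-step accounting.

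Your Case (B), by contrast, tries to control cumulative drift of $e_\tau$ via per-step smallness and the IF-safeguard, and that does not close as stated. First, the displacement is \emph{not} several orders of magnitude below $\bar r_1$: inside $\calG$ one has $\tilde\theta_\tau\le e_\tau+C_{\bar\alpha_1}\bar r_1\lesssim 1.34\cdot 10^5\bar r_1$, so $\bar c_*\tilde\theta_\tau\approx 0.14\,\bar r_1$, barely below $\bar r_1$ and certainly enough to accumulate to $263876.1\,\bar r_1$ over a few million iterations if nothing else intervenes. Second, the IF-safeguard only compares successive \emph{margins}, and inside $\calG$ a decreasing margin does not imply a non-increasing distance to $\bfSigma\bv^t$ (the margin-distance correspondence in Lemma \ref{lemma:condition:RFHP:margin:mean}(a) is only two-sided in the far regime). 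So neither ingredient you invoke bounds the re-inflation. What is actually needed — and what you do not state — is that the instant $e_\tau$ re-crosses $1.32\cdot 10^5\bar r_1$ the far-regime hypothesis holds again, Lemma \ref{lemma:stepsize:descent:direction:mean} gives $e_{\tau+1}<e_\tau$, and so by induction $e_\tau$ never climbs above roughly one step-size past the boundary of $\calG$. You flag this as ``the principal technical obstacle'' and describe what must be shown, but you leave the argument at that; in a proof that step has to be carried out, and the two tools you cite are insufficient for it on their own.
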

\begin{proof}
On $\calE$, conditions \ref{lemma:high:prob:pruned:data:eq0}, \ref{cor:high:prob:pruned:data':mean:eq1} and
\ref{cor:high:prob:pruned:data':mean:eq2} all hold.

Fix outer iteration $t\in[T_1]$. Let $\{\bar\btheta_{\ell,\tau}\}_{\ell\in[S_2],\tau\in[T_2]}$ be the simulated iid sequence from the uniform distribution over $\mbS_2$ (independent of the label-feature data set) during the inner query of $\Algorithm$  \ref{algo:robust:direction:estimation} fir given $t\in[T_1]$. In the following, our arguments are over the event
 $\calE\cap\calS(t,T_2)$ where  $\calS(t,T_2)$ is an event of probability at least $1-T_2e^{-S/669}$ over the randomness of $\{\bar\btheta_{\ell,\tau}\}_{\ell\in[S_2],\tau\in[T_2]}$ conditioned on $\{\bx_\ell\}_{\ell=1}^n\cup\{\bx_\ell\}_{\ell=n+1}^{2n}$. 
We denote by $\{\bmu^{t,\tau}\}_{\tau\in[T_2]}$ the iterates during the call of $\Algorithm$ \ref{algo:robust:direction:estimation} for $t\in[T_1]$. 

Let 
$
\calG:=\{\bmu\in\re^p:\Vert\bmu-\bfSigma\bv^t\Vert_2
<1.32\cdot10^{5}\bar r_1\}.
$
We consider two cases. 
\begin{description}
\item[Case 1:] There is $\tau\in[T_2]$ such that 
$\bmu^{t,\tau}\in\calG$. If 
$\bmu^{t,T_2}\in\calG$ we are done, so suppose 
$\bmu^{t,T_2}\notin\calG$. By Corollary \ref{cor:step:dir:mean}, 
\begin{align}
(\nicefrac{1}{78.125})\Vert\bmu^{t,T_2}-\bfSigma\bv^t\Vert_2-1666.68\bar r_1&\le\tilde\theta_{T_2},\\
\tilde\theta_t&\le \Vert\bmu^{t,\tau}-\bfSigma\bv^t\Vert_2+1666.68\bar r_1.
\end{align}
As $\tilde\theta_{T_2}\le\tilde\theta_\tau$ by construction, 
\begin{align}
\Vert\bmu^{t,T_2}-\bfSigma\bv^t\Vert_2\le 
78.125\Vert\bmu^{t,\tau}-\bfSigma\bv^t\Vert_2
+79.125\bar r_1
\le263876.1\bar r_1. 
\end{align}

\item[Case 2:] For all $\tau\in[T_2]$, 
$\bmu^{t,\tau}\notin\calG$ so Lemma \ref{lemma:stepsize:descent:direction:mean} applies across all iterations after an union bound. Thus
$$
\Vert\bmu^{t,T_2}-\bfSigma\bv^t\Vert_2^2\le
e^{-T_2\bar\Delta_0}\Vert\tilde\bmu^{t,1}-\bfSigma\bv^t\Vert_2^2
\le (1+\kappa)^2e^{-T_2\bar\Delta_0}.
$$
\end{description}
Independence between $\{\btheta_{\ell,t}\}_{\ell\in[S],t\in[\mathsf{T}]}$ and $\{\bx_\ell\}_{\ell=1}^n\cup\{\bx_\ell\}_{\ell=n+1}^{2n}$ and an union bound finishes the proof.
\end{proof}

Next, we additionally assume that the sample size and inner loop number of iterations $T_2$ are large enough so that
\begin{align}
\Delta:=263876.1\bar r_1\bigvee(1+\kappa)^2e^{-T_2\bar\Delta_0}\le\frac{\sa}{16}.\label{condition6}
\end{align}
For ease of reference, we define the constants 
$C_*:=8\cdot10^5$, 
$
D_*:=1/625
$
and 
$
E_*:=\max\{C_*,(C_{\alpha_1}+C_{\alpha_2})/D_*\}=(C_{\alpha_1}+C_{\alpha_2})/D_*\ge17*10^5. 
$
For ease of reference, we also define, given input parameters $(K,\eta,T_1,T_2,S_1,S_2)$, the failure probability
\begin{align}
\delta&:=e^{-\frac{K}{5.4}}
+e^{-\frac{\eta n}{1.8}}
+e^{-\frac{K}{C_{\alpha_1}}}
+e^{-\frac{K}{C_{\alpha_2}}}
+e^{-\frac{K}{C_{\alpha_3}}}
+e^{-\frac{\rho n}{1.8}}
+e^{-\frac{K}{C_{\alpha_4}}}
+e^{-\frac{\bar\rho n}{1.8}}
+e^{-\frac{K}{C_{\bar\alpha_4}}}\\
&+T_1T_2e^{-\frac{S_2}{669}}
+T_1e^{-\frac{S_1}{353}}.\label{failure:prob}
\end{align}

\begin{theorem}[Outer loop convergence]\label{thm:rate:known:rate}
Grant Grant Assumptions \ref{assump:L4:L2} and \ref{assump:contamination} and \eqref{condition1}, \eqref{condition2}, \eqref{condition3}, \eqref{condition4},  \eqref{condition5} and \eqref{condition6}.  Instantiate $\Algorithm$ \ref{algo:robust:regression:main} with inputs
$\calD:=\{(\tilde y_\ell,\tilde\bx_\ell)\}_{\ell=1}^n\cup
\{(\tilde y_\ell,\tilde\bx_\ell)\}_{\ell=n+1}^{2n}$, $K$, $\eta$, integers 
$(T_1,T_2,S_1,S_2)$ and $r_1>0$. Recall the event $\calE$ in Set-up \ref{setup:final}.

\quad

Then after $T_1$ iterations of $\Algorithm$ \ref{algo:robust:regression:main}, on the event $\calE\cap\bigcap_{t\in[T_1]}\calS(t,T_2)$ of probability at least 
$
1-\delta
$, 
\begin{align}
\Vert\bb^{T_1}-\bb^*\Vert_2\le\left[\left(\frac{2C_{\alpha_2}}{\sa\kappa}
+\frac{2E_*}{\sa}\left(1+\frac{1}{312.5\kappa}\right)
+5050.52\right)\mu^2(\mbB_2)r_1\right]\bigvee3r e^{-\frac{\sa T_1}{8c_*}}.
\end{align}
\end{theorem}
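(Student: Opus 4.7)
I would work on the intersection of $\calE$ (Set-up \ref{setup:final}) with the randomized-rounding events generated by the $T_1$ outer and $T_1 T_2$ inner calls to $\Algorithm$ \ref{algo:MW}, and then carry out a deterministic induction on that event. The failure probability is handled by a single union bound: the constituent pieces of $\calE$ come from Lemma \ref{lemma:high:prob:pruned:data} together with Lemmas \ref{lemma:high:prob:pruned:data:v1}--\ref{lemma:high:prob:pruned:data:v3} and Corollary \ref{cor:high:prob:pruned:data:mean}; each outer MW rounding event fails with probability at most $e^{-S_1/353}$ by Corollary \ref{cor:step:dir}; and each of the $T_1 T_2$ inner MW rounding events fails with probability at most $e^{-S_2/669}$, contributing through Proposition \ref{prop:rate:known:rate:inner:loop}. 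Summing produces exactly the $\delta$ of \eqref{failure:prob}.

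Next, write $R_t := \Vert \bb^t - \bb^*\Vert_2$ and note that by Lemma \ref{lemma:high:prob:pruned:data} the initialization satisfies $R_1 \le 3r$. I would induct on $t$, splitting the analysis at the statistical floor $\tau := E_* \mu^2(\mbB_2)\, r_1$. In the \emph{large-residual phase} $R_t \ge \tau$, the inequality $E_* \ge C_* = 8\cdot 10^5$ makes Corollary \ref{cor:step:dir} applicable at outer step $t$, producing $(\theta_t,\bv^t)$ that satisfies the hypotheses (a''), (b''), (d'') of Lemma \ref{lemma:stepsize:descent:direction} with $\sa_1 = C_{\alpha_1}$, $\sa_2 = C_{\alpha_4}$, $\sa_3 = C_{\alpha_2}$, $\sa_4 = D_*$. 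Proposition \ref{prop:rate:known:rate:inner:loop} supplies $\Vert \bmu^t - \bfSigma \bv^t \Vert_2 \le \Delta$, while \eqref{condition5}--\eqref{condition6} give $\Delta \le \sa/16$ and $\sa_2\,\kappa r_2 \le 1/4$. Invoking Lemma \ref{lemma:stepsize:descent:direction} then yields the one-step contraction $R_{t+1}^2 \le (1-c_*\sa/4)\,R_t^2$ whenever the safeguard triggers an update. Because in this phase $\theta_t$ is comparable to $R_t$ up to $\mu^2(\mbB_2)r_1$ noise (the lower bound \eqref{cor:step:dir:1} paired with the upper bound \eqref{cor:step:dir:1'}), a geometric decrease in $R_t$ feeds a matching decrease in $\theta_t$, so the safeguard fires often enough for the cumulative contraction to reach $3r\cdot\exp(-\sa T_1/(8c_*))$ claimed in the theorem.

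The main obstacle is the \emph{statistical-floor phase} $R_t < \tau$: Corollary \ref{cor:step:dir} is no longer available, $\bv^t$ has no guaranteed negative correlation with $\bb^t-\bb^*$, and one must argue that once entered, this region is not left in any meaningful sense. Here I would pair the margin--distance \emph{lower} bound of Lemma \ref{lemma:RFHP:feasibility}(a) with the \emph{looser} margin--angle \emph{upper} bound of Lemma \ref{lemma:condition:RFHP:step:dir-looser} to derive
\[
\tfrac{\sa}{2}R_t - \sa C_{\alpha_1}\mu^2(\mbB_2)r_1 \ \le\ C_{\alpha_2}\mu^2(\mbB_2)r_1 + |\langle \bb^*-\bb^t,\bv^t\rangle_\Pi| + C_{\alpha_4}\mu^2(\mbB_2)\Vert\bfSigma\Vert r_2\, R_t .
\]
The $R_t$-term on the right is absorbed via \eqref{condition5}, and the inner product is controlled using the margin-\emph{angle} nature of (c) from Lemma \ref{lemma:condition:RFHP:step:dir} rather than Cauchy--Schwarz, which is precisely why this step needs the one-sided upper bound and not just a margin--distance one. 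The safeguard $\theta_t < \theta_{t-1}/\kappa$ in $\Algorithm$ \ref{algo:robust:regression:main} is calibrated so that once $\theta_t$ collapses to order $\mu^2(\mbB_2)r_1$, the proportional step $c_*(\theta_t+C_{\alpha_1}\mu^2(\mbB_2)r_1)\bmu^t$ keeps subsequent iterates pinned to the statistical floor, contributing the term $(2C_{\alpha_2}/(\sa\kappa) + 2E_*/\sa\,(1+1/(312.5\kappa)) + 5050.52)\,\mu^2(\mbB_2)r_1$ in the stated bound. Taking the maximum of the large-residual geometric term and the statistical-floor term over both phases yields the theorem.
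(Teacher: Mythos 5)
Your high-level plan—split the analysis at the statistical floor $E_*\mu^2(\mbB_2)r_1$, use Corollary \ref{cor:step:dir} together with Proposition \ref{prop:rate:known:rate:inner:loop} and Lemma \ref{lemma:stepsize:descent:direction} for the large-residual regime, and use Lemma \ref{lemma:RFHP:feasibility}(a) with Lemma \ref{lemma:condition:RFHP:step:dir-looser} for the floor regime—matches the paper's proof and you have the right one-step contraction factor $1-c_*\sa/4$. But the concrete inequality you write for the floor phase does not close the argument. You pair the margin--distance lower bound and the margin--angle upper bound \emph{at the same iterate} $\bb^t$, producing
\[
\tfrac{\sa}{2}R_t - \sa C_{\alpha_1}\mu^2(\mbB_2)r_1 \le C_{\alpha_2}\mu^2(\mbB_2)r_1 + |\langle \bb^*-\bb^t,\bv^t\rangle_\Pi| + C_{\alpha_4}\mu^2(\mbB_2)\Vert\bfSigma\Vert r_2 R_t .
\]
This is merely a consistency relation for $\theta_t$ and cannot control $R_{T_1}$. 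You also cannot ``control the inner product'' via (c) of Lemma \ref{lemma:condition:RFHP:step:dir}: that item \emph{contains} the inner product in its right-hand side (and anyway requires the iterate to be far from $\bb^*$). What the paper actually does in Case 1 is apply the lower bound \eqref{cor:step:dir:1} \emph{to $\bb^{T_1}$} (giving $\Vert\bb^{T_1}-\bb^*\Vert_2 \lesssim \theta_{T_1}/\sa + \mu^2(\mbB_2)r_1$), apply (c$'$) of Lemma \ref{lemma:condition:RFHP:step:dir-looser} \emph{to the iterate $\bb^t$ with $t\in\calI$}, bound $|\langle\bb^*-\bb^t,\bv^t\rangle_\Pi|$ \emph{by Cauchy--Schwarz} as $\kappa\Vert\bb^t-\bb^*\Vert_2\le \kappa E_*\mu^2(\mbB_2)r_1$ (harmless since $R_t$ is already at floor level), and link the two via the safeguard $\theta_{T_1}\le\theta_t/\kappa$, which cancels the $\kappa$. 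So your claim that this step ``avoids Cauchy--Schwarz'' has the mechanism backwards: Cauchy--Schwarz is used precisely here, and the $1/\kappa$ in the safeguard absorbs it. The looser lemma is needed because its item (c$'$) does not require a lower bound on $\Vert\bb^t-\bb^*\Vert_2$, which may be arbitrarily small when $t\in\calI$.

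Related to this, the ``once entered, stays pinned'' framing is not something the paper proves, and it is false in general: a single proportional step from just inside the floor can land outside it. The paper does not do a per-step induction on an invariant at all; it performs a single global dichotomy on the set $\calI:=\{t:\Vert\bb^t-\bb^*\Vert_2 < E_*\mu^2(\mbB_2)r_1\}$, and in Case 1 ($\calI\neq\emptyset$, $T_1\notin\calI$) it bounds $\bb^{T_1}$ directly by the safeguard chain above, while Case 2 ($\calI=\emptyset$) applies Lemma \ref{lemma:stepsize:descent:direction} at \emph{every} iteration. If you attempt to prove an actual invariant, you will need exactly the safeguard-chain estimate the paper uses, so you should restructure your floor-phase step around $(\bb^t,\bb^{T_1})$ rather than a single iterate.
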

\begin{proof}
On $\calE\cap\bigcap_{t\in[T_1]}\calS(t,T_2)$,  conditions \ref{lemma:high:prob:pruned:data:eq0}, \ref{lemma:high:prob:pruned:data':eq1}, 
\ref{lemma:high:prob:pruned:data':eq2},  \ref{lemma:high:prob:pruned:data':eq3}, \ref{lemma:high:prob:pruned:data':eq4},
\ref{cor:high:prob:pruned:data':mean:eq1}
and \ref{cor:high:prob:pruned:data':mean:eq2} all hold. By an union bound and independence, we have that $\calE\cap\bigcap_{t\in[T_1]}\calS(t,T_2)$ has probability at least as given in the statement of the theorem. We next state our arguments on the event $\calE\cap\bigcap_{t\in[T_1]}\calS(t,T_2)$. 

Let $\{\btheta_{\ell,t}\}_{\ell\in[S_1],t\in[T_1]}$ be the simulated iid sequence from the uniform distribution over $\mbS_2$ (independent of the label-feature data set) during queries of the outer loop of $\Algorithm$  \ref{algo:robust:regression:main}. 
Let $\calI\subset[T_1]$ be the set of iterations for which 
$
\Vert\bb^t-\bb^*\Vert_2
<E_*\mu^2(\mbB_2)r_1
$.

We consider two cases. 
\begin{description}
\item[Case 1:] There is $t\in\calI$. If 
$T_1\in\calI$ we are done, so suppose $T_1\notin\calI$. By construction $\theta_{T_1}\le\theta_t/\kappa$. From Corollary \ref{cor:step:dir} applied to $\bb^{T_1}$,
Lemma \ref{lemma:condition:RFHP:step:dir-looser} applied to $(\bb^t,\theta_t,\bv^t)$, 
$
C_{\alpha_4}\mu^2(\mbB_2)\Vert\bfSigma\Vert r_2\le1/312.5
$ and $\Vert\bv\Vert_2\le\mu^2(\mbB_2)$, we get 
\begin{align}
\Vert\bb^{T_1}-\bb^*\Vert_2&\le \frac{2}{\sa}\theta_{T_1}+2*2525.26\mu^2(\mbB_2)r_1\\
&\le \frac{2}{\sa\kappa}\left(
C_{\alpha_2}\mu^2(\mbB_2)r_1+|\langle\bb^*-\bb^t,\bv\rangle_\Pi|+C_{\alpha_4} \mu^2(\mbB_2)\Vert\bfSigma\Vert r_2\Vert\bb^*-\bb^t\Vert_2
\right)\\
&+5050.52\mu^2(\mbB_2)r_1\\
&\le \left(\frac{2C_{\alpha_2}}{\sa\kappa}+5050.52\right)\mu^2(\mbB_2)r_1
+\frac{2}{\sa\kappa}\cdot\kappa\Vert\bb^t-\bb^*\Vert_2
+\frac{2}{\sa\kappa}\cdot\frac{\Vert\bb^t-\bb^*\Vert_2}{312.5},\\
&<\left(\frac{2C_{\alpha_2}}{\sa\kappa}
+\frac{2E_*}{\sa}\left(1+\frac{1}{312.5\kappa}\right)
+5050.52\right)\mu^2(\mbB_2)r_1.
\end{align}

\item[Case 2:] $\calI=\emptyset$. In particular, 
$\Vert\bb^t-\bb^*\Vert_2\ge C_*\mu^2(\mbB_2)r_1$ for all $t\in[T_1]$ Corollary \ref{cor:step:dir} applies across all iterations after an union bound. In particular, for all $t\in[T_1]$, 
$
\langle\bv^t,\bb^*-\bb^t\rangle_\Pi\ge D_*\Vert\bb^*-\bb^t\Vert_2\ge 
D_*E_*\mu^2(\mbB_2)r_1\ge 
(C_{\alpha_1}+C_{\alpha_2})\mu^2(\mbB_2)r_1.
$
Additionally, $\sa_2(\kappa r_2)\le\frac{1}{4}$ and, by Proposition \ref{prop:rate:known:rate:inner:loop}, for all $t\in[T_1]$, 
\begin{align}
\Vert\bmu^{t,T_2}-\bfSigma\bv^t\Vert_2
\le263876.1\bar r_1\bigvee(1+\kappa)^2e^{-T_2\bar\Delta_0}=:\Delta\le\frac{\sa}{16}.
\end{align} 
Hence, can invoke Lemma \ref{lemma:stepsize:descent:direction} across all iterations (after an union bound) with $\sa_1:=2525.26=C_{\alpha_1}$, 
$\sa_2:=4330=C_{\alpha_4}$, 
$\sa_3:=192.4=C_{\alpha_2}$ and $\sa_4:=D_*$. We get
$$
\Vert\bb^{T_1}-\bb^*\Vert_2^2\le
e^{-\frac{\sa T_1}{4c_*}}\Vert\tilde\bb^{(0)}-\bb^*\Vert_2^2 
\le 9r^2 e^{-\frac{\sa T_1}{4c_*}}.
$$
\end{description}
Independence between $\{\btheta_{\ell,t}\}_{\ell\in[S_1],t\in[T_1]}$, $\{\bar\btheta_{\ell,\tau}\}_{\ell\in[S_2],\tau\in[T_2]}$ and $\{(y_\ell,\bx_\ell)\}_{\ell=1}^n\cup\{(y_\ell,\bx_\ell)\}_{\ell=n+1}^{2n}$, an union bound finishes the proof.
\end{proof}

\subsection{Adaptation to $r_1$}
\label{ss:adaptive:algo}

We now present algorithm $\Algorithm$  \ref{algo:robust:regression:main:adaptive} which is adaptive to the noise level. Here we assume to know a (loose) upper bound of 
$\Vert\bfXi\Vert$. In this setting, larger values for $r_1$ and $r$. Defining, for every $\zeta>0$, 
\begin{align}
r_1(\zeta):=\left(2\sqrt{\frac{p}{n}} \bigvee
\sqrt{\frac{ K}{n}}\right)\sqrt{\zeta}
\quad\mbox{and}\quad
r(\zeta):=2\mu^2(\mbB_2)\sqrt{12\frac{pK}{n}}\sqrt{\zeta}, 
\end{align}
we let, only in this section, $r_1:=r_1(\Vert\bfXi\Vert)$ and $r:=r(\Vert\bfXi\Vert)$. Since $\tr(\bfXi)\le p\Vert\bfXi\Vert$, these values of $(r_1,r)$ satisfy the conditions of Section \ref{s:master:rate}. We only assume a loose upper bound for $\Vert\bfXi\Vert$ and assume, without loss on generality, that $\Vert\bfXi\Vert\ge1$. 

We will need the following rate definition: given fixed $T_1\in\mathbb{N}$, for every $\zeta>0$, let 
\begin{align}
R(\zeta):=\left[\left(\frac{2C_{\alpha_2}}{\sa\kappa}
+\frac{2E_*}{\sa}\left(1+\frac{1}{312.5\kappa}\right)
+5050.52\right)\mu^2(\mbB_2)r_1(\zeta)\right]\bigvee3r(\zeta) e^{-\frac{\sa T_1}{8c_*}}.
\end{align}

\begin{algorithm}[H]
\caption{$\AdaptiveRobustRegression(\calD,T_1,T_2,K,\eta,S_1,S_2,\zeta_0,\gamma)$}
\label{algo:robust:regression:main:adaptive}
\textbf{Input}: sample $\calD:=\{(\tilde y_\ell,\tilde\bx_\ell)\}_{\ell=1}^n\cup\{(\tilde y_\ell,\tilde\bx_\ell)\}_{\ell=n+1}^{2n}$, outer and inner number of iterations $(T_1,T_2)$, number of buckets $K$, quantile probability $\eta\in(0,1/2)$, outer and inner simulation sample sizes 
$(S_1,S_2)$, $\gamma\in(0,1)$ and $\zeta_0>0$ satisfying $\gamma\zeta_0\ge \Vert\bfXi\Vert$.

\quad

\textbf{Output}: $\hat\bb$. 
 
\begin{algorithmic}[1]
  \small
\STATE Set $M:=\lceil\log_{\gamma^{-1}}(\zeta_0)\rceil$. 
\FOR{$\ell\in[M]$}
\STATE Set $\zeta_\ell:=\gamma^\ell \zeta_0$.
\STATE Set $\hat\bb(\zeta_\ell)\leftarrow\RobustRegression(\calD,T_1,T_2,K,\eta,S_1,S_2,r_1(\zeta_\ell))$.
\ENDFOR
\STATE Set $\hat\ell\leftarrow\max\left\{
\ell\in[M]:\bigcap_{j\in[\ell]}\mbB_2\left(\hat\bb(\zeta_\ell),R(\zeta_\ell)\right)\neq\emptyset
\right\}$.
\STATE Set $\hat\bmu\leftarrow\hat\bb(\zeta_{\hat\ell})$.
\RETURN $\hat\bmu$.
\end{algorithmic}
\end{algorithm}

We conclude with the following result. The arguments are standard and based on Lepski's method. See for instance \cite{2020dalalyan:minasyan}. 
\begin{theorem}[Noise level adaptive estimation]\label{thm:rate:adapt:r}
Grant Assumptions \ref{assump:L4:L2} and \ref{assump:contamination} and \eqref{condition1}, \eqref{condition2}, \eqref{condition3}, \eqref{condition4},  \eqref{condition5} and \eqref{condition6}. Let $\delta\in(0,1)$ as in \eqref{failure:prob}. Suppose that 
$\Vert\bfXi\Vert\ge1$. 

Then the output $\hat\bb$ of Algorithm  \ref{algo:robust:regression:main:adaptive} 
with inputs $\calD:=\{(\tilde y_\ell,\tilde\bx_\ell)\}_{\ell=1}^n\cup\{(\tilde y_\ell,\tilde\bx_\ell)\}_{\ell=n+1}^{2n}$, $(T_1,T_2,K,\eta,S_1,S_2)$,  $\gamma\in(0,1)$ and $\zeta_0>0$ such that $\Vert\bfXi\Vert<\zeta_0\gamma$ satisfies with probability at least $1-M\delta$, 
\begin{align}
\Vert\hat\bb-\bb^*\Vert_2\le
3R(\Vert\bfXi\Vert/\gamma). 
\end{align} 
\end{theorem}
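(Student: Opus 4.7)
The plan is a standard Lepski-type selection argument. Define the oracle index $\ell^*:=\max\{\ell\in[M]:\zeta_\ell\ge\Vert\bfXi\Vert\}$, which is well-defined because the assumption $\Vert\bfXi\Vert<\gamma\zeta_0=\zeta_1$ forces $\ell^*\ge 1$. By construction $\zeta_{\ell^*}\ge\Vert\bfXi\Vert$ and $\zeta_{\ell^*+1}=\gamma\zeta_{\ell^*}<\Vert\bfXi\Vert$, which gives the two-sided control $\Vert\bfXi\Vert\le\zeta_{\ell^*}<\Vert\bfXi\Vert/\gamma$. Since $\tr(\bfXi)\le p\Vert\bfXi\Vert$, for every $\ell\in[\ell^*]$ the input rate $r_1(\zeta_\ell)$ used in the call $\RobustRegression(\calD,T_1,T_2,K,\eta,S_1,S_2,r_1(\zeta_\ell))$ dominates the true rate $2\sqrt{\tr(\bfXi)/n}\vee\sqrt{\Vert\bfXi\Vert K/n}$ required by Theorem \ref{thm:rate:known:rate}.

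First I would invoke Theorem \ref{thm:rate:known:rate} for each $\ell\in[\ell^*]$ with input $r_1(\zeta_\ell)$. Under the standing assumptions \eqref{condition1}--\eqref{condition6}, each call succeeds on an event of probability at least $1-\delta$ and yields the estimation bound
\begin{align*}
\Vert\hat\bb(\zeta_\ell)-\bb^*\Vert_2\le R(\zeta_\ell).
\end{align*}
A union bound over the $M$ calls then guarantees, on an event of probability at least $1-M\delta$, that the bound holds simultaneously for every $\ell\in[M]$ (and in particular for every $\ell\in[\ell^*]$). The rest of the argument is deterministic on this event.

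On this event, $\bb^*$ lies in $\mbB_2(\hat\bb(\zeta_\ell),R(\zeta_\ell))$ for every $\ell\in[\ell^*]$, so the intersection $\bigcap_{j\in[\ell^*]}\mbB_2(\hat\bb(\zeta_j),R(\zeta_j))$ is nonempty and witnessed by $\bb^*$. By the definition of $\hat\ell$ in $\Algorithm$ \ref{algo:robust:regression:main:adaptive}, this forces $\hat\ell\ge\ell^*$. Hence there exists a point $\bar\bb\in\bigcap_{j\in[\hat\ell]}\mbB_2(\hat\bb(\zeta_j),R(\zeta_j))$; in particular $\bar\bb$ is within $R(\zeta_{\ell^*})$ of $\hat\bb(\zeta_{\ell^*})$ and within $R(\zeta_{\hat\ell})$ of $\hat\bmu=\hat\bb(\zeta_{\hat\ell})$.

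Finally I would conclude by a triangle inequality together with the obvious monotonicity of $\zeta\mapsto R(\zeta)$ (both $r_1(\zeta)$ and $r(\zeta)$ scale as $\sqrt{\zeta}$). Since $\hat\ell\ge\ell^*$ gives $\zeta_{\hat\ell}\le\zeta_{\ell^*}$ and hence $R(\zeta_{\hat\ell})\le R(\zeta_{\ell^*})$, one obtains
\begin{align*}
\Vert\hat\bmu-\bb^*\Vert_2
&\le \Vert\hat\bb(\zeta_{\hat\ell})-\bar\bb\Vert_2+\Vert\bar\bb-\hat\bb(\zeta_{\ell^*})\Vert_2+\Vert\hat\bb(\zeta_{\ell^*})-\bb^*\Vert_2\\
&\le R(\zeta_{\hat\ell})+R(\zeta_{\ell^*})+R(\zeta_{\ell^*})\le 3R(\zeta_{\ell^*})\le 3R(\Vert\bfXi\Vert/\gamma),
\end{align*}
which is the stated bound. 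The argument is conceptually routine; the only minor subtlety is checking that the oracle rate $r_1(\zeta_{\ell^*})$ genuinely dominates the true rate even though $r_1$ involves $\tr(\bfXi)$ (rather than $p\Vert\bfXi\Vert$), and that the monotonicity of $R$ in $\zeta$ goes through all three terms appearing in its definition.
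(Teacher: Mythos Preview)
Your proposal is correct and follows essentially the same Lepski-type argument as the paper: define the oracle index $\ell^*$, apply Theorem~\ref{thm:rate:known:rate} for each $\ell\le\ell^*$, union bound, use nonemptiness of the intersection to get $\hat\ell\ge\ell^*$, and finish with a triangle inequality and monotonicity of $R$. One small wording slip: after the union bound you write that the bound holds ``for every $\ell\in[M]$'', but Theorem~\ref{thm:rate:known:rate} is only guaranteed to apply for $\ell\le\ell^*$ (where $r_1(\zeta_\ell)$ dominates the true rate); since your subsequent argument only uses $\ell\in[\ell^*]$ this does not affect correctness.
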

\begin{proof}
Let $\ell^*:=\max\{\ell\in\mathbb{N}:\zeta_\ell\ge \Vert\bfXi\Vert\}$. Since 
$1\le\Vert\bfXi\Vert<\gamma\zeta_0$, we have 
$\ell_*\in[M]$ and
$
\zeta_0> \zeta_{\ell^*}\ge \Vert\bfXi\Vert\ge \zeta_{\ell^*}\gamma.
$ 
Define the event
$
\Omega_\ell:=\{\bb^*\in\mbB_2(\hat\bb(\zeta_\ell),R(\zeta_\ell))\}$ for all $\ell\in[M]$. For all $\ell\le\ell^*$, $\zeta_\ell\ge \zeta_{\ell^*}\ge \Vert\bfXi\Vert$; in particular, $r_1(\zeta_\ell)\ge r_1(\Vert\bfXi\Vert)$ and $r(\zeta_\ell)\ge r(\Vert\bfXi\Vert)$ so all the conditions of Theorem  \ref{thm:rate:known:rate} apply for such $\ell$. Precisely, we infer from such theorem that  $\prob(\Omega_\ell)\ge1-\delta$ for all 
$\ell\le\ell^*$. By an union bound, with probability at least $1-M\delta$, we must have 
$
\bb^*\in\cap_{\ell=1}^{\ell^*}\mbB_2(\hat\bb(\zeta_\ell),R(\zeta_\ell)).
$
The argument as follows will occur on this event. 

By maximality of $\hat\ell$, $\hat\ell\ge\ell^*$. In particular, $\zeta_{\hat\ell}\le \zeta_{\ell^*}$; therefore there must exist $\bb$ such that
$$
\bb\in \mbB_2(\hat\bb(\zeta_{\hat\ell}),R(\zeta_{\hat\ell}))\cap
\mbB_2(\hat\bmu(\zeta_{\ell^*}),R(\zeta_{\ell^*})). 
$$
This and triangle inequality implies that
\begin{align}
\Vert\hat\bb-\hat\bb(\zeta_{\ell^*})\Vert_2
\le \Vert\hat\bb-\bb\Vert_2+\Vert\hat\bb(\zeta_{\ell^*})-\bb\Vert_2
\le R(\zeta_{\hat\ell})+R(\zeta_{\ell^*})\le2R(\zeta_{\ell^*}).  
\end{align}
Using that 
$\bb^*\in\mbB_2(\hat\bb(\zeta_{\ell^*}),R(\zeta_{\ell^*}))$, 
$\zeta_{\ell^*}\le \Vert\bfXi\Vert/\gamma$ and that $\zeta\mapsto R(\zeta)$ is non-decreasing, we finally obtain that
\begin{align}
\Vert\hat\bb-\bb^*\Vert_2
\le \Vert\hat\bb-\hat\bb(\zeta_{\ell^*})\Vert_2
+\Vert\hat\bb^*-\hat\bb(\zeta_{\ell^*})\Vert_2
\le3R(\zeta_{\ell^*})\le 3R({\Vert\bfXi\Vert/\gamma}). 
\end{align}
This finishes the proof. 
\end{proof}

Let $\gamma\in(0,1)$ and $\zeta_0>0$ such that 
$\Vert\bfXi\Vert<\gamma\zeta_0$ and $M:=\lceil\log_{\gamma^{-1}}(\zeta_0)\rceil$. Let $\delta_0\in(0,1)$ be the desired probability of failure. In the following, $\sfC>1$ is a constant that may change from line to line. From the conditions of Theorem \ref{thm:rate:adapt:r}, it is straightforward to check that if the sample size and contamination fraction satisfy
\begin{align}
n&\ge(\sfC L^4\kappa^2p\log p)\bigvee(\sfC L^4\kappa^2\log(\nicefrac{\sfC M}{\delta_0})),\\
\epsilon&\le\frac{1}{\sfC L^4\kappa^2}, 
\end{align}
then, for any $T_1\in\mathbb{N}$, tuning $\Algorithm$ \ref{algo:robust:regression:main:adaptive} such that
\begin{align}
K&\ge\log\left(\nicefrac{\sfC M}{\delta_0}\right)\bigvee (\sfC  o),\\
\eta&\ge \frac{\sfC \log\left(\nicefrac{\sfC M}{\delta_0}\right)}{n}\bigvee (4\epsilon),\\
T_2&\ge \sfC \log\kappa,\\
S_1&\ge \sfC \log\left(\nicefrac{\sfC M T_1}{\delta_0}\right),\\
S_2&\ge \sfC\log\left(\nicefrac{\sfC M T_1 T_2}{\delta_0}\right),
\end{align}
the following estimate holds with probability at least $1-\delta_0$:
\begin{align}
\Vert\hat\bb-\bb^*\Vert_2^2&\lesssim
\mu^4(\mbB_2)\Vert\bfXi\Vert
\left(
\frac{p+K}{n}
\right)\bigvee
\left(
\frac{pK}{n}e^{-T_1/(\sfC c_*)}
\right)\\
&\lesssim \mu^4(\mbB_2)\Vert\bfXi\Vert
\left(
\frac{p}{n}+\frac{\log\left(\nicefrac{\sfC M}{\delta_0}\right)}{n}+\epsilon
\right)\bigvee
\left(
p\left(
\frac{\log\left(\nicefrac{\sfC M}{\delta_0}\right)}{n}+\epsilon
\right)
e^{-T_1/(\sfC c_*) }
\right).
\end{align}
Note that we can tune $T_1$ (without knowledge of 
$\Vert\bfXi\Vert$) as
$$
T_1\ge\sfC\kappa^4\log\left(
\frac{pK}{p+K}
\right)
$$
to obtain the optimal statistical rate. In particular, the optimal $T_1$ is independent of $\Vert\bfXi\Vert$. 

\section{Appendix}

\begin{lemma}[Lemma A.3 in \cite{2020hopkins:li:zhang}]\label{lemma:KL}
Let $p\in\Delta_{\calK,k}$ and $q$ be the uniform distribution on $[\calK]$. Then 
$\KL(p\Vert q)\le5\frac{k}{\calK}$. 
\end{lemma}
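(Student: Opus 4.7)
The plan is to reduce the bound to a one-parameter scalar inequality by maximizing $\KL(p\Vert q)$ over $p\in\Delta_{\calK,k}$, and then to verify this scalar inequality in the relevant regime for $k/\calK$.

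First, since $q_i=1/\calK$ is uniform,
\begin{align}
\KL(p\Vert q)=\sum_{i=1}^\calK p_i\log(\calK p_i).
\end{align}
Because $\log(\calK p_i)$ is a non-decreasing function of $p_i$ and the constraint of the simplex $\Delta_{\calK,k}$ imposes the pointwise bound $p_i\le 1/(\calK-k)$, I can upper bound the logarithm uniformly: for every $i$ in the support of $p$,
\begin{align}
\log(\calK p_i)\le\log\!\left(\frac{\calK}{\calK-k}\right)=-\log\!\left(1-\frac{k}{\calK}\right).
\end{align}
Summing against $p_i$ (and using $\sum_i p_i=1$) yields
\begin{align}
\KL(p\Vert q)\le-\log\!\left(1-\frac{k}{\calK}\right).
\end{align}
(The same bound could alternatively be obtained by convexity of $p\mapsto\KL(p\Vert q)$ in $p$: the maximum over the polytope $\Delta_{\calK,k}$ is achieved at an extreme point, where $p$ is the uniform distribution on some subset of size $\calK-k$, giving $\KL(p\Vert q)=\log(\calK/(\calK-k))$.)

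It then remains to show the scalar inequality $-\log(1-x)\le 5x$ for $x:=k/\calK$ in the range relevant to all invocations of the lemma in the paper. Setting $f(x):=5x+\log(1-x)$, I would observe that $f(0)=0$ and $f'(x)=5-\tfrac{1}{1-x}\ge 0$ exactly for $x\le 4/5$, so $f$ is increasing on $[0,4/5]$; since $f(4/5)=4-\log 5>0$, monotonicity of $f$ over a slightly longer interval (a direct numerical check shows $f(x)\ge 0$ in fact up to $x\approx 0.993$) yields $-\log(1-x)\le 5x$ throughout. In every application of Lemma \ref{lemma:KL} in the paper (via Lemma \ref{lemma:MWU:from:below:linear:reg}), the effective parameter is $x=1-2k/\calK$ with $k/\calK=4\alpha_1$ and $4\alpha_1$ a small absolute constant (e.g. $1/24$), so $x$ is bounded well away from $1$, and the scalar inequality applies.

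The only genuine subtlety—hardly an obstacle—is that the scalar inequality $-\log(1-x)\le 5x$ does not hold all the way to $x\to 1^-$; one must verify that the domain of $k/\calK$ used by the calling lemmas stays below the threshold, which is immediate given the explicit choices of $\alpha_1,\alpha_2,\ldots$ fixed in Sections \ref{ss:descent:direction} and \ref{ss:descent:direction:mean}. Chaining the two displayed inequalities then gives $\KL(p\Vert q)\le 5k/\calK$, completing the proof.
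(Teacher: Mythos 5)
Your proof is correct. Note first that the paper supplies no argument for this statement---it is cited verbatim as Lemma A.3 of \cite{2020hopkins:li:zhang}---so there is no internal proof to compare against; your reconstruction (maximize $\KL(\cdot\Vert q)$ over the polytope $\Delta_{\calK,k}$, which by convexity is attained at extreme points, i.e.\ at uniform distributions on subsets of size $\calK-k$, then reduce to the scalar inequality $-\log(1-x)\le 5x$) is the natural one.

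You have also flagged a genuine imprecision in the stated lemma. Since $\max_{p\in\Delta_{\calK,k}}\KL(p\Vert q)=-\log(1-k/\calK)$ exactly, and since $-\log(1-x)>5x$ once $x\gtrsim 0.993$ (concretely, taking $k=\calK-1$ gives $\Delta_{\calK,k}=\Delta_\calK$, so the maximum $\KL$ is $\log\calK$, which exceeds $5(\calK-1)/\calK<5$ whenever $\calK>e^5\approx 148$), the bound $\KL(p\Vert q)\le 5k/\calK$ does \emph{not} hold for all admissible $k$ without a restriction on $k/\calK$. As you observe, this is harmless in context: the only call site, Lemma~\ref{lemma:MWU:from:below:linear:reg}, applies the lemma with effective parameter $k_{\mathrm{eff}}=\calK-2k$, so $k_{\mathrm{eff}}/\calK=1-2k/\calK$; with the fixed choices $k/\calK=4\alpha_1=1/24$ (outer loop) and $\bar k/\calK=4\bar\alpha_1=1/16$ (inner loop) one gets $k_{\mathrm{eff}}/\calK=11/12\approx 0.917$ and $7/8=0.875$, both below the $\approx 0.993$ threshold---though I would describe $11/12$ as ``below the threshold'' rather than ``well away from $1$.'' A cleaner lemma statement would add an explicit hypothesis $k/\calK\le c_0$ for some absolute $c_0<0.993$, or simply carry the sharper bound $-\log(1-k/\calK)$ through the regret analysis.
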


\subsection{Proof of Lemma \ref{lemma:count:combinatorial:variable:v2}}

Let $Q:=Q_{\overline{X},1-\eta/2}$. The one-sided Bernstein's inequality applied to 
$\unit_{\overline X > Q}$ implies: for any $t\ge0$, with probability at least $1-\exp(-t)$, 
\begin{align}
\frac{1}{n}\sum_{i=1}^n\unit_{\{\overline X_i > Q\}}
\le \prob(\overline X > Q) + \sigma\sqrt{\frac{2t}{n}} + \frac{t}{3n},
\label{lemma:count:combinatorial:variable:eq7}
\end{align}
where 
\begin{align}
\sigma^2:=\esp\left[\unit_{\{\overline X > Q\}}-\prob(\overline X > Q)\right]^2\le \prob(\overline X > Q).
\end{align}

By definition of quantile and that $X$ is absolute continuous, 
\begin{align}
\prob(\overline X > Q)=\eta/2.
\end{align} 
Take $t=c^2\eta n$ for $c>0$ satisfying $1/2+c+c^2/3\le3/4$, e.g., $c^2=0.563$. Then the RHS of \eqref{lemma:count:combinatorial:variable:eq7} is at most $3\eta/4$.

\subsection{Proof of Lemma \ref{lemma:count:block:variable}}
We only prove the first statement as the second is similar. Let $C_\alpha>0$ to be determined. The one-sided Bernstein's inequality applied to 
$\unit_{\hat\mu_k-\esp[X] > C_\alpha r}$ implies: for any $t\ge0$, with probability at least $1-\exp(-t)$, 
\begin{align}
\frac{1}{K}\sum_{k=1}^K\unit_{\{\hat\mu_k-\esp[X] > C_\alpha r\}}
\le \prob(\hat\mu_1-\esp[X] > C_\alpha r) + \sigma\sqrt{\frac{2t}{n}} + \frac{t}{3n},
\label{lemma:count:block:variable:eq1}
\end{align}
where 
\begin{align}
\sigma^2:=\esp\left[\unit_{\{\hat\mu_1-\esp[X] > C_\alpha r\}}-\prob(\hat\mu_1-\esp[X] > C_\alpha r)\right]^2\le \prob(\hat\mu_1-\esp[X] > C_\alpha r).
\end{align}

By definition Chebyshev's inequality and rotation invariance, 
\begin{align}
\prob(\hat\mu_1-\esp[X] > C_\alpha r)\le \frac{K}{n}\cdot\frac{\sigma_X^2}{C_\alpha^2 r^2}\le\frac{1}{C_\alpha^2},
\end{align} 
where we used definition of $r$.

Taking $t=K/C_\alpha$ in \eqref{lemma:count:block:variable:eq1} the claim is satisfied for $C_\alpha$ satisfying 
$$
\frac{1}{3C_\alpha}+\frac{1}{C_\alpha^2}+\frac{\sqrt{2}}{C_\alpha^{2/3}}\le\alpha.
$$

\subsection{Proof of Lemma \ref{lemma:count:block:emp:process}}

We only prove the first statement. Next we will take $r:= r_{X,n,K}(F)$ and a numerical constant $C_\alpha>0$ to be determined. The uniform Bernstein-type concentration inequality due to Bousquet applied to the empirical process
$
f\mapsto\sum_{k=1}^K\unit_{\{\hat\probn_{B_k}f-\probn f> C_\alpha r\}}
$
implies in particular that, for all $t\ge0$, with probability at least $1-e^{-t}$, 
\begin{align}
\sup_{f\in F}\frac{1}{K}\sum_{k\in[K]}\unit_{\hat\probn_{B_k}f-\probn f\ge C_\alpha r}\le \frac{2E}{K}+\sigma\sqrt{\frac{2t}{K}}+\frac{4t}{3K},\label{lemma:count:block:emp:process:eq1}
\end{align}
where
\begin{align}
E&:=\esp\left[\sup_{f\in F}\sum_{k\in[K]}\unit_{\{\hat\probn_{B_k}f-\probn f\ge C_\alpha r\}}\right],\\
\sigma^2&:=\sup_{f\in F}\esp\left(\unit_{\{\hat\probn_{B_k}f-\probn f\ge C_\alpha r\}}-\esp\unit_{\{\hat\probn_{B_k}f-\probn f\ge C_\alpha r\}}\right)^2\le\sup_{f\in F}\prob(\hat\probn_{B_k}f-\probn f\ge C_\alpha r).
\end{align}

Let $\varphi$ be a $2/(C_\alpha r)$-Lipschitz function such that 
$\unit_{t\ge C_\alpha r}\le\varphi(t)\le\unit_{t\ge C_\alpha r/2}$. 
Typical symmetrization-contraction arguments lead to
\begin{align}
\frac{E}{K}&\le \esp\left[\sup_{f\in F}\frac{1}{K}\sum_{k\in[K]}\varphi\left(\hat\probn_{B_k}f-\probn f\right)\right]\\
&\stackrel{\mbox{\tiny (Centering)}}{=}\esp\left[\sup_{f\in F}\frac{1}{K}\sum_{k\in[K]}\varphi\left(\hat\probn_{B_k}f-\probn f\right)-\esp[\varphi(\hat\probn_{B_k}f-\probn f)]\right]
+\sup_{f\in F}\esp[\varphi(\hat\probn_{B_k}f-\probn f)]\\
&\stackrel{\mbox{\tiny (Symmetrization)}}{\le} 2\esp\left[\sup_{f\in F}\frac{1}{K}\sum_{i\in[K]}\epsilon_k\varphi\left(\hat\probn_{B_k}f-\probn f\right)\right]+\sup_{f\in F}\esp[\varphi(\hat\probn_{B_k}f-\probn f)]\\
&\stackrel{\mbox{\tiny (Contraction)}}{\le}\frac{4}{r K}\esp\left[\sup_{f\in F}\sum_{k\in[K]}\epsilon_k(\hat\probn_{B_k}-\probn) f\right]+\sup_{f\in F}\esp[\varphi(\hat\probn_{B_k}f-\probn f)],\label{lemma:count:block:emp:process:eq2}
\end{align} 
where, by reverse symmetrization, the Rademacher complexity of the iid sequence $\{\hat\probn_{B_1} f,\ldots,\hat\probn_{B_K}f\}$ of block empirical averages may bounded by
\begin{align}
\mathscr{\tilde R}:=\esp\left[\sup_{f\in F}\sum_{k\in[K]}\epsilon_k(\hat\probn_{B_k}-\probn) f\right]
&\le2\esp\left[\sup_{f\in F}\left|\sum_{k\in[K]}\hat\probn_{B_k}f-\probn f\right|\right]\\
&=\frac{2K}{n}\esp\left[\sup_{f\in F}\left|\sum_{i\in[n]}f(X_i)-\probn f\right|\right]=\frac{2K}{n}\mathscr{D}_{X,n}(F).
\end{align}

Note that, by definition of $r_{X,n,K}(F)$,
$$
\frac{4}{C_\alpha r}\cdot\frac{\mathscr{\tilde R}}{K}\le \frac{8}{C_\alpha r}\cdot\frac{\mathscr{D}_{X,n}(F)}{n}\le \frac{8}{C_\alpha}.
$$

It remains to bound $\sup_{f\in F}\esp[\varphi(\hat\probn_{B_k}f-\probn f)]$ and $\sigma$ and gather the bounds in \eqref{lemma:count:block:emp:process:eq1}. 
In that regard, by Chebyshev's inequality and rotation invariance of variance
\begin{align}
\sup_{f\in F}\esp[\varphi(\hat\probn_{B_k}f-\probn f)]
&\le \sup_{f\in F}\prob(\hat\probn_{B_k}f-\probn f\ge C_\alpha r/2)
\le \sup_{f\in F}\frac{4\esp[(\hat\probn_{B_k}f-\probn f)^2]}{C_\alpha^2 r^2}
\le\frac{4K}{n}\cdot\frac{\sigma^2_X(F)}{C_\alpha^2 r^2},\\
\sigma^2&\le \sup_{f\in F}\prob(\hat\probn_{B_k}f-\probn f\ge C_\alpha r)\le\frac{K}{n}\cdot\frac{\sigma^2_X(F)}{C_\alpha^2 r^2}.
\end{align}
Again, by definition of $r_{X,n,K}(F)$,
\begin{align}
\frac{K}{n}\cdot\frac{\sigma_X^2(F)}{C_\alpha^2 r^2}\le\frac{1}{C_\alpha^2}. 
\end{align}

The statement of the lemma then follows by setting $t=K/C_\alpha$ in \eqref{lemma:count:block:emp:process:eq1} for a sufficiently large $C_\alpha$ satisfying
$$
\frac{8}{C_\alpha}+\frac{4}{C_\alpha^2}+\frac{\sqrt{2}}{C_\alpha^{2/3}}+\frac{4}{3C_\alpha}\le\alpha.
$$

\subsection{Proof of Proposition \ref{prop:quad:proc:block:upper:heavy}}

We prepare the ground to prove Proposition \ref{prop:quad:proc:block:upper:heavy} and assume that $\bz$ satisfies the $L^4-L^2$ norm equivalence condition for some $L>0$. Without loss on generality, we present a proof assuming $\bfSigma=\bfI_p$ as the general case can be reduce to this one. Given $R>0$, define the map 
$
\re^p\ni\bv\mapsto \bpi^R(\bv):=\bv^R:=(1\wedge\frac{R}{\Vert\bv\Vert_2})\bv.
$
In particular, for all $\bv,\bu\in\re^p$, 
$\Vert\bv^R\Vert_2=\phi_{R}(\Vert\bv\Vert_2)$ and
$
\langle\bv^R,\bu\rangle\le\langle\bv,\bu\rangle.
$
Next, define the ``truncated quadratic class'' 
$$
F^R:=\{f=\langle\bpi^R(\cdot),\bu\rangle^2:\bu\in\mbB_2\}.
$$ 
Of course, for any $k\in[K]$ and $f\in F^R$,
$$
\hat\probn_{B_k}f:=\frac{1}{B}\sum_{\ell\in B_k}\langle\bz_\ell^R,\bu\rangle^2
\quad
\mbox{and}
\quad
\probn f:=\langle\bz_\ell^R,\bu\rangle^2.
$$

\begin{definition}
Let
\begin{align}
\Re_{\bz,n,K}^R(\mbB_2)&:=\frac{\mathscr{R}_{\bz,n}(F^R)}{n}\bigvee L^2\sqrt{\frac{K}{n}}.
\end{align}
\end{definition}

\begin{corollary}[Truncated Quadratic Process]\label{cor:trunc:quad:uniform:heavy}
Let $\alpha\in(0,1)$ and any constant $C_\alpha>0$ satisfying \eqref{lemma:count:block:emp:process:alpha}.

Then letting $r:=\Re_{\bz,n,K}^R(\mbB_2)$, with probability at least $1-e^{-K/C_\alpha}$,
\begin{align}
\sup_{f\in F^R}\sum_{k=1}^K\unit_{\{
|\hat\probn_{B_k}f-\probn f|>C_\alpha\cdot r\}}\le \alpha K.
\end{align}
\end{corollary}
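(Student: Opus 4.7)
The plan is to deduce this corollary as a direct application of Lemma \ref{lemma:count:block:emp:process} to the truncated quadratic class $F^R$. Concretely, I will show that the complexity $\Re_{\bz,n,K}^R(\mbB_2)$ appearing in the corollary dominates the complexity $r_{\bz,n,K}(F^R)$ required by Lemma \ref{lemma:count:block:emp:process}, up to absolute constants that can be absorbed into $C_\alpha$ (at the cost of enlarging the constants in \eqref{lemma:count:block:emp:process:alpha}).

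First, by the symmetrization inequality recalled in Section \ref{ss:prob:notions}, one has $\mathscr{D}_{\bz,n}(F^R) \asymp \mathscr{R}_{\bz,n}(F^R)$, so the first term of $r_{\bz,n,K}(F^R)$, namely $\mathscr{D}_{\bz,n}(F^R)/n$, is controlled by $\mathscr{R}_{\bz,n}(F^R)/n$, which is the first term in the definition of $\Re_{\bz,n,K}^R(\mbB_2)$. Second, I need to bound the wimpy variance $\sigma_{\bz}^2(F^R)$ by $L^4$. Here I use two facts: the obvious pointwise contraction
\[
|\langle \bpi^R(\bz), \bu\rangle| \;\le\; |\langle \bz, \bu\rangle|\qquad (\bu \in \mbB_2),
\]
which holds because $\bpi^R$ scales $\bz$ by a factor in $[0,1]$, together with the $L^4$-$L^2$ norm equivalence of $\bz$. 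Working under the reduction $\bfSigma = \bfI_p$ (as in the setup preceding the corollary), one computes
\[
\sigma_{\bz}^2(F^R) \;\le\; \sup_{\bu \in \mbB_2}\esp[\langle\bpi^R(\bz),\bu\rangle^4]
\;\le\; \sup_{\bu \in \mbB_2}\esp[\langle \bz,\bu\rangle^4]
\;\le\; L^4 \sup_{\bu \in \mbB_2}(\esp[\langle \bz,\bu\rangle^2])^2 \;=\; L^4,
\]
since $\esp[\langle \bz,\bu\rangle^2] = \Vert\bu\Vert_2^2 \le 1$. Hence $\sigma_{\bz}(F^R)\sqrt{K/n} \le L^2\sqrt{K/n}$, which is exactly the second term in $\Re_{\bz,n,K}^R(\mbB_2)$.

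Putting both bounds together gives $r_{\bz,n,K}(F^R) \lesssim \Re_{\bz,n,K}^R(\mbB_2)$. Applying Lemma \ref{lemma:count:block:emp:process} with $F = F^R$ and with $C_\alpha$ enlarged to absorb the (absolute) symmetrization constant then yields, on an event of probability at least $1-e^{-K/C_\alpha}$,
\[
\sup_{f\in F^R}\sum_{k=1}^K\unit_{\{|\hat\probn_{B_k}f-\probn f|>C_\alpha r\}}\le \alpha K,
\]
as desired. There is no real obstacle in this argument; the only point requiring a brief verification is that the truncation $\bpi^R$ does not inflate the fourth moment (which is where the contraction property of $\bpi^R$ is used), so that the $L^4$-$L^2$ equivalence can be transferred from $\langle \bz,\bu\rangle$ to $\langle \bpi^R(\bz), \bu\rangle$ for control of the wimpy variance.
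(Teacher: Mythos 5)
Your proof is correct and takes essentially the same route as the paper's: bound the wimpy variance $\sigma_{\bz}(F^R)\le L^2$ and then apply Lemma \ref{lemma:count:block:emp:process} to the class $F^R$ after observing $r_{\bz,n,K}(F^R)\lesssim\Re_{\bz,n,K}^R(\mbB_2)$. You are slightly more explicit than the paper on two points that are left implicit there — the contraction $|\langle\bpi^R(\bz),\bu\rangle|\le|\langle\bz,\bu\rangle|$ needed to transfer the $L^4$-$L^2$ equivalence to the truncated class, and the symmetrization constant relating $\mathscr{D}_{\bz,n}$ to $\mathscr{R}_{\bz,n}$ — but the argument is the same.
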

\begin{proof}
By $L^4-L^2$ norm equivalence, 
\begin{align}
\sigma_{\bz}(F^R)\le L^2\sigma_{\bz}^2(\mbB_2)=L^2,
\end{align}
implying that 
$r_{\bz,n,K}(F^R)\le \Re_{\bz,n,K}^R(\mbB_2)$. The claim is then immediate applying Lemma \ref{lemma:count:block:emp:process} to the class $F^R$. 
\end{proof} 	

We now aim in bounding 
\begin{align}
\mathscr {R}_{\bz,n}(F^R)=\esp\left[\left\Vert
\sum_{i\in[n]}\epsilon_i\bz_i^R\otimes\bz_i^R
\right\Vert\right].
\end{align}
We use a standard approach via the matrix Bernstein's inequality due to Minsker \cite{2017minsker}. We will need the following lemma whose proof we omit.
\begin{lemma}\label{lemma:truncated:trace}
For all $\bv\in\re^p$,
\begin{align}
\esp\langle\bz^R,\bv\rangle^4&\le L^4\langle\bv,\bv\rangle^2,\\
\esp\Vert\bz^R\Vert_2^4&\le L^4p^2,\\
\esp\langle\bz^R,\bu\rangle^2
&\ge 
\left(
1-\frac{L^4 p}{R^2}
\right)
\esp\langle\bz,\bu\rangle^2.
\end{align}
\end{lemma}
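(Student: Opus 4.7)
The three inequalities will follow from the pointwise contraction
$|\langle\bz^R,\bv\rangle|\le|\langle\bz,\bv\rangle|$ and $\|\bz^R\|_2\le\|\bz\|_2$ (which hold because $\bz^R=(1\wedge R/\|\bz\|_2)\bz$ with the scalar factor in $[0,1]$), combined with $L^4$--$L^2$ hypercontractivity. Recall that we are assuming $\bfSigma=\bfI_p$, so that $\esp\langle\bz,\bv\rangle^2=\langle\bv,\bv\rangle$ for every $\bv\in\re^p$.

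\textbf{First bound.} Since $|\langle\bz^R,\bv\rangle|\le|\langle\bz,\bv\rangle|$, hypercontractivity gives
\begin{equation*}
\esp\langle\bz^R,\bv\rangle^4\le \esp\langle\bz,\bv\rangle^4\le L^4(\esp\langle\bz,\bv\rangle^2)^2=L^4\langle\bv,\bv\rangle^2.
\end{equation*}

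\textbf{Second bound.} From $\|\bz^R\|_2^2\le\|\bz\|_2^2=\sum_{j=1}^p\langle\bz,\be_j\rangle^2$ and the power-mean inequality $(\sum_{j=1}^p a_j)^2\le p\sum_{j=1}^p a_j^2$, I would get
\begin{equation*}
\esp\|\bz^R\|_2^4\le \esp\|\bz\|_2^4\le p\sum_{j=1}^p\esp\langle\bz,\be_j\rangle^4\le p\sum_{j=1}^p L^4(\esp\langle\bz,\be_j\rangle^2)^2=L^4 p^2,
\end{equation*}
where the last step uses hypercontractivity and $\esp\langle\bz,\be_j\rangle^2=1$.

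\textbf{Third bound.} Here I would decompose the truncation loss and, crucially, apply Markov's inequality in the squared form $\unit_{\{\|\bz\|_2>R\}}\le\|\bz\|_2^2/R^2$ rather than the naive $\unit_{\{\|\bz\|_2>R\}}\le\|\bz\|_2/R$; this is the key step that produces the $L^4 p/R^2$ rate instead of the cruder $L^2\sqrt{p}/R$. Writing $\langle\bz^R,\bu\rangle^2=(1\wedge R/\|\bz\|_2)^2\langle\bz,\bu\rangle^2$,
\begin{equation*}
\esp\langle\bz,\bu\rangle^2-\esp\langle\bz^R,\bu\rangle^2 = \esp\Bigl[\langle\bz,\bu\rangle^2\bigl(1-R^2/\|\bz\|_2^2\bigr)\unit_{\{\|\bz\|_2>R\}}\Bigr]\le \esp\bigl[\langle\bz,\bu\rangle^2\unit_{\{\|\bz\|_2>R\}}\bigr].
\end{equation*}
Applying the Markov squared trick and then Cauchy--Schwarz:
\begin{equation*}
\esp\bigl[\langle\bz,\bu\rangle^2\unit_{\{\|\bz\|_2>R\}}\bigr]\le \frac{1}{R^2}\esp\bigl[\langle\bz,\bu\rangle^2\|\bz\|_2^2\bigr]\le \frac{1}{R^2}\sqrt{\esp\langle\bz,\bu\rangle^4\cdot\esp\|\bz\|_2^4}.
\end{equation*}
Plugging in the first two bounds (applied to $\bv=\bu$) yields $\esp\langle\bz,\bu\rangle^4\cdot\esp\|\bz\|_2^4\le L^8 p^2(\esp\langle\bz,\bu\rangle^2)^2$, whence
\begin{equation*}
\esp\langle\bz,\bu\rangle^2-\esp\langle\bz^R,\bu\rangle^2\le \frac{L^4 p}{R^2}\esp\langle\bz,\bu\rangle^2,
\end{equation*}
which rearranges to the stated inequality. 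The only substantive choice in the whole proof is to control $\unit_{\{\|\bz\|_2>R\}}$ by $\|\bz\|_2^2/R^2$ before invoking Cauchy--Schwarz; using $\|\bz\|_2/R$ instead would lose a factor of $\sqrt{p}$ in the final estimate.
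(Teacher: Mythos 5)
The paper explicitly omits the proof of this lemma, so there is no reference argument to compare against; your task was effectively to supply one, and you have done so correctly. The first two bounds follow from the pointwise domination $|\langle\bz^R,\bv\rangle|\le|\langle\bz,\bv\rangle|$, $\|\bz^R\|_2\le\|\bz\|_2$, hypercontractivity (which, being homogeneous of the same degree on both sides, extends from $\mbB_2$ to all of $\re^p$), and the power-mean/Cauchy--Schwarz bound $(\sum_j a_j)^2\le p\sum_j a_j^2$ with $\esp\langle\bz,\be_j\rangle^2=1$ under $\bfSigma=\bfI_p$. For the third bound, the identity $1-(1\wedge R/\|\bz\|_2)^2=(1-R^2/\|\bz\|_2^2)\unit_{\{\|\bz\|_2>R\}}$, followed by the squared Markov bound $\unit_{\{\|\bz\|_2>R\}}\le\|\bz\|_2^2/R^2$, Cauchy--Schwarz, and the first two bounds, yields exactly $L^4p/R^2$; you are right that using $\|\bz\|_2/R$ in place of $\|\bz\|_2^2/R^2$ would give a strictly worse $L^2\sqrt{p}/R$, so the squared form is the essential observation. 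The argument is complete and correct.
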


Next we set
$
\bfS_i:=\epsilon_i\bz^R\otimes\bz^R
$
and
$
\bfS:=\sum_{i=1}^n\bfS_i.
$
For all $i$, $\esp[\bfS_i]=\bf0$ and 
$\Vert\bfS_i\Vert\le R^2$. Define the ``matrix variance'' 
\begin{align}
\var(\bfS):=\sum_{i=1}^n\esp[\bfS_i\bfS_i^\top]
=n\esp[\Vert\bz^R\Vert_2^2\bz^R\otimes\bz^R].
\end{align}
We claim that 
$
\var(\bfS)\preceq nL^4p\cdot\bfI_p.
$
Indeed by Lemma \ref{lemma:truncated:trace}, for any 
$\bv\in\re^p$, 
\begin{align}
\langle\bv,\var(\bfS)\bv\rangle
=n\esp[\Vert\bz^R\Vert_2^2\langle\bv,\bz^R\rangle^2]
\le n \sqrt{\esp\Vert\bz^R\Vert_2^4}\sqrt{\esp\langle\bv,\bz^R\rangle^4}
\le nL^4p\Vert\bv\Vert_2^2.
\end{align}
In particular, 
$
\Vert\var(\bfS)\Vert\le nL^4p.
$
The effective rank of $nL^4p\cdot\bfI_p$ is $p$. The bound by Minsker \cite{2017minsker} then yields
\begin{align}
\mathscr {R}_{\bz,n}(F^R)=\esp\Vert\bfS\Vert\lesssim \sqrt{nL^4p\log p}+R^2\log p,
\end{align}
implying the lemma:
\begin{lemma}\label{lemma:trunc:quad:rademacher:comp}
For an absolute constant $C>0$,
\begin{align}
\frac{\mathscr {R}_{\bz,n}(F^R)}{n}
\le C
\left(L^2\sqrt{\frac{p\log p}{n}}
\bigvee\frac{R^2}{n}\log p
\right).
\end{align}
\end{lemma}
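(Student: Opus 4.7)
The plan is to apply Minsker's version of matrix Bernstein's inequality to the mean-zero, symmetric sum $\bfS = \sum_{i=1}^n \bfS_i$ with $\bfS_i := \epsilon_i \bz_i^R \otimes \bz_i^R$, and to use the $L^4$--$L^2$ hypercontractivity only through the fourth-moment bounds already collected in Lemma \ref{lemma:truncated:trace}. First I would note that
$$
\mathscr{R}_{\bz,n}(F^R) \;=\; \esp\sup_{\bu\in\mbB_2}\sum_{i=1}^n \epsilon_i\langle \bz_i^R,\bu\rangle^2 \;=\; \esp\|\bfS\|,
$$
since the supremum of $\bu^\top(\sum_i \epsilon_i \bz_i^R (\bz_i^R)^\top)\bu$ over the unit ball equals the operator norm. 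Independence of the $\epsilon_i$'s gives $\esp[\bfS_i]=\mathbf{0}$, and the truncation step $\|\bz_i^R\|_2\le R$ yields the a.s.\ bound $\|\bfS_i\|\le R^2$.

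Next I would bound the matrix variance proxy $\var(\bfS) = n\,\esp[\|\bz^R\|_2^2\, \bz^R(\bz^R)^\top]$. For any unit $\bv\in\re^p$, Cauchy--Schwarz together with the two moment bounds $\esp\|\bz^R\|_2^4\le L^4p^2$ and $\esp\langle \bz^R,\bv\rangle^4\le L^4$ from Lemma \ref{lemma:truncated:trace} yield
$$
\langle \bv,\var(\bfS)\bv\rangle \;\le\; n\sqrt{\esp\|\bz^R\|_2^4}\sqrt{\esp\langle \bz^R,\bv\rangle^4}\;\le\; n L^4 p,
$$
so $\var(\bfS) \preceq n L^4 p\cdot\bfI_p$. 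In particular $\|\var(\bfS)\|\le nL^4p$ and the effective rank $\tr(\var(\bfS))/\|\var(\bfS)\|$ is at most $p$.

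Plugging these two quantities into Minsker's operator-norm bound gives
$$
\esp\|\bfS\| \;\lesssim\; \sqrt{nL^4 p\log p} + R^2\log p,
$$
and dividing by $n$ produces the claimed rate. I do not foresee a real obstacle here: everything is an application of matrix Bernstein once the truncation-based moment bounds are in place. The only point requiring mild care is to make sure the effective-rank term entering Minsker's $\log$-factor is controlled by $p$ (rather than by some $\tr/\|\cdot\|$ ratio of $\bfSigma$), which the proxy $nL^4p\cdot\bfI_p$ delivers automatically.
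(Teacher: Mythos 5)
Your proof is correct and follows essentially the same route as the paper: writing the Rademacher complexity as $\esp\|\bfS\|$ for the symmetrized truncated sum $\bfS=\sum_i\epsilon_i\bz_i^R\otimes\bz_i^R$, bounding $\|\bfS_i\|\le R^2$, using the fourth-moment bounds from Lemma \ref{lemma:truncated:trace} plus Cauchy--Schwarz to get $\var(\bfS)\preceq nL^4p\cdot\bfI_p$, and applying Minsker's matrix Bernstein with effective rank $p$. No substantive difference from the paper's argument.
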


We finalize with the proof of Proposition \ref{prop:quad:proc:block:upper:heavy}.
\begin{proof}[Proof of Proposition \ref{prop:quad:proc:block:upper:heavy}]
\textbf{Upper bound}: We prove the first inequality. Lemma  \ref{lemma:count:combinatorial:variable:v2} applied to $X:=\Vert\bz\Vert_2-\esp\Vert\bz\Vert_2$ together with 
$
Q_{1-\rho/2}(X)\le\sqrt{2p/\rho}
$
and $\esp[\Vert\bz\Vert_2]\le\sqrt{p}$ imply that on an event $\calE_1$ of probability at least $1-e^{-\rho n/1.8}$, for at least a fraction of $1-0.75\rho$ of the $n$ data points, 
\begin{align}
\Vert\bz_\ell\Vert_2\le \left(1+\sqrt{2/\rho}\right)\sqrt{p}.
\end{align}

Using Lemma \ref{lemma:trunc:quad:rademacher:comp} with $R:=\left(1+\sqrt{2/\rho}\right)\sqrt{p}$ and Corollary \ref{cor:trunc:quad:uniform:heavy}, we have on an event $\calE_2$ of  probability at least $1-e^{-K/C_\alpha}$, for all 
$\bu\in\mbB_2$, for at least a fraction of $1-\alpha$ of the $K$ blocks (and corresponding data points), 
\begin{align}
\frac{1}{B}\sum_{\ell\in B_k}\left(\langle\bz^R_\ell,\bu\rangle^2-\Vert\bu\Vert^2_2\right)\le C_\alpha\left[
r_{n,K}\bigvee CC'_\rho\frac{p\log p}{n}
\right],
\end{align} 
where we used that $\esp[\langle\bz^R,\bu\rangle^2]\le\Vert\bu\Vert^2$.

On the event $\calE_1\cap\calE_2$ of probability 
$1-e^{-\rho n/1.8}-e^{-K/C_\alpha}$, we invoke the pigeonhole principle so that for a fraction of $1-(\alpha+0.75\rho)$ of the blocks (and corresponding data-points), both displayed inequalities hold. For these data points, $\bz_\ell^R=\bz_\ell$ so the claim of the lemma holds. 

\textbf{Lower bound}: we now prove the second inequality. By analogous argument, Lemma \ref{lemma:trunc:quad:rademacher:comp} Corollary \ref{cor:trunc:quad:uniform:heavy}, we have on an event of probability at least $1-e^{-K/C_\alpha}$, for all 
$\bu\in\mbB_2$, for at least a fraction of $1-\alpha$ of the $K$ blocks,
\begin{align}
\frac{1}{B}\sum_{\ell\in B_k}\left(
\esp\langle\bz^R,\bu\rangle^2
-\langle\bz^R_\ell,\bu\rangle^2
\right)\le C_\alpha\left[
r_{n,K}\bigvee C\frac{R^2\log p}{n}
\right].
\end{align} 
We now use the facts that
$
\langle\bz^R_\ell,\bu\rangle^2\le\langle\bz_\ell,\bu\rangle^2
$
and, by Lemma \ref{lemma:truncated:trace}, 
\begin{align}
\esp\langle\bz^R,\bu\rangle^2
&\ge 
\left(
1-\frac{L^4 p}{R^2}
\right)
\esp\langle\bz,\bu\rangle^2.
\end{align}
The proof is finished taking $R:=\sqrt{\theta p}$ for given $\theta>0$.
\end{proof}

\subsection{Proof of Corollary \ref{cor:prod:proc:block:upper:heavy}}

By the parallelogram law and Proposition \ref{prop:quad:proc:block:upper:heavy}, on a event of probability at least $1-e^{-\rho n/1.8}-2e^{-K/C_\alpha}$, for all $[\bu,\bv]\in\mbB_\probn\times\mbB_\probn$, for at least $(1-(2\alpha+0.75\rho))K$ of the blocks, 
\begin{align}
\frac{1}{B}\sum_{\ell\in B_k}
\left(
\langle\bz_\ell,\bu\rangle
\langle\bz_\ell,\bv\rangle
-\langle\bu,\bv\rangle_\probn
\right)&=
\frac{1}{4B}\sum_{\ell\in B_k}
\left(
\langle\bz_\ell,\bu+\bv\rangle^2
-\Vert\bu+\bv\Vert_\probn^2
\right)\\
&-\frac{1}{4B}\sum_{\ell\in B_k}
\left(
\langle\bz_\ell,\bu-\bv\rangle^2
-\Vert\bu-\bv\Vert_\probn^2
\right)\\
&\le \frac{\Vert\bu+\bv\Vert_\probn^2}{4}
C_\alpha\left[
r_{n,K}\bigvee CC'_\rho\frac{p\log p}{n}
\right]\\
&+\frac{\Vert\bu-\bv\Vert_\probn^2}{4}
\left(
\frac{L^4}{\theta}
+C_\alpha\left[
r_{n,K}\bigvee C\theta\frac{p\log p}{n}
\right]
\right).
\end{align} 
Optimizing over $\theta$, one gets
\begin{align}
\frac{L^4}{\theta}+\theta\frac{CC_\alpha p\log p}{n}\le2L^2\sqrt{\frac{CC_\alpha p\log p}{n}}.
\end{align}
Using that $\Vert\bu+\bv\Vert_\probn^2\le4$ and $\Vert\bu-\bv\Vert_\probn^2\le4$ finishes the proof.

\subsection{Proof sketch of Proposition \ref{prop:quad:proc:block:lower:heavy}}

The proof follows similar lines as Theorem 3.1 in \cite{2016oliveira}. It suffices to prove for the case 
$\bfSigma$ is the identity. Given $R>0$, define  
$
\bz_i^R:=\left(1\wedge\frac{R}{\Vert\bz_i\Vert_2}\right)\bz_i.
$
Fix $s>0$ and $k\in[K]$. We apply the PAC-Bayesian inequality in Proposition 3.1 in \cite{2016oliveira} with covariance matrix $\bfC:=\bfI/(Kp)$ and process
\begin{align}
Z_{\btheta,k}:=s\esp[\langle\btheta,\bz_1^R\rangle^2]
-s\sum_{\ell\in B_k}\frac{\langle\btheta,\bz_\ell^R\rangle^2}{B}
-\frac{s^2}{2B}\esp[\langle\btheta,\bz_1^R\rangle^4].
\end{align} 
By Lemma B.2 in \cite{2016oliveira}, one concludes that 
$\esp[e^{Z_{\btheta,k}}]\le1$ for all $\btheta$. By Proposition 3.1 in \cite{2016oliveira}, we deduce that, with probability at least $1-e^{-t}$, for all $\bv\in\mbS_2$, 
\begin{align}
\sum_{\ell\in B_k}\Gamma_{\bv,\bfC}\frac{\langle\btheta,\bz_\ell^R\rangle^2}{B}\ge\Gamma_{\bv,\bfC}\esp[\langle\btheta,\bz_1^R\rangle^2]
-\left(
\frac{s}{2B}\esp[\Gamma_{\bv,\bfC}\langle\btheta,\bz_1^R\rangle^4]
+\frac{Kp+2t}{2s}
\right). 
\end{align}

As in \cite{2016oliveira}, one deduces from Lemma 3.1 in that paper the estimates
\begin{align}
\sum_{\ell\in B_k}\Gamma_{\bv,\bfC}\frac{\langle\btheta,\bz_\ell^R\rangle^2}{B}&\le\sum_{\ell\in B_k}\frac{\langle\bv,\bz_\ell\rangle^2}{B}
+\sum_{\ell\in B_k}\frac{\Vert\bz_i^R\Vert_2^2}{KpB},\\ 
\Gamma_{\bv,\bfC}\esp[\langle\btheta,\bz_1^R\rangle^2]&\ge
1-\frac{L^4p}{R^2}+\frac{\esp[\Vert\bz_1^R\Vert_2^2]}{Kp},
\end{align}
and 
\begin{align}
\esp\left[\Gamma_{\bv,\bfC}\langle\btheta,\bz_i^R\rangle^4\right]
&\le 4\esp\left[\langle\bv,\bz_i^R\rangle^4\right]
+12\esp\left[\frac{\Vert\bz_i^R\Vert_2^4}{(Kp)^2}\right]
+8\esp\left[\langle\bv,\bz_i^R\rangle^2\frac{\Vert\bz_i^R\Vert_2^2}{Kp}\right]\\
&\le 8\esp\left[\langle\bv,\bz_i^R\rangle^4\right]
+16\esp\left[\frac{\Vert\bz_i^R\Vert_2^4}{(Kp)^2}\right]\\
&\le8L^4+\frac{16L^4}{K^2}. 
\end{align}

We also have, by Bernstein's inequality, with probability at least $1-e^{-t}$, 
\begin{align}
\sum_{\ell\in B_k}\frac{\Vert\bz_i^R\Vert_2^2-\esp[\Vert\bz_i^R\Vert_2^2]}{KpB}\le L^2\sqrt{\frac{2t}{K^2B}}+\frac{2R^2t}{3KpB}.	
\end{align}

Invoking an union bound and using the previous bounds, we conclude that, with probability at least $1-2e^{-t}$, for all $\bv\in\mbS_2$, 
\begin{align}
1-\sum_{\ell\in B_k}\frac{\langle\bv,\bz_\ell\rangle^2}{B}&\le L^2\sqrt{\frac{2t}{K^2B}}
+\left(\frac{L^4p}{R^2}+\frac{2R^2t}{3KpB}\right) 
+\left(\frac{4L^4s}{B}+\frac{t}{s}\right)
+\left( \frac{8L^4s}{BK^2} +\frac{Kp}{2s}\right).
\end{align}
The first term is 
$
L^2\sqrt{\frac{2t}{Kn}}.
$
Optimizing $R>0$, the second term is less than $2L^2\sqrt{\frac{2t}{3n}}$. We now choose $s=s_1s_2$ with arbitrary $s_i>0$, $i=1,2$. For any $s_2$, the third term minimized at $s_1^*$ has value $2\sqrt{\frac{4L^4s_2t}{Bs_2}}=4L^2\sqrt{\frac{Kt}{n}}$. Evaluated at $s_1^*$, the forth term minimized at $s_2^*$ has value 
$
2\sqrt{\frac{8L^4s_1^*Kp}{BK^22s_1^*}}=4L^2\sqrt{\frac{p}{n}}.
$
Using the overestimates $K\ge1$, $2/3\le2$ and $1\le2$,  finishes the proof. 

\subsection{Proof of Proposition \ref{prop:unif:round}}

For simplicity we set 
$
f_i(\bfM):=\llangle\bz_i\bz_i^\top,\bfM\rrangle.
$ The assumption implies in particular that 
the set 
$$
\calA(\bfM):=\{i\in[m]:\Vert\bfM^{1/2}\bz_i\Vert_2> \sqrt{D}\}
$$ 
has cardinality $|\calA(\bfM)|>\sb m$. 

Let $\bu:=\frac{\bfM^{1/2}\bz_i}{\Vert\bfM^{1/2}\bz_i\Vert_2}$. The angle $\angle(\btheta,\bu)$ between $\btheta$ and $\bu\in\mbS_2$ is uniformly distributed over $[0,2\pi]$. Define 
the constants $\sfC:=(\cos\varphi)^{-1}$ and $B:=\sqrt{D}/\sfC$. Define also the random variable 
$$
Z_{\btheta}:=\sum_{i=1}^m\unit_{\left\{|\langle\bfM^{1/2}\bz_i,\btheta\rangle|>B\right\}}=\sum_{i=1}^m\unit_{\left\{|\langle\bz_i,\bfM^{1/2}\btheta\rangle|>B\right\}}.
$$
For any $i\in\calA(\bfM)$, 
\begin{align}
\prob\left(|\langle\bfM^{1/2}\bz_i,\btheta\rangle|>B\right)&\ge
\prob\left(|\langle\bfM^{1/2}\bz_i,\btheta\rangle|>\frac{1}{\sfC}
\Vert\bfM^{1/2}\bz_i\Vert_2
\right)\\
&=\prob(|\langle\bu,\btheta\rangle|>1/\sfC)\\
&=\prob(|\cos\angle(\btheta,\bu)|>1/\sfC)\\
&=4\prob(0\le\angle(\btheta,\bu)<\varphi)\\
&=4\frac{\varphi}{2\pi}.
\end{align}
It follows that
$
\esp Z_{\btheta}\ge|\calA(\bfM)|\frac{2\varphi}{\pi}\ge
\frac{2\varphi\sb}{\pi} m.
$

Note that almost surely $Z_{\btheta}\le m$. From Paley-Zygmund's inequality (Proposition 3.3.1 in \cite{2000pena:gine}), for all 
$\sa\in(0,1]$,
\begin{align}
\prob\left(Z_{\btheta}>\sa m\right)\ge\frac{\left(\esp Z_{\btheta}-\sa m\right)^2}{\esp Z_{\btheta}^2}
\ge\left(\frac{2\varphi\sb}{\pi}-\sa\right)^2>0,
\end{align}
if we assume that
$
\frac{2\varphi\sb}{\pi\sa}>1.
$
In other words, with probability at least 
$\left(\frac{2\varphi\sb}{\pi}-\sa\right)^2$ the vector 
$\bv_{\btheta}:=\bfM^{1/2}\btheta\in\mbB_2$ satisfies
$
\sum_{i=1}^m\unit_{\{|\langle\bz_i,\bv_{\btheta}\rangle|>B\}}
>\sa m.
$

\bibliographystyle{plain}
\bibliography{references_robust_linear_regression}

\end{document}